\newcommand{\beq} {\begin{equation}}
\newcommand{\eeq} {\end{equation}}
\newcommand{\bdm} {\begin{displaymath}}
\newcommand{\edm} {\end{displaymath}}
\newcommand{\bit}{\begin{itemize}}
\newcommand{\eit}{\end{itemize}}
\newcommand{\bde}{\begin{description}}
\newcommand{\ede}{\end{description}}
\newcommand{\bce}{\begin{center}}
\newcommand{\ece}{\end{center}}
\newcommand{\ben} {\begin{enumerate}}
\newcommand{\een} {\end{enumerate}}
\newcommand{\bea} {\begin{eqnarray}}
\newcommand{\eea} {\end{eqnarray}}
\newcommand{\barr} {\begin{array}}
\newcommand{\earr} {\end{array}}
\newcommand{\bean} {\begin{eqnarray*}}
\newcommand{\eean} {\end{eqnarray*}}
\newcommand{\edoc} {
\graphicspath{{./figures/}}

\newsiamremark{remark}{Remark}
\newsiamremark{hypothesis}{Hypothesis}
\crefname{hypothesis}{Hypothesis}{Hypotheses}
\newsiamthm{claim}{Claim}

\headers{An HDG Method for MHD}{J. Lee, S. Shannon, T. Bui-Thanh, and J. N. Shadid}

\title{Analysis of an HDG method for linearized incompressible resistive MHD equations
  \thanks{
    {\bf Funding: } 
      This work was
      partially supported by DOE grants DE-SC0010518 and
      DE-SC0011118, NSF Grant DMS-1620352,
      and by DOE NNSA ASC Algorithms effort,
      the DOE Office of Science AMR program at Sandia National Laboratory under contract DE-AC04-94AL85000.
    We are grateful for the support.}
}

\author{
  Jeonghun J. Lee\footnotemark[2], 
  Stephen Shannon\footnotemark[2], 
  Tan Bui-Thanh\footnotemark[2] \footnotemark[3], \\
  \and John N. Shadid\footnotemark[4] \footnotemark[5]
}

\usepackage{amsopn}
\DeclareMathOperator{\diag}{diag}

\begin{document}

\maketitle

\renewcommand{\thefootnote}{\fnsymbol{footnote}}
\footnotetext[2]{Institute for Computational Engineering Sciences (ICES), The University of Texas at Austin, 
  Austin, TX 78712 
  (\email{jeonghun@ices.utexas.edu}, \email{shannon@ices.utexas.edu}, \email{tanbui@ices.utexas.edu}).
}
\footnotetext[3]{Department of Aerospace Engineering and Engineering Mechanics, The University of Texas at Austin, 
  Austin, TX 78712.
}
\footnotetext[4]{Computational Mathematics Department, Sandia National Laboratories, P.O. Box 5800, MS 1321, 
  Albuquerque, NM 87185 (\email{jnshadi@sandia.gov}).
}
\footnotetext[5]{Department of Mathematics and Statistics, University of New Mexico, Albuquerque, NM 87131.}
\renewcommand{\thefootnote}{\arabic{footnote}}

\begin{abstract}
  We present a hybridized discontinuous Galerkin (HDG) method for
  stationary linearized incompressible magnetohydrodynamics (MHD)
  equations. At the heart of the paper is the introduction of an HDG
  flux of the dual saddle-point form of the MHD equations that
  facilitates the hybridization of discontinuous Galerkin (DG) method.
  We carry out the {\em a priori} error estimates for the proposed HDG
  method on simplicial meshes in both two- and three-dimensions.
  The analysis provides optimal convergence for the fluid velocity and the magnetic variables,
  and quasi-optimal convergence for the remaining  quantities. Numerical examples are presented to verify the theoretical findings.
\end{abstract}

\begin{keywords}
  hybridized discontinuous Galerkin methods, resistive magnetohydrodynamics, 
  a~priori error analysis, Stokes equations, Maxwell equations
\end{keywords}

\begin{AMS}
  65N30, 
  65N12, 
  65N15, 
  76W05  
\end{AMS}

\section{Introduction}
An important base-level representation for continuum approximation of
the dynamics of charged fluids in the presence of electromagnetic
fields is the resistive magnetohydrodynamics (MHD) model.
MHD models describe important physical phenomena in astrophysical systems
(e.g. solar flares, and planetary magnetic field generation) and in critical
science and technological applications (e.g., magnetically confined fusion energy devices)
\cite{GoedbloedPoedts2004}, for example.  The single fluid resistive MHD model involes the
partial differential equations (PDEs) describing
conservation of mass, momentum, and energy, coupled with the
low-frequency Maxwell's equations.  This multiphysics PDE system is
highly nonlinear and characterized by multiple
interacting physical phenomena spanning a wide range of length-
and time-scales.  These characteristics make the task of developing scalable, robust,
accurate, and efficient computational methods 
extremely challenging.

The most common computational solution
strategies for MHD have been the use of explicit
and partially implicit time integration methods. Notably are implicit-explicit
\cite{aydemir-jcp-85-imhd,m3d,jardin-pop-05-mhd},
semi-implicit \cite{schnack-barnes-si,harned-mikic-si,nimrod2},
and operator-splitting \cite{hujeirat-mnras-98-irmhd,AlegraALE08}
techniques that
include
some use of implicit operators in the formulation.
The implicitness of these approaches is used to enhance efficiency by removing
stringent explicit time-scale constraints in the problem, either from diffusion or
from fast-wave phenomena
\cite{chacon-pop-08-3dmhd,KnollWave05}.

In addition to the challenges associated with designing robust and efficient time integrators, there are a number of spatial discretization issues 
including the dual saddle-point structure of the velocity-pressure $\LRp{\ub,p}$ 
and the enforcement of the solenoidal involution/constraint  on the magnetic induction $(\nabla \cdot \bb = 0)$.
This adds considerable complexity to the numerical approximation of 
resistive MHD system.
In the context of finite volume  and  finite element methods, there are 
four popular approaches to deal with these difficulties: 1) physics-compatible
discretizations that directly enforce key mathematical properties of the
continuous problem (see e.g. \cite{nedelec:1980, misha97, BR:2001}); 2)
methods that transform to potential-based formulations to
eliminate one or both saddle-point sub-systems
\cite{luisMHD,m3d,Lankalapalli07,shadid-jcp-10_mhd}; 3)
exact and weighted-exact penalty formulations
\cite{Gunzburgeretal91,Gerbeau00,CostabelDauge00,CostabelDauge02}; and 4)
and stabilization methods that 
regularize
the dual saddle-point structure \cite{Salah_MHD1999,Codina_MHD2011,Shadidetal2016_3DVMSMHD}.

In this paper we propose a {\em hybridized discontinuous Galerkin} (HDG)
formulation for a linearized version of the resistive MHD system.
The hybridization technique and post-processing have been proposed to reduce computational costs 
of saddle-point problems and to improve the accuracy of numerical solutions \cite{AB85}. 
HDG methods were developed by Cockburn, coauthors, and others
to mitigate the computational costs of classical discontinuous
Galerkin (DG) methods. They have  been proposed for
various types of PDEs including, but not limited to, Poisson-type equations
\cite{CockburnGopalakrishnanLazarov09, CockburnGopalakrishnanSayas10,
  NguyenPeraireCockburn09a,
EggerSchoberl10}, the Stokes equation
\cite{CockburnGopalakrishnan09, NguyenPeraireCockburn10},
the Oseen equations \cite{CesmeliogluCockburnNguyenEtAl13},
and the incompressible Navier-Stokes equations \cite{NguyenPeraireCockburn11}.

In HDG discretizations, the coupled unknowns are single-valued traces
introduced on the mesh skeleton, i.e., the faces, and for high order implicit systems the resulting
matrix is substantially smaller and sparser compared to standard DG
approaches. Once they are solved for, the volume DG unknowns can be
recovered in an element-by-element fashion, completely independent of
one another. Therefore HDG methods have an intrinsic structure for
parallel computing which is essential for large scale applications.
Nevertheless, devising an HDG method for coupled PDE systems is challenging
because construction of a consistent and robust HDG flux is nontrivial.
We adopt the upwind HDG framework proposed in
\cite{Bui-Thanh15, bui2016construction, bui2015rankine}
since it provides  a systematic construction of HDG methods for a large class of PDEs.

Our work starts with section \secref{notations} where notations and
conventions are introduced to enable the construction of HDG method
in section \secref{HDG}.  Specifically, the proposed HDG method is
introduced directly on the dual saddle-point structure of the MHD
system and its well-posedness is analyzed using an energy
approach. This is followed by the {\em a priori error estimation} in
section \secref{error_analysis} where we combine an energy analysis, specially designed projections, and a duality argument to provide convergence rates for all variables. 
{\em Our development can serve as a standalone high-order solver
  for linearized MHD equations, or can be used as the fast-time scale
  solver in an implicit/explicit time integration method, and/or the
solver for a sub-step in a fixed-point nonlinear solver}. Various
numerical results will be presented in section \secref{numerics} to verify our theoretical
findings. Section \secref{conclusions} concludes the paper with future work. This is followed by four appendices in which we detail the definition and analysis of projection operators, state some auxiliary results, discuss the well-posedness of the adjoint equation, and present a postprocessing procedure to
enforce the solenoidal constraints.

\section{Notation}

\seclab{notations}
In this section we introduce common notations and conventions to be
used in the rest of the paper. Let $\Omega \subset \R^\d$, $d=2,3$,
be a bounded domain {such that it is simply-connected and its boundary $\pOmega$ is
a Lipschitz manifold with only one component.} Suppose that we
have a triangulation of $\Omega$, i.e., a partition of $\Omega$ into a finite
number of nonoverlapping $d$-dimensional simplices. We assume that the triangulation
is shape-regular, i.e., 
for all $d$-dimensional simplices in the triangulation, 
the ratio of the diameter of the simplex and the radius of an inscribed $d$-dimensional ball is uniformly bounded.
We will use $\Omegah$ and $\Gh$ to denote
the sets of $d$- and ($d-1$)-dimensional simplices of the triangulation,
and call $\Gh$ the mesh skeleton of the triangulation. The boundary and interior mesh skeletons
are defined by $\Ghb := \{ e \in \Gh \,:\, e \subset \pd \Omega\}$ and
$\Gho := \Gh \setminus \Ghb$. We also define $\pOmegah := \LRc{\pK:\K \in \Omegah}$.
The mesh size of triangulations is $h := \max_{\K \in \Omegah} \ensuremath{\text{diam}}(\K)$.

We use $\LRp{\cdot,\cdot}_D$ (respectively $\LRa{\cdot,\cdot}_D$)
to denote the $L^2$-inner product on $D$ if $D$ is a $d$- (respectively $(d-1)$-)
dimensional domain. 
The standard notation $W^{s,p}(D)$, $s \ge 0$, $1 \le p \le \infty$,
is used for the Sobolev space on $D$ based on $L^p$-norm with
differentiability $s$ (see, e.g., \cite{Evans-book}) and $\nor{\cdot}_{W^{s,p}(D)}$
denotes the associated norm. In particular, if $p = 2$, we use $H^s(D) := W^{s,p}(D)$
and $\nor{\cdot}_{s,D}$.
$W^{s,p}(\Omegah)$ denotes the space of functions whose restrictions on $\K$
reside in $W^{s,p}(\K)$ for each $\K \in \Omegah$ and its norm is
$\nor{u}_{W^{s,p}(\Omegah)}^p := \sum_{\K \in \Omegah} \nor{u|_{\K}}_{W^{s,p}(\K)}^p$
if $1 \le p < \infty$ and $\nor{u}_{W^{s,\infty}(\Omegah)} := \max_{\K \in \Omegah} \nor{u|_{\K}}_{W^{s,\infty}(\K)}$.
For simplicity, we use $\LRp{\cdot, \cdot}$, $\LRa{\cdot,\cdot}$,
$\nor{\cdot}_s$, $\nor{\cdot}_{\pOmegah}$, and $\nor{\cdot}_{W^{s,\infty}}$ for
$\LRp{\cdot, \cdot}_{\Omega}$, $\LRa{\cdot,\cdot}_{\pOmegah}$,
$\nor{\cdot}_{s, \Omega}$, $\nor{\cdot}_{0, \pOmegah}$, and $\nor{\cdot}_{W^{s,\infty}(\Omegah)}$, respectively.
We define $\| u, v \| := \| u \| + \| v \|$.
Furthermore, we denote by $A \lesssim B$ the inequality $A \le \lambda B$ with a constant $\lambda>0$
independent of the mesh size, and by $A \sim B$ the combination of  $A \lesssim B$ and $B \lesssim A$.

For vector- or matrix-valued functions these notations are naturally extended with a
component-wise inner product.
We define similar spaces (respectively inner products and norms) on a single element and a single skeleton face/edge
by replacing $\Omegah$ with $\K$ and $\Gh$ with $\e$.
We define the gradient of a vector, the divergence of a matrix, and the outer product symbol $\otimes$ as:
\[
  \LRp{\nabla \ub}_{ij} = \pp{u_i}{x_j}, \quad
  \LRp{\div \Lb}_i = \div \Lb\LRp{i,:} = \sum_{j=1}^3\pp{\bs{L}_{ij}}{x_j}, \quad
  \LRp{\bs{a} \otimes \bs{b}}_{ij} = a_i b_j = \LRp{\bs{a}\bs{b}^T}_{ij}.
\]
In this paper $\n$ denotes a unit outward normal vector field on faces/edges.
If $\pd \Km \cap \pd \Kp \in \Gh$ for two
distinct simplices $\Km, \Kp$, then $\nm$ and $\np$ denote
the outward unit normal vector fields on $\pd \Km$ and $\pd \Kp$, respectively, and
$\nm = - \np$ on $\pd \Km \cap \pd \Kp$. We simply use $\n$ to denote either $\nm$ or $\np$ in an
expression that is valid for both cases, and this convention is also
used for other quantities (restricted) on a face/edge $\e \in \Gh$. For a scalar quantity
$u$ which is double-valued on $e := \pd \Km \cap \pd \Kp$, the jump term on $e$ is defined by
$\jump{u \n}|_e = u^+ \np + u^- \nm$ where $u^+$ and $u^-$ are the traces of $u$ from $\Kp$-
and $\Km$-sides, respectively. For double-valued vector quantity $\ub$ and matrix quantity $\Lb$,
jump terms are $\jump{\ub \cdot \n}|_e = \ub^+ \cdot \np + \ub^- \cdot \nm$ and
$\jump{\Lb \n}|_e = \Lb^+ \np + \Lb^- \nm$ where $\Lb \n$ denotes the matrix-vector product.

We define $\Poly_k\LRp{\K}$ as
the space of polynomials of degree at most $k$ on a domain $\K$, and
\algns{
  \Poly_k\LRp{\Omegah} = \LRc{ u \in L^2(\Omega) \;:\; u|_{\K} \in \Poly_k\LRp{\K} \; \forall \K \in \Omegah } .
}
The space of polynomials on the mesh skeleton $\Poly_k\LRp{\Gh}$ is similarly defined,
and their extensions to vector- or matrix-valued polynomials $\LRs{\Poly_k(\Omegah)}^d$,
$\LRs{\Poly_k(\Omegah)}^{d\times d}$, $\LRs{\Poly_k(\Gh)}^d$, etc, are straightforward.

\section{HDG Formulation}
\seclab{HDG}
We consider the following nondimensional linearized incompressible MHD system \cite{HoustonSchoetzauWei09}
\begin{subequations}
  \eqnlab{mhdlin}
  \begin{align}
    \label{eq:mhd1lin_1}
    - \frac{1}{\Rey} \lap \ub + \nabla p + (\wb \cdot \nabla) \bs{u} + \kappa \db \times (\nabla \times \bb) &= \gb, \\
    \label{eq:mhd3lin_1}
    \nabla \cdot \ub &= 0, \\
    \label{eq:mhd2lin_1}
    \frac{\kappa}{\Rm} \nabla \times (\nabla \times \bb) + \nabla r - \kappa \nabla \times (\ub \times \db) &= \fb, \\
    \label{eq:mhd4lin_1}
    \nabla \cdot \bb &= 0
  \end{align}
\end{subequations}
where $\ub$ is velocity of the fluid (plasma or liquid metal), $\bb$ the magnetic field, $p$ the fluid pressure, and $r$ a scalar potential.
The following are constant parameters: a fluid Reynolds number $\Rey > 0$, a magnetic Reynolds number $\Rm > 0$, 
and a coupling parameter $\kappa = \Ha^2 / (\Rey\Rm)$, with the Hartmann number $\Ha > 0$. 
Here, $\db \in \LRs{W^{1,\infty}\LRp{\Omega}}^d$ is a prescribed magnetic field 
and $\wb \in \LRs{W^{1,\infty}\LRp{\Omegah}}^d \cap H(div,\Omega)$ with $\nabla \cdot \wb = 0$ is a prescribed velocity.

By introducing auxiliary variables $\Lb$ and $\Hb$, we cast \eqnref{mhdlin} into a first order hyperbolic system:
\begin{subequations}
  \eqnlab{mhdlin-first}
  \begin{align}
    \label{eq:mhd1lin1n}
    \Rey \Lb - \nabla \ub &= \bs{0}, \\
    \label{eq:mhd2lin1n}
    - \nabla \cdot \Lb + \nabla p + (\wb \cdot \nabla) \ub + \kappa \db \times (\nabla \times \bb) &= \gb, \\
    \label{eq:mhd3lin1n}
    \nabla \cdot \ub &= 0, \\
    \label{eq:mhd4lin1n}
    \frac{\Rm}{\kappa} \Hb - \nabla \times \bb &= \bs{0}, \\
    \label{eq:mhd5lin1n}
    \nabla \times \Hb + \nabla r  - \kappa \nabla \times (\ub \times \db) &= \fb, \\
    \label{eq:mhd6lin1n}
    \nabla \cdot \bb &= 0.
  \end{align}
\end{subequations}
In this paper, we consider the following (Dirichlet) boundary conditions for the MHD system \eqnref{mhdlin-first}
\begin{subequations}
  \eqnlab{MHD_BCs_Dirichlet}
  \begin{align}
    \ub &= \bs{u}_D                 & \textrm{ on } \partial \Omega, \\
    \bbt:=-\n \times (\n \times \bb) &= \hb_D       & \textrm{ on } \partial \Omega, \\
    r &= 0                              & \textrm{ on } \partial \Omega,
  \end{align}
\end{subequations}
where we have defined the tangent component of a vector $\bs{a}$ as $\bs{a}^t := -\n\times(\n\times\bs{a})$.
Additionally, we require the compatibility condition for $\ub_D$:
\begin{equation}
  \label{compatibility}
  \LRa{\ub_D \cdot \n, 1}_{\pOmega} = 0 .
\end{equation}
For the uniqueness of the pressure, $p$, we require that the pressure has zero mean, i.e.,
\begin{equation}
  \label{eq:global_3_1}
  (p,1)_\Omega = 0.
\end{equation}

Following the upwind HDG framework in \cite{Bui-Thanh15} we define the HDG flux as
\begin{equation}
  \label{eq:HDGflux}
  \LRs{
    \begin{array}{c}
      \Fh^1 \cdot \n \\
      \Fh^2 \cdot \n \\
      \Fh^3 \cdot \n \\
      \Fh^4 \cdot \n \\
      \Fh^5 \cdot \n \\
      \Fh^6 \cdot \n
    \end{array}
  } =
  \LRs{
    \hspace{-0.4em}
    \begin{array}{c}
      -\ubh \otimes \n \\
      -\Lb\n + m\ub +\p\n + \half \kappa \db \times \LRp{\n \times \LRp{\bbt+\bbht}} + \alpha_1 \LRp{\ub - \ubh} \\
      \ubh \cdot \n \\
      -\n \times \bbht \\
      \n \times \Hb  + \rh\n -\half \kappa \n \times \LRp{\LRp{\ub +\ubh}\times \db} + \alpha_2\LRp{\bbt-\bbht} \\
      \bb \cdot \n + \alpha_3\LRp{\r - \rh}
    \end{array}
    \hspace{-0.4em}
  }
\end{equation}
where $\bbht$, $\ubh$, and $\rh$ are the single-valued trace quantities 
residing on the mesh skeleton $\Gh$. They will be new unknowns 
in the discretizations, which will be described later, to hybridize the DG method. Here, $m
:= \wb \cdot \n$, and $\alpha_1$, $\alpha_2$, and $\alpha_3$ are
constant parameters.  As will be shown later, the conditions $\alpha_1
> \half \nor{\wb}_\Linfty$, $\alpha_2 > 0$, and $\alpha_3 > 0$ are
sufficient for the well-posedness of our HDG formulation.
Note that all 6 components of the HDG flux, $\Fh$, for simplicity are denoted
in the same fashion (by a bold italic symbol).
It is, however, clear from \eqnref{mhdlin-first} that  $\Fh^1$ is a third order tensor,
$\Fh^2$ is a second order tensor, $\Fh^3$ is a vector, etc, and that the
normal HDG flux components, $\Fh^i \cdot \n$, defined in \eqref{eq:HDGflux},
are tensors of one order lower.

For discretization we introduce the discontinuous
piecewise polynomial spaces
\begin{gather*}
  \GbM := \LRs{\Poly_k(\Omegah)}^{d \times d}, \qquad \VbM := \LRs{\Poly_k(\Omegah)}^d, \qquad \QbM := \Poly_k(\Omegah) , \\
  \JbM := \LRs{\Poly_k(\Omegah)}^{\tilde{d}}, \qquad \CbM := \LRs{\Poly_k(\Omegah)}^d, \qquad \SbM := \Poly_k(\Omegah), \qquad \MubM := \LRs{\Poly_k(\Gh)}^d , \\
  \LambM := \LRc{ \lambdab \in \LRs{\Poly_k(\Gh)}^d \;:\; \lambdab \cdot \n_e = 0      \; \forall  e \in \Gh}, \qquad \GambM := \LRs{\Poly_k(\Gh)}^d,
\end{gather*}
where $\tilde{d} = 3$ if $d = 3$, and $\tilde{d} = 1$ if $d = 2$.

Let us introduce two identities which are useful throughout the paper:
\begin{subequations}
  \eqnlab{eq:int-by-parts}
  \algn{
    \LRp{ \ub , \db \times \LRp{\Curl \bb} }_\K
    &= \LRp{ \bb, \Curl \LRp{\ub \times \db} }_\K + \LRa{\db \times \LRp{\n \times \bb}, \ub}_\pK, \\
    \LRs{ \db \times \LRp{\n \times \bb} } \cdot \ub
    &= - \LRs{\n \times \LRp{\ub \times \db}} \cdot \bb .
  }
\end{subequations}
These identities follow from integration by parts and vector product identities.

Next, we multiply \eqref{eq:mhd1lin1n} through \eqref{eq:mhd6lin1n} by
test functions $(\Gb,\vb,q,\Jb,\cb,s)$, integrate by parts all terms, and introduce the HDG
flux \eqref{eq:HDGflux} in the boundary terms.  This results in a
local discrete weak formulation, which we shall call the
\textit{local solver} of the HDG method, for the
MHD system \eqnref{mhdlin-first}:
\begin{subequations}
  \eqnlab{local}
  \begin{align}
    \label{eq:local_1_1}
    \Rey\LRp{\LbH, \Gb}_\K + \LRp{\ubH, \Div\Gb}_\K
    + \LRa{\FbhH^1 \cdot \n, \Gb}_\pK &= 0, \\
    \label{eq:local_2_1}
    \LRp{\LbH, \Grad\vb}_\K - \LRp{\pH, \Div \vb}_\K - \LRp{\ubH \otimes \wb , \Grad \vb}_\K \quad & \\
    + \kappa\LRp{\bbH, \Curl\LRp{\vb \times \db}}_\K
    + \LRa{\FbhH^2 \cdot \n, \vb}_\pK &= \LRp{\gb, \vb}_\K, \notag \\
    \label{eq:local_3_1}
    -\LRp{\ubH, \Grad \q}_\K
    + \LRa{\FbhH^3 \cdot \n, \q}_\pK &= 0, \\
    \label{eq:local_4_1}
    \frac{\Rm}{\kappa}\LRp{\HbH, \Jb}_\K - \LRp{\bbH, \Curl \Jb}_\K
    + \LRa{\FbhH^4 \cdot \n, \Jb}_\pK &= 0, \\
    \label{eq:local_5_1}
    \LRp{\HbH, \Curl \cb}_\K - \LRp{\rH, \Div \cb}_\K - \kappa\LRp{\ubH, \db \times \LRp{\Curl\cb}}_\K \quad & \\
    + \LRa{\FbhH^5 \cdot \n, \cb}_\pK &= \LRp{\fb,\cb}_\K, \notag \\
    \label{eq:local_6_1}
    -\LRp{\bbH, \Grad \s}_\K
    + \LRa{\FbhH^6 \cdot \n, \s}_\pK &= 0
  \end{align}
\end{subequations}
for all $(\Gb,\vb,q,\Jb,\cb,s) \in \GbM\LRp{K} \times \VbM\LRp{K} \times \QbM\LRp{K}
\times \JbM\LRp{K} \times \CbM\LRp{K} \times \SbM\LRp{K}$
and for all $\K \in \Omegah$,
where $\ubH$, $\LbH$, ..., are the discrete counterparts of $\ub$, $\Lb$, ...,
and $\FbhH^i$ is the discrete counterpart of $\Fh^i$ in \eqref{eq:HDGflux}
by replacing the unknowns $\ub$, $\Lb$, ..., with their discrete counterparts.

Since $\bbhtH$, $\ubhH$, and $\rhH$ are the new unknowns, we
need to equip extra equations to close the system \eqnref{local}. To
that end, we observe that an
element $\K$ communicates with its neighbors only through the trace unknowns. 
For the HDG method to be conservative, we (weakly) enforce the
continuity of the HDG flux \eqref{eq:HDGflux} across each interior
edge, i.e., $\LRa{\jump{\FbhH \cdot \n},\bs{\delta}}_e = 0, \forall \e \in \Gho$.
Since $\ubhH, \bbhtH$
and $\rhH$ are single-valued on $\Gh$, 
$\jump{\FbhH^1 \cdot \n} = \bs{0}$, $\jump{\FbhH^3 \cdot \n} = 0$, and $\jump{\FbhH^4 \cdot \n} = \bs{0}$.
The conservation constraints to be enforced reduce to
\begin{equation}
  \label{global}
  \LRa{\jump{\FbhH^2 \cdot \n},\mub}_\ed = 0, \qquad 
  \LRa{\jump{\FbhH^5 \cdot \n}, \lambdab^t}_\ed = 0, \qquad 
  \LRa{\jump{\FbhH^6 \cdot \n}, \gamma}_\ed = 0
\end{equation}
for all $(\mub,\lambdab^t,\gamma) \in \MubM\LRp{e} \times \LambM\LRp{e} \times \GambM\LRp{e}$, 
and for all $e$ in $\Gho$.
Finally, we enforce the Dirichlet boundary conditions through the trace unknowns:

\begin{align}
  \eqnlab{BCs}
  \LRa{\ubhH,\mub}_e = \LRa{\ub_D,\mub}_e, \qquad 
  \LRa{\bbhtH,\lambdab^t}_e = \LRa{\hb_D,\lambdab^t}_e, \qquad
  \LRa{\rhH,\gamma}_e = 0
\end{align}
for all $(\mub,\lambdab^t,\gamma) \in \Mb_h\LRp{e} \times \Lambdabt_h\LRp{e} \times \Gamma_h\LRp{e}$ 
for all $e$ in $\Ghb$.

In \eqnref{local}, \eqref{global}, and \eqnref{BCs}, we seek
$(\LbH,\ubH,\pH,\HbH,\bbH,\rH) 
\in \GbM \times \VbM \times \QbM \times \JbM \times \CbM \times \SbM$ and
$(\ubhH,\bbhtH,\rhH) \in \MubM \times \LambM \times \GambM$.
From this point forward, for simplicity in writing, we will not 
state explicitly that equations hold for all test functions, for all elements, or
for all edges.

We will refer to $\LbH,\ubH,\pH,\HbH,\bbH,$ and $\rH$ as the
\textit{local variables}, and to equation \eqnref{local} on each element as the \textit{local solver}. 
This reflects the fact that we can solve for local variables
element-by-element as function of $\ubhH, \bbhtH,$ and $\rhH$. On the
other hand, we will refer to $\ubhH, \bbhtH,$ and $\rhH$
as the \textit{global variables}, which are governed by equations
\eqref{global} and \eqnref{BCs} on the mesh skeleton. 
Finally, for the uniqueness of the discrete pressure $\pH$, 
we enforce the discrete counterpart of \eqref{eq:global_3_1}:
\begin{align}
  \label{eq:global_5_1}
  (\pH,1) &= 0.
\end{align}

\subsection{Well-posedness of the HDG formulation}
\seclab{wellposednessHDG}
In this subsection we discuss well-posedness of the system \eqnref{local}--\eqnref{BCs}.
\begin{theorem}
  The HDG system \eqnref{local}--\eqnref{global_5_1}
  is well-posed. 
\end{theorem}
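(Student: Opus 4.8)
The plan is to reduce the assertion to uniqueness. The discretization \eqnref{local}--\eqnref{global_5_1} is a linear system on finite-dimensional spaces and -- once the pressure is normalized by \eqnref{global_5_1} and the datum $\ub_D$ is required to satisfy the compatibility condition \eqref{compatibility}, exactly as for the Stokes system -- the number of independent equations equals the number of unknowns. Hence it suffices to show that the homogeneous problem, obtained by setting $\gb=\bs{0}$, $\fb=\bs{0}$, $\ub_D=\bs{0}$, $\hb_D=\bs{0}$ (so that the discrete pressure automatically has zero mean), has only the trivial solution.

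The first and central step is an energy identity. I would take $(\Gb,\vb,q,\Jb,\cb,s)=(\LbH,\ubH,\pH,\HbH,\bbH,\rH)$ in the local solver \eqnref{local}, sum over $\K\in\Omegah$, and add to the result the conservation equations \eqref{global} tested against $(\mub,\lambdab^t,\gamma)=(\ubhH,\bbhtH,\rhH)$ together with the (homogeneous) boundary conditions \eqnref{BCs}. Integrating the volume terms by parts back onto the skeleton, the $\Grad$--$\Div$ pairs in the $\LbH$--$\ubH$ and $\pH$--$\ubH$ blocks combine, after summation over elements, into skeleton integrals; the magnetic/velocity couplings cancel by the identities \eqnref{eq:int-by-parts} and the symmetric placement of $\half\kappa\,\db\times(\n\times(\bbt+\bbht))$ and $\half\kappa\,\n\times((\ub+\ubh)\times\db)$ in the HDG flux \eqref{eq:HDGflux}; and, since $\Div\wb=0$, the convection term $-\LRp{\ubH\otimes\wb,\Grad\ubH}_\K$ equals $-\half\LRa{m\,\ubH,\ubH}_\pK$, which combines with the $m\,\ubH$ contribution of $\FbhH^2\cdot\n$. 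Using $\nm=-\np$ to assemble the two contributions of a shared interior face, one arrives at
\[
  \Rey\,\nor{\LbH}^2 + \tfrac{\Rm}{\kappa}\,\nor{\HbH}^2
  + \sum_{\K\in\Omegah}\LRa{\LRp{\alpha_1-\tfrac12 m}\LRp{\ubH-\ubhH},\ubH-\ubhH}_\pK
  + \alpha_2\sum_{\K\in\Omegah}\nor{\bbtH-\bbhtH}_\pK^2
  + \alpha_3\sum_{\K\in\Omegah}\nor{\rH-\rhH}_\pK^2 = 0 ,
\]
with $\ubhH,\bbhtH,\rhH$ vanishing on boundary faces. Because $\alpha_1>\half\nor{\wb}_\Linfty\ge\half\abs{m}$, $\alpha_2>0$, and $\alpha_3>0$, every term is nonnegative, so $\LbH=\bs{0}$, $\HbH=\bs{0}$, and on every face $\ubH=\ubhH$, $\bbtH=\bbhtH$, $\rH=\rhH$ (all three being zero on $\pOmega$).

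The remaining unknowns are then recovered algebraically. From \eqref{eq:local_1_1} with $\LbH=\bs{0}$, integration by parts and $\ubH=\ubhH$ on $\pK$ give $\LRp{\Grad\ubH,\Gb}_\K=0$ for all $\Gb$, hence $\Grad\ubH=\bs{0}$ on each $\K$; with single-valuedness of $\ubhH$ and the homogeneous boundary condition this forces $\ubH=\ubhH=\bs{0}$. Similarly \eqref{eq:local_4_1} with $\HbH=\bs{0}$ yields $\Curl\bbH=\bs{0}$ on each $\K$; \eqref{eq:local_5_1} yields $\Grad\rH=\bs{0}$, hence $\rH=\rhH=0$; \eqref{eq:local_6_1} yields $\Div\bbH=0$ on each $\K$; and the third equation of \eqref{global} gives $\jump{\bbH\cdot\n}=0$. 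Together with the single-valued tangential trace $\n\times\bbH=\n\times\bbtH$ (zero on $\pOmega$), this shows $\bbH\in H_0(\mathrm{curl},\Omega)\cap H(\mathrm{div},\Omega)$ with $\Curl\bbH=\bs{0}$ and $\Div\bbH=0$; since $\Omega$ is simply connected with connected boundary, $\bbH=\Grad\phi$ for some $\phi\in H^1_0(\Omega)$ with $\lap\phi=0$, so $\phi=0$ and $\bbH=\bs{0}$, hence also $\bbtH=\bbhtH=\bs{0}$. Finally \eqref{eq:local_2_1} gives $\Grad\pH=\bs{0}$ on each $\K$, the first equation of \eqref{global} gives $\jump{\pH\n}=0$, so $\pH$ is a global constant, and \eqnref{global_5_1} forces $\pH=0$. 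All discrete unknowns therefore vanish.

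I expect the energy identity to be the main obstacle: one must check that every cross term cancels and that the skeleton residue is precisely the nonnegative stabilization form above, and this is where the structural features of the flux \eqref{eq:HDGflux} are used -- the symmetric coupling of the Lorentz and induction terms, the $m\,\ubH$ term that, together with the integrated-by-parts convection, produces the upwind form, and the role of $\alpha_1,\alpha_2,\alpha_3$. A second, conceptually delicate step is $\bbH=\bs{0}$: it rests on the de Rham-type fact that a curl-free, divergence-free field with vanishing tangential trace on a simply connected domain with connected boundary must vanish, which is exactly where the topological hypotheses on $\Omega$ enter.
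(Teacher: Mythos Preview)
Your proposal is correct and follows essentially the same route as the paper: reduce to uniqueness, derive the energy identity by testing the local solver with the unknowns and combining with the conservation equations tested against the traces, then recover each variable in turn. Two minor remarks: the coefficient in the stabilization term comes out as $\alpha_1+\tfrac12 m$ rather than $\alpha_1-\tfrac12 m$ (this is inconsequential since $\alpha_1>\tfrac12\nor{\wb}_\Linfty$ kills both signs), and for $\bbH=\bs{0}$ the paper invokes the Friedrichs-type inequality $\nor{\bbH}_0\le C(\nor{\Div\bbH}_0+\nor{\Curl\bbH}_0)$ from \cite{GiraultRaviart86} directly, whereas your scalar-potential/Laplace argument is an equivalent way of using the same topological hypotheses on $\Omega$.
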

\begin{proof}
  Since the problem is a system of linear equations
  with the same number of equations and unknowns,
  without loss of generality, we only need show
  that $\LRp{\gb,\fb, \ub_D, \bs{h}_D} = \bs{0}$ implies
  $(\LbH,\ubH,\pH,\HbH,\bbH, \rH,\ubhH,\bbhtH,\rhH) = \bs{0}$.
  To begin, we take $(\Gb,\vb,q,\Jb,\cb,s) = (\LbH,\ubH,\pH,\HbH,\bbH,\rH)$ and get 
  \begin{subequations}
    \begin{align*}
      \Rey\LRp{\LbH, \LbH}_\K + \LRp{\ubH, \Div\LbH}_\K
      + \LRa{-\ubhH \otimes \n, \LbH}_\pK &= 0, \\
      \LRp{\LbH, \Grad \ubH}_\K - \LRp{\pH, \Div \ubH}_\K - \LRp{\ubH \otimes \wb , \Grad \ubH}_\K
      + \kappa\LRp{\bbH, \Curl\LRp{\ubH \times \db}}_\K \quad & \\
      + \LRa{-\LbH\n + m\ubH +\pH\n + \half \kappa \db \times \LRp{\n \times \LRp{\bbt_h+\bbht_h}} + \alpha_1 \LRp{\ubH - \ubhH}, \ubH}_\pK &= 0, \\
      -\LRp{\ubH, \Grad \q}_\K
      + \LRa{\ubhH \cdot \n, \q}_\pK &= 0, \\
      \frac{\Rm}{\kappa}\LRp{\HbH, \HbH}_\K - \LRp{\bbH, \Curl \HbH}_\K
      + \LRa{-\n \times \bbht_h, \HbH}_\pK &= 0, \\
      \LRp{\HbH, \Curl \bbH}_\K - \LRp{\rH, \Div \bbH}_\K - \kappa\LRp{\ubH, \db \times \LRp{\Curl\bbH}}_\K \quad & \\
      + \LRa{\n \times \HbH  + \rhH\n -\half \kappa \n \times \LRp{\LRp{\ubH +\ubhH}\times \db} + \alpha_2\LRp{\bbt_h-\bbht_h}, \bbH}_\pK &= 0, \\ 
      -\LRp{\bbH, \Grad \rH}_\K
      + \LRa{\bbH \cdot \n + \alpha_3\LRp{\rH - \rhH}, \rH}_\pK &= 0 .
    \end{align*}
  \end{subequations}
  If we integrate by parts the first four terms of the second equation and the first term of the fifth equation, 
  sum the resulting equations, and sum over all elements, then we arrive at 
  \begin{align}
    \Rey\nor{\LbH}_0^2 + \frac{\Rm}{\kappa}\nor{\HbH}_0^2
    - \LRa{\ubhH \otimes \n, \LbH}
    + \LRa{ \frac{m}{2} \ubH, \ubH } \notag
    + \LRa{ \alpha_1 (\ubH-\ubhH), \ubH } \\
    \label{eq:sum_energy4_1}
    + \LRa{ \half \kappa \db \times \LRp{\n \times \bbhtH}, \ubH }
    + \LRa{\ubhH \cdot \n, \pH}
    - \LRa{ \n \times \bbhtH, \HbH }
    + \LRa{ \rhH \n, \bbH } \\
    + \LRa{ \alpha_2(\bbH^t-\bbhtH), \bbH^t } \notag
    - \LRa{ \half\kappa\n \times \LRp{\ubhH \times \db}, \bbH }
    + \LRa{\alpha_3 \LRp{\rH-\rhH}, \rH}
    &= 0.
  \end{align}
  Here, we have used 
  \algns{
    - \LRp{\ubH,\wb \cdot \nabla \ubH}_\K \allowbreak
    = - \half\LRp{\wb,\nabla(\ubH\cdot\ubH)}_\K \allowbreak
    &= \half\LRp{(\Div\wb)\ubH,\ubH}_\K \allowbreak - \LRa{\frac{m}{2}\ubH,\ubH}_\pK \\
    &= - \LRa{\frac{m}{2}\ubH,\ubH}_\pK 
  }
  which is obtained from $\Div \wb = 0$. 

  Next, we set $({\mub, \lambdab^t, \gamma}) = ({\ubhH,\bbhtH,\rhH})$,
  and sum \eqref{global} over all interior edges to obtain
  \begin{align}
    \LRa{-\LbH\n + m\ubH +\pH\n + \half\kappa\db \times \LRp{\n \times \bbH^t}
    + \alpha_1\LRp{\ubH - \ubhH},\ubhH}_{\pOmegah\setminus\pOmega} \notag & \\
    \label{eq:sum_energy5_1}
    + \LRa{\n \times \HbH -\half\kappa\n \times \LRp{\ubH \times \db}
    + \alpha_2\LRp{\bbH^t -\bbhtH}, \bbhtH}_{\pOmegah\setminus\pOmega} & \\
    \notag
    +\LRa{\bbH \cdot \n + \alpha_3\LRp{\rH - \rhH}, \rhH}_{\pOmegah\setminus\pOmega}
    &= 0
  \end{align}
  where we have used the continuity of $\db$ to eliminate 
  $\langle{\db \times ({\n \times \bbhtH}),\ubhH}\rangle_{\pOmegah\setminus\pOmega}$ and
  $\langle{\n \times \LRp{\ubhH \times \db},\bbhtH}\rangle_{\pOmegah\setminus\pOmega}$.

  Since $\ub_D = \bs{0}$ and $\hb_D = \bs{0}$ by assumption, we conclude from the boundary conditions \eqnref{BCs}
  that $\ubhH = \bs{0}$, $\bbhtH = \bs{0}$, and $\rhH = 0$  on $\pOmega$.
  The integrals in \eqref{eq:sum_energy5_1} can then be written over $\pOmegah$
  since the contribution on the domain boundary, $\pOmega$, is zero.
  Subtracting \eqref{eq:sum_energy5_1} from \eqref{eq:sum_energy4_1} we arrive at
  \begin{align}
    \label{eq:sum_energy6_2}
    \Rey\nor{\LbH}_0^2 + \frac{\Rm}{\kappa}\nor{\HbH}_0^2
    + \LRa{ \alpha_1 (\ubH-\ubhH), (\ubH-\ubhH) }
    + \LRa{ \frac{m}{2} \ubH, \ubH } \quad & \\
    \quad - \LRa{ m \ubH, \ubhH }  \notag
    + \alpha_2 \nor{\bbH^t-\bbhtH}_\pOmegah^2
    + \alpha_3 \nor{\rH-\rhH}_\pOmegah^2
    &= 0.
  \end{align}
  Finally, using the fact that $\wb \in H(div,\Omega)$ and $\ubhH = \bs{0}$ on $\pOmega$,
  we can freely add
  $0=\LRa{\frac{m}{2}\ubhH,\ubhH}$ to rewrite \eqref{eq:sum_energy6_2} as
  \begin{align}
    \label{eq:sum_energy6_1}
    \Rey\nor{\LbH}_0^2 + \frac{\Rm}{\kappa}\nor{\HbH}_0^2
    + \LRa{ \LRp{\alpha_1+\frac{m}{2}} (\ubH-\ubhH), (\ubH-\ubhH) } \quad & \\
    + \alpha_2 \nor{\bbH^t-\bbhtH}_\pOmegah^2
    + \alpha_3 \nor{\rH-\rhH}_\pOmegah^2
    &= 0. \notag
  \end{align}

  Choosing $\alpha_1 > \half \nor{\wb}_\Linfty$, $\alpha_2 > 0$, and $\alpha_3 > 0$, we can conclude that
  $\LbH = \bs{0}$ and $\HbH = \bs{0}$,
  that $\ubH = \ubhH$, $\bbH^t = \bbhtH$, and $\rH = \rhH$ on $\Gho$,
  and that $\ubH = \bs{0}$, $\bbH^t = \bs{0}$, and $\rH = 0$ on $\pOmega$.

  Now, we integrate \eqref{eq:local_1_1} by parts to obtain
  $\Grad\ubH = \bs{0}$ in $\K$,
  which implies that $\ubH$ must be elementwise constant.
  The fact that
  $\ubH = \ubhH$ on $\Gho$
  means $\ubH$ is also continuous on $\Gh$, and since
  $\ubH = \bs{0}$ on $\pOmega$
  we conclude that $\ubH = \bs{0}$, and therefore $\ubhH = \bs{0}$.

  Note that since $\bbH^t = \bbhtH$ on $\Gho$, $\bbH^t$ is
  continuous on $\Omega$.
  Furthermore, the third conservation constraint in \eqref{global} implies that $\bbH \cdot \n$
  is continuous on $\Omega$.  Integrating both \eqref{eq:local_4_1} and
  \eqref{eq:local_6_1} by parts, we can conclude that $\Curl \bbH =
  \bs{0}$ and $\Div \bbH = 0$ on $\Omega$.
  When $\bbH \in H(div,\Omega) \cap H(curl,\Omega)$ and $\bbH^t = \bs{0}$ on $\pOmega$,
  and when $\Omega$ is simply-connected with one component to the boundary,
  there is a constant $C>0$ such that $\norm{\bbH}_0
  \leq C (\norm{\Div \bbH}_0 + \norm{\Curl \bbH}_0)$ (see, e.g., \cite[Lemma
  3.4]{GiraultRaviart86}). This implies that $\bbH = \bs{0}$, and hence $\bbhtH =
  \bs{0}$.

  Considering the vanishing unknowns above, integrating by parts reduces 
  \eqref{eq:local_2_1} and \eqref{eq:local_5_1} to
  $(\Grad \pH, \vb)_\K = 0$ and $(\Grad \rH, \cb)_\K = 0$, respectively.
  Thus, $\pH$ and $\rH$ are elementwise constants.
  Since $\rH = \rhH$ on $\Gho$, then $\rH$ is continuous on $\Omega$,
  and since $\rH = 0$ on $\pOmega$,
  we can conclude that $\rH = 0$, and hence $\rhH = 0$.
  Finally, we use the first conservation constraint in \eqref{global} to
  conclude $\pH$ is continuous and hence constant on $\Omega$.
  Using the zero-average condition \eqref{eq:global_5_1} yields $\pH = 0$.
\end{proof}

\subsection{Well-posedness of the local solver}
The key design of the HDG method is that it allows us to separate the
computation of the volume (DG) unknowns $(\LbH,\ubH,\pH,\HbH,\bbH,\rH)$
and the trace unknowns $({\ubhH,\bbhtH,\rhH})$. In practice, we first
solve \eqnref{local} for local unknowns $(\LbH,\ubH,\pH,\HbH,\bbH,\rH)$ as a
function of $({\ubhH,\bbhtH,\rhH})$. These are then substituted into
the conservative algebraic equation \eqref{global} on the mesh
skeleton to solve for the unknown $({\ubhH,\bbhtH,\rhH})$. Finally,
the local unknowns 
$( \LbH, \ubH, \allowbreak \pH, \allowbreak \HbH, \bbH, \rH )$ 
are computed, as in the
first step, using $({\ubhH,\bbhtH,\rhH})$ from the second step. It is
therefore important to study the well-posedness of the local
solver. 

Similar to HDG methods for Stokes equation
\cite{NguyenPeraireCockburn10, CockburnNguyenPeraire10, Bui-Thanh15},
it turns out that the local solver is not well-posed unless extra
conditions are imposed on the pressure.
Two methods for achieving the well-posedness of the local solver for the Stokes equations
are proposed in \cite{NguyenPeraireCockburn10}.
One is a pseudotransient approach, and the other involves introducing the 
element average edge pressure as global unknowns.
These methods are both suitable for our setting here. Here, we present a new approach in which we introduce the elementwise
pressure integral as a global unknown and require their sum to vanish.
Toward this goal, we introduce the space of elementwise constants, 
$\XbM := \Poly_0(\Omegah)$.  
Next we augment \eqref{eq:local_3_1} to read
\begin{align}
  \label{eq:local_3_1aug}
  -\LRp{\ubH, \Grad \q}_\K 
  + \LRa{\ubhH \cdot \n, \q}_\pK
  + \LRp{\pH, \bar{\q}}_\K
  = |\K|^{-1}\LRp{\rho_h,\bar{\q}}_\K 
\end{align}
with $\rho_h \in \XbM$, $\bar{\q}|_\K := |\K|^{-1} \LRp{\q,1}_\K$ the average of $\q$ in $\K$, 
and $|\K|$ the volume of element $\K$.
Next we augment the global solver with
\begin{align}
  \label{compatibility_1aug}
  \LRa{\ubhH \cdot \n, \xi}_\pK + \sum_\K \rho_h|_\K &= 0 
\end{align}
for all $\xi$ in $\XbM$, and remove the constraint \eqref{eq:global_5_1}, 
which will be automatically satisfied by this construction.

To justify \eqref{eq:local_3_1aug} and \eqref{compatibility_1aug} we make the following observations.
First, summing \eqref{compatibility_1aug} over all elements and using the compatibility condition
on $\ub_D$, \eqref{compatibility}, we conclude 
\begin{equation}
  \label{sumrho}
  \sum_{\K \in \Omegah} \rho_h|_\K = 0 
\end{equation}
and
\begin{equation}
  \label{compatibility_1}
  \LRa{\ubhH \cdot \n, \xi}_\pK = 0 \qquad \forall \K \in \Omegah .
\end{equation}
Next, setting $\q = 1$ on $\K$ in \eqref{eq:local_3_1aug} and using \eqref{compatibility_1}, 
we can conclude that
\begin{equation}
  \label{eq:divFree_1}
  (\pH,1)_\K = \rho_h|_\K, 
\end{equation}
and therefore that \eqref{eq:local_3_1} holds for each $\K$.
Additionally, \eqref{eq:divFree_1} and \eqref{sumrho} imply that \eqref{eq:global_5_1} holds.
Finally, we note that we have added the same number of new unknowns $\rho_h$ as the number of  equations in
\eqref{compatibility_1aug}.

For this modified HDG scheme we claim well-posedness of the local solver. 
\begin{theorem}
  The local solver given by \eqnref{local} such that \eqref{eq:local_3_1} is replaced by \eqref{eq:local_3_1aug},
  is well-posed. In other words, given $(\ubhH, \bbhtH, \rhH, \gb, \fb, \rho_h)$, there exists 
  a unique solution \\
  $(\LbH,\ubH,\pH,\HbH,\bbH,\rH)$ to the system.
\end{theorem}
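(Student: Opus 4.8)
The plan is to adapt the energy argument that established well-posedness of the full HDG system, now localizing it to a single element $\K$. On each $\K$ the modified local solver --- equations \eqref{eq:local_1_1}, \eqref{eq:local_2_1}, the augmented \eqref{eq:local_3_1aug}, \eqref{eq:local_4_1}, \eqref{eq:local_5_1}, \eqref{eq:local_6_1} --- is a square linear system: the trial space $\GbM\LRp{\K}\times\VbM\LRp{\K}\times\QbM\LRp{\K}\times\JbM\LRp{\K}\times\CbM\LRp{\K}\times\SbM\LRp{\K}$ and the test space coincide, and the new datum $\rho_h\in\XbM$ only enters the right-hand side of \eqref{eq:local_3_1aug}. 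Hence it suffices to prove uniqueness, i.e., that $\LRp{\ubhH,\bbhtH,\rhH,\gb,\fb,\rho_h}=\bs{0}$ forces $\LRp{\LbH,\ubH,\pH,\HbH,\bbH,\rH}=\bs{0}$ on $\K$.

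First I would test the local solver on $\K$ with $\LRp{\Gb,\vb,\q,\Jb,\cb,\s}=\LRp{\LbH,\ubH,\pH,\HbH,\bbH,\rH}$, integrate the volume terms by parts, use the integration-by-parts and vector-product identities recorded above together with $\Div\wb=0$ (so that $-\LRp{\ubH,\wb\cdot\nabla\ubH}_\K=-\LRa{\frac{m}{2}\ubH,\ubH}_\pK$), and sum the six equations. Exactly as in the preceding proof, the volume couplings --- between $\LbH$ and $\ubH$, between $\pH$ and $\Div\ubH$, between $\HbH$ and $\bbH$, between $\rH$ and $\Div\bbH$, and the $\kappa$-coupling terms of \eqref{eq:local_2_1} and \eqref{eq:local_5_1} --- telescope into boundary integrals; because the trace unknowns are now prescribed and vanish, every remaining boundary term involving $\ubhH,\bbhtH,\rhH$ drops out, while the augmentation with $\q=\pH$ contributes the nonnegative quantity $|\K|^{-1}\LRp{\pH,1}_\K^2$ to the left-hand side and $|\K|^{-1}\LRp{\rho_h,\bar{\q}}_\K=0$ to the right. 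The outcome is the element-wise energy identity
\begin{equation*}
  \Rey\nor{\LbH}_{0,\K}^2 + \frac{\Rm}{\kappa}\nor{\HbH}_{0,\K}^2
  + \LRa{\LRp{\alpha_1+\frac{m}{2}}\ubH,\ubH}_\pK
  + \alpha_2\nor{\bbH^t}_{0,\pK}^2 + \alpha_3\nor{\rH}_{0,\pK}^2
  + |\K|^{-1}\LRp{\pH,1}_\K^2 = 0 .
\end{equation*}
Since $\alpha_1+\frac{m}{2}\ge\alpha_1-\half\nor{\wb}_\Linfty>0$ pointwise on $\pK$ and $\alpha_2,\alpha_3>0$, every term is nonnegative, so $\LbH=\bs{0}$ and $\HbH=\bs{0}$ in $\K$, $\ubH=\bs{0}$, $\bbH^t=\bs{0}$, $\rH=0$ on $\pK$, and $\LRp{\pH,1}_\K=0$.

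It then remains to recover $\ubH$, $\bbH$, $\pH$, $\rH$ inside $\K$, mimicking the second half of the preceding proof element by element. From \eqref{eq:local_1_1} (whose flux reduces to $-\ubhH\otimes\n=\bs{0}$), with $\LbH=\bs{0}$ and $\ubH=\bs{0}$ on $\pK$, integration by parts gives $\Grad\ubH=\bs{0}$, hence $\ubH=\bs{0}$ in $\K$. From \eqref{eq:local_4_1}, with $\HbH=\bs{0}$ and $\bbH^t=\bs{0}$ on $\pK$, one obtains $\Curl\bbH=\bs{0}$, and from \eqref{eq:local_6_1} (flux $\bbH\cdot\n+\alpha_3\LRp{\rH-\rhH}=\bbH\cdot\n$, using $\rH=0$ on $\pK$ and $\rhH=0$) one obtains $\Div\bbH=0$; since a simplex is convex, hence simply connected with connected boundary, and $\bbH^t=\bs{0}$ on all of $\pK$, the same div--curl $L^2$ estimate used earlier (see \cite[Lemma 3.4]{GiraultRaviart86}), applied on $\K$, forces $\bbH=\bs{0}$ in $\K$. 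Finally, with all the above vanishing, \eqref{eq:local_2_1} collapses to $\LRp{\Grad\pH,\vb}_\K=0$, so $\pH$ is a constant, and $\LRp{\pH,1}_\K=0$ yields $\pH=0$; likewise \eqref{eq:local_5_1} collapses to $-\LRp{\rH,\Div\cb}_\K=0$, which together with $\rH=0$ on $\pK$ gives $\Grad\rH=\bs{0}$, hence $\rH=0$ in $\K$.

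The step I expect to require the most care is the first one: verifying that the extra datum $\rho_h$ and the augmented pressure coupling in \eqref{eq:local_3_1aug} are compatible with the coercivity structure, i.e., that the pressure/divergence boundary terms produced by \eqref{eq:local_2_1}, \eqref{eq:local_3_1aug}, and the flux \eqref{eq:HDGflux} telescope cleanly so that only the sign-definite $|\K|^{-1}\LRp{\pH,1}_\K^2$ survives and nothing spoils the energy balance. A secondary, more routine, point is checking that the hypotheses of the div--curl estimate genuinely hold on a single simplex.
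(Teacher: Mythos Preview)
Your proof is correct and follows essentially the same energy argument as the paper. The only cosmetic difference is that the paper first tests \eqref{eq:local_3_1aug} with a constant $\q$ to obtain $\LRp{\pH,1}_\K=0$ and then drops the augmentation term before running the energy identity, whereas you keep the augmentation term and read off $\LRp{\pH,1}_\K=0$ from the resulting nonnegative contribution $|\K|^{-1}\LRp{\pH,1}_\K^2$; the remainder of the argument---vanishing of $\LbH,\HbH$ and the boundary traces from coercivity, then $\ubH=\bs{0}$ via \eqref{eq:local_1_1}, $\bbH=\bs{0}$ via the div--curl estimate on the simplex, and finally $\pH=\rH=0$---matches the paper step for step.
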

\begin{proof}
  We show that
  $(\ubhH, \bbhtH, \rhH, \gb, \fb, \rho_h) = \bs{0}$ implies
  $(\LbH,\ubH,\pH,\HbH,\bbH,\rH) = \bs{0}$. To begin,
  set $(\ubhH, \bbhtH, \rhH, \gb, \fb, \rho_h) = \bs{0}$.
  Then \eqref{eq:local_3_1aug} reduces to $-\LRp{\ubH,\Grad\q}_\K + \LRp{\pH,\bar{\q}}_\K = 0$,
  and taking $\q$ as constant gives $\LRp{\pH,\bar{\q}}_\K = 0$ and hence $-\LRp{\ubH,\Grad\q}_\K = 0$.

  Next, choose $(\Gb,\vb,\q,\Jb,\cb,\s) = (\LbH,\ubH,\pH,\HbH,\bbH,\rH)$,
  integrate by parts the first four terms in \eqref{eq:local_2_1} and
  the first term in \eqref{eq:local_5_1}, and sum the resulting equations in the local solver
  to conclude
  \begin{align}
    \label{eq:sum_energy2_1}
    &\Rey\nor{\LbH}_{0,\K}^2 
    + \LRa{ \LRp{\alpha_1 + \frac{m}{2}}\ubH, \ubH }_\pK \\
    &\quad + \frac{\Rm}{\kappa}\nor{\HbH}_{0,\K}^2  
    + \alpha_2 \nor{\bbH^t}_{0,\pK}^2
    + \alpha_3 \nor{\rH}_{0,\pK}^2
    = 0. \notag
  \end{align}
  Recalling we have set $\alpha_1 > \half \nor{\wb}_\Linfty$, $\alpha_2 > 0$, and $\alpha_3 > 0$, we can conclude that 
  \begin{align*}
    \LbH = \bs{0}, \quad   \HbH = \bs{0} \quad \text{ in } \K, \qquad 
    \ubH = \bs{0}, \quad   \bbH^t = \bs{0}, \quad \rH = 0 \quad \text{ on } \pK.
  \end{align*}
  Using an argument similar to that in Section \secref{wellposednessHDG}
  we can conclude that $\ubH = \bbH = \bs{0}$ in $\K$. From
  \eqref{eq:local_2_1} and \eqref{eq:local_5_1} we can conclude
  $\LRp{\Grad \pH, \vb}_\K = 0$ and $\LRp{\Grad \rH, \cb}_\K = 0$, respectively.
  Thus, $\pH$ and $\rH$ must be constant, and since $\rH=0$ on $\pK$, $\rH$ is
  identically zero in $\K$.  
  Now since $(\pH,\bar{q})_\K  = 0$,
  we have $\pH = 0$ in $\K$. 
\end{proof}
\begin{remark}
  Note that introducing $\rho_h$ and equations \eqref{eq:local_3_1aug} and \eqref{compatibility_1aug}
  does not alter the solution of the original HDG scheme. Indeed, if
  $(
  \LbH,\ubH,\pH,\HbH,\bbH,\rH,\allowbreak 
  \ubhH,\allowbreak\bbhtH,\allowbreak\rhH,\allowbreak\rho_h 
  )$
  is a solution of the modified scheme, it is also a
  solution of the original one because the modified scheme contains all the equations
  of the original one, except for \eqref{eq:local_3_1} and \eqref{eq:global_5_1}.
  But we have already shown that 
  \eqref{eq:local_3_1aug}, \eqref{compatibility_1aug}, and \eqref{compatibility} 
  imply  \eqref{eq:local_3_1},
  and that \eqref{eq:divFree_1} and \eqref{sumrho} imply \eqref{eq:global_5_1}. 
  Conversely, reversing these arguments, if
  $(
  \LbH,\ubH,\pH,\HbH,\bbH,\rH,\allowbreak 
  \ubhH,\allowbreak\bbhtH,\allowbreak\rhH 
  )$
  is a solution of the original scheme,
  we can define $\rho_h$ as in \eqref{eq:divFree_1} and add \eqref{eq:divFree_1} to \eqref{eq:local_3_1}
  to recover \eqref{eq:local_3_1aug}.
  Then, taking $\q$ as constant in \eqref{eq:local_3_1} implies \eqref{compatibility_1}.
  Also, \eqref{eq:divFree_1} and \eqref{eq:global_5_1} imply \eqref{sumrho}.
  Finally, adding \eqref{compatibility_1} and \eqref{sumrho} we recover \eqref{compatibility_1aug},
  showing that
  $({\LbH,\ubH,\pH,\HbH,\bbH,\rH, \ubhH, \bbhtH, \rhH, \rho_h})$ is a
  solution of the modified one. Since the original HDG scheme is well-posed, so is the modified one. 
\end{remark}

\section{Error analysis}
\seclab{error_analysis}
For an unknown $\sigma$ we use $\veps_\sigma$ to denote the error
between the exact solution $\sigma$ and its finite element
approximation $\sigma_h$. For example, $\veps_{\Lb} := \Lb - \LbH$ and
$\veps_{\ubh} := \ubh - \ubhH$, where $\ubh$ is the trace of the
exact solution $\ub$ on the mesh skeleton. We use $\Pi \sigma$ to
denote some interpolation (or projection) of the unknown $\sigma$ into
{its associated finite element space},
and decompose $\veps_\sigma$ into
$\veps_\sigma^I + \veps_\sigma^h$ where
\algn{ \label{eq:error-split}
\veps_\sigma^I := \sigma - \Pi \sigma, \qquad \text{and } \quad 
\veps_\sigma^h := \Pi \sigma - \sigma_h.  
}
Here the superscript $I$ of $\veps^I$ denotes the `I'nterpolation (in
fact projection) error, and the superscript $h$ of $e^h$ indicates the
difference between the interpolation of the exact solution 
and the
finite element approximation. We will see that $\Pi \sigma$ may not
depend only on $\sigma$. In particular, we define a collective
projection $\bs{\Pi} (\Lb, \ub, \p, \Hb, \bb, \r, \ubh,
\bbht, \rh)$ in Appendix \secref{projections} for our
HDG scheme. Each component of $\bs{\Pi}$ may depend on the
others. Nonetheless, for simplicity of presentation we use $\Pi
\Lb$ to denote the $\Lb$-component of $\bs{\Pi}$ for example. The
analysis and the properties of the proposed projection can be
referred to Appendix \secref{projections}. This projection
simplifies the error equation substantially as we now see.

\begin{lemma} \label{lemma1}
  Assume that the exact solution $(\Lb,\ub,\p,\Hb,\bb,\r)$ of \eqnref{mhdlin-first}-\eqnref{MHD_BCs_Dirichlet}
  is sufficiently regular.
  Then the exact solution
  satisfies \eqnref{local}--\eqnref{BCs}.
  That is, if we replace $(\LbH,\ubH,\pH,\HbH,\bbH,\rH,\ubhH,\bbhtH,\rhH)$ 
  with $(\Lb,\ub,\p,\Hb,\bb,\r, \ub, \bbt, \r)$ in \eqnref{local}--\eqnref{BCs},
  then {\eqnref{local}--\eqnref{BCs}} hold true for all
  $(\Gb,\vb,q,\Jb,\cb,s,\mub,\lambdab^t,\gamma) \in 
  \GbM\LRp{K} \times \VbM\LRp{K} \times \QbM\LRp{K} \times \JbM\LRp{K} \times \CbM\LRp{K} \times \SbM\LRp{K}
  \times \MubM(e) \times \LambM(e) \times \GambM(e)$.
\end{lemma}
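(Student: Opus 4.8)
The statement is a standard consistency (Galerkin orthogonality) lemma, and the plan is to verify, relation by relation, that inserting the exact solution and its skeleton traces into \eqnref{local}, \eqref{global}, and \eqnref{BCs} produces identities. The crux is that, once the trace unknowns $(\ubhH,\bbhtH,\rhH)$ are replaced by the single-valued exact traces $(\ub,\bbt,\r)$, the HDG flux \eqref{eq:HDGflux} reduces exactly to the physical normal flux of \eqnref{mhdlin-first}. First I would record what ``sufficiently regular'' buys us: the exact solution has $L^2$ traces on $\Gh$, is continuous across every interior face (so that $\ub$, $\bbt$, $\r$ are genuinely single-valued there), and the physical fluxes $\Lb\n$, $\p\n$, $\n\times\Hb$, $\bb\cdot\n$, $\db\times(\n\times\bb)$, $\n\times(\ub\times\db)$ lie in $L^2(\pOmegah)$.

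Next, on each $\K\in\Omegah$, I would test \eqref{eq:mhd1lin1n}--\eqref{eq:mhd6lin1n} against $(\Gb,\vb,q,\Jb,\cb,s)$ and integrate by parts exactly the terms that are integrated by parts in \eqnref{local}, using $\Div\wb = 0$ on the convection term (so that $((\wb\cdot\nabla)\ub,\vb)_\K = \LRa{(\wb\cdot\n)\ub,\vb}_\pK - (\ub\otimes\wb,\nabla\vb)_\K$) and the identities \eqnref{eq:int-by-parts}, together with standard curl integration-by-parts and the scalar triple product, on the magnetic-coupling terms $\kappa\,\db\times(\Curl\bb)$ and $\kappa\,\Curl(\ub\times\db)$. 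This reproduces the volume terms of \eqnref{local} verbatim, plus $\pK$-boundary integrals whose integrands are $-\ub\otimes\n$, $-\Lb\n+(\wb\cdot\n)\ub+\p\n+\kappa\,\db\times(\n\times\bb)$, $\ub\cdot\n$, $-\n\times\bb$, $\n\times\Hb+\r\n-\kappa\,\n\times(\ub\times\db)$, and $\bb\cdot\n$, respectively. It then remains to check that each of these integrands equals the corresponding component $\Fh^i\cdot\n$ of \eqref{eq:HDGflux} after the substitution. The elementary facts that make this work are: (i) all stabilization contributions vanish, $\alpha_1(\ub-\ubh)=\bs{0}$, $\alpha_2(\bbt-\bbht)=\bs{0}$, $\alpha_3(\r-\rh)=0$, since the trace is the one-sided value of the continuous exact solution; (ii) the average $\half(\bbt+\bbht)$ collapses to $\bbt$ and $\n\times\bbt=\n\times\bb$, so $\half\kappa\,\db\times(\n\times(\bbt+\bbht))=\kappa\,\db\times(\n\times\bb)$, and likewise $-\half\kappa\,\n\times((\ub+\ubh)\times\db)=-\kappa\,\n\times(\ub\times\db)$; and (iii) with $m=\wb\cdot\n$ the remaining terms match verbatim. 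This establishes \eqnref{local}.

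Finally, the conservation constraints \eqref{global} follow because, with the exact solution inserted, $\Fh\cdot\n$ is single-valued on each interior face $e=\pd\Km\cap\pd\Kp$: there $\nm=-\np$, the fields $\Lb,\p,\Hb,\bb,\r$ and the datum $\wb\cdot\n$ are continuous (the latter since $\wb\in H(\div,\Omega)$), so $\jump{\Fh^2\cdot\n}=\jump{\Fh^5\cdot\n}=\jump{\Fh^6\cdot\n}=0$ pointwise; equivalently, a strong PDE solution has continuous normal fluxes. The boundary relations \eqnref{BCs} are immediate from \eqnref{MHD_BCs_Dirichlet}, since $\ub=\ub_D$, $\bbt=\hb_D$, $\r=0$ on $\pOmega$ give the requested $L^2(e)$ identities on $e\in\Ghb$; moreover the zero-mean condition \eqref{eq:global_5_1} is just \eqref{eq:global_3_1}, and one may note that the exact solution also satisfies the augmented relation \eqref{eq:local_3_1aug} upon defining $\rho_h$ through \eqref{eq:divFree_1}. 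The only place that needs genuine care is the flux-matching for the second and fifth components in step (iii), where the tangential-trace manipulation and the $\half$-averaging of $\bbt+\bbht$ must be reconciled with the curl integration-by-parts built into \eqnref{eq:int-by-parts}; everything else is routine bookkeeping.
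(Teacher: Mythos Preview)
Your proposal is correct and follows essentially the same approach as the paper: test the PDE system, integrate by parts, match the resulting boundary terms with the HDG flux after the substitution $(\ubh,\bbht,\rh)=(\ub,\bbt,\r)$, and then observe single-valuedness for the conservation constraints and use \eqnref{MHD_BCs_Dirichlet} for the boundary relations. The paper's own proof is considerably more terse, but the underlying argument is identical; your more explicit flux-matching for $\Fh^2\cdot\n$ and $\Fh^5\cdot\n$ (collapsing the $\half$-averages and killing the stabilization terms) is exactly what the paper elides.
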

\begin{proof}
  Multiply \eqnref{mhdlin-first} by
  $(\Gb,\vb,q,\Jb,\cb,s) \in \GbM\LRp{K} \times \VbM\LRp{K} \times \QbM\LRp{K} \times \JbM\LRp{K} \times \CbM\LRp{K} \times \SbM\LRp{K}$,
  integrate over $\Omega$, and integrate by parts.
  By the regularity assumptions on the solution, the solution components 
  $((\Lb \n)\otimes\n, \ub, \p, \Hb^t, \bb, \r)$ are single valued on $\mc{E}$.
  and the exact solution satisfies \eqnref{local}.
  With the additional fact that $(\wb \cdot \n)\n$ and $\db$ are also single-valued on $\mc{E}_h$,
  we have that the exact solution satisfies \eqref{global}.
  Finally, the boundary conditions \eqnref{MHD_BCs_Dirichlet} trivially imply that the exact solution satisfies \eqnref{BCs}.
\end{proof}

\begin{lemma}[Error equation]
  The discretization errors satisfy
  \algn{ 
    \label{eq:disc-energy}
    E_h^2:=&E_h^2 (\elh, \ehh, \eeuh - \euhh, \eebth - \ebthh, \eerh - \erhh) \\
    &\quad := 
    \Rey \nor{\eL^h}_0^2 + \frac{\Rm}{\kappa}\nor{\ehh}_0^2 
    + \LRa{ \LRp{\alpha_1+\frac{m}{2}} (\eu^h-\euh^h), (\eu^h-\euh^h) } \notag \\
    &\qquad  + \alpha_2 \nor{ \eebth-\ebht^h}_\pOmegah^2 
    + \alpha_3 \nor{\er^h-\erh^h}_\pOmegah^2  \notag \\
    &\quad =  - \Rey\LRp{\eL^I, \eL^h} 
    - \kappa\LRp{\eb^I, \Curl\LRp{\eu^h \times \db}}  
    + \kappa\LRp{\eu^I, \db \times \LRp{\Curl \eb^h}} \notag \\ 
    &\qquad - \LRa{ \n \times \ehi  - \half\kappa\n \times \LRp{\LRp{\eu^I+\euh^I}\times \db} + \alpha_2\ebt^I , \ebt^h-\ebht^h} . \notag
  }
\end{lemma}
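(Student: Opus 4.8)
The plan is to reproduce the energy computation of Section~\secref{wellposednessHDG}, now applied to the error, using the consistency established in Lemma~\ref{lemma1} together with the tailored collective projection $\bs{\Pi}$ of Appendix~\secref{projections}. By Lemma~\ref{lemma1} the exact solution, together with the trace data $(\ub,\bbt,\r)$, satisfies \eqnref{local}, \eqref{global}, and \eqnref{BCs}; the discrete solution satisfies the same relations by construction. Subtracting, the total errors $(\veps_{\Lb},\veps_{\ub},\veps_{\p},\veps_{\Hb},\veps_{\bb},\veps_{\r},\veps_{\ubh},\veps_{\bbt},\veps_{\rh})$ solve \eqnref{local} and \eqref{global} with zero source data, and the homogeneous version of \eqnref{BCs}. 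I would then substitute the splitting $\veps_\sigma=\veps_\sigma^I+\veps_\sigma^h$ into each of these error equations.

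Next I would test. Taking $(\Gb,\vb,q,\Jb,\cb,s)=(\veps_{\Lb}^h,\veps_{\ub}^h,\veps_{\p}^h,\veps_{\Hb}^h,\veps_{\bb}^h,\veps_{\r}^h)$ in \eqnref{local} and $(\mub,\lambdab^t,\gamma)=(\veps_{\ubh}^h,\veps_{\bbt}^h,\veps_{\rh}^h)$ in \eqref{global} is legitimate, since each $h$-component lies in the corresponding discrete space. Integrating by parts exactly as in the proof of the well-posedness theorem (the first four terms of the momentum equation and the first term of the induction equation), summing over all elements, and subtracting the conservation identities, the contributions built solely from $h$-components reproduce, term by term, the functional $E_h^2$ on the left of \eqref{eq:disc-energy}. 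The same ingredients as before are used: the identity $-\LRp{\veps_{\ub}^h,\wb\cdot\nabla\veps_{\ub}^h}_\K=-\LRa{\frac{m}{2}\veps_{\ub}^h,\veps_{\ub}^h}_\pK$ coming from $\Div\wb=0$; the continuity of $\db$ to cancel the magnetic--velocity cross terms on interior faces; the single-valuedness of $\veps_{\ubh}^h,\veps_{\bbt}^h,\veps_{\rh}^h$; and the vanishing of these trace errors on $\pOmega$ (both the exact trace and its projection coincide with the Dirichlet data there, a property of $\bs{\Pi}$ recorded in Appendix~\secref{projections}), which allows one to add $0=\LRa{\frac{m}{2}\veps_{\ubh}^h,\veps_{\ubh}^h}$ and complete the square. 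In particular, the volume pairings $\LRp{\veps_{\p}^h,\Div\veps_{\ub}^h}$ and $\LRp{\veps_{\r}^h,\Div\veps_{\bb}^h}$ cancel against the matching skeleton terms, so no pressure or scalar-potential contribution survives on the left.

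It remains to collect all terms carrying at least one $I$-component; these form the right-hand side of \eqref{eq:disc-energy}. Here I would invoke the orthogonality relations built into $\bs{\Pi}$ in Appendix~\secref{projections}, which are designed precisely so that all but four of these $I$-terms vanish. The survivors are the mass pairing $-\Rey\LRp{\veps_{\Lb}^I,\veps_{\Lb}^h}$ from \eqref{eq:local_1_1}, the two curl--coupling terms $-\kappa\LRp{\veps_{\bb}^I,\Curl\LRp{\veps_{\ub}^h\times\db}}$ and $\kappa\LRp{\veps_{\ub}^I,\db\times\LRp{\Curl\veps_{\bb}^h}}$ arising from the $\kappa$-terms of \eqref{eq:local_2_1} and \eqref{eq:local_5_1}, and the skeleton term $-\LRa{\n\times\veps_{\Hb}^I-\half\kappa\,\n\times\LRp{\LRp{\veps_{\ub}^I+\veps_{\ubh}^I}\times\db}+\alpha_2\veps_{\bbt}^I,\ \veps_{\bbt}^h-\veps_{\bbht}^h}$. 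Gathering these yields \eqref{eq:disc-energy}.

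I expect the main obstacle to be the bookkeeping in the last step: one must follow, through each integration by parts, exactly which bilinear pairing between a projection error and a discrete test function survives, which is what pins down the defining properties required of $\bs{\Pi}$ in Appendix~\secref{projections}. The $\db$- and $\wb$-dependent terms require care to stay consistent with the well-posedness computation, but beyond that there is no analytic difficulty not already present in the proof of the first theorem.
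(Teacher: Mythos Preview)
Your proposal is correct and follows essentially the same approach as the paper: derive the error equations by consistency, split via $\bs{\Pi}$, test with the $h$-components, reproduce the well-posedness energy computation, and use the projection orthogonalities of Appendix~\secref{projections} to leave exactly the four survivors you name. The paper carries out the bookkeeping more explicitly---writing out the reduced error equations \eqnref{local_error2} and \eqnref{global_error2} and indicating, line by line, which of \eqref{rhat-proj}--\eqnref{Lu-projection3} is responsible for each cancellation---but your outline captures the argument accurately, including the subtlety that the surviving skeleton term arises from the difference between the local induction equation tested against $\veps_{\bb}^h$ and the conservation condition tested against $\veps_{\bbht}^h$.
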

\begin{proof}
  Using the fact that the numerical solution and exact solution both satisfy \eqnref{local} (see Lemma~\ref{lemma1}),
  the linearity of the operators lead to the following error equations:
  \begin{subequations}
    \eqnlab{local_error}
    \begin{align}
      \label{eq:local_1_1_error}
      \Rey\LRp{\eL, \Gb} + \LRp{\eu, \Div\Gb}
      - \LRa{\euh \otimes \n, \Gb} &= 0, \\
      \label{eq:local_2_1_error}
      \LRp{\eL, \Grad\vb} - \LRp{\ep, \Div \vb} - \LRp{\eu \otimes \wb , \Grad \vb}
      + \kappa\LRp{\eb, \Curl\LRp{\vb \times \db}} \quad & \\
      + \LRa{-\eL \n + m\eu +\ep\n + \half \kappa \db \times \LRp{\n \times \LRp{\ebt+\ebht}} + \alpha_1 \LRp{\eu - \euh}, \vb} &= 0, \notag \\
      \label{eq:local_3_1_error}
      -\LRp{\eu, \Grad \q}
      + \LRa{\euh \cdot \n, \q} &= 0, \\
      \label{eq:local_4_1_error}
      \frac{\Rm}{\kappa}\LRp{\eH, \Jb} - \LRp{\eb, \Curl \Jb}
      - \LRa{\n \times \ebht, \Jb} &= 0, \\
      \label{eq:local_5_1_error}
      \LRp{\eH, \Curl \cb} - \LRp{\er, \Div \cb} - \kappa\LRp{\eu, \db \times \LRp{\Curl\cb}} \quad & \\
      + \LRa{\n \times \eH  + \erh\n -\half \kappa \n \times \LRp{\LRp{\eu +\euh}\times \db} + \alpha_2\LRp{\ebt-\ebht}, \cb} &= 0, \notag \\
      \label{eq:local_6_1_error}
      -\LRp{\eb, \Grad \s}
      + \LRa{\eb \cdot \n + \alpha_3\LRp{\er - \erh}, \s} &= 0.
    \end{align}
  \end{subequations}
  Next, we split the error terms into their interpolation and approximation components as in \eqref{eq:error-split} 
  using the projections $\Pi$ defined in Appendix~\secref{projections}. Due to the cancellation properties of $\Pi$ we obtain reduced error equations:
  \begin{subequations}
    \eqnlab{local_error2}
    \begin{align}
      \label{eq:local_1_1_error2}
      \Rey\LRp{\elh, \Gb} + \LRp{\eeuh, \Div\Gb}
      - \LRa{\euhh \otimes \n, \Gb} 
      = 
      - \Rey\LRp{\eli, \Gb}, \\
      \label{eq:local_2_1_error2}
      \LRp{\elh, \Grad\vb} - \LRp{\eph, \Div \vb} - \LRp{\eeuh \otimes \wb , \Grad \vb}
      + \kappa\LRp{\eebh, \Curl\LRp{\vb \times \db}} \qquad \notag \\
      + \LRa{-\elh \n + m\eeuh +\eph\n + \half \kappa \db \times \LRp{\n \times \LRp{\eebth+\ebthh}}
      + \alpha_1 \LRp{\eeuh - \euhh}, \vb} \qquad \\
      =- \kappa\LRp{\eebi, \Curl\LRp{\vb \times \db}}, \notag \\
      \label{eq:local_3_1_error2}
      -\LRp{\eeuh, \Grad \q}
      + \LRa{\euhh \cdot \n, \q} 
      = 0, \\
      \label{eq:local_4_1_error2}
      \frac{\Rm}{\kappa}\LRp{\ehh, \Jb} - \LRp{\eebh, \Curl \Jb}
      - \LRa{\n \times \ebthh, \Jb} 
      = 0, \\
      \label{eq:local_5_1_error2}
      \LRp{\ehh, \Curl \cb} - \LRp{\eerh, \Div \cb} - \kappa\LRp{\eeuh, \db \times \LRp{\Curl\cb}}  \qquad \\
      + \LRa{\n \times \ehh  + \erhh\n -\half \kappa \n \times \LRp{\LRp{\eeuh +\euhh}\times \db} + \alpha_2\LRp{\eebth-\ebthh}, \cb} \qquad \notag \\
      = \kappa\LRp{\eeui, \db \times \LRp{\Curl\cb}} 
      - \LRa{\n \times \ehi -\half \kappa \n \times \LRp{\LRp{\eeui +\euhi}\times \db} + \alpha_2\eebti, \cb} 
      , \notag \\
      \label{eq:local_6_1_error2}
      -\LRp{\eebh, \Grad \s}
      + \LRa{\eebh \cdot \n + \alpha_3\LRp{\eerh - \erhh}, \s} 
      = 0.
    \end{align}
  \end{subequations}
  More details of the cancellation properties of $\Pi$ used in the above formula are:
  \algns{
    \eqref{uhat-proj}, \eqref{eq:Lu-projection2} 					&\ra \eqref{eq:local_1_1_error2} \\
    \eqref{p-proj}, \eqref{eq:Lu-projection1}, \eqref{eq:Lu-projection3} 		&\ra \eqref{eq:local_2_1_error2} \\
    \eqref{uhat-proj}, \eqref{eq:Lu-projection2} 					&\ra \eqref{eq:local_3_1_error2} \\
    \eqref{bhat-proj}, \eqref{J-proj}, \eqref{eq:br-proj1}			&\ra \eqref{eq:local_4_1_error2} \\
    \eqref{rhat-proj}, \eqref{bhat-proj}, \eqref{J-proj}, \eqref{eq:br-proj2}	&\ra \eqref{eq:local_5_1_error2} \\
    \eqref{rhat-proj}, \eqref{eq:br-proj1}, \eqref{eq:br-proj3} 			&\ra \eqref{eq:local_6_1_error2} 
  }
  Notice that \eqnref{local_error2} looks like \eqnref{local}, but with the approximation error replacing the finite element solution,
  and with some nonzero right hand side terms.
  Since the approximation error is in the finite element spaces, we can choose the test functions to be the approximation error terms.
  Similar to the procedure to arrive at \eqref{eq:sum_energy4_1}, we 
  take $(\Gb,\vb,q,\Jb,\cb,s) = (\elh,\eeuh,\eph,\ehh,\eebh,\eerh)$,
  integrate by parts the first four terms of \eqref{eq:local_2_1_error2} and the first term of \eqref{eq:local_5_1_error2},
  and sum the resulting equations in \eqnref{local_error2} to arrive at 
  \begin{align}
    \Rey\nor{\elh}_0^2 + \frac{\Rm}{\kappa}\nor{\ehh}_0^2
    - \LRa{\euhh \otimes \n, \elh}
    + \LRa{ \frac{m}{2} \eeuh, \eeuh } \notag
    + \LRa{ \alpha_1 (\eeuh-\euhh), \eeuh } \\
    \label{eq:sum_energy4_1_error2}
    + \LRa{ \half \kappa \db \times \LRp{\n \times \ebthh}, \eeuh }
    + \LRa{\euhh \cdot \n, \eph}
    - \LRa{ \n \times \ebthh, \ehh }
    + \LRa{ \erhh \n, \eebh } \\
    + \LRa{ \alpha_2(\eebth-\ebthh), \eebth } \notag
    - \LRa{ \half\kappa\n \times \LRp{\euhh \times \db}, \eebh }
    + \LRa{\alpha_3 \LRp{\eerh-\erhh}, \eerh} \notag \\
    =
    - \Rey\LRp{\eli, \elh}
    - \kappa\LRp{\eebi, \Curl\LRp{\eeuh \times \db}} 
    + \kappa\LRp{\eeui, \db \times \LRp{\Curl\eebh}} \notag \\
    - \LRa{\n \times \ehi -\half \kappa \n \times \LRp{\LRp{\eeui +\euhi}\times \db} + \alpha_2\eebti, \eebh}. \notag
  \end{align}
  For the boundary conditions and conservation conditions,
  since the exact solution satisfies \eqref{global}--\eqnref{BCs}, we have
  \begin{subequations}
    \begin{align}
      \eqnlab{global_error}
      \LRa{-\eL \n + m\eu +\ep\n + \half \kappa \db \times \LRp{\n \times \LRp{\ebt+\ebht}} 
      + \alpha_1 \LRp{\eu - \euh}, \mub}_{\pOmegah\setminus\pOmega} &= 0, \\
      \label{second-conservation-error-equation}
      \LRa{\n \times \eH - \half \kappa \n \times \LRp{\LRp{\eu+\euh} \times \db} 
      + \alpha_2\LRp{\ebt-\ebht}, \lambdab^t}_{\pOmegah\setminus\pOmega} &= 0, \\
      \LRa{\eb \cdot \n + \alpha_3\LRp{\er - \erh}, \gamma}_{\pOmegah\setminus\pOmega} &= 0, \\
      \LRa{\euh,\mub}_\pOmega &= 0, \\ 
      \LRa{\ebht,\lambdab^t}_\pOmega &= 0, \\
      \label{rhat-boundary-error-equation}
      \LRa{\erh,\gamma}_\pOmega &= 0.
    \end{align}
  \end{subequations}
  We split the error terms into their interpolation and approximation components as before, 
  and use the projections defined in Appendix~\secref{projections} to cancel terms. 
  More detailed roles for cancellations are
  \algns{
    \eqref{uhat-proj}, \eqref{eq:Lu-projection3} 		\ra \eqref{global_error2_cons_u}; \;
    \euhh : \text{single-valued}, \eqref{bhat-proj} 	\ra \eqref{global_error2_cons_b}; \; 
    \eqref{rhat-proj}, \eqref{eq:br-proj3} 		\ra \eqref{global_error2_cons_r}, 
  }
  and recall that the definitions of $\Pi$ for $\ubh$, $\bbh$, $\rh$ are $L^2$ projections. 
  Then we have
  \begin{subequations}
    \eqnlab{global_error2}
    \begin{align}
      \label{global_error2_cons_u}
      \LRa{-\elh \n + m\eeuh +\eph\n + \half \kappa \db \times \LRp{\n \times \eebth} 
      + \alpha_1 \LRp{\eeuh - \euhh}, \mub}_{\pOmegah\setminus\pOmega} 
      = 0, \\
      \label{global_error2_cons_b}
      \LRa{\n \times \ehh - \half \kappa \n \times \LRp{\eeuh \times \db} + \alpha_2\LRp{\eebth-\ebthh}, \lambdab^t}_{\pOmegah\setminus\pOmega}  \\
      = -\LRa{\n \times \ehi - \half \kappa \n \times \LRp{\LRp{\eeui+\euhi} \times \db} + \alpha_2\eebti, \lambdab^t}_{\pOmegah\setminus\pOmega} \notag , \\
      \label{global_error2_cons_r}
      \LRa{\eebh \cdot \n + \alpha_3\LRp{\eerh - \erhh}, \gamma}_{\pOmegah\setminus\pOmega} = 0, \\
      \label{global_error2_BC_u}
      \LRa{\euhh,\mub}_\pOmega = 0, \\ 
      \label{global_error2_BC_b}
      \LRa{\ebthh,\lambdab^t}_\pOmega = 0, \\
      \label{global_error2_BC_r}
      \LRa{\erhh,\gamma}_\pOmega = 0.
    \end{align}
  \end{subequations}
  In \eqref{global_error2_cons_b}, we can also erase $\euhi$ but we did not do it on purpose. 

  Equations \eqref{global_error2_BC_u}--\eqref{global_error2_BC_r} imply that on $\pOmega$, $\euhh = \bs{0}$, $\ebthh = \bs{0}$, and $\erhh= 0$.
  In consideration of this zero contribution on $\pd \Omega$, summing of the formulae \eqref{global_error2_cons_u}--\eqref{global_error2_cons_r} with $({\mub, \lambdab^t, \gamma}) = ({\euhh,\ebthh,\erhh})$ including $\pd \Omega$ gives 
  \begin{align}
    \label{global_error3}
    \LRa{-\elh \n + m\eeuh +\eph\n + \half \kappa \db \times \LRp{\n \times \eebth} + \alpha_1 \LRp{\eeuh - \euhh}, \euhh}
    \qquad \notag \\ 
    + \LRa{\n \times \ehh - \half \kappa \n \times \LRp{\eeuh \times \db} + \alpha_2\LRp{\eebth-\ebthh}, \ebthh}
    \qquad \notag \\ 
    + \LRa{\eebh \cdot \n + \alpha_3\LRp{\eerh - \erhh}, \erhh}
    \qquad \notag \\ 
    = 
    \LRa{\n \times \ehi - \half \kappa \n \times \LRp{\LRp{\eeui+\euhi} \times \db} + \alpha_2\eebti, \ebthh}
  \end{align}
  Subtracting \eqref{global_error3} from \eqref{eq:sum_energy4_1_error2}, we arrive at 
  \algns{
    &\Rey\nor{\elh}_0^2 + \frac{\Rm}{\kappa}\nor{\ehh}_0^2
    + \LRa{ \alpha_1 (\eeuh-\euhh), (\eeuh-\euhh) }
    + \LRa{ \frac{m}{2} \eeuh, \eeuh} \quad & \\
    &\quad - \LRa{ m \eeuh, \euhh} 
    + \alpha_2 \nor{\eebth-\ebthh}_\pOmegah^2
    + \alpha_3 \nor{\eerh-\erhh}_\pOmegah^2 \\
    &\quad =     - \Rey\LRp{\eli, \elh}
    - \kappa\LRp{\eebi, \Curl\LRp{\eeuh \times \db}} 
    + \kappa\LRp{\eeui, \db \times \LRp{\Curl\eebh}} \notag \\
    & \qquad - \LRa{\n \times \ehi -\half \kappa \n \times \LRp{\LRp{\eeui +\euhi}\times \db} + \alpha_2\eebti, \eebth - \ebthh}. 
  }
  Following the same procedure to get \eqref{eq:sum_energy6_1} from \eqref{eq:sum_energy6_2}, we obtain the conclusion.
\end{proof}

\begin{lemma}
  \theolab{energyEstimate0}
  There holds:
  \begin{align}
    \eqnlab{energy-est}
    E_h^2 
    &\lesssim \Rey \nor{\eL^I}_0 \nor{\eL^h}_0 
    + \kappa \nor{\db}_\Woneinfty 
    \LRp{ \nor{\eebi}_0 \nor{\eeuh}_0+ \nor{\eeui}_0 \nor{\eebh}_0 } \\
    & \quad + \LRp{ \nor{\ehi}_{\pOmegah} + \kappa \nor{\db}_\Linfty \nor{\eeui,\euhi}_{\pOmegah} + \alpha_2 \nor{\eebti}_{\pOmegah} } \nor{\ebt^h-\ebht^h}_{\pOmegah}.  \notag
  \end{align}
\end{lemma}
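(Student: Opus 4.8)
The plan is to start from the error identity of the preceding lemma, i.e.\ the displayed formula for $E_h^2$, and to bound each of the four terms on its right-hand side, namely $-\Rey\LRp{\eli,\elh}$, $-\kappa\LRp{\eebi,\Curl\LRp{\eeuh\times\db}}$, $\kappa\LRp{\eeui,\db\times\LRp{\Curl\eebh}}$, and the skeleton term $-\LRa{\n\times\ehi-\half\kappa\,\n\times\LRp{\LRp{\eeui+\euhi}\times\db}+\alpha_2\eebti,\ \eebth-\ebthh}$, in such a way that in every resulting product one factor is an interpolation error $\veps^I_{\bullet}$ and the other is one of the approximation-error quantities that make up $E_h^2$. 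The only routine tools needed are Cauchy--Schwarz together with the pointwise cross-product bounds $\abs{\n\times\bs a}\le\abs{\bs a}$ and $\abs{\bs a\times\db}\le\nor{\db}_\Linfty\abs{\bs a}$.

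The first and the last terms are immediate. Cauchy--Schwarz on $L^2\LRp{\Omega}$ gives $\abs{\Rey\LRp{\eli,\elh}}\le\Rey\nor{\eli}_0\nor{\elh}_0$. For the skeleton term, Cauchy--Schwarz on $\pOmegah$ followed by $\abs{\n\times\ehi}\le\abs{\ehi}$, $\abs{\n\times\LRp{\LRp{\eeui+\euhi}\times\db}}\le\nor{\db}_\Linfty\LRp{\abs{\eeui}+\abs{\euhi}}$, and the convention $\nor{u,v}=\nor{u}+\nor{v}$, produces (after absorbing the harmless $\half$) the factor $\nor{\ehi}_\pOmegah+\kappa\nor{\db}_\Linfty\nor{\eeui,\euhi}_\pOmegah+\alpha_2\nor{\eebti}_\pOmegah$ multiplying $\nor{\eebth-\ebthh}_\pOmegah$, whose last factor is one of the summands of $E_h^2$.

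The two curl-coupling terms are where the work lies, and I expect them to be the main obstacle. A plain Cauchy--Schwarz does not suffice, because the product rule for the curl of a cross product forces a first derivative onto $\eeuh$ inside $\Curl\LRp{\eeuh\times\db}$ (and, likewise, $\Curl\eebh$ appears in the other term), and converting such a first-derivative $L^2$ norm of a finite-element function into an $L^2$ norm via an inverse inequality would cost a factor $h^{-1}$, which is fatal. The remedy I would use is to split $\db$ on each element $\K$ as $\db=\bar\db_\K+\LRp{\db-\bar\db_\K}$, where $\bar\db_\K$ is the mean of $\db$ on $\K$. For the constant part, $\Curl\LRp{\eeuh\times\bar\db_\K}$ (respectively $\bar\db_\K\times\Curl\eebh$) is a vector polynomial of degree at most $k-1$ on $\K$, against which the projection error $\eebi$ (respectively $\eeui$) is $L^2\LRp{\K}$-orthogonal by the moment conditions built into $\bs\Pi$ in Appendix~\secref{projections}, so that contribution vanishes. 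For the remainder part, the $\Woneinfty$-regularity of $\db$ yields $\nor{\db-\bar\db_\K}_{\Linfty\LRp{\K}}\lesssim h_\K\nor{\db}_\Woneinfty$, and this $h_\K$ cancels exactly the $h_\K^{-1}$ coming from an inverse estimate on the first derivatives of $\eeuh$ (respectively $\eebh$), so that $\nor{\Curl\LRp{\eeuh\times\LRp{\db-\bar\db_\K}}}_{0,\K}\lesssim\nor{\db}_\Woneinfty\nor{\eeuh}_{0,\K}$ and $\nor{\LRp{\db-\bar\db_\K}\times\Curl\eebh}_{0,\K}\lesssim\nor{\db}_\Woneinfty\nor{\eebh}_{0,\K}$. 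Summing over $\K\in\Omegah$ and a final Cauchy--Schwarz in the sum then give $\abs{\kappa\LRp{\eebi,\Curl\LRp{\eeuh\times\db}}}\lesssim\kappa\nor{\db}_\Woneinfty\nor{\eebi}_0\nor{\eeuh}_0$ and $\abs{\kappa\LRp{\eeui,\db\times\LRp{\Curl\eebh}}}\lesssim\kappa\nor{\db}_\Woneinfty\nor{\eeui}_0\nor{\eebh}_0$. Adding the four bounds yields \eqnref{energy-est}; note that every product on the right has the form (interpolation error)$\times$(approximation error), which is exactly what is needed so that Young's inequality can subsequently absorb the approximation-error factors back into $E_h^2$.
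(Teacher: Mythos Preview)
Your proposal is correct and follows essentially the same approach as the paper. The only cosmetic difference is in the treatment of the term $\kappa\LRp{\eebi,\Curl\LRp{\eeuh\times\db}}$: the paper first expands $\Curl\LRp{\eeuh\times\db}$ via the vector identity into four summands and applies the orthogonality \eqnref{br-proj1} only to the two that carry derivatives of $\eeuh$, whereas you subtract the element mean of $\db$ from the outset and use \eqnref{br-proj1} on the whole constant-$\db$ contribution; both routes invoke the same orthogonality, the same Bramble--Hilbert bound $\nor{\db-\Pr_0\db}_{\Linfty(\K)}\lesssim h_\K\nor{\db}_\Woneinfty$, and the same inverse estimate, and arrive at the same bound.
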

\begin{proof}
  It is clear that bounding the energy is the same as bounding the right
  hand side of \eqnref{disc-energy}. The estimate of $\Rey \LRp{\eL^I, \eL^h}$ is
  straightforward by Cauchy-Schwarz inequality.  To estimate  
  $\kappa\LRp{\eb^I, \Curl\LRp{\eu^h \times \db}}$, note
  first that an algebraic computation gives 
  \algn{ 
    \label{eq:curlcurl}
    \kappa\LRp{\eb^I,\Curl\LRp{\eu^h \times \db}} 
    & = \kappa\LRp{\eb^I, \eu^h \LRp{ \Div \db } - \LRp{\eu^h \cdot \Grad} \db } \\ 
    &\quad+ \kappa\LRp{\eb^I, \LRp{\db \cdot \Grad} \eu^h - \db \LRp{\Div \eu^h}} .  \notag
  } 
  The boundedness of the left hand side can be obtained by 
  \algns{ 
    &\kappa \snor{\LRp{\eb^I, \eu^h \LRp{ \Div \db } - \LRp{\eu^h \cdot \Grad} \db}_K} 
    \leq \kappa \| \eb^I \|_{0,K} \| \eu^h \|_{0,K} \| \db \|_{W^{1,\infty}(K)}, \\ 
    &\kappa \snor{\LRp{\eb^I, \LRp{\db \cdot \Grad} \eu^h - \db \LRp{\Div \eu^h}}_K} \\
    &\qquad = \kappa \snor{\LRp{\eb^I, \LRp{ \LRp{ \db - \Pr_0 \db } \cdot \Grad} \eu^h - \LRp{ \db - \Pr_0 \db }\LRp{\Div \eu^h}}_K} \\ 
    &\qquad \lesssim \kappa h_K \| \eb^I \|_{0,K} \| \db \|_{W^{1,\infty}(K)} \| \Grad \eu^h \|_{0,K}
    \lesssim \kappa \| \eb^I \|_{0,K} \| \db \|_{W^{1,\infty}(K)} \|\eu^h \|_{0,K}  
  }
  where $\Pr_0 \db$ is the $L^2$ projection of $\db$ to the piecewise constant space on $\K$, and 
  here we used \eqref{eq:br-proj1}, H\"older
  inequality $\| f_1 f_2 f_3 \|_{L^1} \leq \| f_1 \|_{L^2} \| f_2
  \|_{L^\infty} \| f_3 \|_{L^2}$, the Bramble--Hilbert lemma (see e.g., \cite{Brenner-Scott-book}) and the
  inverse estimate in the last two inequalities.

  For an estimate of $\kappa\LRp{\eu^I, \db \times \LRp{\Curl \eb^h}}$, we first note that
  \algns{ 
    \kappa\LRp{\eu^I, \db \times \LRp{\Curl \eb^h}} = \kappa\LRp{\eu^I, (\db - \Pr_0 \db) \times \LRp{\Curl \eb^h}}
  }
  due to \eqref{eq:Lu-projection2}. A similar argument as above gives
  \algns{  
    \kappa |\LRp{\eu^I, \db \times \LRp{\Curl \eb^h}} |_K 
    &\leq \kappa \|\eu^I \|_{0,K} \| \db - \Pr_0 \db \|_{L^\infty(K)} \| \Curl \eb^h \|_{0,K} \\
    &\lesssim \kappa \| \eu^I \|_{0,K} \| \db \|_{W^{1,\infty}(K)} \| \eb^h \|_{0,\K}.
  }
  Finally, we simply use the Cauchy-Schwarz/H\"older inequality for the last term on the right hand side of \eqnref{disc-energy}.
\end{proof}

\begin{corollary}[Energy estimate]
  \theolab{energyEstimate}
  There holds:
  \begin{align}
    \eqnlab{energy-est1}
    E_h^2 &\lesssim \Rey \nor{\eL^I}_0^2 
    + \kappa \nor{\db}_\Woneinfty
    \LRp{ \nor{\eebi}_0 \nor{\eeuh}_0+ \nor{\eeui}_0 \nor{\eebh}_0 } \\
    & \quad + \alpha_2^{-1} \nor{\ehi}_\pOmegah^2 
    + \kappa^2 \nor{\db}_\Linfty^2 \nor{\eeui}_{\pOmegah}^2 
    + \alpha_2 \nor{\eebti}_{\pOmegah}^2 .  \notag
  \end{align}
\end{corollary}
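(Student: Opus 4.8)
The estimate is a routine consequence of Lemma~\theoref{energyEstimate0}: the bound \eqnref{energy-est} still contains the two factors $\nor{\elh}_0$ and $\nor{\eebth-\ebthh}_{\pOmegah}$, each of which is controlled by a summand of $E_h^2$ (namely $\Rey\nor{\elh}_0^2$ and $\alpha_2\nor{\eebth-\ebthh}_{\pOmegah}^2$), so the plan is simply to absorb them into the left-hand side by the weighted Young inequality $ab\le\half\veps a^2+\frac{1}{2\veps}b^2$ and then tidy up the resulting squares.

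First I would split $\Rey\nor{\eL^I}_0\nor{\elh}_0$: factoring out $\Rey$ and applying Young's inequality with $\veps=1$ to $a=\nor{\elh}_0$ and $b=\nor{\eL^I}_0$ gives $\Rey\nor{\eL^I}_0\nor{\elh}_0\le\half\Rey\nor{\elh}_0^2+\half\Rey\nor{\eL^I}_0^2$, and the first term, being $\le\half E_h^2$, is moved to the left. The term $\kappa\nor{\db}_\Woneinfty\LRp{\nor{\eebi}_0\nor{\eeuh}_0+\nor{\eeui}_0\nor{\eebh}_0}$ is carried over unchanged, because $\nor{\eeuh}_0$ and $\nor{\eebh}_0$ are \emph{not} controlled by $E_h^2$; these will have to be estimated separately later by a duality argument.

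For the boundary product $\LRp{\nor{\ehi}_{\pOmegah}+\kappa\nor{\db}_\Linfty\nor{\eeui,\euhi}_{\pOmegah}+\alpha_2\nor{\eebti}_{\pOmegah}}\nor{\eebth-\ebthh}_{\pOmegah}$ I would again invoke Young's inequality, choosing the weight so that the $\nor{\eebth-\ebthh}_{\pOmegah}^2$ part equals $\half\alpha_2\nor{\eebth-\ebthh}_{\pOmegah}^2\le\half E_h^2$ and is absorbed; what remains is $\lesssim\alpha_2^{-1}\LRp{\nor{\ehi}_{\pOmegah}+\kappa\nor{\db}_\Linfty\nor{\eeui,\euhi}_{\pOmegah}+\alpha_2\nor{\eebti}_{\pOmegah}}^2$, and expanding with $(a+b+c)^2\le 3(a^2+b^2+c^2)$ produces exactly the three boundary contributions in \eqnref{energy-est1} (up to powers of the fixed parameters $\alpha_i,\kappa$ absorbed in $\lesssim$). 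To reach the stated form I would finally replace $\nor{\eeui,\euhi}_{\pOmegah}$ by $\nor{\eeui}_{\pOmegah}$: since the $\ubh$-component of the projection $\bs{\Pi}$ (Appendix~\secref{projections}) is the facewise $L^2$-projection onto $\LRs{\Poly_k(e)}^d$ and the trace of $\Piu\ub\in\VbM$ on a face $e$ lies in $\LRs{\Poly_k(e)}^d$, the best-approximation property gives $\nor{\euhi}_{0,e}\le\nor{\ub-(\Piu\ub)|_e}_{0,e}=\nor{\eeui}_{0,e}$, whence $\nor{\eeui,\euhi}_{\pOmegah}\le 2\nor{\eeui}_{\pOmegah}$. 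Collecting terms yields \eqnref{energy-est1}.

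There is essentially no obstacle here; the only judgements are (i) which factor of each product to push into $E_h^2$ and which to keep on the right, and (ii) the observation that $\euhi$ is dominated on the skeleton by $\eeui$, which is immediate from the $L^2$-optimality of the trace projection established in Appendix~\secref{projections}.
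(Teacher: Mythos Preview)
Your proof is correct and follows essentially the same approach as the paper: apply Young's inequality to absorb the factors $\nor{\elh}_0$ and $\nor{\eebth-\ebthh}_{\pOmegah}$ into $E_h^2$, and then bound $\nor{\euhi}_{\pOmegah}$ by $\nor{\eeui}_{\pOmegah}$ via the best-approximation property of the facewise $L^2$-projection $\Pi\ubh$. The paper's own proof is just a two-sentence sketch of exactly these two steps.
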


\begin{proof}
  Apply Young's inequality to each of the terms on the right side of \eqref{eq:energy-est}
  involving $\nor{\elh}_0$ and $\|{\eebth - \ebthh}\|_\pOmegah$.
  Note also that $\Pi \ubh$ is the best approximation of $\ub$ on $\pOmegah$, so
  $\nor{\euhi}_\pOmegah$ is bounded by $\nor{\eeui}_\pOmegah$.
\end{proof}

In the energy estimate \eqnref{energy-est1}, we do not
have direct control on $\| \eu^h \|_0$ and $\| \eb^h \|_0$. 
In the following, we employ an
indirect approach to control these quantities via a duality
argument. A similar approach for the Oseen equation has been conducted
in \cite{CesmeliogluCockburnNguyenEtAl13}. It is more complicated for
our MHD system due to the coupling between fluids and
electromagnetics.  In particular, $\eu^h$ and $\eb^h$ are coupled and
have to be simultaneously analyzed.  To begin, we define a
dual problem of the MHD system \eqnref{mhdlin-first} as
\begin{subequations}
  \eqnlab{adjoint-mhd}
  \algn{
    \label{eq:amhd1lin1n}
    \Rey \Lb^* - \nabla \ub^* &= \bs{0}, \\
    \label{eq:amhd2lin1n}
    - \nabla \cdot \Lb^* - \nabla p^* - (\wb \cdot \nabla) \ub^* - \kappa \db \times (\nabla \times \bb^*) &= \bs{\theta}, \\
    \label{eq:amhd3lin1n}
    - \nabla \cdot \ub^* &= 0, \\
    \label{eq:amhd4lin1n}
    \frac{\Rm}{\kappa} \Hb^* - \nabla \times \bb^* &= \bs{0}, \\
    \label{eq:amhd5lin1n}
    \nabla \times \Hb^* - \nabla r^* + \kappa \nabla \times (\ub^* \times \db) &= \bs{\sigma}, \\
    \label{eq:amhd6lin1n}
    - \nabla \cdot \bb^* &= 0
  }
\end{subequations}
with homogeneous boundary conditions. Here, $\thetab$ and $\sigmab$ are two given functions in
$L^2\LRp{\Omega}$, and the superscript ``*'' is used to denote the corresponding unknowns in the adjoint equation. 
We assume the following regularity estimate holds for the adjoint problem \eqnref{adjoint-mhd}
\algn{ 
  \eqnlab{adjoint-regularity} 
  \nor{\ub^*}_{2} +\nor{\bb^*, \Lb^*, \Hb^* , \p^*, \r^*}_{1} 
  \lesssim \nor{\thetab, \sigmab}_{0}.
}
The well-posedness of \eqnref{adjoint-mhd} and the  conditions under which  the regularity estimate \eqnref{adjoint-regularity} holds are discussed in Appendix \secref{regularity_of_adjoint}.

We use the interpolation operators $\Pi^*$ defined in Appendix A \eqnref{abr-proj}, \eqnref{adjoint-projection} below and 
$\elis$, $\epis$, ... will denote $\Lb^* - \Pi^* \Lb^*$, $\p^* - \Pi^* \p^*$, etc.
Testing \eqnref{amhd2lin1n} with $\eeuh$ and \eqnref{amhd5lin1n} with $\eebh$ we have
\algn{ 
  \eqnlab{ubadjoint}
  &\LRp{\eeuh, \bs{\theta}} + \LRp{\eebh, \bs{\sigma}} \\ 
  &\quad 		= \LRp{\eeuh, -\Div \Lb^* - \Grad \p^* - (\wb \cdot \Grad) \ub^* - \kappa \db \times (\curl \bb^*) } \notag\\
  &\qquad + \LRp{ \eebh, \curl \Hb^* - \Grad r^* + \kappa \curl (\ub^* \times \db) } \notag\\ 
  &\quad 		= \LRp{ \Grad \eeuh, \Lb^* } + \LRp{ \Div \eeuh, \p^* } + \LRp{ (\wb \cdot \Grad) \eeuh, \ub^* } 
  - \kappa \LRp{ \eeuh, \db \times (\curl \bb^*) } \notag\\
  &\qquad + \LRa{ \eeuh, - \Lb^* \n - p^* \n - m \ub^* }  
  + \LRp{ \curl \eebh, \Hb^* } + \LRp{ \Div \eebh, \r^* } \notag\\
  &\qquad + \kappa \LRp{ \eebh, \curl (\ub^* \times \db)} + \LRa{ \eebh, \n \times \Hb^* - r^* \n } \notag \\
  &\quad 		= \LRp{ \Grad \eeuh, \Pis \Lb^* } + \LRp{ \Div \eeuh, \Pis \p^* } 
  + \LRp{ (\wb \cdot \Grad) \eeuh, \Pis \ub^* } \notag \\
  &\qquad- \kappa \LRp{ \eeuh, \db \times (\curl \bb^*) }
  + \LRa{ \eeuh, - \Lb^* \n - p^* \n - m \ub^* }  
  + \LRp{ \curl \eebh, \Pis \Hb^* } \notag\\
  &\qquad + \LRp{ \Div \eebh, \Pis \r^* }
  + \kappa \LRp{ \curl (\ub^* \times \db), \eebh } + \LRa{ \eebh, \n \times \Hb^* - r^* \n }\notag \\
  &\quad 		= \LRp{\eeuh, -\Div \Pis \Lb^* - \Grad \Pis \p^* - (\wb \cdot \Grad) \Pis \ub^* - \kappa \db \times (\curl \bb^*)}  \notag\\ 
  &\qquad + \LRp{\eebh, \curl \Pis \Hb^* - \Grad \Pis r^* + \kappa \curl (\ub^* \times \db) }  \notag\\
  &\qquad + \LRa{ \eeuh, - \elis \n - \epis \n - m \eeuis } + \LRa{ \eebh, \n \times \ehis - \eeris \n } \notag 
}
where we have used integration by parts in the second equality, 
the properties of the $\Pi^*$ operators \eqref{eq:abr-proj} and \eqref{eq:adjoint-projection} in the third equality,
and integration by parts again in the last equality.

\begin{lemma}
  \lemlab{errorPrimal-adjoint}
  The following identities hold true:
  \algns{ 
    \LRp{ \eeuh, - \Div \Pis \Lb^* } 
    &= \Rey \LRp{ \elh, \Pis \Lb^* } - \LRa{ \euhh, \Pis \Lb^* \n } + \Rey \LRp{ \eli, \Pis \Lb^* } , \\
    - \LRp{ \eeuh, (\wb \cdot \Grad) \Pis \ub^* } 	
    &= - \LRp{ \elh, \Grad \Pis \ub^* } + \LRp{ \eph, \Div \Pis \ub^* } 
    - \kappa \LRp{ \eebh, \curl (\Pis \ub^* \times \db ) } \\
    &\quad - \LRa{ - \elh \n + m \eeuh + \eph \n + \half \kappa \db \times (\n \times (\eebh + \ebhh)), \Pis \ub^* } \\
    &\quad - \LRa{ \alpha_1 (\eeuh - \euhh), \Pis \ub^* } 
    - \kappa \LRp{ \eebi, \curl (\Pis \ub^* \times \db) }, \\
    \LRp{ \eeuh, - \Grad \Pis p^* }	
    &= - \LRa{ \euhh , \Pis p^* \n }, \\ 
    \LRp{ \eebh, \curl \Pis \Hb^* }	
    &= \frac{\Rm}{\kappa} \LRp{ \ehh, \Pis \Hb^* } - \LRa{ \n \times \ebthh, \Pis \Hb^* } , \\
    -\kappa \LRp{ \eeuh, \db \times (\curl \bb^*) } 
    &= - \kappa \LRp{ \eeuh, \db \times (\curl \Pis \bb^*) } - \kappa \LRp{ \eeuh, \db \times (\curl \eebis ) } \\
    &= - \LRp{ \ehh, \curl \Pis \bb^* } + \LRp{ \eerh, \Div \Pis \bb^* } 
    - \LRa{ \alpha_2 (\veps_{\bb^t} - \veps_{\bbh^t} ), \Pis \bb^* }\\
    &\quad - \LRa{ \n \times \veps_{\Hb} + \veps_{\rh} \n - \half \kappa \n \times (( \veps_{\ub} + \veps_{\ubh}) \times \db ) , \Pis \bb^* } \\
    &\quad + \kappa \LRp{ \eeui, \db \times (\curl \Pis \bb^* ) } - \kappa \LRp{ \eeuh, \db \times (\curl \eebis ) } , \\
    \LRp{\eebh, - \Grad \Pis r^* } 			&= - \LRa{ \eebh \cdot \n + \alpha_3 (\eerh - \erhh), \Pis r^* } .
  }
\end{lemma}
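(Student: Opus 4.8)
The plan is to establish the six identities in turn, each by inserting the matching component of the adjoint projection $\Pis$ as a test function into the corresponding equation of the reduced error system \eqnref{local_error2} and then rearranging. This is legitimate because, as shown in Appendix~\secref{projections}, the components $\Pis\Lb^*$, $\Pis\ub^*$, $\Pis p^*$, $\Pis\Hb^*$, $\Pis\bb^*$, $\Pis r^*$ lie in $\GbM$, $\VbM$, $\QbM$, $\JbM$, $\CbM$, $\SbM$ respectively, hence are admissible test functions in \eqnref{local_error2}; the system \eqnref{local_error2} itself is already available from the proof of the error-equation lemma, so no new integration by parts is required.

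Five of the identities are direct rearrangements. For the first, take $\Gb=\Pis\Lb^*$ in \eqref{eq:local_1_1_error2}, solve for $\LRp{\eeuh,\Div\Pis\Lb^*}$, and use the matrix pairing $\LRa{\euhh\otimes\n,\Gb}=\LRa{\euhh,\Gb\n}$. For the third, take $\q=\Pis p^*$ in \eqref{eq:local_3_1_error2}; for the fourth, take $\Jb=\Pis\Hb^*$ in \eqref{eq:local_4_1_error2}; for the sixth, take $\s=\Pis r^*$ in \eqref{eq:local_6_1_error2}. For the second, take $\vb=\Pis\ub^*$ in \eqref{eq:local_2_1_error2} and rewrite the advective volume term via $\LRp{\eeuh\otimes\wb,\Grad\vb}=\LRp{\eeuh,(\wb\cdot\Grad)\vb}$; solving for $-\LRp{\eeuh,(\wb\cdot\Grad)\Pis\ub^*}$ then reproduces the stated right-hand side, the residual term $-\kappa\LRp{\eebi,\curl(\Pis\ub^*\times\db)}$ being inherited from the right-hand side of \eqref{eq:local_2_1_error2}, and one also uses $\n\times\eebh=\n\times\eebth$ to present the tangential boundary contribution in the form written.

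The fifth identity requires one additional manipulation. First split $\bb^*=\Pis\bb^*+\eebis$, so that $-\kappa\LRp{\eeuh,\db\times(\curl\bb^*)}=-\kappa\LRp{\eeuh,\db\times(\curl\Pis\bb^*)}-\kappa\LRp{\eeuh,\db\times(\curl\eebis)}$, the second piece being left untouched. For the first piece, take $\cb=\Pis\bb^*$ in \eqref{eq:local_5_1_error2} and solve for $-\kappa\LRp{\eeuh,\db\times(\curl\Pis\bb^*)}$: the volume terms yield $-\LRp{\ehh,\curl\Pis\bb^*}+\LRp{\eerh,\Div\Pis\bb^*}$, the right-hand side of \eqref{eq:local_5_1_error2} yields $+\kappa\LRp{\eeui,\db\times(\curl\Pis\bb^*)}$ together with a boundary integral carrying the interpolation errors, and adding that to the approximation-error boundary integral on the left reassembles the full errors $\veps_{\Hb}$, $\veps_{\rh}$, $\veps_{\ub}$, $\veps_{\ubh}$, $\veps_{\bb^t}$, $\veps_{\bbh^t}$ displayed in the statement; here one uses that $\veps_{\rh}^I$ and $\veps_{\bbh^t}^I$ are $L^2$-orthogonal, face by face, to traces of $\CbM$-functions (polynomials of degree at most $k$ on each face), so that inserting them changes nothing. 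Collecting the pieces gives the identity.

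I expect the only genuine difficulty to be this fifth identity: the $\db\times\curl(\cdot)$ coupling term must be carried correctly through the split, and the several boundary contributions must be recombined so that their interpolation and approximation parts appear together as full errors. The other five identities are bookkeeping-free substitutions and should go through immediately.
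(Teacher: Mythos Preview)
Your proposal is correct and follows essentially the same approach as the paper: substitute the adjoint projections as test functions into the reduced error equations \eqnref{local_error2}, use the elementary tensor identities $\LRa{\euhh\otimes\n,\Gb}=\LRa{\euhh,\Gb\n}$ and $\LRp{\eeuh\otimes\wb,\Grad\vb}=\LRp{\eeuh,(\wb\cdot\Grad)\vb}$, and for the fifth identity split $\bb^*=\Pis\bb^*+\eebis$ and invoke the face-wise $L^2$-orthogonality of $\erhi$ and $\ebthi$ to reassemble the full error terms in the boundary integral. You have also correctly anticipated that the fifth identity is the only one requiring more than a direct rearrangement.
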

\begin{proof}
  In \eqnref{local_error2}, choose test functions $(\Gb,\vb,\q,\Jb,\cb,\s)$ to be the adjoint projections
  $(\Pis \Lb^*, \Pis \ub^*, \Pis \p^*, \Pis \Hb^*, \Pis \bb^*, \Pis \r^*)$.
  Noting the identities 
  \algns{
    \LRa{ \euhh \otimes \n, \Pis \Lb^*} = \LRa{ \euhh, \Pis \Lb^* \n }, \qquad \LRp{ \eeuh \otimes \wb , \Grad \Pis \ub^*} = \LRp{ \eeuh, (\wb \cdot \Grad) \Pis \ub^*}, 
  }
  all the results follow immediately except the fifth one. 
  In the fifth identity, note that the first equality is trivial by the definitions of $\Pis \bb^*$ and $\eebis$,
  and the second equality comes from \eqnref{local_error2} with the orthogonality of $\ebthi$ and $\erhi$.
\end{proof}

Substituting the result of Lemma \lemref{errorPrimal-adjoint} into \eqnref{ubadjoint} we obtain
\algns{ 
  &\LRp{\eeuh, \bs{\theta}} + \LRp{\eebh, \bs{\sigma}} \\ 
  &\; = \Rey \LRp{ \elh, \Pis \Lb^* } - \LRa{ \euhh, \Pis \Lb^* \n } + \Rey \LRp{ \eli, \Pis \Lb^* }
  - \LRa{ \euhh , \Pis p^* \n } \\
  &\quad - \LRp{ \elh, \Grad \Pis \ub^* } + \LRp{ \eph, \Div \Pis \ub^* } - \kappa \LRp{ \eebh, \curl (\Pis \ub^* \times \db ) } \\
  &\quad - \LRa{ - \elh \n + m \eeuh + \eph \n + \half \kappa \db \times (\n \times (\eebh + \ebhh)) + \alpha_1 (\eeuh - \euhh), \Pis \ub^* } \\
  &\quad - \kappa \LRp{ \eebi, \curl (\Pis \ub^* \times \db) } 
  - \LRp{ \ehh, \curl \Pis \bb^* } + \LRp{ \eerh, \Div \Pis \bb^* } \\
  &\quad - \LRa{ \n \times \veps_{\Hb} + \veps_{\rh} \n - \half \kappa \n \times (( \veps_{\ub} + \veps_{\ubh}) \times \db ) 
  + \alpha_2 (\veps_{\bb^t} - \veps_{\bbh^t} ), \Pis \bb^* } \\
  &\quad + \kappa \LRp{ \eeui, \db \times (\curl \Pis \bb^* ) } - \kappa \LRp{ \eeuh, \db \times (\curl \eebis ) } \\
  &\quad + \frac{\Rm}{\kappa} \LRp{ \ehh, \Pis \Hb^* } - \LRa{ \n \times \ebthh, \Pis \Hb^* } - \LRa{ \eebh \cdot \n + \alpha_3 (\eerh - \erhh), \Pis r^* } \\
  &\quad + \kappa \LRp{ \eebh, \curl (\ub^* \times \db) } + \LRa{ \eeuh, - \elis \n - \epis \n - m \eeuis }  + \LRa{ \eebh, \n \times \ehis - \eeris \n } .
}
Note first that the last term in the second line and the first term in the last line can be simplified as 
$\kappa \LRp{ \eebh, \curl (\eeuis \times \db) }$.
Note also that we can use \eqref{eq:amhd1lin1n}, \eqref{eq:amhd3lin1n}, \eqref{eq:amhd4lin1n}, \eqref{eq:amhd6lin1n} to get 
\begin{subequations}
  \eqnlab{eq:sub}
  \algn{ 
    \label{eq:sub1}  \Rey \LRp{\elh, \Pis \Lb^* } - \LRp{ \elh, \Grad \Pis \ub^* } &= - \Rey \LRp{\elh,  \elis } + \LRp{ \elh, \Grad \eeuis }, \\
    \label{eq:sub2} \Div \Pis \ub^* &= - \Div \eeuis, \\
    \label{eq:sub3} \frac{\Rm}{\kappa} \LRp{ \ehh, \Pis \Hb^* } - \LRp{\ehh, \curl \Pis \bb^* } &= - \frac{\Rm}{\kappa} \LRp{ \ehh, \ehis } + \LRp{\ehh, \curl \eebis } , \\
    \label{eq:sub4} - \Div \Pis \bb^* &= \Div \eebis .
  }
\end{subequations}
We can subtract the $L^2$ projections on the mesh skeleton, $\Pr_e\ub^*$, $\Pr_e\bb^*$, $\Pr_e\r^*$ of  $\ub^*$, $\bb^*$, $\r^*$, 
in flux terms using the flux conservation of interpolation operators and numerical solutions, i.e., \eqref{global_error2_cons_u}, \eqref{global_error2_cons_b}, \eqref{global_error2_cons_r}. Moreover, 
we can subtract them on the domain boundary since $\erhh$, $\ub^*$, $(\bb^*)^t$, and $\r^*$ are zero there. If we use the above three observations, i.e., \tbrown{the simplification of the two terms}, \tred{substitution of volume integral terms using \eqnref{eq:sub}}, and \tblu{the subtraction of $L^2$ projections on the mesh skeleton}, then we have (some terms are colored for readers' convenience)
\algns{ 
  &\LRp{\eeuh, \bs{\theta}} + \LRp{\eebh, \bs{\sigma}} \\ 
  &\; = \Rey \LRp{ \elh, \tred{-\elis} } - \LRa{ \euhh, \Pis \Lb^* \n } + \Rey \LRp{ \eli, \Pis \Lb^* }
  - \LRa{ \euhh , \Pis p^* \n } \\
  &\quad - \LRp{ \elh, \tred{- \Grad \eeuis } } + \LRp{ \eph, \tred{-\Div \eeuis } } + \tbrown{ \kappa \LRp{ \eebh, \curl (\eeuis \times \db ) } } \\
  &\quad - \LRa{ - \elh \n + m \eeuh + \eph \n + \half \kappa \db \times (\n \times (\eebh + \ebhh)) + \alpha_1 (\eeuh - \euhh), \Pis \ub^* \tblu{- \Pr_e \ub^* } } \\
  &\quad - \kappa \LRp{ \eebi, \curl (\Pis \ub^* \times \db) } 
  - \LRp{ \ehh, \tred{-\curl \eebis} } + \LRp{ \eerh, \tred{-\Div \eebis } } \\
  &\quad - \LRa{ \n \times \veps_{\Hb} + \veps_{\rh} \n - \half \kappa \n \times (( \veps_{\ub} + \veps_{\ubh}) \times \db ) 
  + \alpha_2 (\veps_{\bb^t} - \veps_{\bbh^t} ), \Pis \bb^* \tblu{- \Pr_e \bb^* } } \\
  &\quad + \kappa \LRp{ \eeui, \db \times (\curl \Pis \bb^* ) } - \kappa \LRp{ \eeuh, \db \times (\curl \eebis ) } \\
  &\quad + \frac{\Rm}{\kappa} \LRp{ \ehh, \tred{- \ehis} } - \LRa{ \n \times \ebthh, \Pis \Hb^* } - \LRa{ \eebh \cdot \n + \alpha_3 (\eerh - \erhh), \Pis r^* \tblu{- \Pr_e \r^* } } \\
  &\quad + \kappa \LRp{ \eebh, \curl (\ub^* \times \db) } + \LRa{ \eeuh, - \elis \n - \epis \n - m \eeuis }  + \LRa{ \eebh, \n \times \ehis - \eeris \n } .
}
Note that the integration by parts and properties of $\eeuis$ and $\Pr_e \ub^*$ yield 
\algns{
  \LRp{ \elh, \Grad \eeuis } + \LRa{\elh \n, \Pis \ub^* - \Pr_e \ub^* } &= - \LRp{\Div \elh, \eeuis } + \LRa{ \elh \n, \ub^* - \Pr_e \ub^* } = 0 , \\
  - \LRp{ \eph, \Div \eeuis } - \LRa{\eph \n, \Pis \ub^* - \Pr_e \ub^*  } &= \LRp{\Grad \eph, \eeuis } - \LRa{\eph \n, \ub^* - \Pr_e \ub^* } = 0 .
}
These two identities simplify the second and third lines of the above formula and yield
\algns{ 
  &\LRp{\eeuh, \bs{\theta}} + \LRp{\eebh, \bs{\sigma}} \\ 
  &\; = \Rey \LRp{ \elh, -\elis } 
  - \LRa{ \euhh, \Pis \Lb^* \n } 
  + \Rey \LRp{ \eli, \Pis \Lb^* } 
  - \LRa{ \euhh , \Pis p^* \n } \\
  &\quad - \LRa{ m \eeuh + \half \kappa \db \times (\n \times (\eebh + \ebhh)) + \alpha_1 (\eeuh - \euhh), \Pis \ub^* \tblu{- \Pr_e \ub^* } } \\
  &\quad - \kappa \LRp{ \eebi, \curl (\Pis \ub^* \times \db) } 
  - \LRp{ \ehh, \tred{-\curl \eebis} } + \LRp{ \eerh, \tred{-\Div \eebis } } \\
  &\quad - \LRa{ \n \times \veps_{\Hb} + \veps_{\rh} \n - \half \kappa \n \times (( \veps_{\ub} + \veps_{\ubh}) \times \db ) 
  + \alpha_2 (\veps_{\bb^t} - \veps_{\bbh^t} ), \Pis \bb^* \tblu{- \Pr_e \bb^* } } \\
  &\quad + \kappa \LRp{ \eeui, \db \times (\curl \Pis \bb^* ) } 
  - \kappa \LRp{ \eeuh, \db \times (\curl \eebis ) } 
  +  \kappa \LRp{ \eebh, \curl (\eeuis \times \db ) }  \\
  &\quad + \frac{\Rm}{\kappa} \LRp{ \ehh, \tred{- \ehis} } - \LRa{ \n \times \ebthh, \Pis \Hb^* } - \LRa{ \eebh \cdot \n + \alpha_3 (\eerh - \erhh), \Pis r^* \tblu{- \Pr_e \r^* } } \\
  &\quad + \LRa{ \eeuh, - \elis \n - \epis \n - m \eeuis }  + \LRa{ \eebh, \n \times \ehis - \eeris \n } .
}
To simplify it further, we note the identities
\algns{
  & \LRp{ \ehh, \curl \eebis } = \LRp{ \curl \ehh, \eebis} + \LRa{ \ehh, \n \times \eebis } = \LRa{ \ehh, \n \times \eebis } , \\
  & - \LRp{ \eerh, \Div \eebis } = \LRp{ \Grad \eerh, \eebis} - \LRa{\eerh \n, \eebis } = -\LRa{\eerh \n, \eebis } , \\
  & \LRa{ \n \times \veps_{\Hb} + \veps_{\rh} \n - \half \kappa \n \times (( \veps_{\ub} + \veps_{\ubh}) \times \db ) 
  + \alpha_2 (\veps_{\bb^t} - \veps_{\bbh^t} ), \Pis \bb^* - \Pr_e \bb^* }  \\
  & = \LRa{ \n \times \ehh + \erhh \n - \half \kappa \n \times (( \eeuh + \euhh) \times \db ) + \alpha_2 (\eebth - \ebthh), \Pis \bb^* { - \Pr_e \bb^*} } \\
  & \quad + \LRa{ \n \times \ehi - \half \kappa \n \times (( \eeui + \euhi) \times \db ) + \alpha_2 \eebti , \Pis \bb^* { - \Pr_e \bb^*} } , \\
  & -\LRa{\eebh \cdot \n, \Pis \r^* - \Pr_e \r^* } - \LRa{\eebh, \eeris \n} = 0 
}
where cancellations come from the properties of $\eebis$, $\Pr_e \bb^*$, $\Pr_e \r^*$ and the orthogonalities of $\erhi$ and $\ebthi$ to polynomials. 
We use these identities to the previous formula, more precisely, to the last two terms in the third line, to the fourth line, to the last terms in the last two lines. 
Then we obtain (here some terms are colored only for readers' convenience later)
\algns{ 
  &\LRp{\eeuh, \bs{\theta}} + \LRp{\eebh, \bs{\sigma}} \\ 
  &\; = \Rey \LRp{ \elh, -  \elis  } 
  - \LRa{ \tred{\euhh}, \Pis \Lb^* \n } 
  + \Rey \LRp{ \eli, \Pis \Lb^* } 
  - \LRa{ \tred{\euhh} , \Pis p^* \n } \\
  &\quad - \LRa{  m \tblu{\eeuh} + \half \kappa \db \times (\n \times (\tcyan{\eebth} + \tbrown{\ebhh})) + \alpha_1 (\tblu{\eeuh} - \tred{\euhh}), \Pis \ub^* {- \Pr_e \ub^*} } \\
  &\quad - \kappa \LRp{ \eebi, \curl (\Pis \ub^* \times \db) }  - \LRa{\n \times \ehh, \eebis  }  -  \LRa{ \tgreen{\eerh} \n,  \eebis  }  \\
  &\quad - \LRa{ \n \times \ehh + \tpurple{\erhh} \n - \half \kappa \n \times (( \tblu{\eeuh} + \tred{\euhh}) \times \db ) + \alpha_2 (\tcyan{\eebth} - \tbrown{\ebthh}), \Pis \bb^* { - \Pr_e \bb^*} } \\
  &\quad - \LRa{ \n \times \ehi - \half \kappa \n \times (( \eeui + \euhi) \times \db ) + \alpha_2 \eebti , \Pis \bb^* { - \Pr_e \bb^*} } \\
  &\quad + \kappa \LRp{ \eeui, \db \times (\curl \Pis \bb^* ) } 
  - \kappa \LRp{ \tblu{\eeuh}, \db \times (\curl \eebis ) } 
  + \kappa \LRp{ \eebh, \curl (\eeuis \times \db ) } \\ 
  &\quad + \frac{\Rm}{\kappa} \LRp{ \ehh, - { \ehis } } - \LRa{ \n \times \ebthh, \Pis \Hb^* } - \LRa{ \alpha_3 (\tgreen{\eerh} - \tpurple{\erhh}), \Pis r^* {- \Pr_e r^*} } \\
  &\quad - \LRa{ \tblu{\eeuh}, \elis \n + \epis \n + m \eeuis } + \LRa{ \tcyan{\eebth}, \n \times \ehis  } .
}
Here we used the facts $\n \times \eebh = \n \times \eebth$ and $\LRa{ \eebh, \n \times \ehis  } =\LRa{ \eebth, \n \times \ehis  }$.
Algebraic manipulations with \eqnref{eq:int-by-parts} yield
\algns{ 
  &\LRp{\eeuh, \bs{\theta}} + \LRp{\eebh, \bs{\sigma}} \\ 
  &\; = \Rey { \LRp{ \elh, -  \elis }} + \Rey { \LRp{ \eli, \Pis \Lb^* }} + \frac{\Rm}{\kappa} {\LRp{ \ehh, -  \ehis  }} + \kappa { \LRp{ \eebh, \curl (\eeuis \times \db ) } }\\ 
  &\quad - \kappa {\LRp{ \eebi, \curl (\Pis \ub^* \times \db) }} + \kappa {\LRp{ \eeui, \db \times (\curl \Pis \bb^* ) }} - \kappa {\LRp{ \eeuh, \db \times (\curl \eebis ) }}  \\ 
  &\quad {- \LRa{ \tred{\euhh}, \Pis \Lb^* \n + \Pis p^* \n - \alpha_1 (\Pis \ub^* - \Pr_e \ub^*) + \half \kappa \db \times (\n \times (\Pis \bb^* - \Pr_e \bb^*) ) }} \\
  &\quad {- \LRa{ \tblu{\eeuh} , \elis \n + \epis \n + m \eeuis + \half \kappa \db \times (\n \times ( \Pis \bb^* - \Pr_e \bb^* ) )  }} \\
  &\quad + {\LRa{ \tbrown{\ebthh}, \half \kappa \n \times \LRp{ (\Pis \ub^* - \Pr_e \ub^*) \times \db } + \n \times \Pis \Hb^* + \alpha_2 (\Pis \bb^* - \Pr_e \bb^*) }} \\
  &\quad + {\LRa{ \tcyan{\eebth}, \half \kappa \n \times \LRp{ (\Pis \ub^* - \Pr_e \ub^*) \times \db } + \n \times \ehis - \alpha_2 (\Pis \bb^* - \Pr_e \bb^*) }} \\ 
  &\quad + {\LRa{ \tpurple{\erhh}, \alpha_3 (\Pis r^* - \Pr_e r^*) - (\Pis \bb^* - \Pr_e \bb^*) \cdot \n }}  \\
  &\quad + {\LRa{ \tgreen{\eerh}, - \alpha_3 (\Pis r^* - \Pr_e r^*) - \eebis \cdot \n }} 
   {- \LRa{ \tblu{\eeuh} , \LRp{ m + \alpha_1 } \LRp{ \Pis \ub^* - \Pr_e \ub^* } }} \\
  &\quad + {\LRa{ \ehh, \n \times \eebis + \n \times (\Pis \bb^* - \Pr_e \bb^*) }} \\
  &\quad - {\LRa{ \n \times \ehi - \half \kappa \n \times (( \eeui + \euhi) \times \db ) + \alpha_2 \eebti , \Pis \bb^*  - \Pr_e \bb^* }} .  
}
We reduce this further, particularly from the third to ninth lines. 
For the third line, note that $\euhh$, $\db$, and the exact solution are single-valued, and $\LRa{ \euhh, \Pis \ub^* - \Pr_e \ub^*} = \LRa{ \euhh, \eeuis}$. Note also that $\Pis \bb^* - \Pr_e \bb^* = - \eebis + \ebhis$ holds. Then the third line is 
\algns{
  \LRa{ \euhh, \elis \n + \epis \n - \alpha_1 \eeuis - \half \kappa \db \times (\n \times ( - \eebis + \ebthis ) ) } = \tred{0}
}
where the vanishing equality comes from \eqref{eq:aLu-proj3}. For the fourth line, note first that 
\algns{
  \LRa{\eeuh, m \eeuis  + m (\Pis \ub^* - \Pr_e \ub^*) }  &= \LRa{\eeuh, m \euhis }, \\
  \LRa{\eeuh, \alpha_1 (\Pis \ub^* - \Pr_e \ub^*) }  &= - \LRa{\eeuh, \alpha_1 \eeuis }
}
where the second equality is valid because $\alpha_1$ is constant on each facet. Then the fourth line is 
\algns{
  &\LRa{ \eeuh, \alpha_1 \eeuis - m \euhis - \elis \n - \epis \n - \half \kappa \db \times (\n \times ( - \eebis + \ebthis ) ) } , \\
  &\quad = - \LRa{ \eeuh, m \euhis + \kappa \db \times (\n \times ( - \eebis + \ebthis ) ) }  
}
where we use \eqref{eq:aLu-proj3} again. Note that
\algns{
  \Pi^* \ub^* - \Pr_e \ub^* = -\eeuis + \euhis, \qquad \Pr_e ((\Pi^* \bb^* - \Pr_e \bb^*)^t) = - \Pr_e \veps^I_{(\bb^*)^t} .
} 
From these the fifth and sixth lines are rewritten as 
\algns{
  \LRa{ \ebthh, \half \kappa \n \times ( ( - \eeuis + \euhis ) \times \db ) + \n \times (- \ehis ) - \alpha_2 \veps^I_{(\bb^*)^t} }, \\
  \LRa{ \eebth, \half \kappa \n \times \LRp{ ( - \eeuis + \euhis ) \times \db } + \n \times \ehis + \alpha_2  \veps^I_{(\bb^*)^t} }  .
}
Further, the seventh, eighth, ninth lines are vanishing, i.e., 
\algns{
  \LRa{ {\erhh}, \alpha_3 (\Pis r^* - \Pr_e r^*) - (\Pis \bb^* - \Pr_e \bb^*) \cdot \n } &= \LRa{ \erhh, - \alpha_3 \eeris + \eebis \cdot \n } = \tred{0}, \\
  \LRa{ {\eerh}, - \alpha_3 (\Pis r^* - \Pr_e r^*) - \eebis \cdot \n } &= \LRa{ \eerh, \alpha_3 \eeris - \eebis \cdot \n } = \tred{0}, \\
  \LRa{ \ehh, \n \times \eebis + \n \times (\Pis \bb^* - \Pr_e \bb^*) }&= \tred{0}
}
where we used  \eqref{eq:abr-proj3} for the first two identities, and the fact $\n \times \eebis = \n \times \veps^I_{(\bb^*)^t}$ for the third identity. Finally, $\LRp{ \ehh, -  \ehis  } = \tred{0}$ by the definition of $\ehis$. 
As a consequence, we have a reduced formula
\algn{ 
  \label{eq:adj-id}
  &\LRp{\eeuh, \bs{\theta}} + \LRp{\eebh, \bs{\sigma}} \\ 
  &\quad = \Rey\underbrace{ \LRp{ \elh, -  \elis }}_{=:I_1}  
  + \Rey\underbrace{ \LRp{ \eli, \Pis \Lb^* }}_{=:I_2}  
  + \kappa\underbrace{ \LRp{ \eebh, \curl (\eeuis \times \db ) } }_{=:I_3} \notag \\ 
  &\qquad - \kappa \underbrace{\LRp{ \eebi, \curl (\Pis \ub^* \times \db) }}_{=:I_4} 
  + \kappa \underbrace{\LRp{ \eeui, \db \times (\curl \Pis \bb^* ) }}_{=:I_5} 
  - \kappa \underbrace{\LRp{ \eeuh, \db \times (\curl \eebis ) }}_{=:I_6} \notag \\ 
  &\qquad \underbrace{- \LRa{ \eeuh, m \euhis + \kappa \db \times (\n \times ( - \eebis + \ebthis ) ) } }_{=:I_{7}} \notag \\ 
  &\qquad + \underbrace{\LRa{ \ebthh, \half \kappa \n \times ( ( - \eeuis + \euhis ) \times \db ) + \n \times (- \ehis ) - \alpha_2 \veps^I_{(\bb^*)^t} }}_{=:I_{8}} \notag \\ \displaybreak
  &\qquad + \underbrace{\LRa{ \eebth, \half \kappa \n \times \LRp{ ( - \eeuis + \euhis ) \times \db } + \n \times \ehis + \alpha_2  \veps^I_{(\bb^*)^t} }}_{=:I_{9}} \notag \\ 
  &\qquad - \underbrace{\LRa{ \n \times \ehi - \half \kappa \n \times (( \eeui + \euhi) \times \db ) + \alpha_2 \eebti , \Pis \bb^*  - \Pr_e \bb^* }}_{=:I_{10}} . \notag  
}

\noindent {\bf Estimation for $I_1$:} Combining the estimate for $\elis$ 
and the regularity estimate (see \eqref{eq:elliptic-estimate}) gives
\algn{ 
  \label{eq:I1-estm}
  \snor{\Rey I_1} 
  \le \Rey \nor{\elh}_0 \nor{\elis}_0 
  \lesssim h \Rey \nor{\elh}_0 \nor{\bs{\theta}, \bs{\sigma}}_0 .
}

\noindent {\bf Estimation for $I_2$:} 
Using \eqnref{amhd1lin1n}, \eqref{eq:Lu-projection1}, \eqref{eq:Lu-projection2},
Lemma \lemref{gradProjectionBound}, and the regularity of the adjoint solutions, we have
\algn{ 
  \label{eq:I2-estm}
  | \Rey I_2 | 
  &\leq | \Rey \LRp{ \eli, \Grad \ub^* } | + | \Rey \LRp{ \eli, - \elis } | \\
  &= | \Rey \LRp{ \eli, \Grad (\ub^* - \Pr_1 \ub^*) } + \LRp{ \eeui \otimes \wb, \Grad \Pr_1 \ub^* } | 
  + | \Rey \LRp{ \eli, - \elis } | \notag \\
  &\lesssim \Rey \LRp{ h \nor{\eli}_0 \nor{\ub^*}_2 
  + \nor{\wb - \Pr_0 \wb}_\Linfty \nor{\eeui}_0 \nor{\ub^* }_1 
  + \nor{\eli}_0 \nor{\elis}_0 } \notag \hspace{-0.5em} \\
  &\lesssim h \Rey \LRp{\nor{\eli}_0 + \nor{\wb}_\Woneinfty \nor{\eeui}_0} \nor{\bs{\theta}, \bs{\sigma}}_0 . \notag
}

\noindent {\bf Estimation for $I_4$:} By the identity \eqref{eq:curlcurl}, it suffices to estimate 
\algns{ 
  &\LRp{\eb^I, \Pis \ub^* \LRp{ \Div \db } - \LRp{\Pis \ub^* \cdot \Grad} \db }, \quad \text{ and } \\ 
  &\LRp{\eb^I, \LRp{(\db - \Pr_0 \db) \cdot \Grad} \Pis \ub^*  - (\db - \Pr_0 \db) \LRp{\Div \Pis \ub^* }} .
}
By the triangle inequality, the inverse estimate, and \eqref{eq:elliptic-estimate}, we have 
\algns{ 
  \nor{\Grad \Pis \ub^* }_0 &\le \nor{\Grad (\Pis \ub^* - \Pr_1 \ub^*)}_0 + \nor{\Grad \Pr_1 \ub^*}_0 \\
  &\lesssim h^{-1} \nor{\Pis \ub^* - \Pr_1 \ub^*}_0 + \nor{\ub^*}_1 \\
  &\le h^{-1} (\nor{\eeuis}_0 + \nor{\ub^* - \Pr_1 \ub^*}_0 ) + \nor{\ub^*}_1 \\
  &\lesssim \nor{\bs{\theta},\bs{\sigma}}_0 ,
}
and we also have
\algns{ 
  \nor{\Pis \ub^* }_0 
  \le \nor{\eeuis}_0 + \nor{\ub^*}_0 
  \lesssim \nor{\bs{\theta},\bs{\sigma}}_0,
}
thus 
\algn{ 
  \label{eq:I5-estm}
  \snor{\kappa I_4}  
  &\lesssim \kappa \nor{\db}_\Woneinfty \nor{\eebi}_0 \LRp{ \nor{\Pis \ub^*}_0 
  + h\nor{\Grad \Pis \ub^*}_0 } \\
  &\lesssim \kappa \nor{\db}_\Woneinfty \nor{\eebi}_0 \nor{\bs{\theta},\bs{\sigma}}_0. \notag
}

\noindent {\bf Estimation for $I_5$:} 
By an argument similar to the estimate of $\nor{\Grad \Pis \ub^*}_0$ above, 
$\nor{\curl \Pis \bb^*}_0 \lesssim \nor{\bs{\theta},\bs{\sigma}}_0 $.
Since $I_5 = \LRp{ \eeui, (\db - \Pr_0 \db) \times (\curl \Pis \bb^* )}$, 
\algn{ 
  \label{eq:I6-estm}
  | \kappa I_5 | 
  &\lesssim h \kappa \nor{\db}_\Woneinfty \nor{\eeui}_0 \nor{\curl \Pis \bb^*}_0 \\ 
  &\lesssim h \kappa \nor{\db}_\Woneinfty \nor{\eeui}_0 \nor{\bs{\theta},\bs{\sigma}}_0. \notag
}

\noindent {\bf Estimation for  $I_6$ and $I_7$:} Integrating  $I_6$ by parts (see \eqnref{eq:int-by-parts}) we have
\algns{ 
  - \kappa I_6 
  = - \kappa \LRp{ \eebis, \curl (\eeuh \times \db) } 
  - \kappa \LRa{ \db \times (\n \times \eebis), \eeuh }.
}
Now we can write $-\kappa I_6 + I_7$ as
\algns{ 
  -\kappa I_6 + I_7 
  &= - \kappa \LRp{ \eebis, \curl (\eeuh \times \db) } 
  + \LRa{\eeuh, -m \euhis - \kappa \db \times (\n \times \ebthis) } .
}
For the first term, as in the estimate of $I_4$, it suffices to estimate
\algns{ 
  \LRp{\eebis , \eeuh \LRp{ \Div \db } - \LRp{\eeuh \cdot \Grad} \db } \; \text{and} \;
  \LRp{\eebis, \LRp{(\db - \Pr_0 \db) \cdot \Grad} \eeuh - (\db - \Pr_0 \db) \LRp{\Div \eeuh }}.
}
Invoking the H\"older's inequality and an inverse estimate we can bound the upper bounds of the first term as
\algns{ 
  \nor{\eebis}_0 \nor{\db}_\Woneinfty \nor{\eeuh}_0 
  \lesssim h  \nor{\db}_\Woneinfty \nor{\eeuh}_0  \nor{\bs{\theta}, \bs{\sigma}}_0 .
}
For the second term, we first observe that
\algns{ 
  &\LRa{\eeuh, -m \euhis - \kappa \db \times (\n \times \ebthis) } \\
  &\qquad = \LRa{\eeuh, - (\wb - \Pr_0 \wb) \cdot \n \euhis - \kappa (\db - \Pr_0 \db) \times (\n \times \ebthis) }.
}
By the H\"{o}lder inequality, 
\algns{ 
  \snor{ \LRa{\eeuh, -m \euhis - \kappa \db \times (\n \times \ebthis) } }
  \le \nor{\eeuh}_{\pOmegah} 
  \left( \nor{\wb - \Pr_0 \wb}_{L^\infty(\pOmegah)} \nor{\ub^* - \Pr_1 \ub^*}_{\pOmegah} \right. \\
  \qquad + \left. \kappa \nor{(\db - \Pr_0 \db)}_{L^\infty(\pOmegah)} \nor{\bb^* - \Pr_0 \bb^*}_{\pOmegah} \right),
}
where we used the fact that $\Pis \hat{\bb^*}$ and $\Pis \hat{\ub^*}$ are the best approximations on $\pOmegah$. 
By Lemma~\ref{lemma:cont-inv-trace} this can be estimated by 
$\LRp{ h \kappa \nor{\db}_\Woneinfty + h^2 \nor{\wb}_\Woneinfty } \nor{\eeuh}_{0} \nor{\bs{\theta}, \bs{\sigma}}_0$.
As a consequence, 
\algn{ 
  \label{eq:I7_9-estm}
  \snor{-\kappa I_6 + I_7}
  \lesssim  \LRp{ h \kappa \nor{\db}_\Woneinfty 
  + h^2 \nor{\wb}_\Woneinfty} \nor{\eeuh}_{0} \nor{\bs{\theta}, \bs{\sigma}}_0 .
}

\noindent {\bf Estimation for $I_3$, $I_{8}$, and $I_{9}$:} Integrating $I_3$ by parts (see \eqnref{eq:int-by-parts}) gives
\algns{ 
  \kappa I_3 = \kappa \LRp{\eeuis, \db \times \LRp{\curl \eebh} } + \kappa \LRa{\n \times \LRp{\eeuis \times \db }, \eebh }.
}
Some algebraic manipulations give 
\algns{ 
  \kappa I_3 + I_{8} + I_{9} &= \kappa \LRp{\eeuis, \db \times \LRp{\curl \eebh} } + \LRa{\eebth, \kappa \n \times (\euhis \times \db) } \\
  &\quad +  \LRa{ \ebthh - \eebth, \half \kappa \n \times ( ( \euhis - \eeuis  ) \times \db ) - \n \times \ehis  - \alpha_2 \veps^I_{(\bb^*)^t}  } .
}
The first term is easily estimated by 
\algns{ 
  |\LRp{\eeuis, \db \times \LRp{\curl \eebh} }| 
  &= |\LRp{\eeuis, (\db -\Pr_0 \db) \times \LRp{\curl \eebh} }| \\
  &\lesssim h \nor{\db}_\Woneinfty \nor{\eebh}_0 \nor{\bs{\theta}, \bs{\sigma}}_0 .
}
For the second term we have
\algns{ 
  \left| \LRa{\eebth, \kappa \n \times (\euhis \times \db) } \right| 
  &\le \kappa \nor{\eebth}_{\pOmegah} \nor{\ub^* - \Pr_k \ub^*}_{\pOmegah} \nor{\db - \Pr_0 \db}_{L^\infty(\pOmegah)} \\
  &\lesssim h^2 \kappa \nor{\db}_\Woneinfty \nor{\eebh}_{0} \nor{\bs{\theta}, \bs{\sigma}}_0,
}
where we used Lemma~\ref{lemma:cont-inv-trace} and the discrete trace inequality.
Using the Cauchy--Schwarz inequality, \eqref{eq:elliptic-estimate}, and Lemma~\ref{lemma:cont-inv-trace}, the third term is bounded by 
\algns{ 
  h^\half \LRp{\kappa \nor{\db}_\Linfty  + \alpha_2 + 1} \nor{\ebthh - \eebth}_{\pOmegah} \nor{\bs{\sigma}, \bs{\theta}}_0.
}
Combining the above estimates we conclude 
\begin{multline}
  \label{eq:I4_10_11-estm}
  | \kappa I_3 + I_{8} + I_{9} | 
  \lesssim \left( \vphantom{\nor{\ebthh - \eebth}_{\pOmegah}}
  h \kappa \nor{\db}_\Woneinfty \nor{\eebh}_0 \right.  \\
  \left. + h^\half \LRp{\kappa \nor{\db}_\Linfty  
+ \alpha_2 + 1} \nor{\ebthh - \eebth}_{\pOmegah} \right) \nor{\bs{\sigma}, \bs{\theta}}_0 .
\end{multline}

\noindent {\bf Estimation for $I_{10}$:} Using the approximation capability of the projector $\bs{\Pi}$ (see Appendix \secref{projections}) we have
\algn{ 
  \label{eq:I15-estm}
  &\snor{\LRa{ \n \times \ehi - \half \kappa \n \times (( \eeui + \euhi) \times \db ) + \alpha_2 \eebti , \Pis \bb^*  - \Pr_e \bb^* }} \\
  & \quad \lesssim h^\half \LRp{ \nor{\ehi}_{\pOmegah} + \kappa \nor{\db}_\Linfty \nor{\eeui}_{\pOmegah} 
  + \alpha_2 \nor{\eebi}_{\pOmegah} }  \nor{\bs{\theta} , \bs{\sigma}}_0 . \hspace{-0.6em} \notag
}

At this point we are ready to estimate  the discretization errors for $\Lb$, $\Hb$, $\ub$, and $\bb$.
For readability let us absorb  $\alpha_1$, $\alpha_2$, $\alpha_3$, $\Rey$, $\Rm$, $\kappa$, 
and the norms on $\db$ and $\wb$ into the implicit constants.

\begin{theorem} 
  \theolab{adj-estm}
  Suppose that $\alpha_1 - \half \nor{\wb}_\Linfty$,
  $\alpha_2$, and $\alpha_3$ are chosen to be positive constants independent of $h$, $\Rey$, $\Rm$,
  $\kappa$, $\db$, and $\wb$.
  Suppose also that $h$ is sufficiently small, i.e., 
  \algn{
    \label{eq:smallness_assumption}
    h \le C \ll 1 
  }
  with $C$ (depending on the coefficients in estimates \eqref{eq:I7_9-estm} and \eqref{eq:I4_10_11-estm}).
  Then it holds that 
  \algn{ \label{eq:Eh-estm}
  E_h \lesssim h^{k+\half} \nor{ \Lb, \Hb, \ub, \bb, \r, \p}_{k+1} 
}
and the following error estimates hold:
\algn{
  \label{eq:LJ-estm}  \nor{ \Lb - \LbH , \Hb - \HbH }_0 &\lesssim h^{k+\half} \nor{ \Lb, \Hb, \ub, \bb, \r, \p}_{k+1}, \\
  \label{eq:ub-estm}  \nor{\bb - \bbH , \ub - \ubH }_0 &\lesssim h^{k+1} \nor{ \Lb, \Hb, \ub, \bb, \r, \p}_{k+1}.
}
\end{theorem}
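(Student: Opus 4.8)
The plan is to first gain control of the two volume errors $\nor{\eeuh}_0$ and $\nor{\eebh}_0$ — which enter the energy functional $E_h$ only through the skeleton combinations $\eeuh-\euhh$ and $\eebth-\ebthh$, never by themselves — by means of the duality identity \eqref{eq:adj-id} together with the adjoint regularity assumption \eqnref{adjoint-regularity}, and then to substitute the resulting bound back into the energy estimate of Corollary~\theoref{energyEstimate} to close the argument.

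\emph{Step 1 (duality bound).} In \eqref{eq:adj-id} take $\thetab=\eeuh$ and $\sigmab=\eebh$, so that the left-hand side becomes $\nor{\eeuh}_0^2+\nor{\eebh}_0^2$. Insert the term-by-term bounds already established for $I_1,\dots,I_{10}$, namely \eqref{eq:I1-estm}, \eqref{eq:I2-estm}, \eqref{eq:I5-estm}, \eqref{eq:I6-estm}, \eqref{eq:I7_9-estm}, \eqref{eq:I4_10_11-estm}, and \eqref{eq:I15-estm}. Each of these is of the form $(\text{coefficient})\times\nor{\thetab,\sigmab}_0$, so dividing by $\nor{\thetab,\sigmab}_0=\nor{\eeuh}_0+\nor{\eebh}_0$ leaves
\[
  \nor{\eeuh}_0+\nor{\eebh}_0 \lesssim h\nor{\elh}_0 + h^\half\nor{\eebth-\ebthh}_{\pOmegah} + C_0(h)\LRp{\nor{\eeuh}_0+\nor{\eebh}_0} + \mathcal R,
\]
where $C_0(h)\sim h\kappa\nor{\db}_\Woneinfty+h^2\nor{\wb}_\Woneinfty$ (from \eqref{eq:I7_9-estm} and \eqref{eq:I4_10_11-estm}) and $\mathcal R$ collects interpolation-error terms — $\nor{\eli}_0$, $\nor{\eeui}_0$, $\nor{\eebi}_0$ (with an $h$ or no prefactor) and the skeleton quantities $\nor{\ehi}_{\pOmegah}$, $\nor{\eeui}_{\pOmegah}$, $\nor{\eebi}_{\pOmegah}$ (with an $h^\half$ prefactor). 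By the smallness hypothesis \eqref{eq:smallness_assumption}, with $C$ chosen so that $C_0(h)\le\half$, the self-referential term is absorbed on the left. Finally, since $E_h^2$ dominates both $\Rey\nor{\elh}_0^2$ and $\alpha_2\nor{\eebth-\ebthh}_{\pOmegah}^2$, we replace $\nor{\elh}_0$ and $\nor{\eebth-\ebthh}_{\pOmegah}$ by $E_h$, obtaining a bound of the form $\nor{\eeuh}_0+\nor{\eebh}_0\lesssim h^\half E_h+\mathcal R$.

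\emph{Step 2 (closing the energy estimate).} Substitute the Step-1 bound into the right-hand side of \eqnref{energy-est1}. The only dangerous terms there are the cross terms $\kappa\nor{\db}_\Woneinfty\LRp{\nor{\eebi}_0\nor{\eeuh}_0+\nor{\eeui}_0\nor{\eebh}_0}$; after the substitution they are $\lesssim(\nor{\eebi}_0+\nor{\eeui}_0)\LRp{h^\half E_h+\mathcal R}$, and a weighted Young inequality — legitimate precisely because $\nor{\eebi}_0+\nor{\eeui}_0$ is itself $O(h^{k+1})$ — absorbs the resulting $\varepsilon E_h^2$ on the left. What remains is a bound of $E_h^2$ purely by interpolation errors of the collective projection $\bs\Pi$. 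Invoking the approximation properties proved in Appendix~\secref{projections} — volume projection errors $O(h^{k+1})$, their traces on $\pOmegah$ only $O(h^{k+\half})$ — the skeleton terms dominate, and we conclude $E_h\lesssim h^{k+\half}\nor{\Lb,\Hb,\ub,\bb,\r,\p}_{k+1}$, i.e.\ \eqref{eq:Eh-estm}.

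\emph{Step 3 (individual estimates) and main obstacle.} For $\Lb$ and $\Hb$: from $\Rey\nor{\elh}_0^2+\frac{\Rm}{\kappa}\nor{\ehh}_0^2\le E_h^2$ and the triangle inequality $\nor{\veps_{\Lb}}_0\le\nor{\eli}_0+\nor{\elh}_0$ (likewise for $\Hb$), estimate \eqref{eq:LJ-estm} follows, the $h^{k+\half}$ rate being forced by $\nor{\elh}_0\lesssim E_h$. For $\ub$ and $\bb$: feed \eqref{eq:Eh-estm} into the Step-1 bound, so that $h^\half E_h$ and $\mathcal R$ are both $O(h^{k+1})$ (an $h$ or $h^\half$ prefactor against a volume or skeleton interpolation error), giving $\nor{\eeuh}_0+\nor{\eebh}_0\lesssim h^{k+1}\nor{\cdot}_{k+1}$; one more triangle inequality with the optimal interpolation bounds for $\nor{\eeui}_0$, $\nor{\eebi}_0$ yields \eqref{eq:ub-estm}. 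The crux is Step 1: because $\eeuh$ and $\eebh$ are coupled through the MHD interaction terms they must be estimated \emph{simultaneously}, which is why the dual problem \eqnref{adjoint-mhd} carries two independent data $(\thetab,\sigmab)$ and why the long reduction to \eqref{eq:adj-id} is needed. The delicate bookkeeping is to absorb the self-referential term of Step 1 and the $\varepsilon E_h^2$ of Step 2 with a mesh-independent constant $C$ in \eqref{eq:smallness_assumption} depending only on $\kappa\nor{\db}_\Woneinfty$, $\nor{\wb}_\Woneinfty$, and the stabilization parameters, and to track that the only loss of half a power occurs in the trace estimates on $\pOmegah$ — exactly what separates the optimal $h^{k+1}$ rate for $\ub,\bb$ from the quasi-optimal $h^{k+\half}$ rate for $\Lb,\Hb$.
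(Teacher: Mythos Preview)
Your proposal is correct and follows essentially the same three-step architecture as the paper: duality with $\thetab=\eeuh$, $\sigmab=\eebh$ in \eqref{eq:adj-id} plus the $I_1$--$I_{10}$ bounds to control $\nor{\eeuh,\eebh}_0$ by $h^{1/2}E_h+\mathcal R$ after absorbing the self-referential term via \eqref{eq:smallness_assumption}; substitution into Corollary~\theoref{energyEstimate} with Young's inequality to close on $E_h$; then triangle inequalities for the individual estimates. One cosmetic remark: your justification ``legitimate precisely because $\nor{\eebi}_0+\nor{\eeui}_0$ is itself $O(h^{k+1})$'' for the Young step is not quite the point---the absorption of $\varepsilon E_h^2$ works for any fixed small $\varepsilon$ regardless of the size of the interpolation errors, and the paper simply writes $E_h^2\lesssim\nor{\ehi,\eeui,\eebi}_{\pOmegah}^2+\nor{\eli,\eeui,\eebi}_0^2$ directly---but this does not affect correctness.
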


\begin{proof}
  We proceed by taking $\bs{\theta} = \eeuh$, $\bs{\sigma} = \eebh$ in \eqref{eq:adjoint-mhd}. If we use \eqref{eq:adj-id}, 
  the estimates \eqref{eq:I1-estm}--\eqref{eq:I15-estm}, and  Young's inequality, we can obtain 
  \algn{
    \eqnlab{ub-estimate-inter}
    \nor{\eebh, \eeuh}_0 
    &\lesssim \nor{\eebi}_0 + h \nor{\eli, \elh, \eeui}_0 + h \nor{\eeuh, \eebh}_0 \\
    &\quad + h^\half \nor{\ehi, \eeui, \eebi }_{\pOmegah} + h^\half \nor{\ebthh -\eebth}_{\pOmegah}, \notag
  }
  which can be simplified to become
  \algn{
    \eqnlab{ub-estimate-inter2}
    \nor{\eebh, \eeuh}_0 
    &\lesssim \nor{\eebi}_0 + h \nor{\eli, \elh, \eeui }_0  \\
    &\quad + h^\half \nor{\ehi, \eeui, \eebi }_{\pOmegah} + h^{\half} \nor{\ebthh -\eebth}_{\pOmegah}, \notag
  }
  if the constants that multiply $\nor{\eeuh}_0$ and $\nor{\eebh}_0$ in \eqref{eq:I7_9-estm} and \eqref{eq:I4_10_11-estm} are sufficiently small,
  which is true under the assumptions we have made on $\alpha_1$, $\alpha_2$, and $\alpha_3$ together with \eqref{eq:smallness_assumption}.
  The discretization error terms on the right hand side of \eqnref{ub-estimate-inter2} (i.e., terms with superscript ``$h$'')  
  are bounded by $E_h$ (see definition of $E_h$ in \eqref{eq:energy-est}). This implies
  \algn{ 
    \nor{\eebh,\eeuh}_0
    \lesssim \nor{\eebi}_0 + h \nor{\eli, \eeui }_0 + h^\half \nor{\ehi, \eeui, \eebi }_{\pOmegah} + h^{\half} E_h.
    \eqnlab{ub-estimate-inter1}
  }
  Applying Young's inequality to the right side of \eqnref{energy-est1} for $\nor{\eeuh}_0$, $\nor{\eebh}_0$, and using \eqnref{ub-estimate-inter1}, 
  we get 
  \[
    E_h^2 \lesssim \nor{\ehi, \eeui, \eebi}_{\pOmegah}^2 + \nor{\eli, \eeui, \eebi}^2_0.
  \]
  Then \eqref{eq:Eh-estm} follows from the approximation properties of $\ehi, \eeui, \eebi, \eli$ with the trace inequality Lemma~\ref{lemma:cont-inv-trace}  discussed in Appendix \secref{projections} and \secref{auxiliary}, respectively. Further, it gives 
  \[
    \nor{ \elh, \ehh}_0 + \nor{\eeuh-\euhh,\eebth-\ebthh,\eerh-\erhh}_\pOmegah \lesssim E_h \lesssim h^{k+\half} \nor{ \Lb, \Hb, \ub, \bb, \r, \p}_{k+1},
  \]
  where the first inequality is from the definition of $E_h$ in \eqnref{disc-energy}. Then \eqref{eq:LJ-estm} follows from the triangle inequality. 
  Finally, the above estimate with \eqnref{ub-estimate-inter1} and the triangle ineqality give \eqref{eq:ub-estm}.
\end{proof}

What remains is to estimate $\nor{\eph}_0$ and $\nor{\eerh}_0$, and to
that end we extend the argument in \cite{CesmeliogluCockburnNguyenEtAl13} for our MHD system.
\begin{theorem} 
  \label{thm:eph-estm}
  There holds:
  \algns{ 
    \nor{\eph}_0 &\lesssim \nor{ \elh, \eeuh, \eebh, \eebi }_0 + h^{\half} E_h \lesssim h^{k+\half} \nor{ \Lb, \Hb, \ub, \bb, \r, \p}_{k+1}, \quad k \ge 0 .
  }
\end{theorem}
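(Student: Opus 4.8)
The plan is to estimate $\nor{\eph}_0$ by a Bogovskii / inf--sup argument applied to the momentum error equation \eqref{eq:local_2_1_error2}, in the spirit of the Oseen pressure estimate of \cite{CesmeliogluCockburnNguyenEtAl13}, and then to feed in the rates already proved in Theorem~\theoref{adj-estm}. First I would dispose of the mean value of $\eph$: since $(\p,1)=(\pH,1)=0$ one has $(\eph,1)=-(\veps_p^I,1)$, which is $O(h^{k+1})$, so writing $\eph=(\eph-\bar c)+\bar c$ with $\bar c:=|\Omega|^{-1}(\eph,1)$ costs nothing at the claimed order (and nothing at all if the pressure part of the projection in Appendix~\secref{projections} has zero mean). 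Then, since $\Omega$ is bounded and Lipschitz, the continuous inf--sup condition for the Stokes problem provides a Bogovskii field $\bs{\phi}\in[H^1_0(\Omega)]^d$ with $\Div\bs\phi=\eph-\bar c$ in $\Omega$ and $\nor{\bs\phi}_1\lesssim\nor{\eph}_0$. Let $\bs\phi_h\in\VbM$ be a projection of $\bs\phi$ enjoying the usual HDG properties $(\bs\phi_h,\bs w)_\K=(\bs\phi,\bs w)_\K$ for $\bs w\in[\Poly_{k-1}(\K)]^d$, $\LRa{\bs\phi_h\cdot\n,\mu}_F=\LRa{\bs\phi\cdot\n,\mu}_F$ for $\mu\in\Poly_k(F)$, and $\nor{\bs\phi-\bs\phi_h}_0+h\nor{\Grad\bs\phi_h}_0\lesssim h\nor{\bs\phi}_1$ (the velocity block of the construction in Appendix~\secref{projections} supplies such a $\bs\phi_h$).

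The core step is to take $\vb=\bs\phi_h$ in \eqref{eq:local_2_1_error2}, summed over $\Omegah$. Integrating by parts $(\eph,\Div\bs\phi_h)$ and using the two projection identities above, one gets $(\eph,\Div\bs\phi_h)=(\eph,\Div\bs\phi)=\nor{\eph-\bar c}_0^2$, which is the term to be isolated. For the skeleton term $\LRa{-\elh\n+m\eeuh+\eph\n+\half\kappa\db\times(\n\times(\eebth+\ebthh))+\alpha_1(\eeuh-\euhh),\bs\phi_h}_{\pOmegah}$ I would invoke the flux-conservation identity \eqref{global_error2_cons_u}, together with $\bs\phi=\bs0$ on $\pOmega$, to subtract the single-valued $L^2(\Gh)$-projection $\Pr_e\bs\phi\in\MubM$ of $\bs\phi$, i.e.\ replace $\bs\phi_h$ by $\bs\phi_h-\Pr_e\bs\phi$ at no cost. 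The $\eph\n$-part of this term then vanishes: $\LRa{\eph\n,\bs\phi_h-\bs\phi}_{\pOmegah}=0$ by the trace property of $\bs\phi_h$ (as $\eph|_F\in\Poly_k(F)$ on each flat face), and $\LRa{\eph\n,\bs\phi-\Pr_e\bs\phi}_{\pOmegah}=0$ since $\eph\n|_F\in\MubM(F)$; hence no $\nor{\eph}_0^2$ is regenerated on the right-hand side.

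What remains is routine bookkeeping. The volume contributions $\Rey(\elh,\Grad\bs\phi_h)$, $-(\eeuh\otimes\wb,\Grad\bs\phi_h)$, $\kappa(\eebh,\Curl(\bs\phi_h\times\db))$ and the right-hand side $\kappa(\eebi,\Curl(\bs\phi_h\times\db))$ are controlled by Cauchy--Schwarz/H\"older, the algebraic expansion of $\Curl(\bs\phi_h\times\db)$ from \eqref{eq:curlcurl}, and $\nor{\Grad\bs\phi_h}_0\lesssim\nor{\bs\phi}_1$, which gives $\lesssim(\nor{\elh}_0+\nor{\eeuh}_0+\nor{\eebh}_0+\nor{\eebi}_0)\nor{\bs\phi}_1$. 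In the leftover skeleton term I bound $\nor{\bs\phi_h-\Pr_e\bs\phi}_{\pOmegah}\lesssim h^{1/2}\nor{\bs\phi}_1$ (approximation plus a trace inequality, Lemma~\ref{lemma:cont-inv-trace}), $\nor{\elh}_{\pOmegah},\nor{\eeuh}_{\pOmegah},\nor{\eebth}_{\pOmegah}\lesssim h^{-1/2}(\nor{\elh}_0,\nor{\eeuh}_0,\nor{\eebh}_0)$ by the inverse trace inequality, $\nor{\ebthh}_{\pOmegah}\le\nor{\eebth}_{\pOmegah}+\nor{\eebth-\ebthh}_{\pOmegah}$, and recall that $\nor{\eeuh-\euhh}_{\pOmegah}$ and $\nor{\eebth-\ebthh}_{\pOmegah}$ are $\lesssim E_h$ directly from the definition \eqref{eq:disc-energy}; this contributes $\lesssim(\nor{\elh}_0+\nor{\eeuh}_0+\nor{\eebh}_0+h^{1/2}E_h)\nor{\bs\phi}_1$. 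Collecting, dividing by $\nor{\eph-\bar c}_0$, using $\nor{\bs\phi}_1\lesssim\nor{\eph}_0$, and adding back $|\Omega|^{1/2}|\bar c|\lesssim h^{k+1}$ yields
\[
  \nor{\eph}_0\lesssim\nor{\elh,\eeuh,\eebh,\eebi}_0+h^{1/2}E_h .
\]
The stated rate then follows from $\nor{\elh}_0\lesssim h^{k+1/2}\nor{\Lb,\Hb,\ub,\bb,\r,\p}_{k+1}$ and $\nor{\eeuh,\eebh}_0\lesssim h^{k+1}$ via \eqref{eq:LJ-estm}--\eqref{eq:ub-estm}, from $h^{1/2}E_h\lesssim h^{k+1}$ via \eqref{eq:Eh-estm}, and from $\nor{\eebi}_0\lesssim h^{k+1}$ (projection approximation); the argument does not degrade at $k=0$.

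The main obstacle I anticipate is precisely the clean extraction of $\nor{\eph}_0^2$ in the core step: every $\eph$-dependent piece of the flux term must be converted either into $-\nor{\eph-\bar c}_0^2$ or into exactly zero, because any leftover $\eph^2$-type term would come with a constant that cannot be absorbed into the left-hand side. This rests on using the conservation identity \eqref{global_error2_cons_u} and the trace-orthogonality of the HDG projection in tandem, and it is exactly the point where the $O(h^{1/2})$ skeleton accuracy of $\bs\phi_h-\Pr_e\bs\phi$ only cancels --- rather than beats --- the $O(h^{-1/2})$ inverse-trace factors on the polynomial error functions, which is why the resulting pressure estimate is merely quasi-optimal, $\nor{\eph}_0=O(h^{k+1/2})$.
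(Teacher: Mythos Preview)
Your overall strategy is exactly the paper's: Bogovskii field plus the momentum error equation \eqref{eq:local_2_1_error2}, then flux conservation \eqref{global_error2_cons_u} to kill the $\eph\n$ boundary contribution, and finally inverse/continuous trace inequalities to land on $\nor{\elh,\eeuh,\eebh,\eebi}_0+h^{1/2}E_h$. The final inference of the $h^{k+1/2}$ rate from Theorem~\theoref{adj-estm} and the projection estimates is also correct. Two minor points: since $\Pi p$ is the $L^2$ projection, $(\eph,1)=-(\veps_p^I,1)=0$ exactly, so your $\bar c$ detour is unnecessary; and the appendix velocity projection $\Pi\ub$ does not have the simple moment/trace properties you state.

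There is, however, a genuine technical gap in your ``core step'': the projection $\bs\phi_h\in[\Poly_k(\K)]^d$ you postulate, satisfying simultaneously $(\bs\phi_h-\bs\phi,\bs w)_\K=0$ for all $\bs w\in[\Poly_{k-1}(\K)]^d$ \emph{and} $\LRa{(\bs\phi_h-\bs\phi)\cdot\n,\mu}_F=0$ for all $\mu\in\Poly_k(F)$ on every face, does not exist. A dimension count shows these conditions over-determine $\bs\phi_h$ (in $d=3$: $3\binom{k+2}{3}$ volume conditions plus $4\binom{k+2}{2}$ face conditions exceed $\dim[\Poly_k(\K)]^3=3\binom{k+3}{3}$ by $\binom{k+2}{2}$). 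Consequently your claimed identity $\LRa{\eph\n,\bs\phi_h-\bs\phi}_{\pOmegah}=0$ cannot be obtained this way, and the clean extraction of $\nor{\eph}_0^2$ fails as written.

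The paper repairs exactly this point by using a well-posed projector $\Pit$ with volume orthogonality to $[\Poly_{k-1}(\K)]^d$ and normal-trace orthogonality only to $\{\mub\cdot\n:\mub\in[\Poly_k^{\perp}(\K)]^d\}$ (cf.\ the Cockburn--Sayas decomposition). With this $\Pit$ one does \emph{not} get $\LRa{\eph\n,\Pit\vel-\vel}=0$; instead the residual $\LRa{\eph\n,\vel-\Pit\vel}$ is kept, the term $(\elh,\Grad\Pit\vel)$ is integrated by parts twice to produce $(\elh,\Grad\vel)+\LRa{\elh\n,\Pit\vel-\vel}$, and then one uses $\LRa{(-\elh+\eph\Ir)\n,\vel}=\LRa{(-\elh+\eph\Ir)\n,\Pr_e\vel}$ together with \eqref{global_error2_cons_u} to eliminate both $\eph\n$ and $\elh\n$ simultaneously. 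Once you replace your $\bs\phi_h$ by $\Pit\vel$ and follow this manipulation, the rest of your bookkeeping goes through essentially unchanged.
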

\begin{proof}
  We consider a projection operator $\Pit : \LRs{H^1(\Omega)}^d \ra \LRs{\Poly_k(\Omegah)}^d$ defined by 
  \algns{ 
    \LRp{ \Pit \vel - \vel, \vb}_\K &= 0, \qquad \vb \in \LRs{\Poly_{k-1}(\K)}^d, \\
    \LRa{ (\Pit \vel - \vel) \cdot \n, \mub \cdot \n }_{\pd \K} &= 0, \qquad \mub \in \LRs{\Poly_k^{\perp}(\K)}^d,
  }
  for $\vel \in \LRs{H^1(\Omega)}^d$ and $\K \in \Omegah$, where $\Poly_k^{\perp}(\K)$ is the subspace of 
  $\Poly_k(\K)$ which is orthogonal to $\Poly_{k-1}(\K)$ in $L^2(\K)$. Its well-posedness is based on 
  the orthogonal decomposition (see \cite[Lemma~4.1]{CockburnSayas-DivConfStokes})
  \algns{
    \Poly_k(\pK) = \{ \vb \cdot \n|_\pK \,:\, \vb \in \LRs{\Poly_k^{\perp}(\K)}^d \} \oplus \{ \q |_\pK \,:\, \q \in \Poly_k^{\perp} (\K) \},
  }
  and it has optimal approximation property by the Bramble--Hilbert lemma. 

  Since $\Pi \p$ is the $L^2$-projection, from \eqref{eq:global_5_1} we have
  \algns{ 
    \LRp{\ep^h,1} = -(\ep^I,1) = 0,
  }
  that is, $\ep^h$ has zero mean. It is known \cite{GiraultRaviart89} that there exists a function 
  $\vel \in \LRs{H_0^1(\Omega)}^d$ that $\Div \vel = \eph$ and $\nor{\vel}_1 \lesssim \nor{\eph}_0$. 
  For such a $\vel$, we have
  \begin{align} 
    \eqnlab{eph}
    \nor{\eph}_0^2 &= \LRp{\eph, \Div \vel} = - \LRp{ \Grad \eph, \vel} + \LRa{ \eph, \vel \cdot \n} \\
    \notag &= - \LRp{ \Grad \eph, \Pit \vel } + \LRa{ \eph, \vel \cdot \n} \\ 
    \notag &= \LRp{\eph, \Div \Pit \vel} + \LRa{ \eph \n, \vel - \Pit \vel } 
  \end{align}
  where we have performed integration by parts twice and used the definition of the projector $\Pit$.
  Since the exact solution $\LRp{\Lb, \p, \ub, \bb}$ and its trace also satisfy 
  the HDG local (sub-) equation \eqref{eq:local_2_1}, we can add and subtract the corresponding projections in \eqnref{projections-tan} to obtain
  \algn{ 
    \eqnlab{eph-local}
    &\LRp{ \eph, \Div \Pit \vel} 
    = \LRp{ \elh, \Grad \Pit \vel} - \LRp{ \eeuh, (\wb \cdot \Grad) \Pit \vel } 
    + \kappa \LRp{ \eebh, \Curl \LRp{ \Pit \vel \times \db } } \notag\\
    &\quad  + \LRa{ m \eeuh - \elh \n + \eph \n + \half \kappa \db \times  \LRp{ \n \times \LRp{ \eebth + \ebthh } } + \alpha_1 \LRp{ \eeuh - \euhh }, \Pit \vel } \hspace{-0.5em} \\
    & \quad + \kappa \LRp{ \eebi, \Curl \LRp{ \Pit \vel \times \db } }, \notag
  }
  where we have taken $\Pit\vel$ as the test function in \eqnref{local_2_1}.

  Combining \eqnref{eph}  and \eqnref{eph-local} yields
  \algns{  
    \nor{ \eph }_0^2 &=  \LRp{ \elh, \Grad \Pit \vel } - \LRp{ \eeuh, (\wb \cdot \Grad) \Pit \vel } 
    + \kappa \LRp{ \eebh, \Curl \LRp{ \Pit \vel \times \db } } \\
    & \quad + \LRa{ m \eeuh - \elh \n + \half \kappa \db \times  \LRp{ \n \times \LRp{ \eebth + \ebthh } } + \alpha_1 \LRp{ \eeuh - \euhh }, \Pit \vel } \\
    & \quad + \kappa \LRp{ \eebi, \Curl \LRp{ \Pit \vel \times \db } } + \LRa{ \eph \n, \vel },
  }
  which can be further simplified using two facts: first, integrating by parts twice and using the definition of $\Pit$ give
  \algns{ 
    \LRp{ \elh, \Grad \Pit \vel } &= \LRp{ \elh, \Grad \vel } - \LRa{ \elh \n, \vel } + \LRa{ \elh \n, \Pit \vel };
  }
  and second, we combine the first equation in \eqref{global} and \eqnref{Lu-projection3} to have
  \algns{ 
    \LRa{ - \elh \n + \eph \n, \vel} &= \LRa{ - \elh \n + \eph \n, \Pr_e \vel} \\
    &= - \LRa{ m \eeuh + \half \kappa \db \times  \LRp{ \n \times \LRp{ \eebth + \ebthh } } + \alpha_1 \LRp{ \eeuh - \euhh }, \Pr_e \vel}.
  }
  In particular, we obtain 
  \algns{  
    \nor{ \eph }_0^2 &=  \LRp{ \elh, \Grad \vel } - \LRp{ \eeuh, (\wb \cdot \Grad) \Pit \vel } 
    + \kappa \LRp{ \eebh, \Curl \LRp{ \Pit \vel \times \db } } \\
    & \quad + \LRa{ m \eeuh + \half \kappa \db \times  \LRp{ \n \times \LRp{ \eebth + \ebthh } } + \alpha_1 \LRp{ \eeuh - \euhh }, \Pit \vel - \Pr_e \vel} \\
    & \quad + \kappa \LRp{ \eebi, \Curl \LRp{ \Pit \vel \times \db } } .
  }
  By the triangle and H\"{o}lder inequalities,
  \algns{
    &\nor{ \eph }_0^2
    \lesssim \nor{\elh}_0 \nor{\Grad\vel}_0
    + \nor{\wb}_\Linfty \nor{ \eeuh }_0 \nor{\Grad \Pit \vel}_0
    + \kappa \nor{\db}_\Woneinfty \nor{\eebh,\eebi}_0 \nor{\Grad \Pit \vel}_0 \\
    & +
    \left( \hspace{-0.2em} \nor{\wb}_\Linfty \nor{ \eeuh }_{\pOmegah}
    \hspace{-0.6em} + \kappa \nor{\db}_\Linfty \nor{ \eebth, \eebth - \ebthh}_{\pOmegah} 
    \hspace{-0.8em} + \alpha_1 \nor{ \eeuh - \euhh }_{\pOmegah} \hspace{-0.2em} \right)
    \nor{\Pit \vel - \Pr_e \vel}_{\pOmegah} \\
    &\lesssim \LRp{ \nor{ \elh, \eeuh, \eebh, \eebi }_0 + h^{\half} E_h } \nor{\eph}_0
  }
  where we have used Lemma \lemref{gradProjectionBound}, the
  approximation capability of $\Pit$ and the $L^2$-projection, definition of $E_h$ in \eqnref{disc-energy},  the property of $\vel$,
  and we absorb all mesh independent parameters into the implicit constant in the final inequality.
  As a consequence, we have $\nor{ \eph }_0 \lesssim \nor{ \elh, \eeuh, \eebh, \eebi }_0 + h^{\half} E_h$. Then the conclusion 
  follows from the triangle inequality and the estimates of $\nor{ \elh, \eeuh, \eebh, \eebi }_0$ and $E_h$.
\end{proof}
For an analogous result for $\nor{\eerh}_0$, we make use of the following Lemma.
\begin{lemma} \label{lemma-for-r}
  There exists some $\LRs{\vel \in H^1(\Omega)}^d$ such that $\Div \vel = \eerh$, $\n \times \vel = \bs{0}$ on $\pOmega$,
  and $\nor{\vel}_1 \lesssim \nor{\eerh}_0$.
\end{lemma}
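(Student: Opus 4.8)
The plan is to realize $\vel$ explicitly as the gradient of the solution of a scalar Poisson problem with homogeneous Dirichlet data, mirroring (but simplifying) the construction used for $\eph$ in Theorem~\ref{thm:eph-estm}. First I would observe that $\eerh = \Pi\r - \rH \in L^2(\Omega)$ and introduce the unique weak solution $\phi \in H^1_0(\Omega)$ of $\lap\phi = \eerh$ in $\Omega$ with $\phi = 0$ on $\pOmega$; existence, uniqueness, and the bound $\nor{\phi}_1 \lesssim \nor{\eerh}_0$ follow from the Lax--Milgram lemma and the Poincar\'e inequality (testing the weak form with $\phi$). Note that, in contrast with the pressure case, $\eerh$ need \emph{not} have zero mean, but this is harmless here since we only prescribe the tangential trace of $\vel$, leaving its normal trace (hence $\int_{\pOmega}\vel\cdot\n = \int_\Omega\eerh$) unconstrained.

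Next I would invoke elliptic regularity for the Dirichlet Laplacian: under the same structural hypotheses on $\Omega$ that underpin the adjoint regularity estimate \eqnref{adjoint-regularity} (a convex polytope, or sufficiently smooth boundary), one has $\phi \in H^2(\Omega)$ with $\nor{\phi}_2 \lesssim \nor{\lap\phi}_0 = \nor{\eerh}_0$. Setting $\vel := \Grad\phi$, we immediately get $\vel \in \LRs{H^1(\Omega)}^d$, $\Div\vel = \lap\phi = \eerh$, and $\nor{\vel}_1 \le \nor{\phi}_2 \lesssim \nor{\eerh}_0$, which are two of the three assertions.

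It then remains to check the tangential boundary condition. Since $\phi \in H^2(\Omega)$, the trace of $\Grad\phi$ on $\pOmega$ is well defined (in $\LRs{H^{1/2}(\pOmega)}^d$), and because $\phi$ vanishes identically on $\pOmega$ its surface gradient vanishes there; equivalently, on $\pOmega$ we have $\Grad\phi = (\pd_\n\phi)\,\n$, so $\n\times\vel = \n\times\Grad\phi = (\pd_\n\phi)(\n\times\n) = \bs{0}$. This yields all three claimed properties.

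I expect the $H^2$ elliptic-regularity step to be the only real subtlety: it is the point at which the (implicit) smoothness/convexity assumption on $\Omega$ is used, and it is needed not just for the norm bound but also to make sense of the trace of $\Grad\phi$ required for the boundary condition. This is consistent with the standing assumptions, since the same regularity of $\Omega$ is already required for \eqnref{adjoint-regularity}. As an alternative avoiding $H^2$ regularity, one could instead combine the Bogovski\u{\i}-type operator of \cite{GiraultRaviart89} used for $\eph$ with a divergence-free correction absorbing the tangential trace on $\pOmega$; I would mention this only as a remark, since the gradient construction above is the most transparent route.
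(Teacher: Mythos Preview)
Your argument is correct, but it rests on $H^2$ regularity for the homogeneous Dirichlet Laplacian, which you rightly isolate as the one real assumption. The paper's proof avoids this entirely and works for any bounded Lipschitz domain: it splits $\eerh$ into its mean $\overline{\eerh}$ and the mean-free part, handles the latter with the standard Bogovski\u{\i} operator producing $\vel_0 \in [H_0^1(\Omega)]^d$, and for the constant part builds $\vel_1 \in [H^1(\Omega)]^d$ with $\Div \vel_1 = \overline{\eerh}\,1_\Omega$ and boundary trace $\vel_1|_{\pOmega} = \phi\,\n$ for a smooth nonnegative bump $\phi$ on $\pOmega$ normalized so that $\int_{\pOmega}\phi = \overline{\eerh}\,|\Omega|$ (the existence of such $\vel_1$ with the right $H^1$ bound is taken from Galdi). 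The sum $\vel = \vel_0 + \vel_1$ then has purely normal trace on $\pOmega$, so $\n\times\vel = \bs{0}$. This is essentially the ``Bogovski\u{\i} plus correction absorbing the tangential trace'' alternative you sketch in your closing remark. What the paper's route buys is that the lemma stands on its own, independent of the domain-regularity hypotheses invoked for \eqnref{adjoint-regularity}; what your route buys is brevity and transparency, at the cost of tying the lemma to the same convexity/smoothness regime already implicitly in force for the duality argument. Both are legitimate under the paper's standing assumptions.
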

\begin{proof}
  Let $\overline{\eerh}$ be the mean-value of $\eerh$ in $\Omega$ and $1_{\Omega}$ be the indicator function on $\Omega$. It is well known that there exists $\vel_0$ in $\LRs{H_0^1(\Omega)}^d$ such that $\Div \vel_0 = \eerh - \overline{\eerh} 1_{\Omega}$, $\nor{\vel_0}_1 \lesssim \| \eerh - \overline{\eerh} 1_{\Omega} \|_0$. Since $\Omega$ is a bounded domain with polyhedral boundary, $\pd \Omega$ consists of finitely many piecewise smooth $(d-1)$-dimensional components, so there exists a nonnegative smooth function $\phi_0 \not = 0$ on $\pd \Omega$ supported on the interior of the smooth components. Let $\phi$ be the renormalized function of $\phi_0$ satisfying $\int_{\pd \Omega} \phi = \overline{\eerh}|\Omega|$. Then $\phi \n \in [H^{1/2}(\pd \Omega)]^d$ holds because $\phi$ is a smooth function on $\pd \Omega$, and there is $\Phi \in [H^1(\Omega)]^d$ such that $\Phi|_{\pd \Omega} = \phi \n$ and (cf. \cite[Theorem II.4.3]{galdi2011introduction})
  \algns{
    \nor{\Phi}_1 \lesssim \nor{\phi \n}_{H^{1/2}(\pd \Omega)} \lesssim \nor{\phi_0 \n}_{H^{1/2}(\pd \Omega)} \|{\overline{\eerh}} 1_{\Omega} \|_0 .
  }  
  According to a result in \cite[p.176]{galdi2011introduction}, there exists $\vel_1$ in $\LRs{H^1(\Omega)}^d$ such that $\Div \vel_1 = \overline{\eerh} 1_{\Omega}$, $\vel_1 = \phi \n$ on $\pd \Omega$, and also satisfies
  \algns{
    \nor{\vel_1}_1 &\lesssim \| \overline{\eerh} 1_{\Omega}\|_0 + \| \Div \Phi \|_0 \lesssim \| \overline{\eerh} 1_{\Omega}\|_0 + \| \Phi \|_1 \lesssim \| \overline{\eerh} 1_{\Omega}\|_0 .
  }
  The conclusion follows by taking $\vel = \vel_0 + \vel_1$.
\end{proof}
\begin{theorem}
  There holds:
  \algns{ 
    \nor{\r - \rH}_0 \lesssim  \nor{\ehh , \eeui, \eeuh}_0 + h^{\half} E_h \lesssim h^{k+\half} \nor{ \Lb, \Hb, \ub, \bb, \r, \p}_{k+1}, \quad k \ge 0 .
  }
\end{theorem}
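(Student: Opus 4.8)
The plan is to adapt, essentially line by line, the argument of Theorem~\ref{thm:eph-estm} for $\nor{\eph}_0$, with the Maxwell block of the HDG system playing the role of the Stokes block: here $\r$ is the Lagrange multiplier enforcing $\Div\bb = 0$ just as $\p$ enforces $\Div\ub = 0$, and \eqnref{local_5_1} contains $\rH$ only through $-\LRp{\rH, \Div\cb}_\K$ exactly as \eqref{eq:local_2_1} contains $\pH$ through $-\LRp{\pH, \Div\vb}_\K$. First I would split $\r - \rH = \eeri + \eerh$; the interpolation part satisfies $\nor{\eeri}_0 \lesssim h^{k+1}\nor{\r}_{k+1}$ by the approximation properties of $\bs{\Pi}$ (Appendix~\secref{projections}), so it suffices to bound $\nor{\eerh}_0$.

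To that end I would invoke Lemma~\ref{lemma-for-r} to obtain $\vel \in \LRs{H^1(\Omega)}^d$ with $\Div\vel = \eerh$, $\n\times\vel = \bs{0}$ on $\pOmega$, and $\nor{\vel}_1 \lesssim \nor{\eerh}_0$. Using the auxiliary projection $\Pit$ from the proof of Theorem~\ref{thm:eph-estm} (orthogonal to $\LRs{\Poly_{k-1}(\K)}^d$ in the interior and matching normal traces against $\LRs{\Poly_k^{\perp}(\K)}^d$ on $\pd\K$) and integrating by parts twice exactly as in \eqnref{eph},
\algns{
  \nor{\eerh}_0^2 = \LRp{\eerh, \Div\vel} = \LRp{\eerh, \Div\Pit\vel} + \LRa{\eerh\,\n, \vel - \Pit\vel}_\pOmegah .
}
Since the exact solution and its traces satisfy \eqnref{local}--\eqnref{BCs} (Lemma~\ref{lemma1}), I would take $\cb = \Pit\vel$ in the reduced error equation \eqnref{local_5_1_error2} to rewrite $\LRp{\eerh, \Div\Pit\vel}$ as $\LRp{\ehh, \Curl\Pit\vel} - \kappa\LRp{\eeuh + \eeui, \db\times(\Curl\Pit\vel)}$ plus the two flux integrals $\LRa{\n\times\ehh + \erhh\,\n - \half\kappa\,\n\times((\eeuh+\euhh)\times\db) + \alpha_2(\eebth-\ebthh), \Pit\vel}_\pOmegah$ and $\LRa{\n\times\ehi - \half\kappa\,\n\times((\eeui+\euhi)\times\db) + \alpha_2\eebti, \Pit\vel}_\pOmegah$.

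The decisive step is to tame the interface and boundary integrals. I would subtract the $L^2$-skeleton projection $\Pr_e\vel$ inside the flux integrals, using that the polynomial part of the flux satisfies $\LRa{\cdot,\vel}=\LRa{\cdot,\Pr_e\vel}$ on each face, the reduced second conservation equation \eqref{global_error2_cons_b} (with $\lambdab^t = \Pr_e\vel$), and the boundary identities $\erhh = 0$, $\n\times\vel = \bs{0}$ on $\pOmega$; this replaces $\Pit\vel$ by $\Pit\vel - \Pr_e\vel$ and gains a factor $h^{1/2}$ on the boundary terms, while the leftover $\LRa{\eerh\,\n, \vel - \Pit\vel}_\pOmegah$ is disposed of via the orthogonality built into $\Pit$ and into the $\r$-component of $\bs{\Pi}$, just as the analogous pressure boundary terms were treated through \eqref{global} and \eqnref{Lu-projection3} in Theorem~\ref{thm:eph-estm}. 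I expect this bookkeeping --- keeping track of the tangential versus normal boundary traces generated by the two $\Curl$ operators and checking that every contribution on $\pOmega$ disappears --- to be the main obstacle; it is exactly why Lemma~\ref{lemma-for-r} supplies a $\vel$ with vanishing tangential trace on $\pOmega$ rather than the $\vel \in \LRs{H_0^1(\Omega)}^d$ of the pressure argument (which is unavailable here, since $\eerh$ need not have zero mean).

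Once this is arranged, the Cauchy--Schwarz and H\"older inequalities, the inverse estimate $\nor{\Curl\Pit\vel}_0 \lesssim \nor{\vel}_1$, the approximation properties of $\Pit$, $\Pr_e$ and $\Pr_0\db$, and the discrete trace inequality Lemma~\ref{lemma:cont-inv-trace} give
\algns{
  \nor{\eerh}_0^2 \lesssim \LRp{\nor{\ehh, \eeuh, \eeui}_0 + h^{1/2}E_h}\nor{\eerh}_0
}
after absorbing $\Rey$, $\Rm$, $\kappa$, the $\alpha_i$ and the $\db,\wb$-norms into the implicit constant (the residual interpolation quantities $\nor{\ehi}_\pOmegah$, $\nor{\eebi}_0$, $\nor{\eebi}_\pOmegah$ being $O(h^{k+1})$ and hence harmless). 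Dividing by $\nor{\eerh}_0$ and combining with the bound on $\nor{\eeri}_0$ yields $\nor{\r - \rH}_0 \lesssim \nor{\ehh, \eeui, \eeuh}_0 + h^{1/2}E_h$. Finally I would insert the estimates of Theorem~\theoref{adj-estm}, namely $\nor{\ehh}_0 \lesssim E_h \lesssim h^{k+1/2}\nor{\Lb,\Hb,\ub,\bb,\r,\p}_{k+1}$ and $\nor{\eeuh}_0 \lesssim h^{k+1}\nor{\Lb,\Hb,\ub,\bb,\r,\p}_{k+1}$, together with $\nor{\eeui}_0 \lesssim h^{k+1}\nor{\ub}_{k+1}$, to conclude $\nor{\r - \rH}_0 \lesssim h^{k+1/2}\nor{\Lb,\Hb,\ub,\bb,\r,\p}_{k+1}$ for all $k \ge 0$.
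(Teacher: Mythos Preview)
Your plan is essentially the paper's own proof: invoke Lemma~\ref{lemma-for-r} for a $\vel$ with $\Div\vel=\eerh$ and $\n\times\vel=\bs{0}$ on $\pOmega$, write $\nor{\eerh}_0^2=\LRp{\eerh,\Div\Pit\vel}+\LRa{\eerh\,\n,\vel-\Pit\vel}$, expand the first term via the reduced error equation \eqnref{local_5_1_error2} with $\cb=\Pit\vel$, subtract $\Pr_e\vel$ in the flux integral using the conservation error equation, and bound everything against $\nor{\vel}_1\lesssim\nor{\eerh}_0$.

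One small correction to your bookkeeping: the leftover term $\LRa{\eerh\,\n,\vel-\Pit\vel}$ does \emph{not} vanish by orthogonality of $\Pit$ or of $\bs\Pi$. Rather, after you pull in the conservation equation and use that $\erhh$ is a skeleton polynomial (so $\LRa{\erhh\,\n,\Pr_e\vel}=\LRa{\erhh\,\n,\vel}$), the $\erhh\,\n$ piece of the flux combines with this leftover to produce $\LRa{(\eerh-\erhh)\,\n,\vel-\Pit\vel}$, which is bounded by $h^{1/2}\nor{\eerh-\erhh}_{\pOmegah}\,\nor{\vel}_1\lesssim h^{1/2}E_h\,\nor{\eerh}_0$ and lands exactly in the $h^{1/2}E_h$ contribution you already anticipated. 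With that adjustment your outline matches the paper's argument.
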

\begin{proof}
  First choose a function $\vel$ that satisfies the statements of Lemma~\ref{lemma-for-r},
  and consider 
  \algns{ 
    \nor{\eerh}_0^2 = \LRp{\eerh, \Div \vel} = \LRp{ \eerh, \Div \Pit \vel} + \LRa{ \eerh \n , \vel - \Pit \vel } .
  }
  From \eqref{eq:local_5_1_error2} with $\cb = \Pit\vel$, we have 
  \algns{ 
    \LRp{ \eerh, \Div \Pit \vel} &= \LRp{ \ehh, \Curl \Pit \vel } - \kappa \LRp{ \eeuh, \db \times \LRp{ \Curl \Pit \vel } } \\
    &\quad + \LRa{ \n \times \veps_{\Hb} + \veps_{\rh} \n - \half \kappa \n \times \LRp{ \LRp{ \veps_{\ub} + \veps_{\ubh} } \times \db } + \alpha_3 \LRp{ \veps_{\bb^t} - \veps_{\bbh^t} }, \Pit \vel } \\
    &\quad - \kappa \LRp{ \eeui, \db \times \LRp{ \Curl \Pit \vel } } .
  }
  If we use this to the previous identity, and also use the flux condition \eqref{second-conservation-error-equation} 
  with $\Pr_e \vel$ as the test function, we have 
  \algns{ 
    \nor{\eerh}_0^2 &= \LRp{ \ehh, \Curl \Pit \vel } - \kappa \LRp{ \eeuh, \db \times \LRp{ \Curl \Pit \vel } } \\
    &\quad + \LRa{ \n \times \veps_{\Hb} + \veps_{\rh} \n - \half \kappa \n \times \LRp{ \LRp{ \veps_{\ub} + \veps_{\ubh} } \times \db } + \alpha_3 \LRp{ \veps_{\bb^t} - \veps_{\bbh^t} }, \Pit \vel - \Pr_e \vel } \\
    &\quad - \kappa \LRp{ \eeui, \db \times \LRp{ \Curl \Pit \vel } } + \LRa{ \eerh \n , \vel - \Pit \vel } 
  }
  where the surface integrals on the domain boundary are zero due to the fact that $\n \times \vel = \bs{0}$ on $\pOmega$
  and \eqref{rhat-boundary-error-equation}.
  Note that 
  \algns{
    \LRa{ \veps_{\rh}^I \n, \Pit \vel - \Pr_e \vel } = 0 , \qquad \LRa{ \veps_{\rh}^h \n, \Pit \vel - \Pr_e \vel } = \LRa{ \veps_{\rh}^I \n, \Pit \vel - \vel } .
  }
  With these identities and \eqref{second-conservation-error-equation} we have 
  \algns{ 
    \nor{\eerh}_0^2 &= \LRp{ \ehh, \Curl \Pit \vel } - \kappa \LRp{ \eeuh, \db \times \LRp{ \Curl \Pit \vel } } \\
    &\quad + \LRa{ \n \times \veps_{\Hb}^h - \half \kappa \n \times \LRp{ \LRp{ \veps_{\ub}^h + \veps_{\ubh}^h } \times \db } + \alpha_3 ({ \veps_{\bb^t}^h - \veps_{\bbh^t}^h }), \Pit \vel - \Pr_e \vel } \\
    &\quad - \kappa \LRp{ \eeui, \db \times \LRp{ \Curl \Pit \vel } } + \LRa{ (\eerh - \erhh)\n , \vel - \Pit \vel } 
  }
  By the triangle, H\"{o}lder, and trace inequalities, 
  \algns{ 
    \nor{\eerh}_0^2 &\lesssim \nor{\ehh}_0 \nor{\Grad \Pit \vel}_0 + \kappa \nor{\eeuh}_0 \nor{\db}_{L^\infty} \nor{\Grad \Pit \vel}_0 + \nor{\eeui}_0 \nor{\vel}_1 \\
    &\quad + \LRp{ h^{-\half} \nor{\ehh, \eeuh}_0 + \nor{ \eeuh - \euhh, \eebth- \ebthh, \eerh - \erhh }_{0,\mc{E}_h} } h^{\half} \nor{\vel}_1 .
  }
  Since $\nor{\Grad \Pit \vel}_0 \lesssim \nor{\vel}_1$ and $\nor{\vel}_1 \lesssim \nor{\eerh}_0$, we have 
  \algns{ 
    \nor{\eerh}_0 \lesssim  \nor{\ehh , \eeui, \eeuh}_0 + h^{\half} E_h  .
  }
  The conclusion follows from the triangle inequality and the estimates of $\nor{\ehh , \eeui, \eeuh}_0$ and $E_h$. 
\end{proof}

\section{Numerical Results}
\seclab{numerics}

In this section we apply the proposed HDG scheme for 2D MHD problems. The
application for large-scale 3D problems is ongoing and will be
presented elsewhere.  The first problem we consider is the Hartmann
flow whose analytical solution exists and is one dimensional in
nature. The second problem is posed on a non-convex
domain to demonstrate the approximation capability of our proposed HDG
scheme though the non-convexity is not covered by our
theory.
To further challenge the proposed HDG approach,
we will consider a singular problem as  the third example.

Before presenting the results for each example, we make some general observations to differentiate the proposed HDG scheme
from the DG method of \cite{HoustonSchoetzauWei09}.
First, recalling the primary motivation of HDG schemes, we have a reduced global system to solve for,
which offers computational savings, especially for high-order solution of large-scale problems.
Second, the HDG scheme allows the convenience of equal polynomial order approximations of all unknowns
with direct approximations of $\Grad \ub$ and $\Curl \bb$. 
On the other hand, the DG method of \cite{HoustonSchoetzauWei09} employs polynomials of order $k-1$ and $k+1$
for $\pH$ and $\rH$, respectively, where $k$ is the order of approximation of $\ubH$ and $\bbH$, 
and $\Grad \ub$, $\Curl \bb$ are approximated by the derivatives of $\ubH$ and $\bbH$.
These differences make a direct comparison of the two methods difficult.
The DG scheme is proven to be optimal (converging with $\mc{O}({h^k})$) in the DG energy norms for sufficiently smooth solutions.
By the definitions of these norms in \cite{HoustonSchoetzauWei09}, $\ubH$, $\Grad \ubH$, $\bbH$, and $\Curl \bbH$
are proven to converge at worst with $\mc{O}(h^k)$ in the $L^2$ norm.
Optimal $L^2$ convergence of $\mc{O}(h^{k+1})$ for $\ubH$ and $\bbH$ is not proven in \cite{HoustonSchoetzauWei09}, 
but is demonstrated in all numerical examples with smooth solutions.
On the other hand, the HDG scheme is proven to be $L^2$ optimal ($\mc{O}({h^{k+1}})$) for $\ubH$ and $\bbH$ 
and quasi-optimal ($\mc{O}({h^{k+\half}})$) for the remaining local quantities.
These rates are demonstrated (and sometimes exceeded) in the numerical examples involving smooth solutions.
For the numerical example involving a singular solution, the HDG scheme gives similar $L^2$ error magnitudes and
convergence rates as in the DG scheme for $\ubH$ and $\bbH$ for the same polynomial order $k$.

We note that the numerical solutions $\ubH$ and $\bbH$ in the numerical results below satisfy the divergence-free constraint in the weak sense. However, the pointwise  divergence-free constraint can be fulfilled by 
a post-processing procedure
outlined in Appendix~\secref{appendix:postproc}, which shows that  convergence rates of the post-processed solutions 
to the exact solutions are as good as those of $\ubH$ and $\bbH$. For that reason, we do not use the post-processed solution in the error computation.

\subsection{Hartmann Flow}

In this numerical study, 
we consider a conducting incompressible fluid (liquid metal, for
example) in a domain $[-\infty,\infty] \times [-l_0,l_0] \times
[-\infty,\infty]$ (bounded by infinite parallel
plates in the $x_2$ direction \cite{HoustonSchoetzauWei09,Shadidetal2016_3DVMSMHD}).  The fluid is subject to a
uniform pressure gradient $G := -\frac{\partial p}{\partial x_1}$ in the
$x_1$ direction, and a uniform external magnetic field $b_0$ in the $x_2$
direction.  
If we consider no-slip boundary conditions on the $x_2$ boundaries, the resulting
flow pattern is known as \textit{Hartmann flow} which admits an
analytical solution 
which is one dimensional in nature.  
We assume that the infinite
parallel plates are perfectly insulating.
Here, we consider the Hartmann flow in a 2D domain  $\Omega = [0,0.025] \times [-1,1]$.
If we define the characteristic velocity as $u_0 := \sqrt{G l_0/\rho}$,
and consider the driving pressure gradient $G$ as a forcing term (incorporated in $\gb$),
the non-dimensionalized solution with
$\gb = (1,0)$, $\fb = (0,0)$ reads
\begin{align*}
  \ub &= \LRp{ \frac{\Rey}{\Ha \tanh(\Ha)} \LRs{1 - \frac{\cosh(\Ha \,x_2)}{\cosh(\Ha)}} , 0}, &
  p   &= - \frac{1}{2\kappa} \LRs{\frac{\sinh(\Ha \,x_2)}{\sinh(\Ha)} - x_2}^2 - p_0, \\
  \bb &= \LRp{ \frac{1}{\kappa} \LRs{\frac{\sinh(\Ha \,x_2)}{\sinh(\Ha)} - x_2} , 1}, &
  r   &= 0,
\end{align*}
where $\Ha:=\kappa\Rey\Rm$,
and $p_0$ is a constant that enables $p$ to satisfy the zero average pressure condition \eqref{eq:global_3_1}.
We set $\wb = \ub$ and $\db = \bb$, and we enforce the boundary conditions on $\pOmega$ using the exact solution, i.e.,  $\ub_D = \ub$, $\hb_D = \bbt$, and $\r_D = 0$. 

At refinement level $l$, the domain is divided into $l \times 80 l$ squares, each of which is divided into two triangles from top right to bottom left.
In Figure \ref{SJS:fig:Hartmann_tri_conv} are the convergence plots with $\Rey = \Rm = 7.07$ and $\kappa = 200$.
The convergence rates for $\LbH$, $\ubH$, $\pH$, $\HbH$, $\bbH$, and $\rH$ are observed to be approximately
$k+\half$, $k+1$, $k+\half$, $k+1$, $k+1$, and $k+1$, respectively.
These observed rates approximately match or exceed their respective theoretical rates of
$k+\half$, $k+1$, $k+\half$, $k+\half$, $k+1$, and $k+\half$
which were proven in Section~\secref{error_analysis}.

\begin{figure}
  \begin{center}
    \subfigure{\includegraphics[scale=0.34]{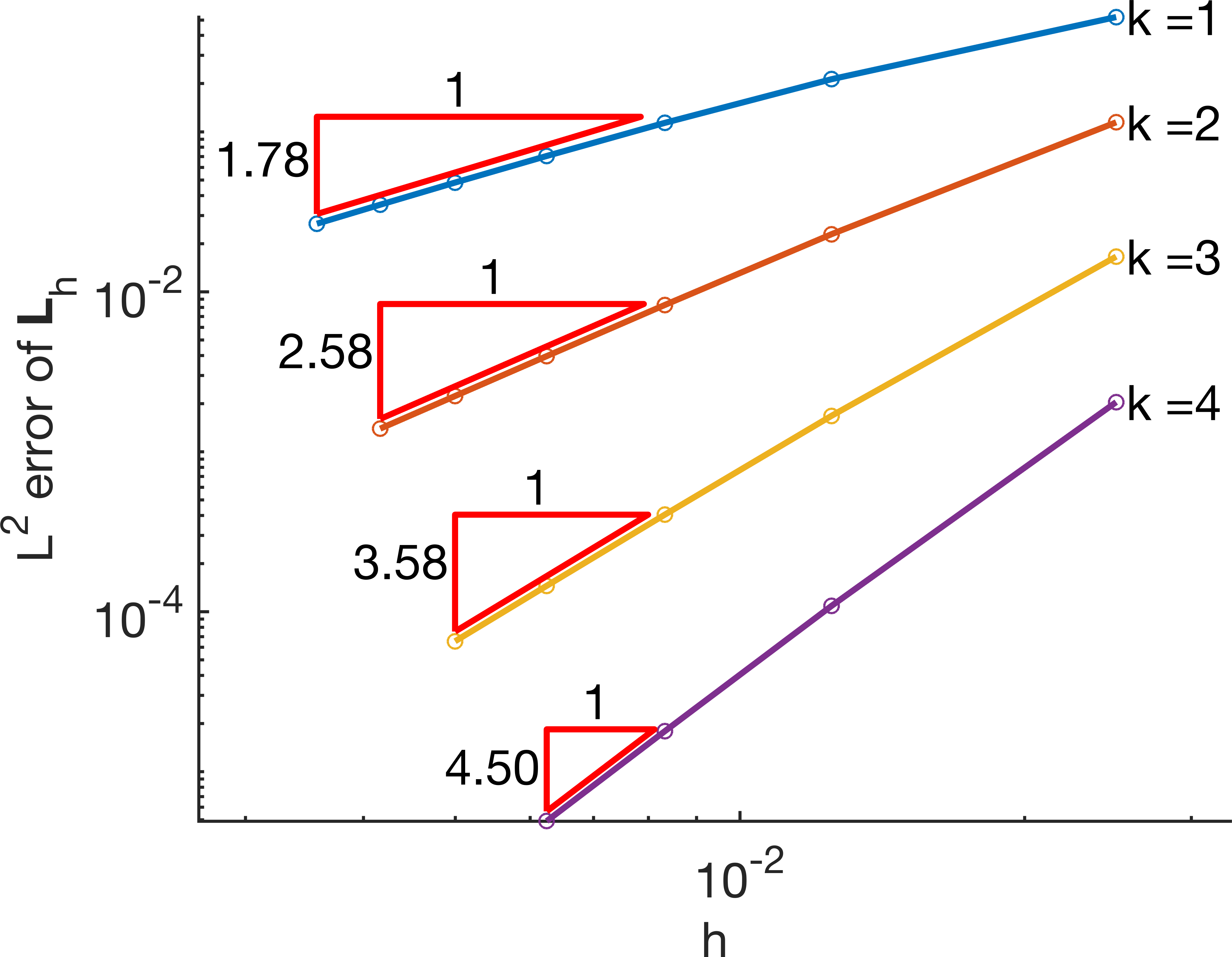}
    \label{SJS:fig:Hartmann_tri_L_conv}}
    \subfigure{\includegraphics[scale=0.34]{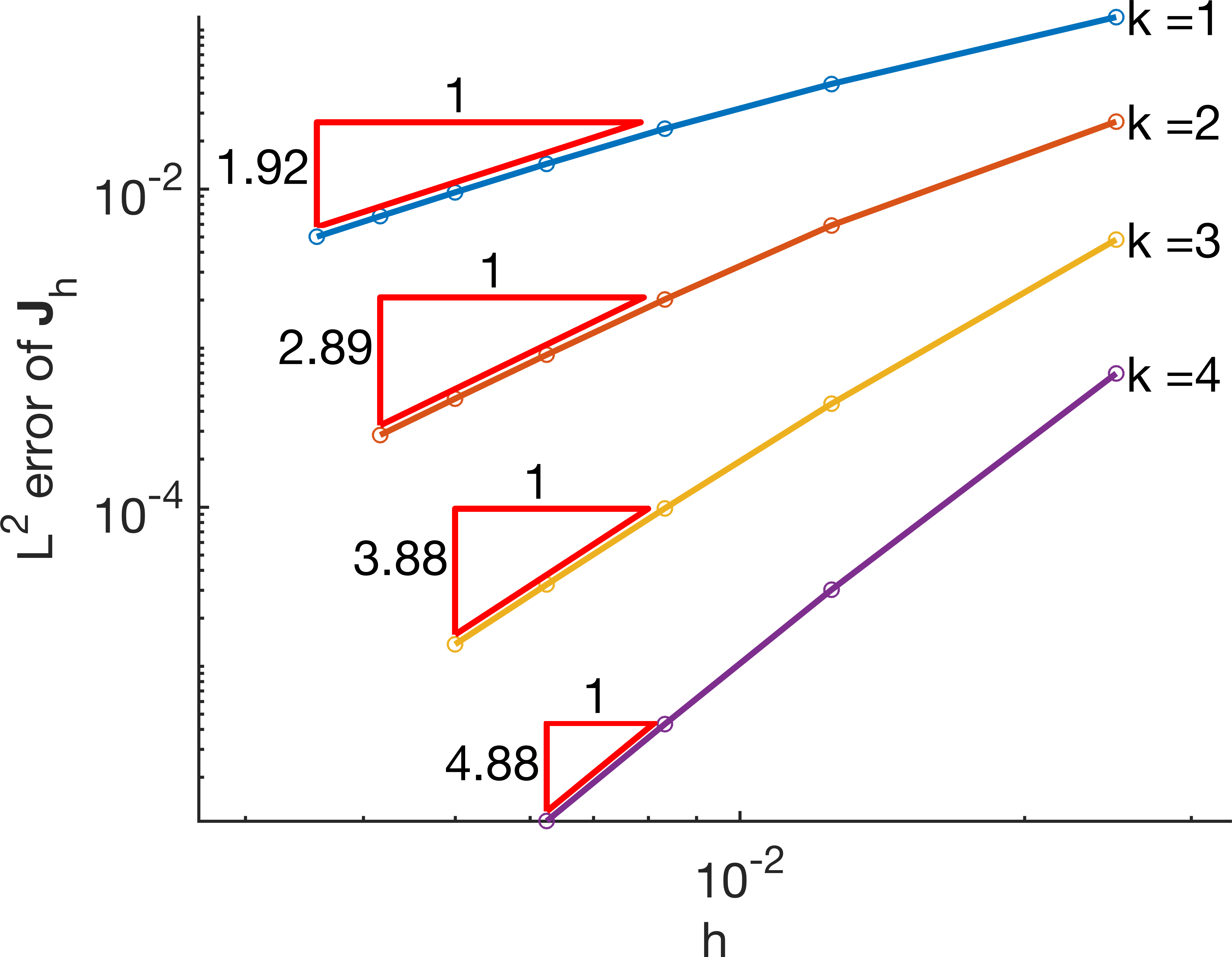}
    \label{SJS:fig:Hartmann_tri_J_conv}}
    \\ \vspace{-0.5em}
    \subfigure{\includegraphics[scale=0.34]{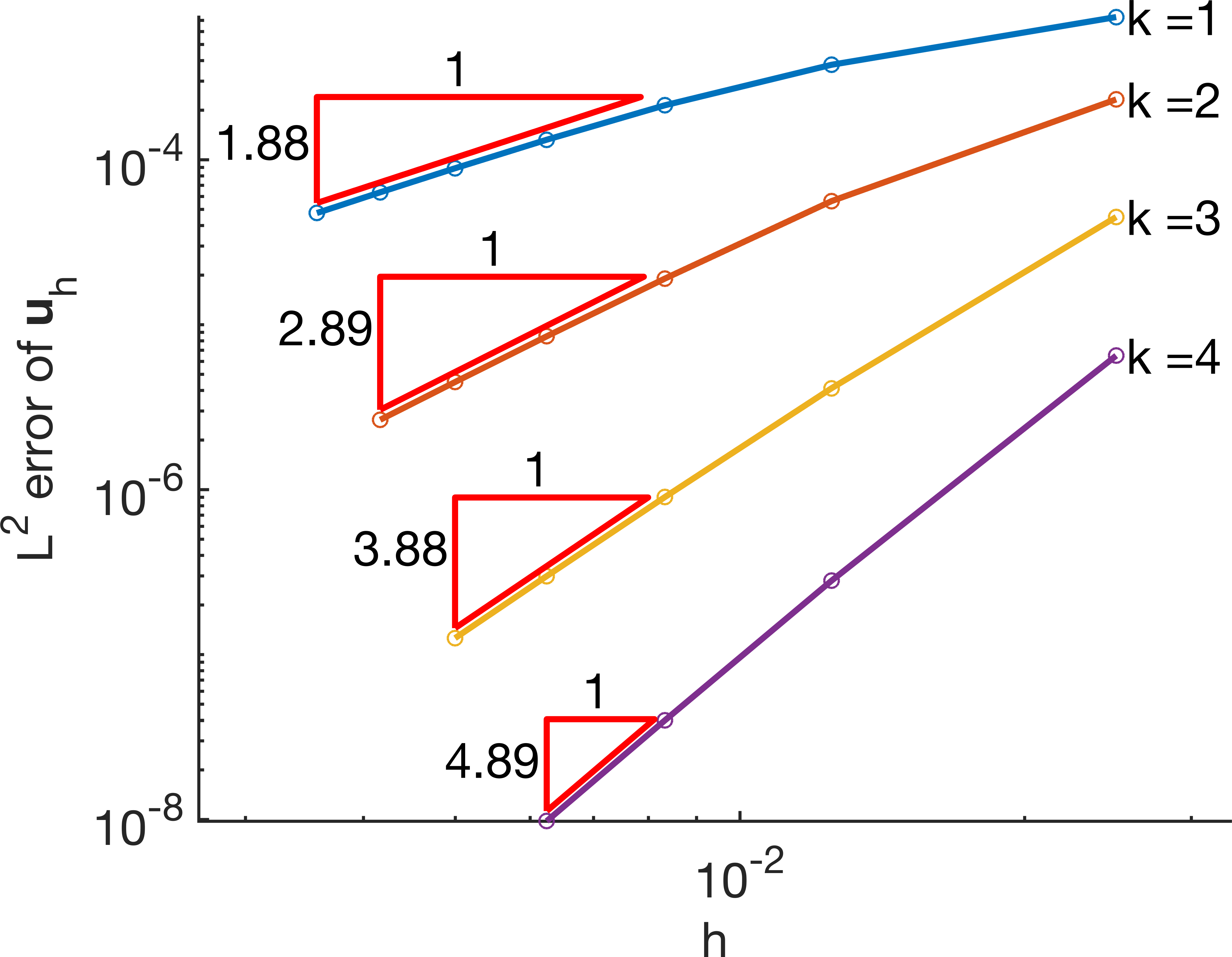}
    \label{SJS:fig:Hartmann_tri_u_conv}}
    \subfigure{\includegraphics[scale=0.34]{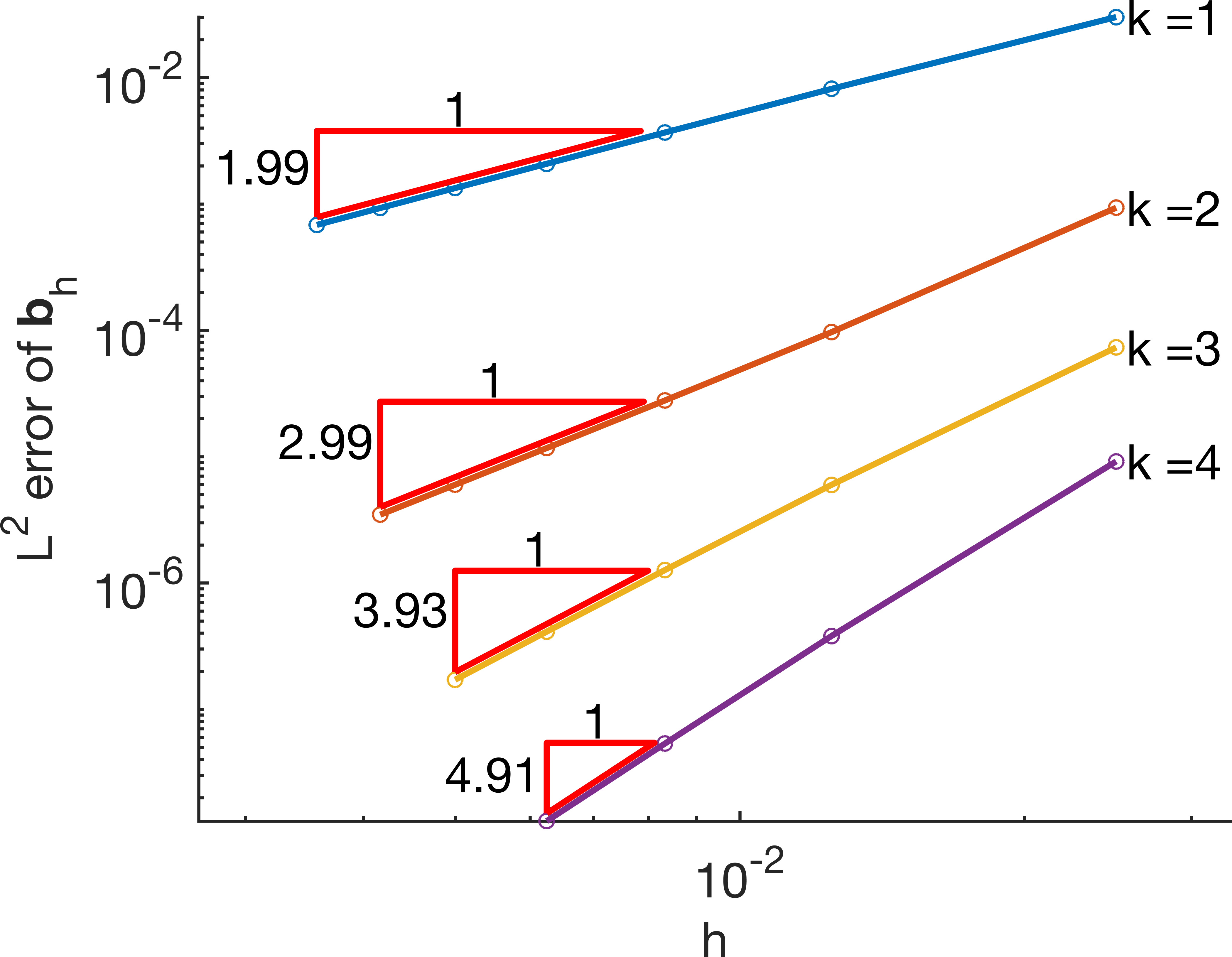}
    \label{SJS:fig:Hartmann_tri_b_conv}}
    \\ \vspace{-0.5em}
    \subfigure{\includegraphics[scale=0.34]{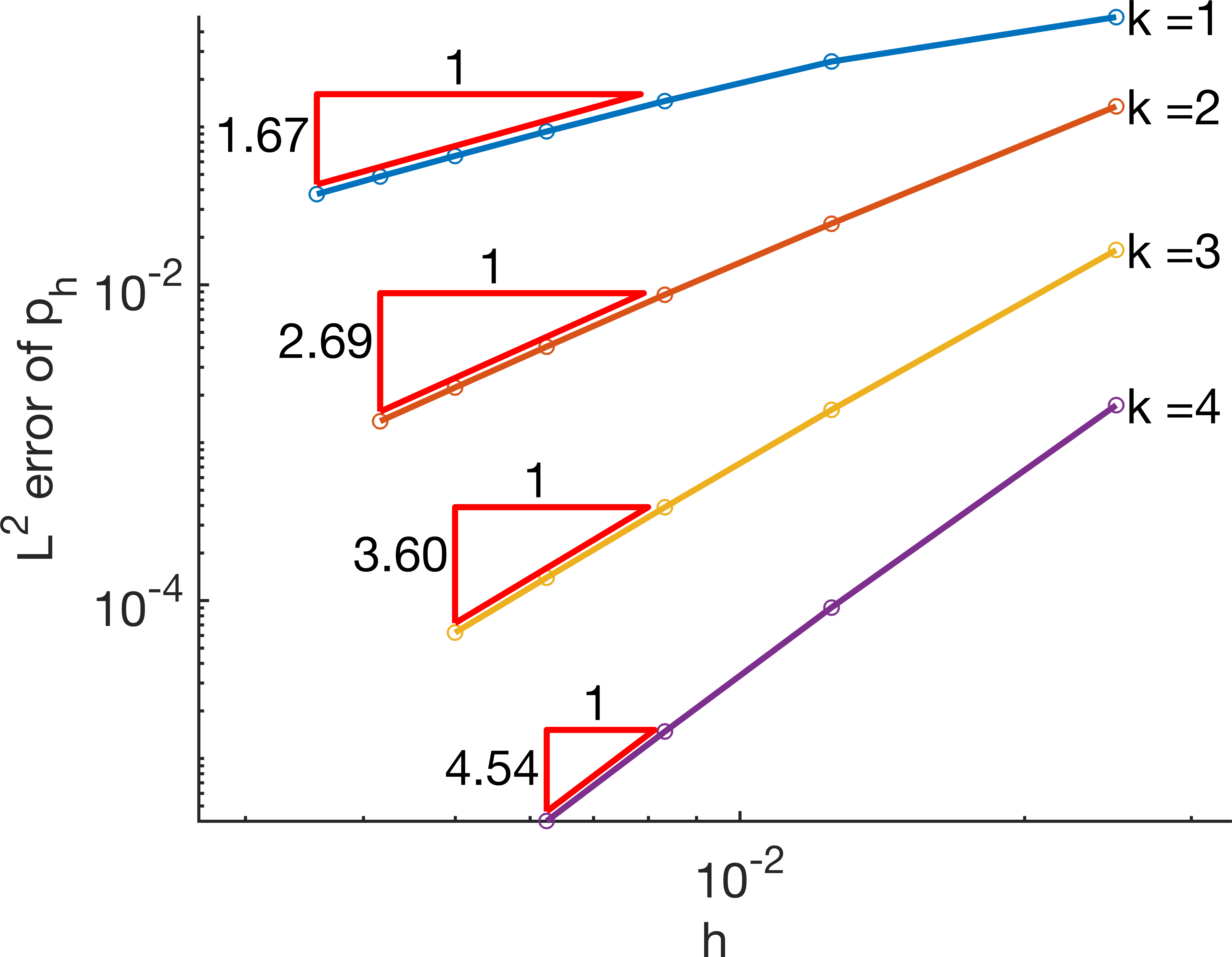}
    \label{SJS:fig:Hartmann_tri_p_conv}}
    \subfigure{\includegraphics[scale=0.34]{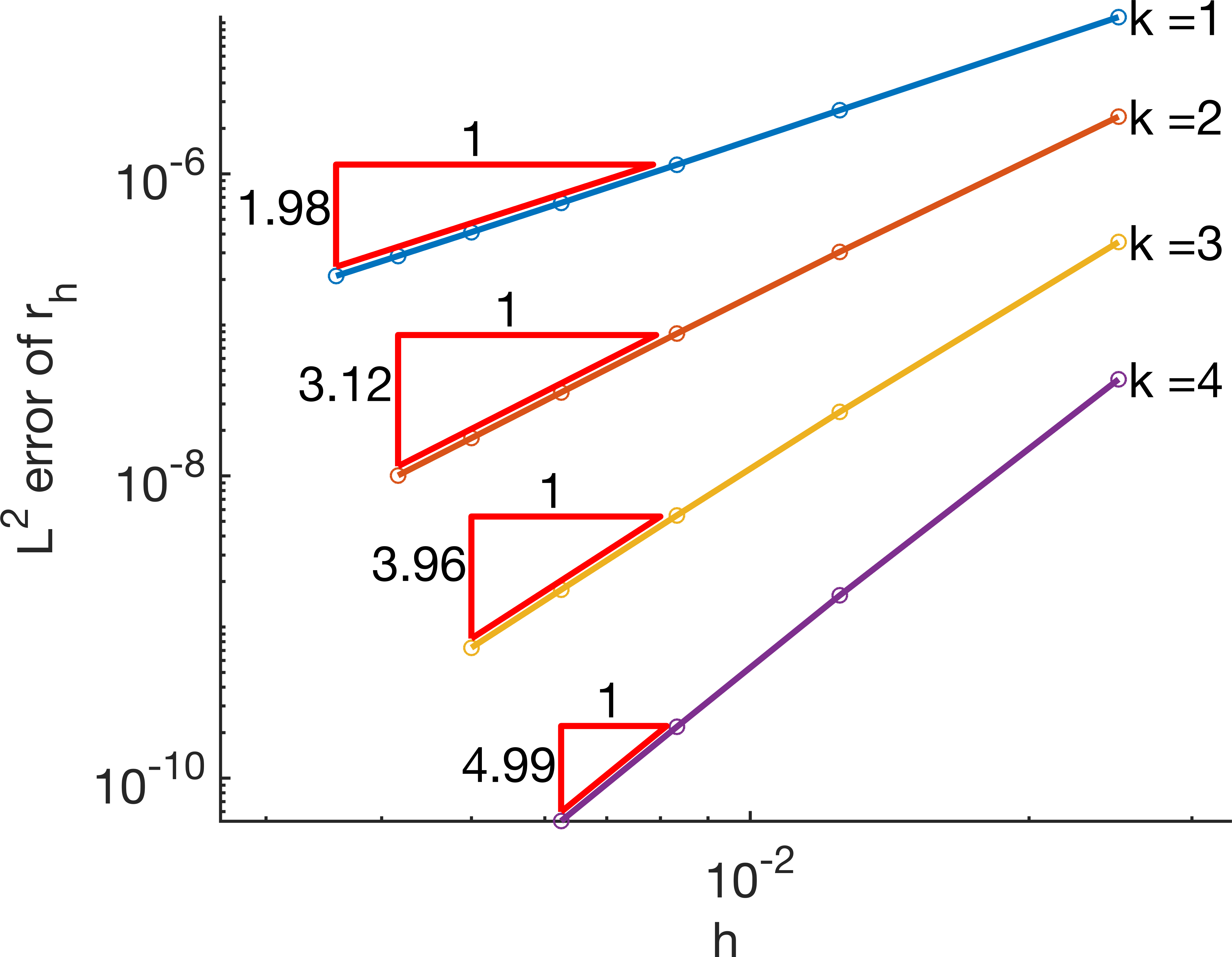}
    \label{SJS:fig:Hartmann_tri_r_conv}}
    \caption{Hartmann flow problem: $L^2$ convergence plots for $\LbH$, $\ubH$, $\pH$, $\HbH$, $\bbH$, and $\rH$.}
    \label{SJS:fig:Hartmann_tri_conv}
  \end{center}
\end{figure}

\subsection{Non-convex Domain}

This example illustrates the convergence of the HDG scheme applied
to a problem posed
on the non-convex domain
$\Omega = (-1,1) \times (-1,1) \setminus [0,1) \times (-1,0]$ in Figure \ref{SJS:fig:Lshaped_mesh} (similar to Section 5.1.1 in \cite{HoustonSchoetzauWei09}).
We take $\Rey = \Rm = \kappa = 1$, $\wb = (2,1)$, and $\db = (x_1,-x_2)$.
We set $\gb$ and $\fb$ such that  the manufactured solution for \eqnref{mhdlin} is the following
\begin{alignat*}{2}
  \ub \ && = \LRp{ -\LRs{x_2 \cos(x_2) + \sin(x_2)} e^{x_1} , x_2 \sin(x_2) e^{x_1} }, \quad
  p &= 2 e^{x_1} \sin(x_2) - p_0, \\
  \bb \ && = \LRp{ -\LRs{x_2 \cos(x_2) + \sin(x_2)} e^{x_1} , x_2 \sin(x_2) e^{x_1} }, \quad
  r &= -\sin(\pi x_1) \sin(\pi x_2),
\end{alignat*}
where $p_0$ is the constant that enables $p$ to satisfy the zero average pressure condition \eqref{eq:global_3_1}.
We use the exact solution to enforce the boundary conditions $\pOmega$, i.e., $\ub_D = \ub$, $\hb_D = \bbt$, and $\r_D = \r$. 

At refinement level $l$, each quadrant  of the domain (see Figure \ref{SJS:fig:Lshaped_mesh} for an example with $l=4$) is subdivided into $l \times l$ squares,
each of which is divided into two triangles from top right to bottom left.
In Figure \ref{SJS:fig:Lshaped_conv} are the convergence plots.  For
this problem, we observe the optimal convergence rates of $k+1$ for
all of the local variables, which matches or exceeds the rates proven
in Section~\secref{error_analysis}.  

\begin{figure}
  \begin{center}
    \includegraphics[scale=0.3]{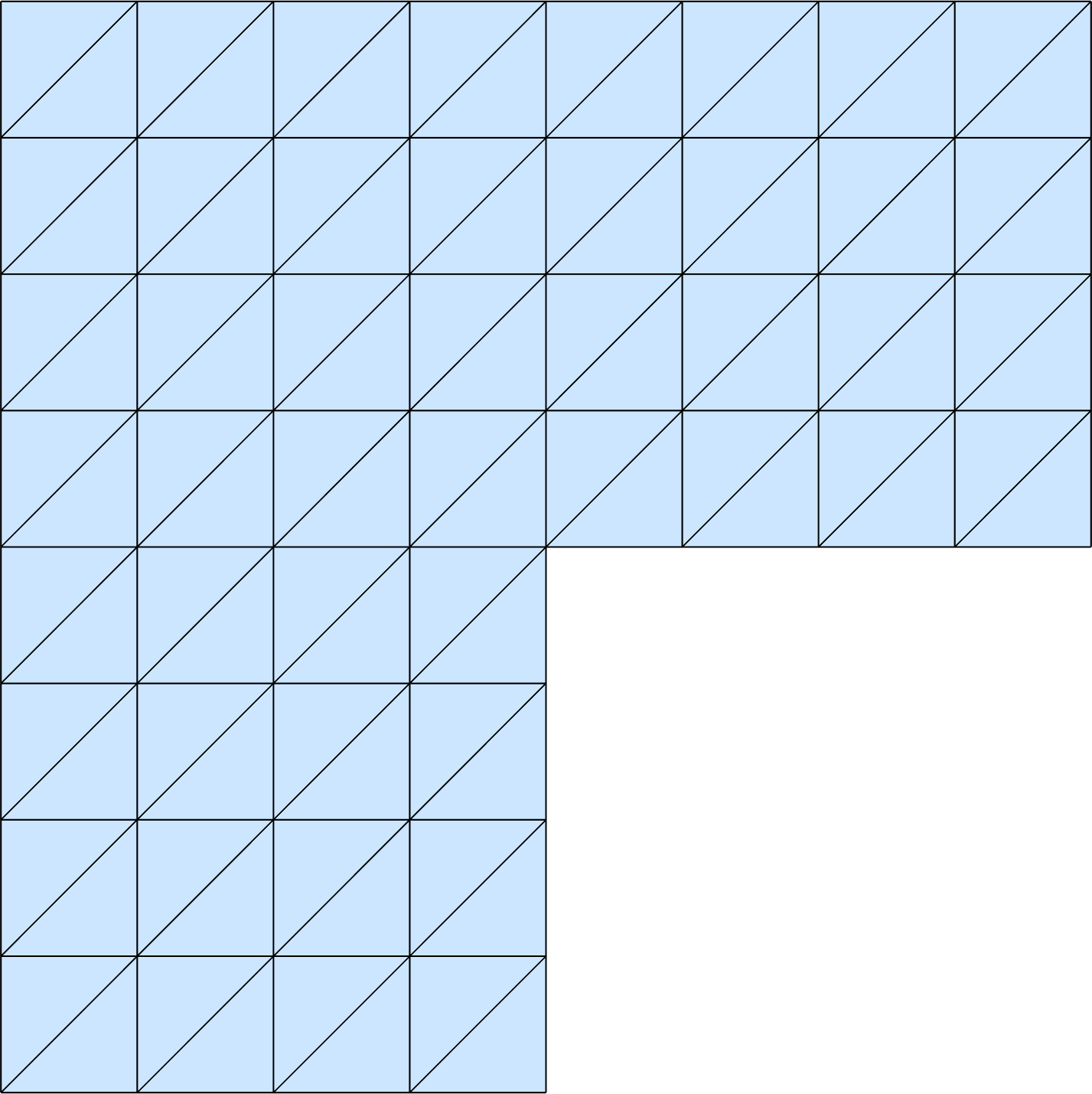}
    \caption{Geometry of the non-convex domain with a mesh at refinement level $l=4$.}
    \label{SJS:fig:Lshaped_mesh}
  \end{center}
\end{figure}

\begin{figure}
  \begin{center}
    \subfigure{\includegraphics[scale=0.34]{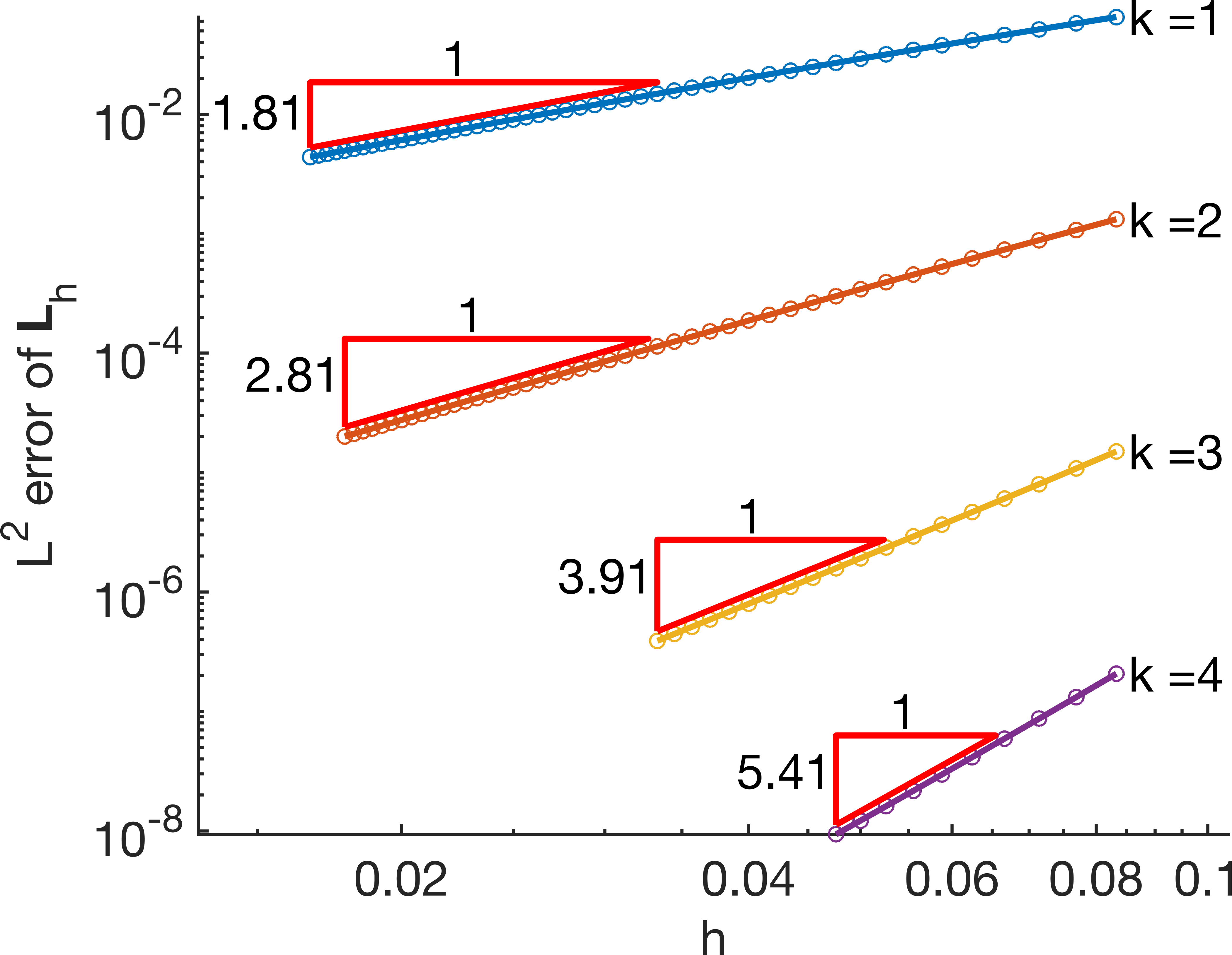}
    \label{SJS:fig:Lshaped_L_conv}}
    \subfigure{\includegraphics[scale=0.34]{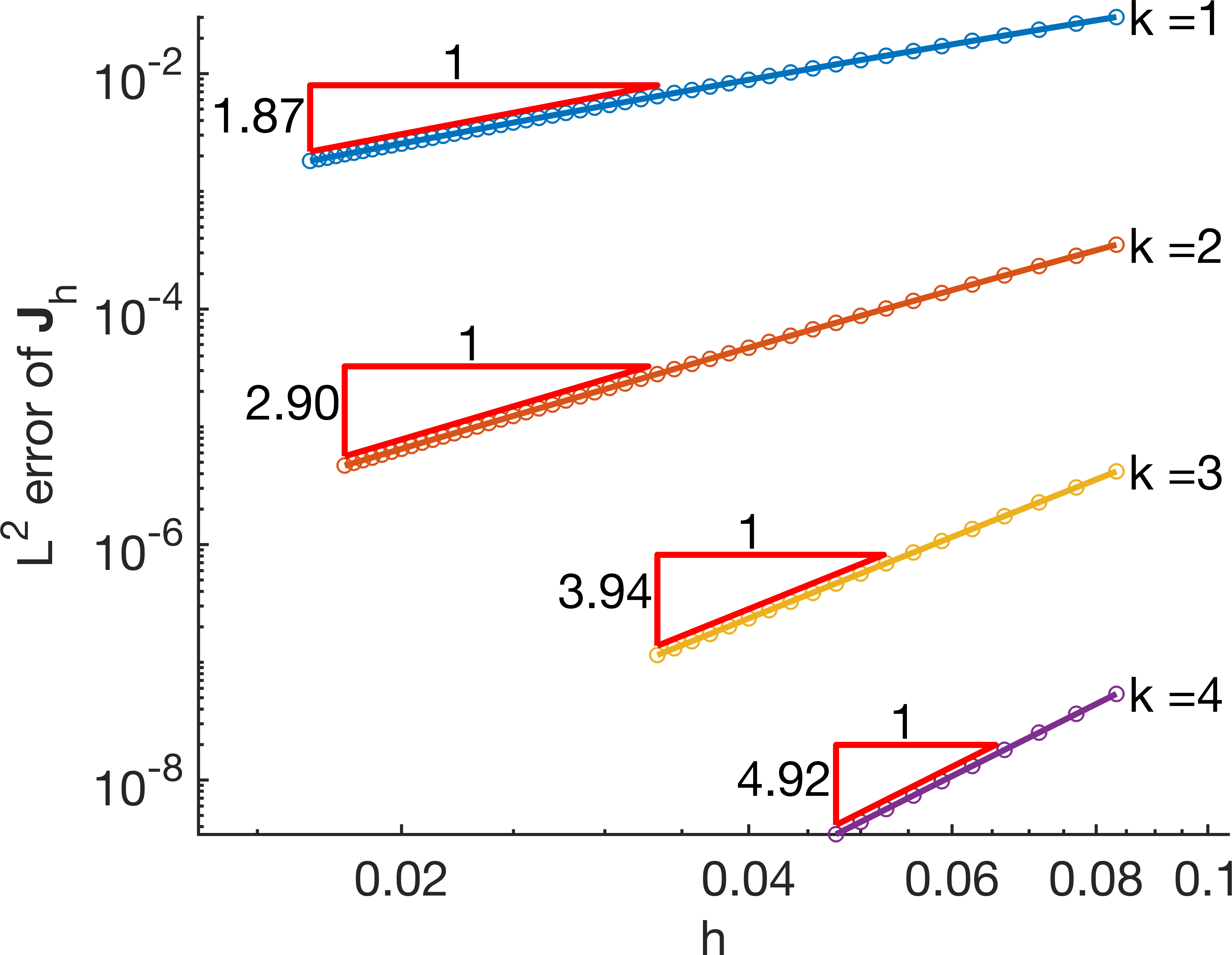}
    \label{SJS:fig:Lshaped_J_conv}}
    \\ \vspace{-0.5em}
    \subfigure{\includegraphics[scale=0.34]{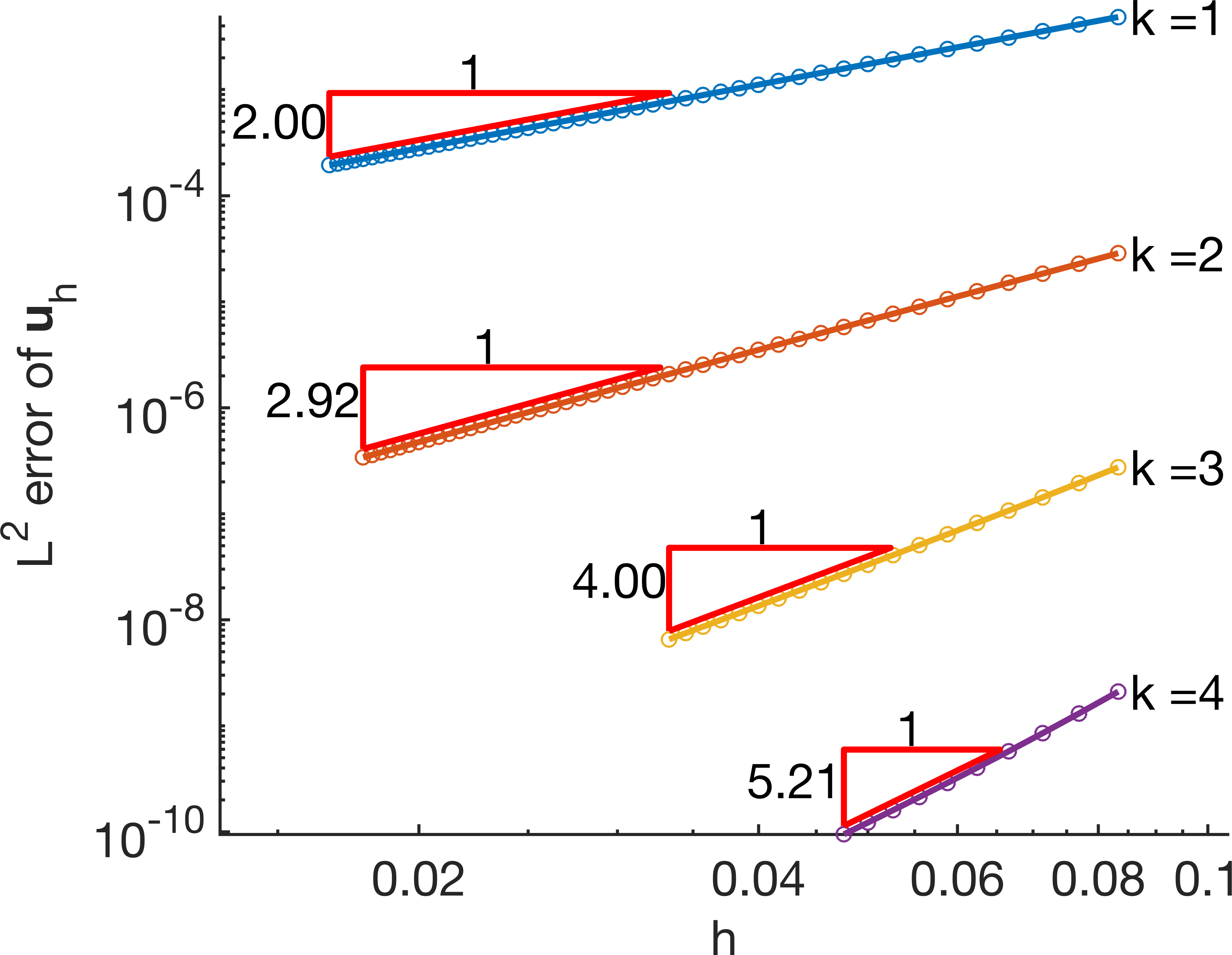}
    \label{SJS:fig:Lshaped_u_conv}}
    \subfigure{\includegraphics[scale=0.34]{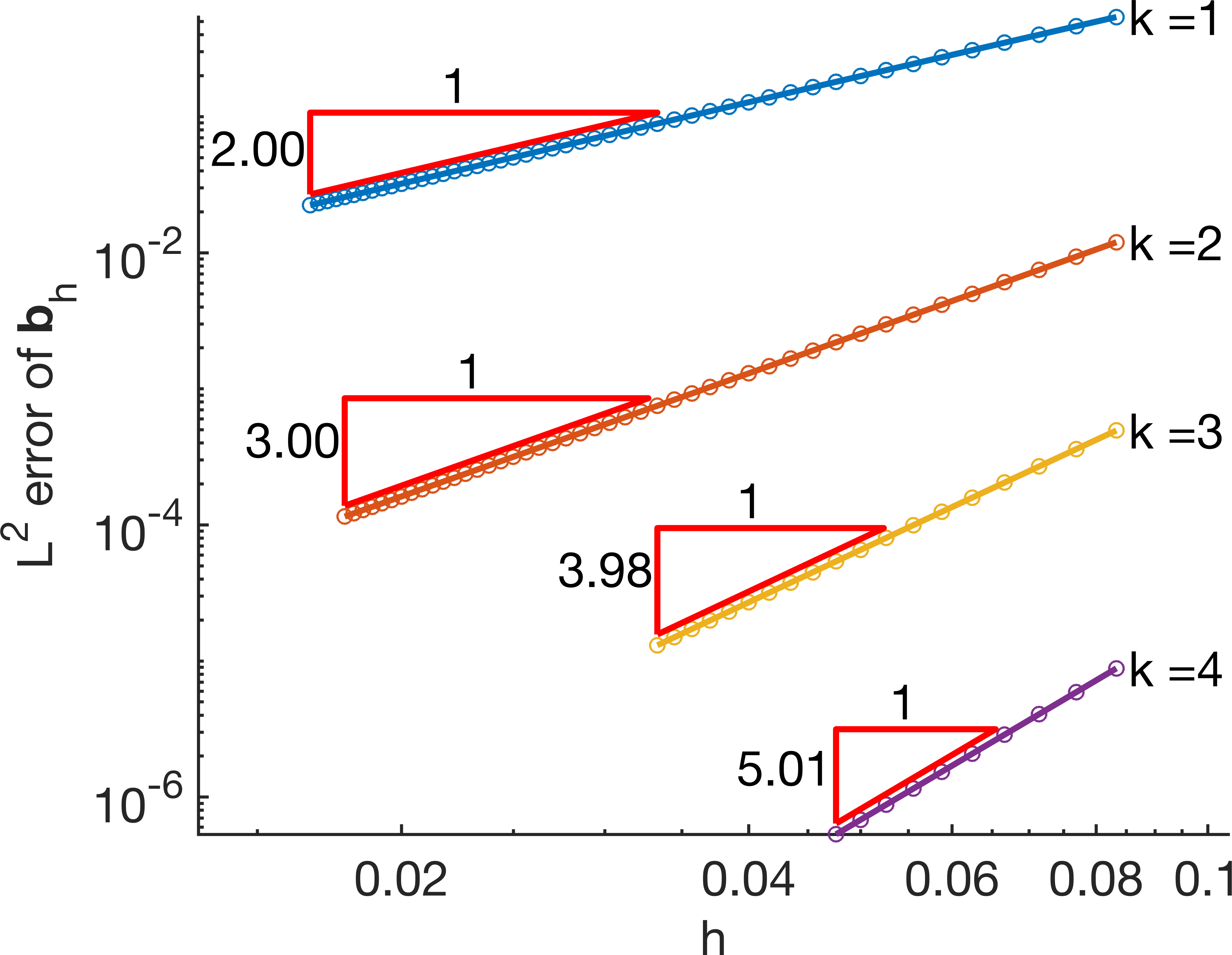}
    \label{SJS:fig:Lshaped_b_conv}}
    \\ \vspace{-0.5em}
    \subfigure{\includegraphics[scale=0.34]{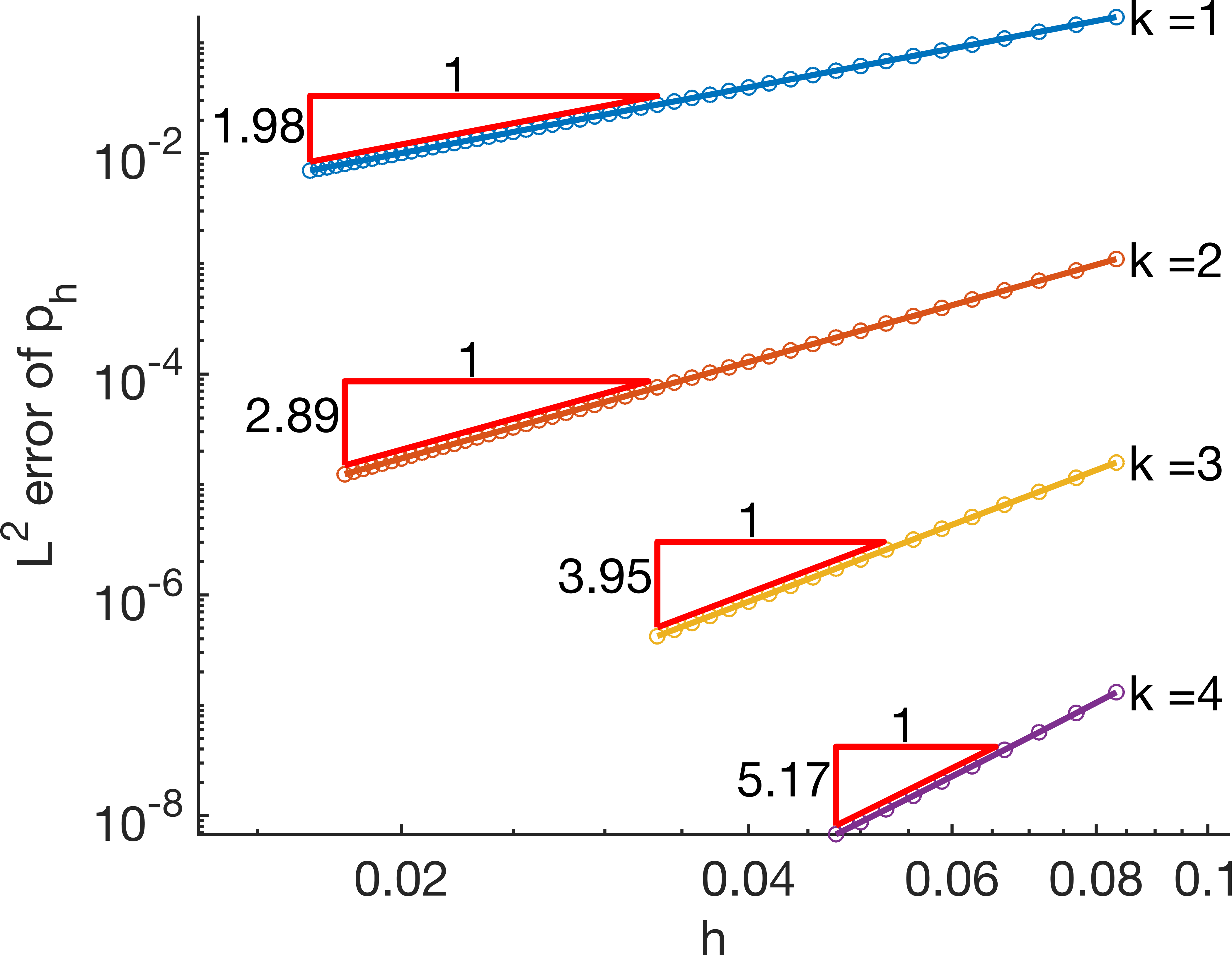}
    \label{SJS:fig:Lshaped_p_conv}}
    \subfigure{\includegraphics[scale=0.34]{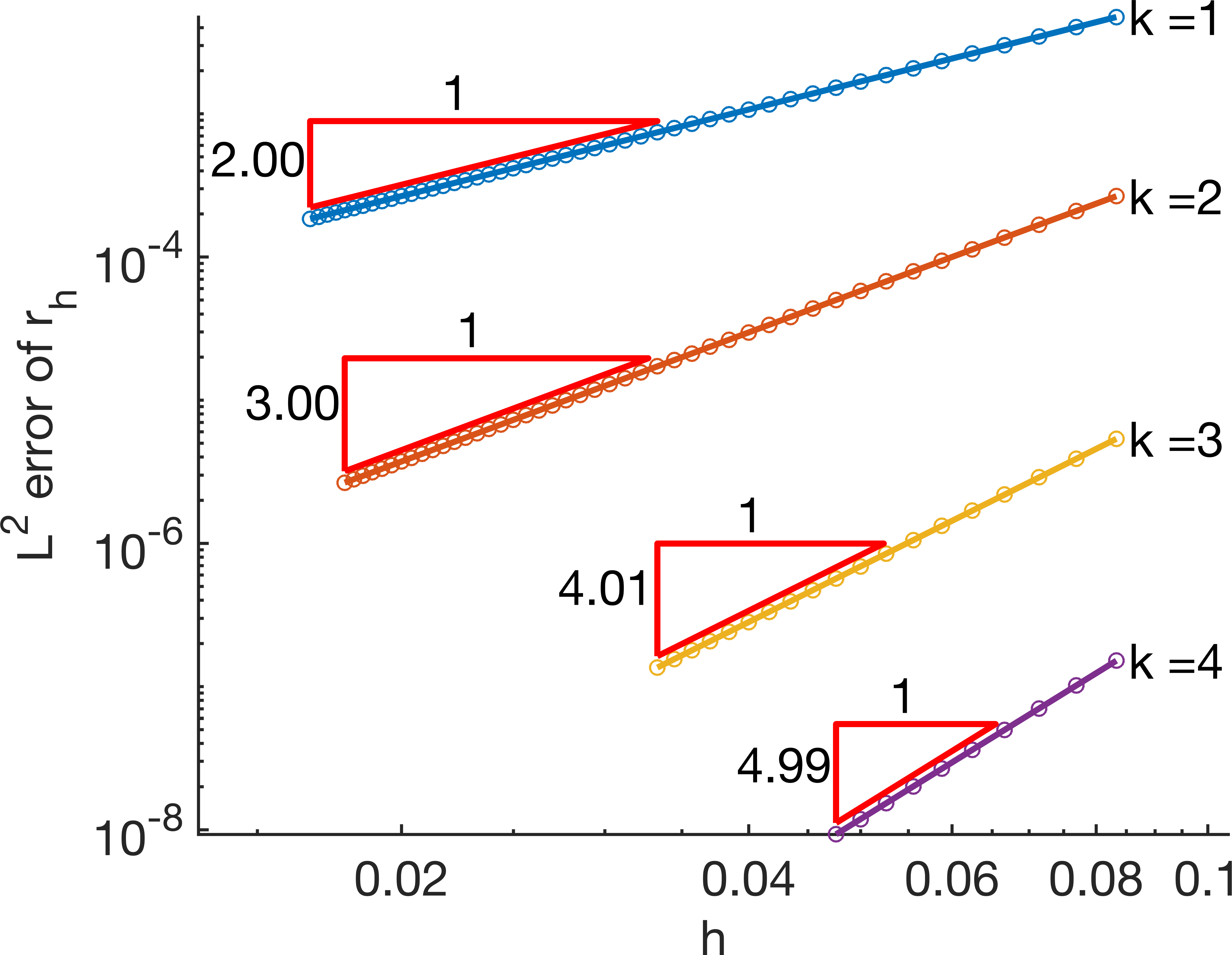}
    \label{SJS:fig:Lshaped_r_conv}}
    \caption{Non-convex domain (L-shaped) problem: $L^2$ convergence plots for $\LbH$, $\ubH$, $\pH$, $\HbH$, $\bbH$, and $\rH$.}
    \label{SJS:fig:Lshaped_conv}
  \end{center}
\end{figure}

\subsection{Singular Solution}

Although we do not discuss the implications of singular solutions on the 
theoretical convergence rates of the HDG scheme, applying the scheme to such a problem
is instructive in assessing its robustness.
This example illustrates the convergence of the HDG scheme
using a manufactured solution with a singularity
(similar to the example in Section 5.2
of \cite{HoustonSchoetzauWei09}). In particular,
we consider the same non-convex domain and mesh refinement as
in the previous example (see Figure~\ref{SJS:fig:Lshaped_mesh}).
We take $\Rey = \Rm = \kappa = 1$, $\wb = \bs{0}$, and $\db = (-1,1)$.
We choose $\gb$ and $\fb$ such that the analytical solution of  \eqnref{mhdlin} has the form
\begin{align*}
  \ub &= \LRp{
    \begin{array}{c}
      \rho^\lambda \LRs{(1+\lambda)\sin(\phi)\psi(\phi)+\cos(\phi)\psi'(\phi)}, \\
      \rho^\lambda \LRs{-(1+\lambda)\cos(\phi)\psi(\phi)+\sin(\phi)\psi'(\phi)}
    \end{array}
  }, &   \bb &= \nabla \LRp{ \rho^{2/3}\sin\LRp{\frac{2\phi}{3}} }, \\
  p &= -\rho^{\lambda-1} \frac{(1+\lambda)^2\psi'(\phi)+\psi'''(\phi)}{1-\lambda}, & 
  r &= 0,
\end{align*}
where
\begin{align*}
  \psi(\phi) &= \cos (\lambda w) \LRs{ \frac{\sin((1+\lambda)\phi)}{1+\lambda} - \frac{\sin((1-\lambda)\phi)}{1-\lambda}} - \cos(({1+\lambda}) \phi)  + \cos((1-\lambda)\phi), \\
  w &= \frac{3\pi}{2}, \qquad \lambda \approx 0.54448373678246.
\end{align*}
On $\pOmega$ we use the exact solution to set the boundary condition, i.e., $\ub_D = \ub$, $\hb_D = \bbt$, and $r_D = r$.
For this problem, it is known that $\ub \in \LRs{H^{1+\lambda}(\Omega)}^2$, $\p \in H^\lambda (\Omega)$, 
and $\bb \in \LRs{H^{2/3}(\Omega)}^2$, 
and that the solution contains magnetic and hydrodynamic singularities that 
are among the strongest singularities \cite{HoustonSchoetzauWei09}.

Convergence results for this problem are shown in Figure \ref{SJS:fig:singular_conv}.
For the fluid variables $\LbH$, $\ubH$, and $\pH$, we observe convergence rates of approximately 
$\lambda$, $2\lambda$, and $\lambda$, respectively.
For the magnetic variables $\HbH$, $\bbH$, and $\rH$, we observe convergence rates of approximately 
$1/2$, $2/3$, and $1/3$, respectively.

\begin{figure}
  \begin{center}
    \subfigure{\includegraphics[scale=0.34]{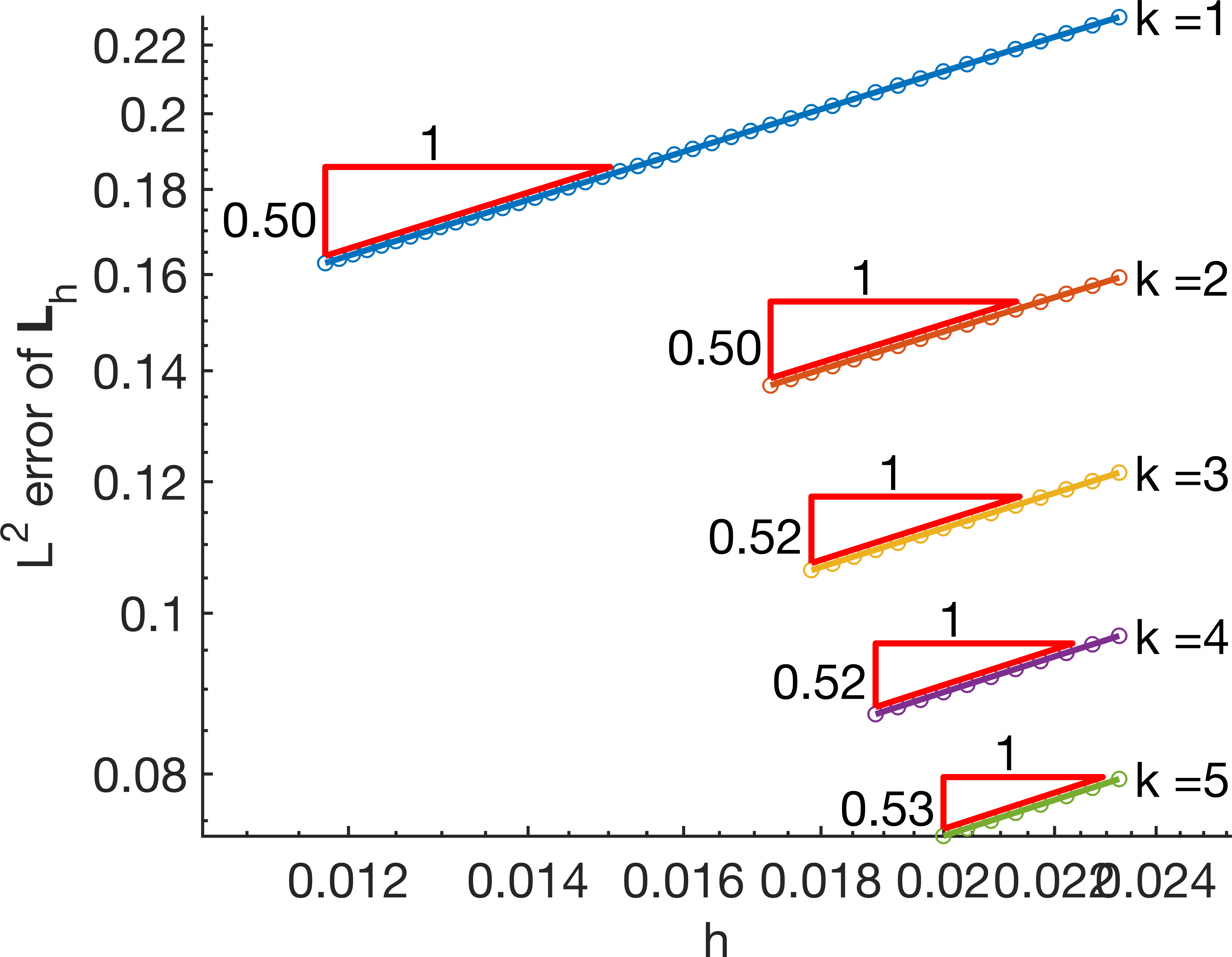}
    \label{SJS:fig:singular_L_conv}}
    \subfigure{\includegraphics[scale=0.34]{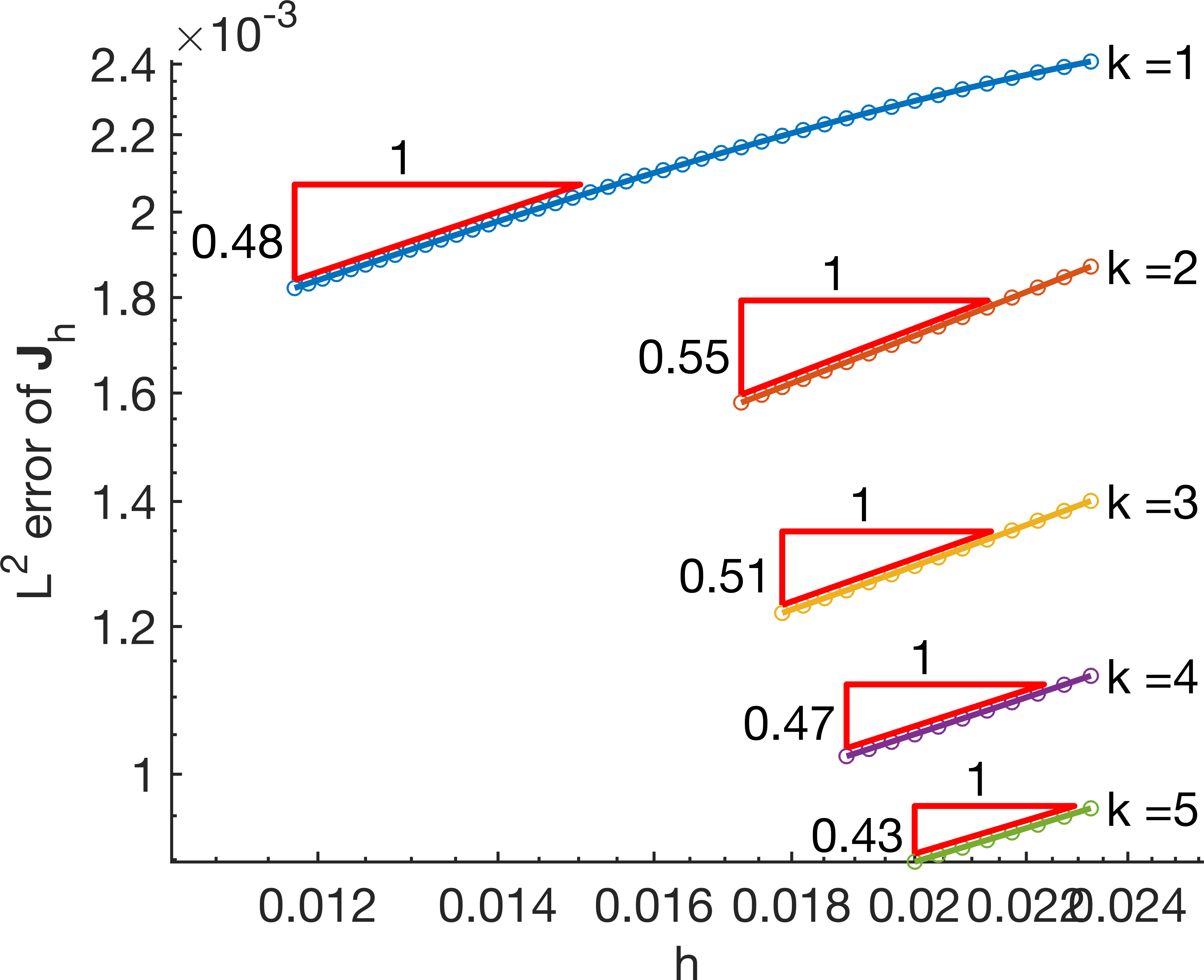}
    \label{SJS:fig:singular_J_conv}}
    \\ \vspace{-0.5em}
    \subfigure{\includegraphics[scale=0.34]{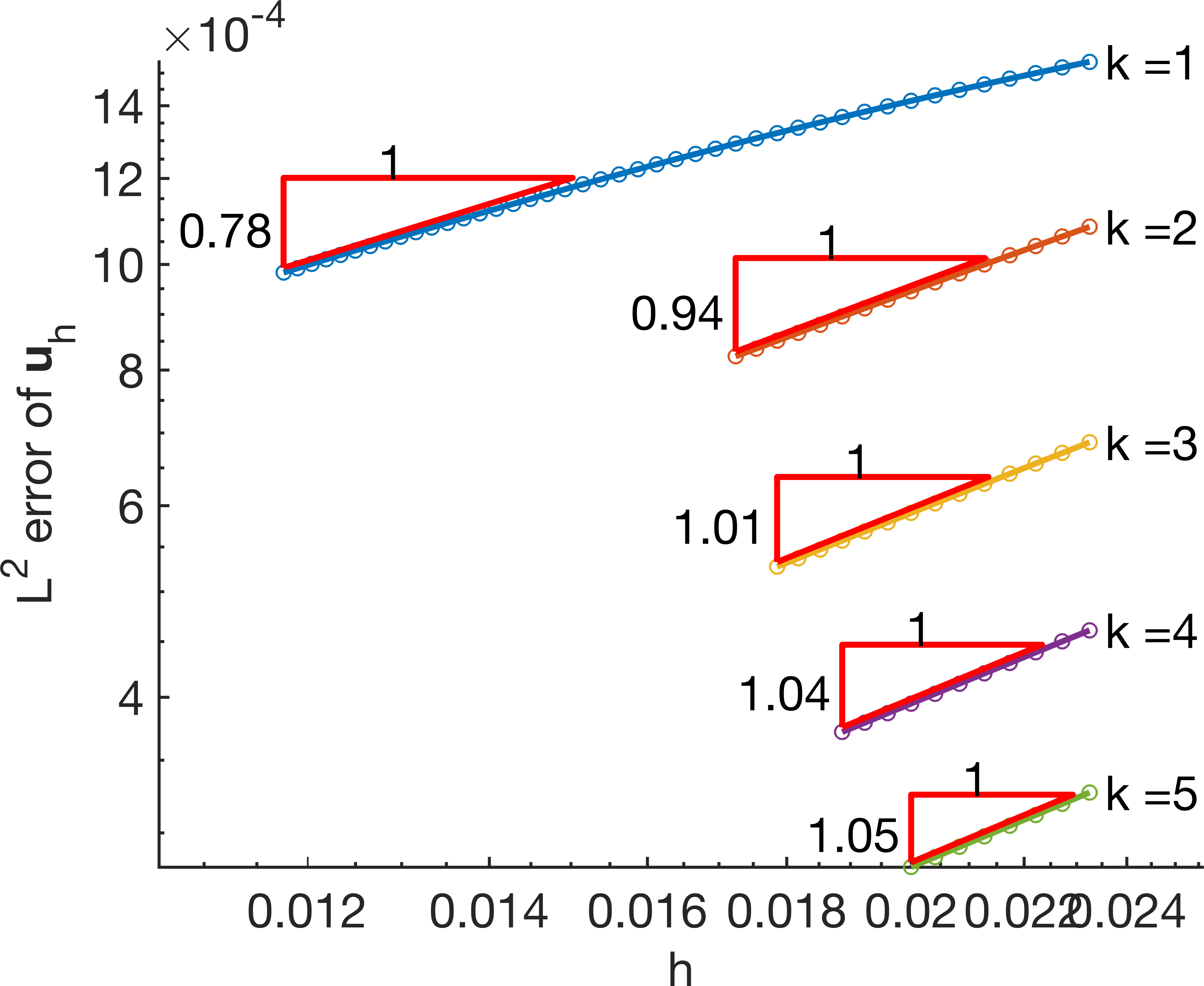}
    \label{SJS:fig:singular_u_conv}}
    \subfigure{\includegraphics[scale=0.34]{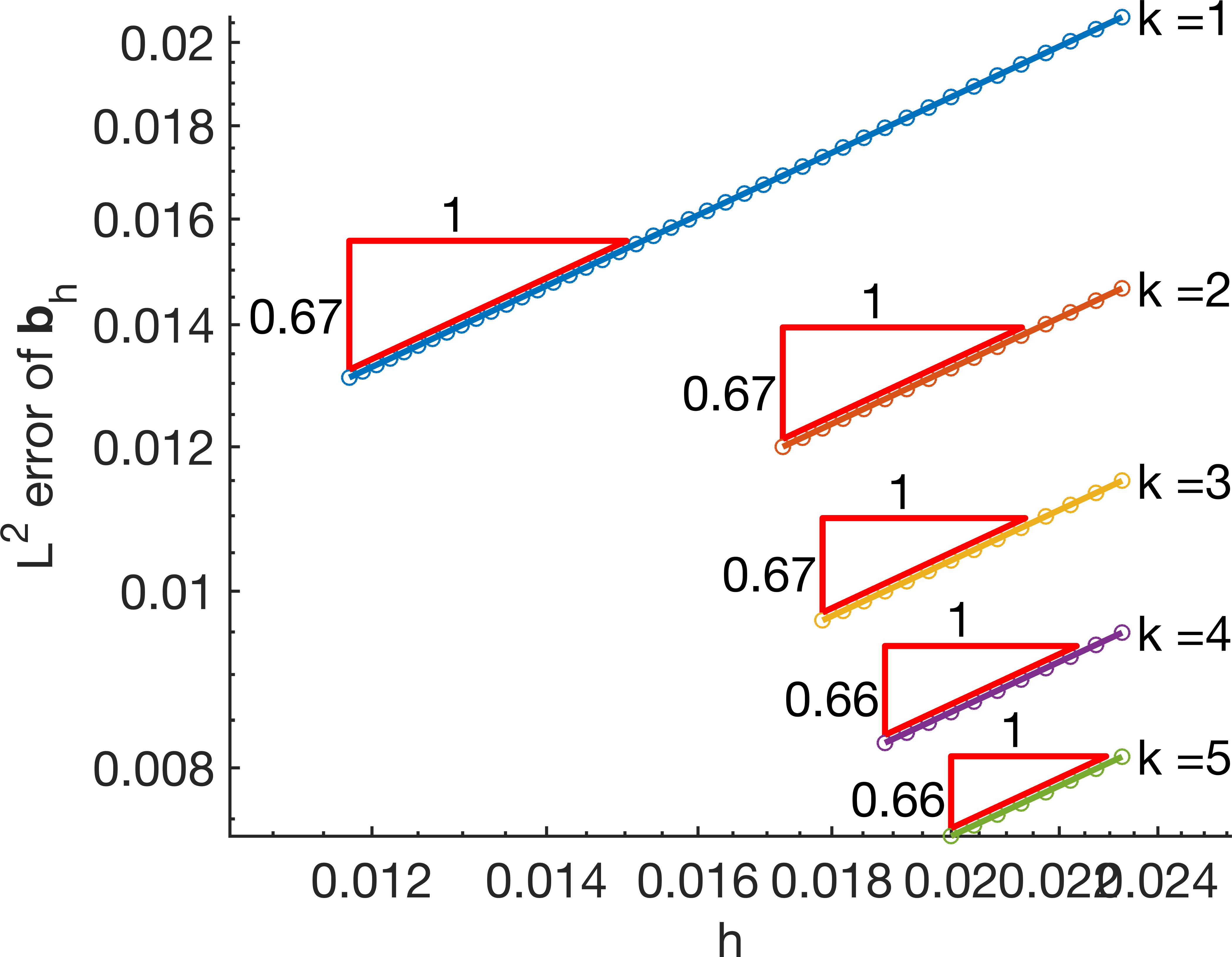}
    \label{SJS:fig:singular_b_conv}}
    \\ \vspace{-0.5em}
    \subfigure{\includegraphics[scale=0.34]{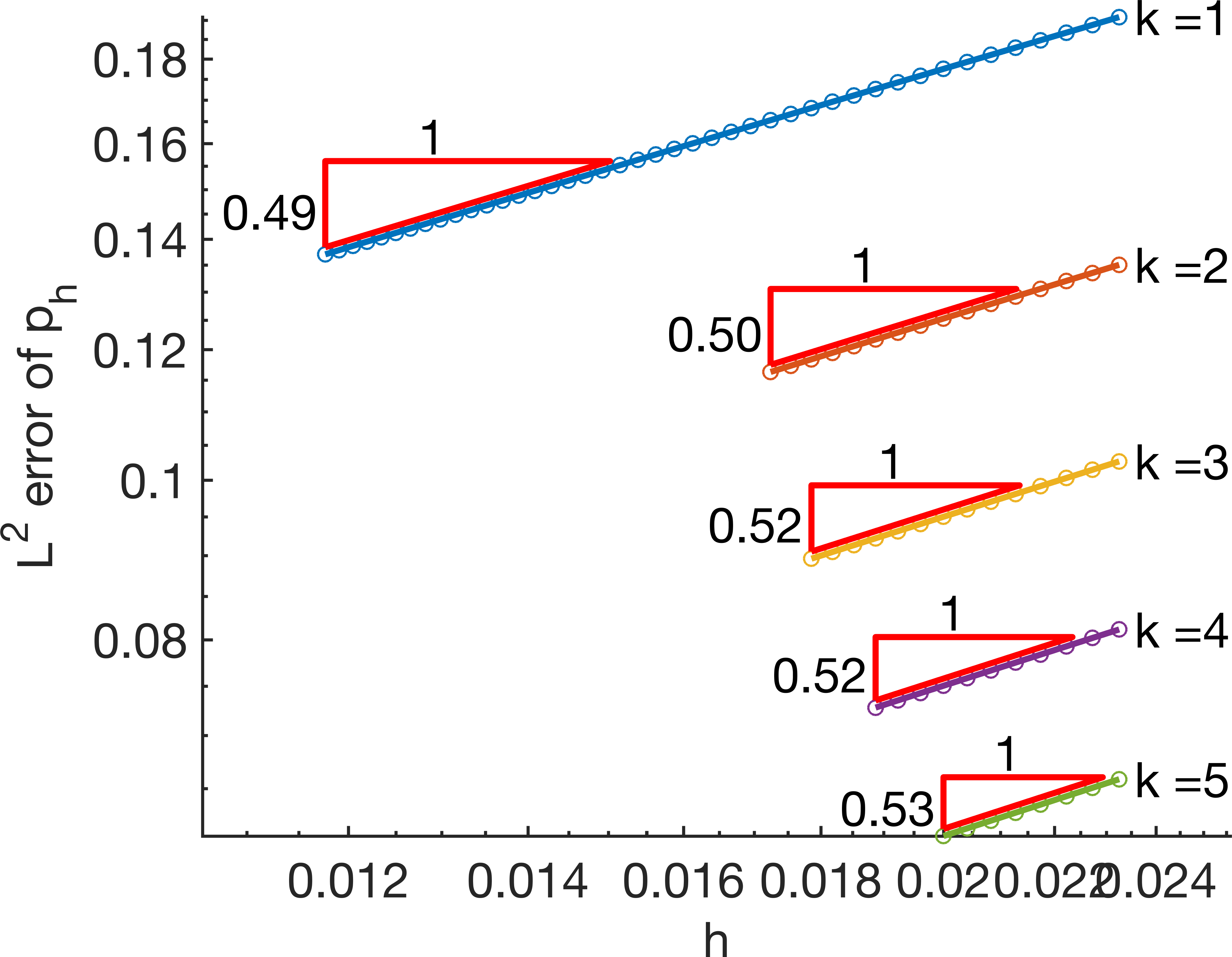}
    \label{SJS:fig:singular_p_conv}}
    \subfigure{\includegraphics[scale=0.34]{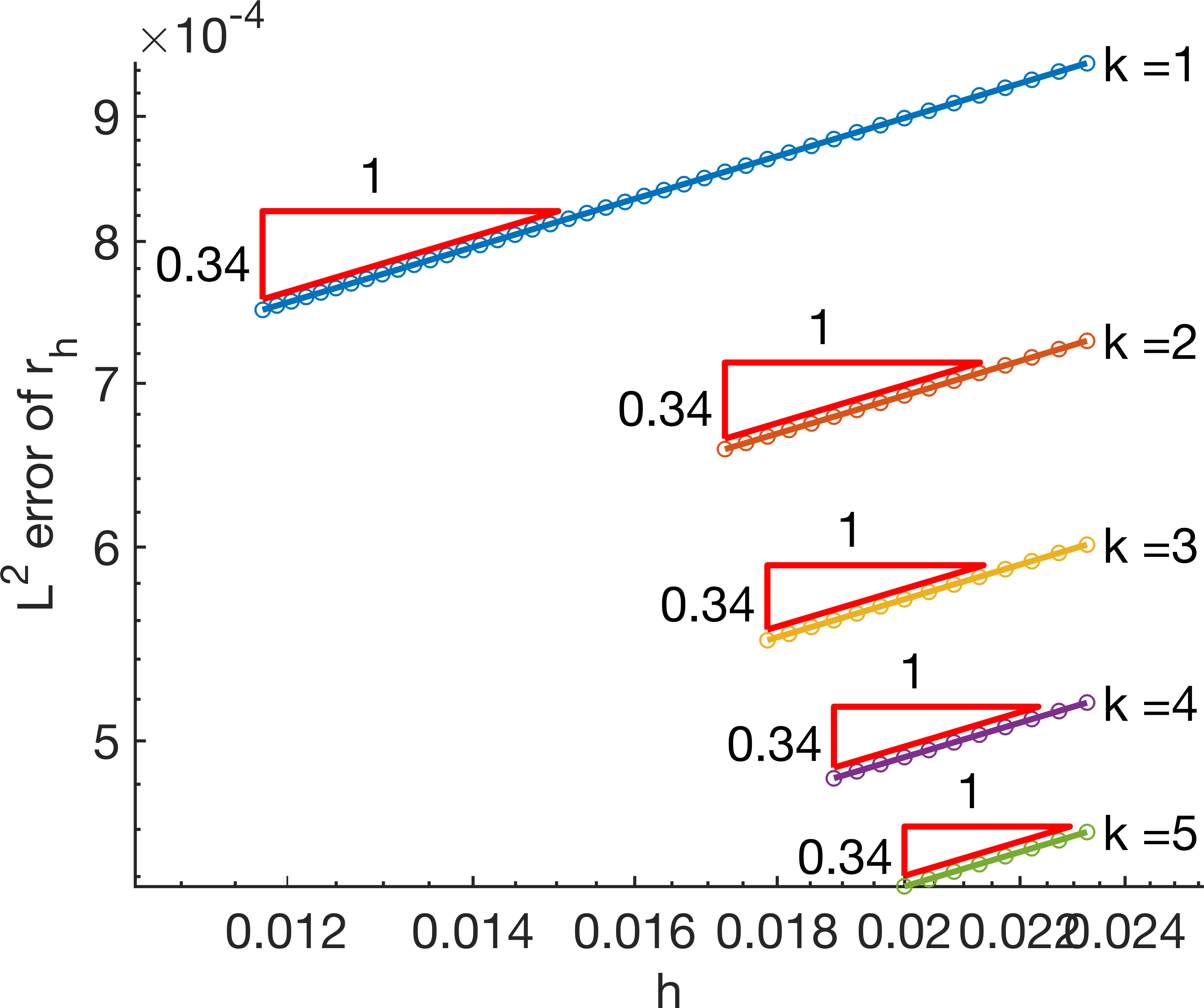}
    \label{SJS:fig:singular_r_conv}}
    \caption{Singular solution problem: $L^2$ convergence.}
    \label{SJS:fig:singular_conv}
  \end{center}
\end{figure}

\subsection{3D numerical experiments on cubical meshes}
We show numerical results for a three dimensional problem
with our HDG method adapted to hexahedral meshes with tensor product polynomial spaces.
Our theoretical analysis is only on the method on tetrahedral meshes, 
so it does not support this method on cubical meshes. 
Nonetheless we present this numerical result here in order to demonstrate that the HDG method 
can be applied to 3D problems and can be implemented using hexahedral meshes.

We set $\Omega = [0,1]^3$, $\wb = (1,2,-4)$, $\db = (-3,1,5)$, and set the forcing functions and boundary conditions to solve for the manufactured solution
\begin{alignat*}{3}
  \ub &= \bb = \LRp{
    \begin{array}{c}
      \sin(2\pi x_1) \sin(2\pi x_2) \sin(2\pi x_3), \\
      \sin(2\pi x_1) \cos(2\pi x_2) \cos(2\pi x_3), \\
      \cos(2\pi (x_1-x_3)) \sin(2\pi x_2) 
    \end{array} } ,  \\
  p &= e^{\LRp{x_1-\half}^2 + \LRp{x_2-\half}^2 + \LRp{x_3-\half}^2} - \pi^{\frac{3}{2}} {\textrm{erf}\LRp{\half}}^3,  
  \qquad  r = 0 , 
\end{alignat*}
with Dirichlet boundary conditions applied on $\partial \Omega$ for $\ub$, $r$, 
and the tangential components of $\bb$.

Convergence rates of $L^2$-errors with respect to uniform mesh refinements are given in Figure~\ref{SJS:fig:3Dprob3_conv}.
The convergence rates of $p$ and $r$ are optimal with order $k+1$, and the convergence rate of $\ub$ is suboptimal with $k+\frac 12$.
On the contrary, the errors of $\Lb$, $\Hb$, $\bb$ seem to have slightly lower convergence rates between $k$ and $k+\frac 12$.

\begin{figure}
  \begin{center}
	\subfigure{\includegraphics[scale=0.32]{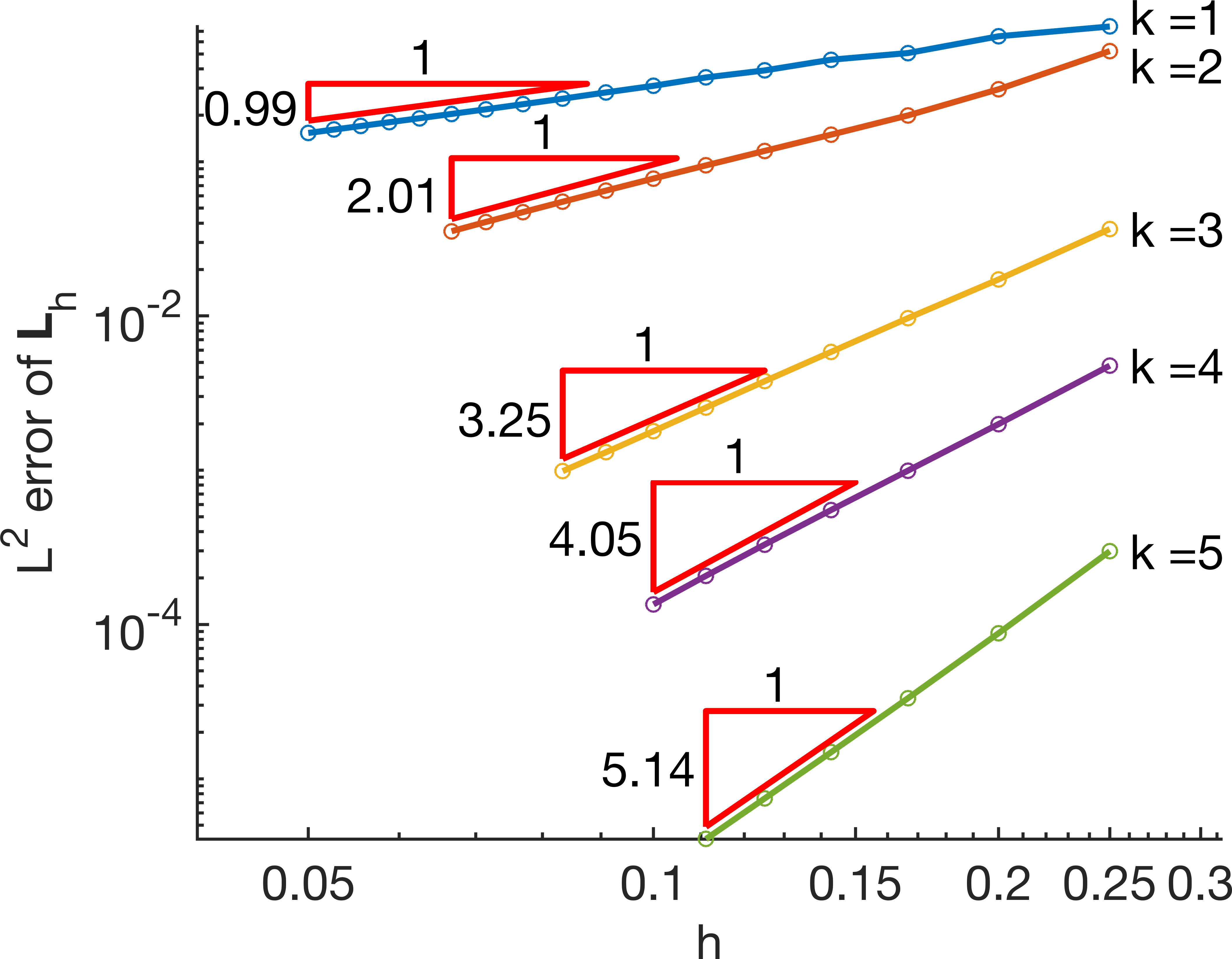}
	\label{SJS:fig:3Dprob3_L_conv}}
	\subfigure{\includegraphics[scale=0.32]{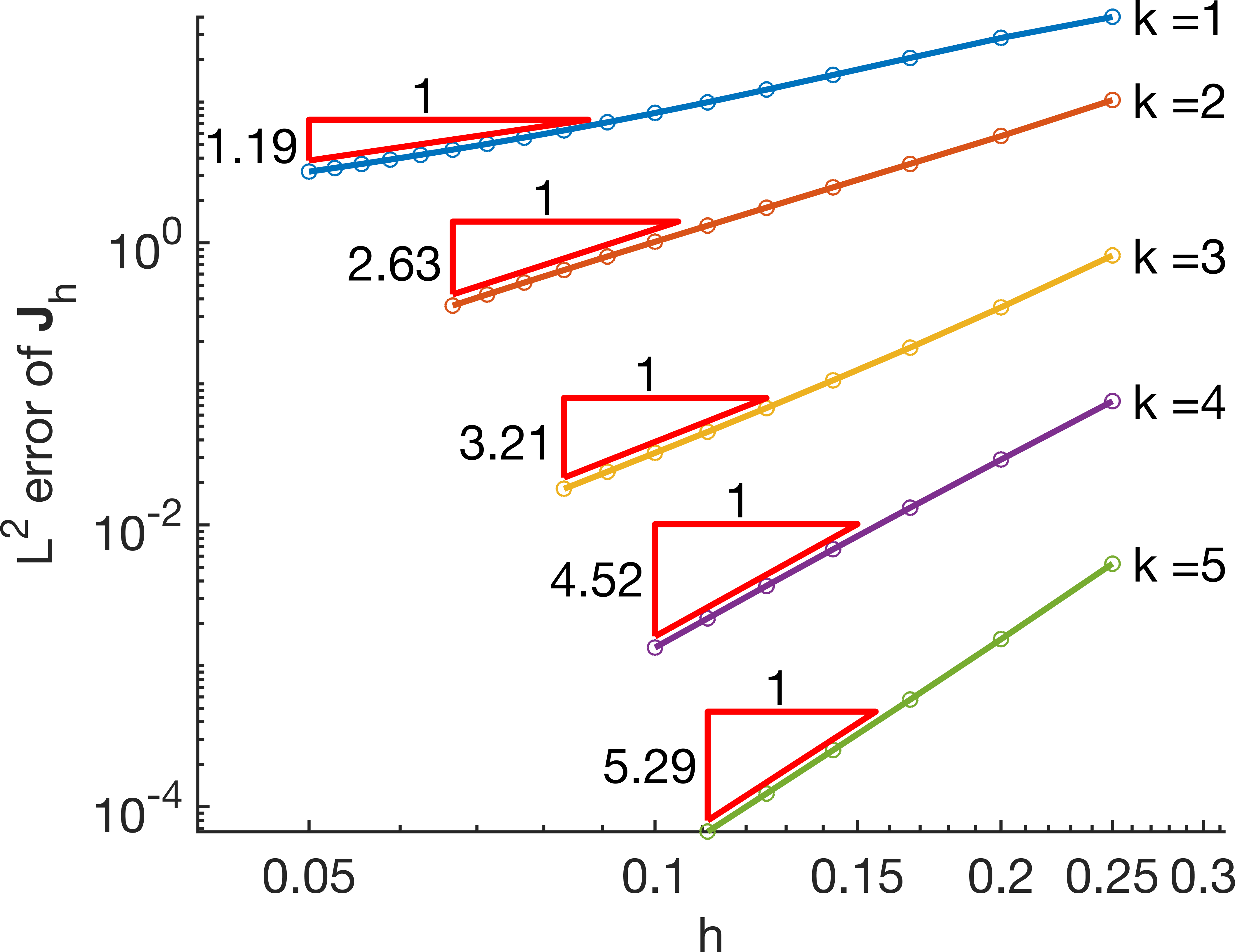}
	\label{SJS:fig:3Dprob3_J_conv}}
	\\
	\subfigure{\includegraphics[scale=0.32]{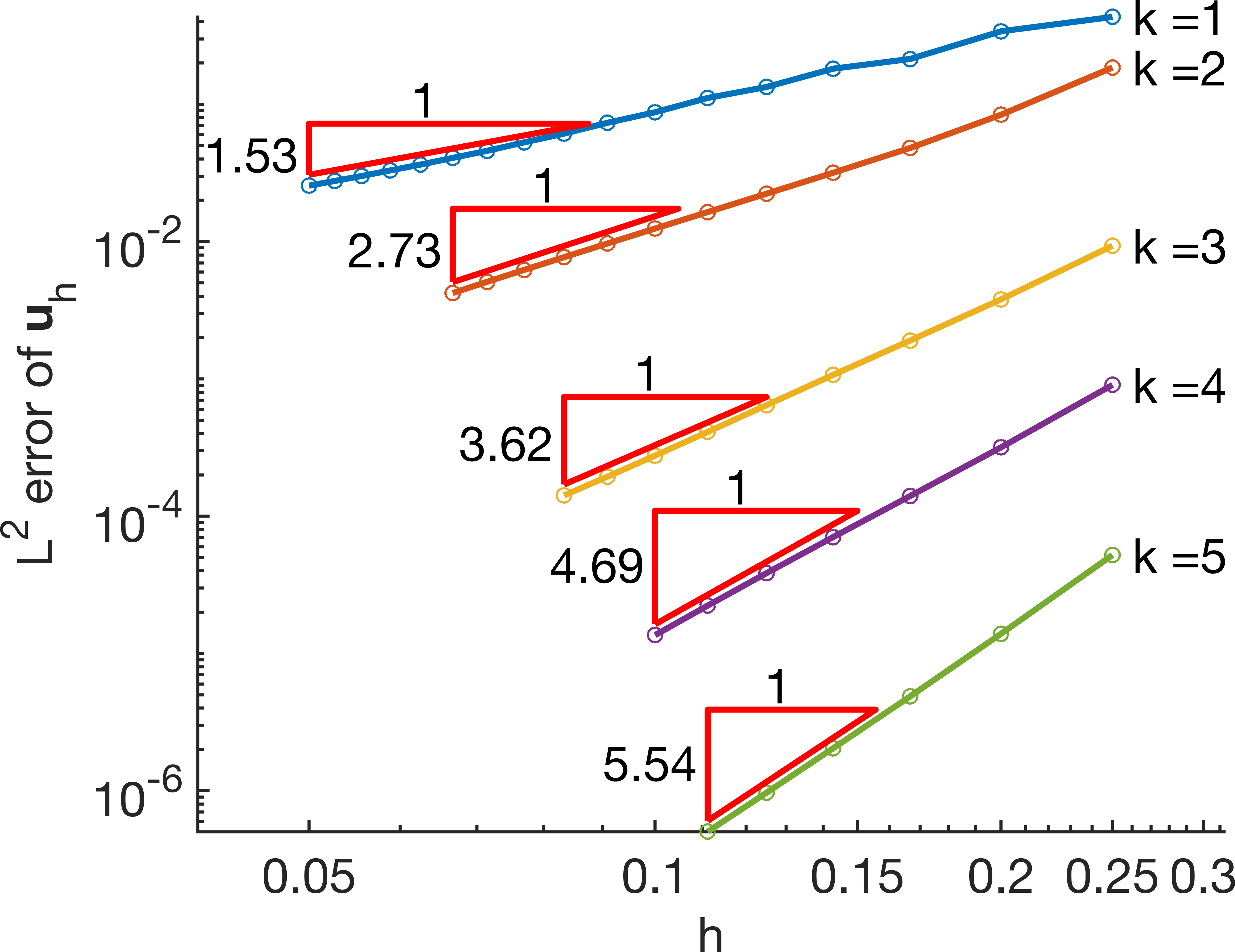}
	\label{SJS:fig:3Dprob3_u_conv}}
	\subfigure{\includegraphics[scale=0.32]{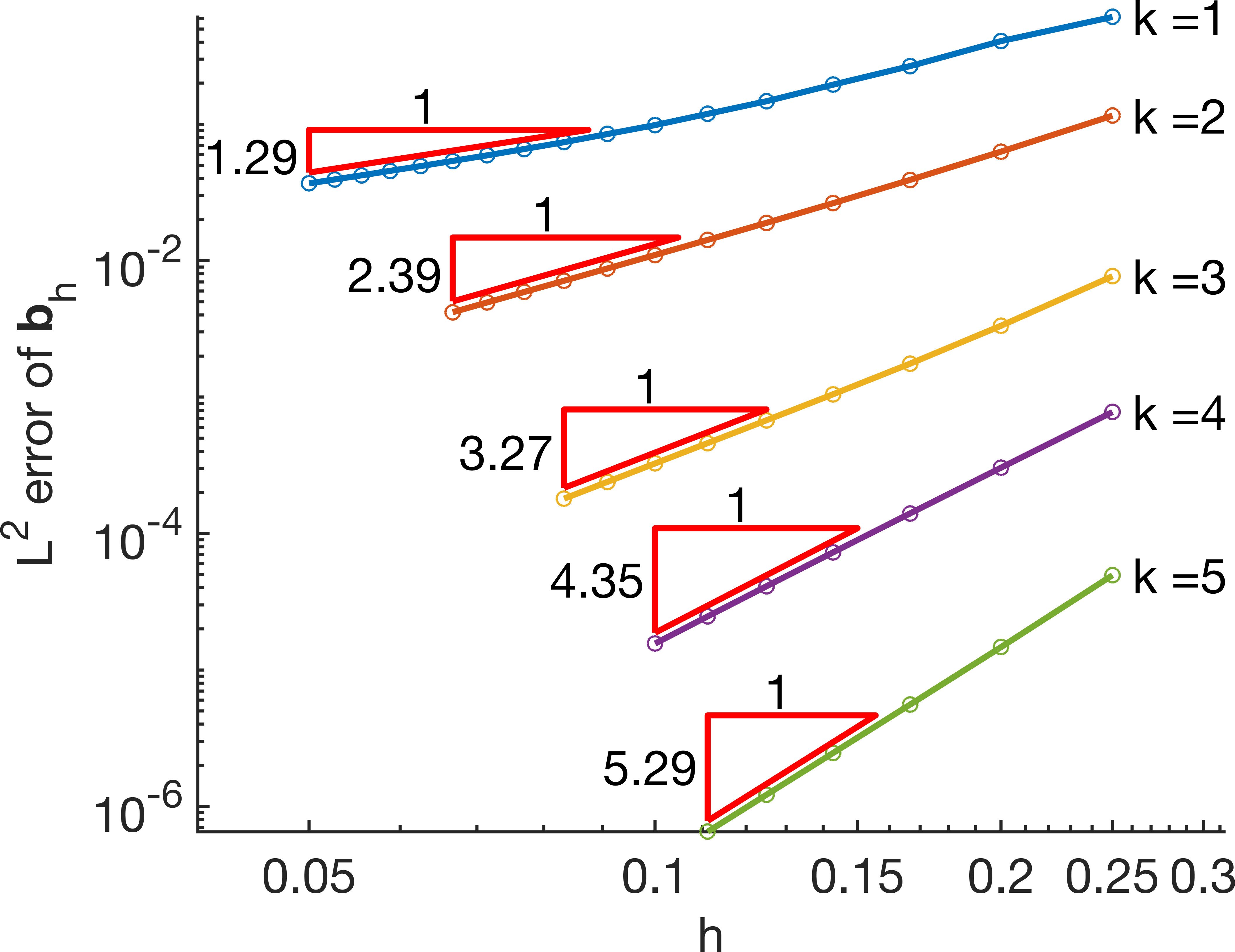}
	\label{SJS:fig:3Dprob3_b_conv}}
	\\
	\subfigure{\includegraphics[scale=0.32]{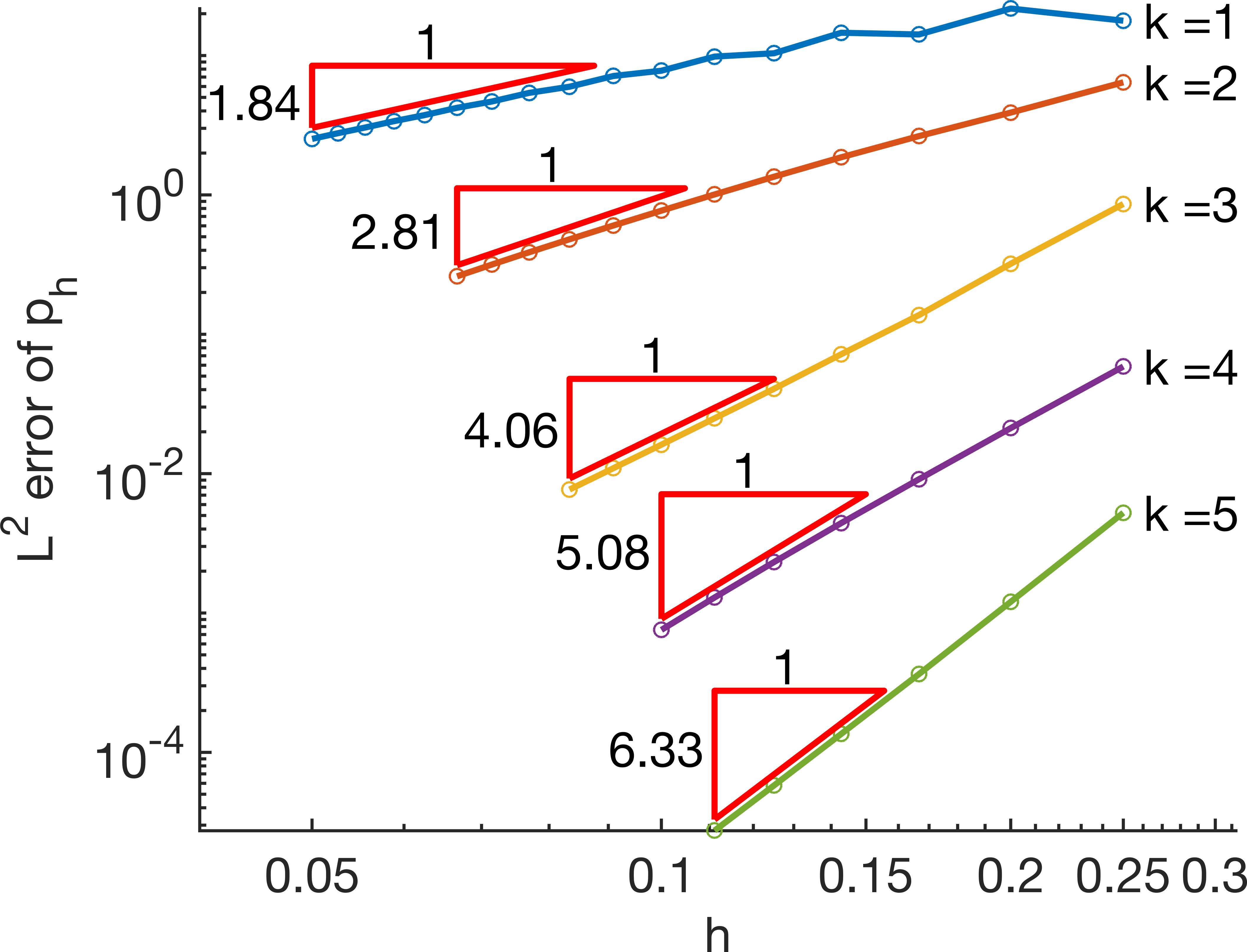}
	\label{SJS:fig:3Dprob3_p_conv}}
	\subfigure{\includegraphics[scale=0.32]{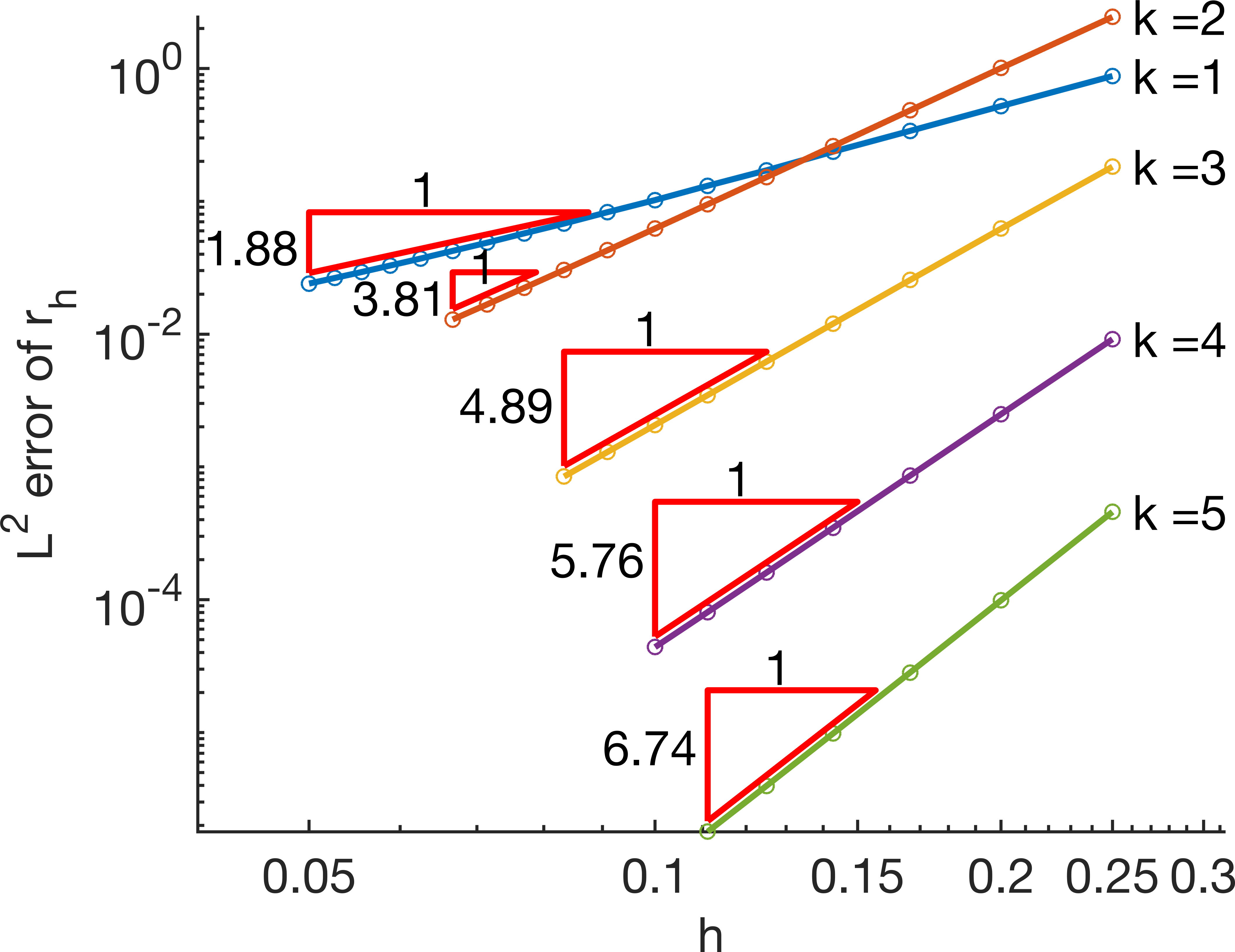}
	\label{SJS:fig:3Dprob3_r_conv}}
    \caption{3D problem with hexahedral mesh: $L^2$ convergence.}
	\label{SJS:fig:3Dprob3_conv}
  \end{center}
\end{figure}

\section{Conclusions and future work}
\seclab{conclusions}
In this paper we have constructed an HDG method for a linearization of the incompressible resistive magnetohydrodynamics equations.
We have carried out the a priori error analysis using elaborate interpolation operators, a duality argument 
with elliptic regularity assumptions, and an energy approach. 
Specifically, this allows us to prove optimal convergence for the velocity variable, $\ub$, and the magnetic field variable, $\bb$, 
and quasi-optimal convergence for the remaining  quantities $\Lb$, $\p$, $\Hb$, and $\r$.
Numerical performances of the method are tested on three examples: the Hartmann flow, a manufactured solution over a non-convex domain,
and singular solution on a non-convex domain. The numerical results show that  the theoretical convergence rates of all the unknowns 
are obtained for smooth solutions even when the elliptic regularity assumption of domain fails to hold. 
Ongoing work includes 3D computation on parallel computers for large-scale problems and  extensions of our HDG method to nonlinear time-dependent magnetohydrodynamics equations.

\appendix

\section{Auxiliary results}
\seclab{auxiliary}
In this appendix we collect some technical results that are useful for our analysis.
\begin{lemma}[Inverse Inequality. {\cite[Lemma 1.44]{PietroErn12}}] \label{lemma:inverse-inequality}
  For $v \in \poly{k}(\K)$ with $\K \in \Omegah$, there exists $C>0$ independent of $h$ such that
  \algns{
    \nor{\Grad v}_{0,\K} \le C h_K^{-1} \nor{v}_{0,\K} .
  }
\end{lemma}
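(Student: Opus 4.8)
The plan is the classical scaling argument: transport the inequality to a fixed reference simplex, use that all (semi)norms on the finite-dimensional space $\poly{k}$ are comparable there, and then track the geometric factors picked up under the affine map. Fix a reference simplex $\hat{\K}$, and for each $\K \in \Omegah$ let $F_\K(\hat{\bs x}) = B_\K \hat{\bs x} + \bs b_\K$ denote the affine bijection $\hat{\K} \to \K$, with invertible Jacobian $B_\K$. For $v \in \poly{k}(\K)$ set $\hat v := v \circ F_\K \in \poly{k}(\hat{\K})$. The chain rule gives $(\Grad v)\circ F_\K = B_\K^{-T}\hat\Grad \hat v$, and the change of variables $\bs x = F_\K(\hat{\bs x})$ yields $\nor{\Grad v}_{0,\K}^2 = \snor{\det B_\K}\,\nor{B_\K^{-T}\hat\Grad\hat v}_{0,\hat{\K}}^2$ together with $\nor{v}_{0,\K}^2 = \snor{\det B_\K}\,\nor{\hat v}_{0,\hat{\K}}^2$.

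Next I would invoke norm equivalence on the finite-dimensional space $\poly{k}(\hat{\K})$: the maps $\hat v \mapsto \nor{\hat\Grad\hat v}_{0,\hat{\K}}$ and $\hat v \mapsto \nor{\hat v}_{0,\hat{\K}}$ are both continuous on $\poly{k}(\hat{\K})$, the second being a norm, so by compactness of the unit sphere there is a constant $\hat C = \hat C(k,d,\hat{\K})$, independent of everything else, with $\nor{\hat\Grad\hat v}_{0,\hat{\K}} \le \hat C\,\nor{\hat v}_{0,\hat{\K}}$ for all $\hat v \in \poly{k}(\hat{\K})$. Combining this with the two identities above and the submultiplicative bound $\nor{B_\K^{-T}\hat\Grad\hat v}_{0,\hat{\K}} \le \nor{B_\K^{-1}}_2\,\nor{\hat\Grad\hat v}_{0,\hat{\K}}$ for the spectral norm gives $\nor{\Grad v}_{0,\K} \le \hat C\,\nor{B_\K^{-1}}_2\,\nor{v}_{0,\K}$.

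It remains to show $\nor{B_\K^{-1}}_2 \lesssim h_K^{-1}$, which is the only step requiring any care. Standard estimates for affine simplicial maps bound $\nor{B_\K^{-1}}_2 \le \operatorname{diam}(\hat{\K})/\rho_\K$, where $\rho_\K$ is the radius of the largest ball inscribed in $\K$. Shape-regularity of the triangulation is precisely the statement that $h_K/\rho_\K$ is bounded by a fixed constant $\sigma$, so $\nor{B_\K^{-1}}_2 \le \sigma\,\operatorname{diam}(\hat{\K})\,h_K^{-1}$. Taking $C := \hat C\,\sigma\,\operatorname{diam}(\hat{\K})$, which depends only on $k$, $d$, the shape-regularity constant, and the fixed reference simplex but not on $h$, completes the proof. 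The argument involves no genuine obstacle, which is why the statement is simply quoted from \cite{PietroErn12}.
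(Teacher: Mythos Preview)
Your proof is correct and is the standard scaling argument. The paper does not give its own proof of this lemma at all; it simply states the result and cites \cite[Lemma~1.44]{PietroErn12}, so there is nothing to compare against beyond noting that your argument is exactly the classical one that reference records.
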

\begin{lemma}[Trace inequality. {\cite[Lemma 1.49]{PietroErn12}}] \label{lemma:cont-inv-trace}
  For $v \in H^1(\Omegah)$ and for $\K \in \Omegah$ with $e \subset \pK$, there exists $C>0$ independent of $h$ such that
  \algns{
    \nor{v}_{0,e}^2 \le C \LRp{\nor{\Grad v}_{0,\K} + h_{\K}^{-1} \nor{v}_{0,\K} } \nor{v}_{0,\K} .
  }
\end{lemma}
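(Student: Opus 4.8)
The plan is to establish this multiplicative trace inequality by a direct divergence-theorem computation rather than by a reference-element scaling argument; the latter only delivers the additive bound $\nor{v}_{0,e}^2 \lesssim h_\K \nor{\Grad v}_{0,\K}^2 + h_\K^{-1} \nor{v}_{0,\K}^2$, and recovering the stated multiplicative form from it would require an inverse inequality and hence that $v$ be a polynomial. First I would reduce to smooth $v$: the trace map $H^1(\K) \to L^2(\pK)$ is bounded, so both sides of the asserted inequality depend continuously on $v$ in the $H^1(\K)$-norm, and since $C^\infty(\overline{\K})$ is dense in $H^1(\K)$ it suffices to prove the estimate for $v \in C^\infty(\overline{\K})$ and then pass to the limit.

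For such $v$, let $\bs{a}$ be the vertex of the simplex $\K$ opposite to the face $e$ and put $\bs{\xi}(\bs{x}) := \bs{x} - \bs{a}$. Since every face $f \neq e$ of $\K$ contains $\bs{a}$, one has $\bs{\xi} \cdot \n_f \equiv 0$ on $f$, whereas on $e$ the function $\bs{\xi} \cdot \n_e$ is the positive constant $h_\perp := \operatorname{dist}(\bs{a}, \operatorname{aff}(e))$. Applying the divergence theorem to $v^2 \bs{\xi}$, together with the identity $\Div(v^2 \bs{\xi}) = 2 v\, \Grad v \cdot \bs{\xi} + d\, v^2$, gives
\[
  h_\perp \nor{v}_{0,e}^2 = \int_\K \LRp{ 2 v\, \Grad v \cdot \bs{\xi} + d\, v^2 } d\bs{x} \le 2 h_\K \nor{v}_{0,\K} \nor{\Grad v}_{0,\K} + d\, \nor{v}_{0,\K}^2,
\]
where I used $\abs{\bs{\xi}} \le h_\K$ on $\K$ and the Cauchy--Schwarz inequality.

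It then remains to bound $h_\perp$ from below in terms of $h_\K$, uniformly over the triangulation. The signed distance to $\operatorname{aff}(e)$ is affine, hence maximized over the polytope $\K$ at a vertex; the vertices of $e$ lie at distance zero and $\bs{a}$ at distance $h_\perp$, so $h_\perp$ is this maximum and in particular $h_\perp \ge \rho_\K$, the inradius of $\K$ (the incenter lies in $\K$ at distance $\ge \rho_\K$ from $\operatorname{aff}(e)$). Shape-regularity gives $\rho_\K \gtrsim h_\K$, so $h_\perp^{-1} \lesssim h_\K^{-1}$. Dividing the displayed estimate by $h_\perp$ yields
\[
  \nor{v}_{0,e}^2 \lesssim \nor{v}_{0,\K} \nor{\Grad v}_{0,\K} + h_\K^{-1} \nor{v}_{0,\K}^2 \le \LRp{ \nor{\Grad v}_{0,\K} + h_\K^{-1} \nor{v}_{0,\K} } \nor{v}_{0,\K},
\]
which is the claim. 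I do not foresee a serious obstacle in this argument; the only point demanding care is verifying that the ratios $h_\perp / h_\K$ and $\sup_\K \abs{\bs{\xi}} / h_\K$ are controlled solely by the shape-regularity constant of the family of triangulations, so that the resulting constant $C$ is genuinely independent of $h$.
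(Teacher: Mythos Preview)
Your argument is correct. The paper does not supply its own proof of this lemma; it is simply quoted from \cite[Lemma~1.49]{PietroErn12} as an auxiliary result, so there is nothing in the paper to compare against directly. For what it is worth, the divergence-theorem computation you carry out---choosing $\bs{\xi}=\bs{x}-\bs{a}$ with $\bs{a}$ the vertex opposite $e$ so that $\bs{\xi}\cdot\n$ vanishes on the remaining faces---is precisely the standard proof of the multiplicative trace inequality on simplices, and is essentially the argument given in the cited reference. Your handling of the shape-regularity step ($h_\perp \ge \rho_\K \gtrsim h_\K$) and the density reduction are both fine.
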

Applying the arithmetic-geometric mean inequality to the right side, we can derive 
\algn{
  \label{continuous_trace_ineq}
  \nor{v}_{0,e} \lesssim \LRp{h_k^\half \nor{\Grad v}_{0,\K} + h_{\K}^{-\half} \nor{v}_{0,\K} }.
}
If $v \in H^1(\Omegah)$ is in piecewise polynomial spaces, we can derive the following 
inequality from Lemma~\ref{lemma:cont-inv-trace} and the inverse inequality (Lemma~\ref{lemma:inverse-inequality}):
\algn{
  \label{discrete_trace_ineq}
  \nor{v}_{0,e} \lesssim  { h_{\K}^{-\half} \nor{v}_{0,\K} } .
}

\begin{lemma}
  \lemlab{gradProjectionBound}
  Suppose that $\Pi:H^1 (\K) \ra \poly{k}(\K)$ is a bounded interpolation which is a projection on $\poly{k}(\K)$. Then
  \algns{
    \nor{\Grad \Pi \vb}_{0,\K} \lesssim \nor{\vb}_{1,\K} .
  }
\end{lemma}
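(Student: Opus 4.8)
The plan is to exploit the two hypotheses on $\Pi$ — that it is a projection (hence reproduces the constant functions, which lie in $\poly{k}(\K)$) and that it is bounded — together with the inverse inequality of Lemma~\ref{lemma:inverse-inequality} and a Poincar\'e estimate, keeping careful track of the powers of $h_\K$ so that they cancel.

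First I would write $\Pr_0\vb$ for the componentwise $L^2(\K)$-projection of $\vb$ onto the constants, so that $\vb-\Pr_0\vb$ has vanishing mean on $\K$; since $\Pr_0\vb\in\poly{k}(\K)$, the projection property gives $\Pi\vb=\Pi(\vb-\Pr_0\vb)+\Pr_0\vb$ and hence $\Grad\Pi\vb=\Grad\Pi(\vb-\Pr_0\vb)$. Applying the inverse inequality to the polynomial $\Pi(\vb-\Pr_0\vb)$ yields
\[
  \nor{\Grad\Pi\vb}_{0,\K}=\nor{\Grad\Pi(\vb-\Pr_0\vb)}_{0,\K}\lesssim h_\K^{-1}\nor{\Pi(\vb-\Pr_0\vb)}_{0,\K}.
\]
Next I would use the boundedness of $\Pi$ in the mesh-dependent form $\nor{\Pi w}_{0,\K}\lesssim\nor{w}_{0,\K}+h_\K\nor{\Grad w}_{0,\K}$, followed by the Poincar\'e inequality $\nor{\vb-\Pr_0\vb}_{0,\K}\lesssim h_\K\nor{\Grad\vb}_{0,\K}$, to obtain $\nor{\Pi(\vb-\Pr_0\vb)}_{0,\K}\lesssim h_\K\nor{\Grad\vb}_{0,\K}$. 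Inserting this into the previous display cancels the factor $h_\K^{-1}$ and gives $\nor{\Grad\Pi\vb}_{0,\K}\lesssim\nor{\Grad\vb}_{0,\K}\le\nor{\vb}_{1,\K}$.

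The main obstacle is justifying the mesh-dependent boundedness estimate for $\Pi$ with a constant independent of $h_\K$: a bare ``bounded operator $H^1(\K)\to\poly{k}(\K)$'' statement would only give $\nor{\Pi w}_{0,\K}\lesssim\nor{w}_{1,\K}$, which does not cancel the $h_\K^{-1}$. I would handle this by the standard scaling argument. Let $F_\K\colon\hat\K\to\K$ be the affine map from the reference simplex and set $\hat w:=w\circ F_\K$; the interpolation operators to which this lemma is applied are defined through moment conditions that transform covariantly under affine maps, so $\Pi$ corresponds on $\hat\K$ to a single fixed bounded projection $\hat\Pi$ onto $\poly{k}(\hat\K)$, for which $\nor{\hat\Pi\hat w}_{0,\hat\K}\lesssim\nor{\hat w}_{1,\hat\K}$ holds on the unit-size domain $\hat\K$. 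Transferring this back via the usual scalings $\nor{w}_{0,\K}\sim h_\K^{d/2}\nor{\hat w}_{0,\hat\K}$ and $\nor{\Grad w}_{0,\K}\sim h_\K^{d/2-1}\nor{\hat\nabla\hat w}_{0,\hat\K}$, and using shape-regularity of $\Omegah$ to keep all constants uniform, produces exactly the claimed $h_\K$-weighted bound. Equivalently, one can run the entire argument on $\hat\K$ — applying the inverse inequality, the projection property, the (fixed-domain) boundedness of $\hat\Pi$, and Poincar\'e all on $\hat\K$ to get $\nor{\hat\nabla\hat\Pi\hat\vb}_{0,\hat\K}\lesssim\nor{\hat\nabla\hat\vb}_{0,\hat\K}$ — and then scale back, since $\nor{\Grad\cdot}_{0,\K}$ and $\nor{\hat\nabla\hat\cdot}_{0,\hat\K}$ differ only by the single factor $h_\K^{d/2-1}$, which appears on both sides and cancels.
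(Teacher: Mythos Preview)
Your argument is correct and is essentially the same as the paper's: subtract a constant, apply the inverse inequality, use boundedness/approximation of $\Pi$, and finish with Poincar\'e. The only cosmetic difference is that the paper subtracts the constant $c$ \emph{after} applying $\Pi$ and then splits $\Pi\vb-c=(\Pi\vb-\vb)+(\vb-c)$ via the triangle inequality, invoking the ``approximation property of $\Pi$'' for the first piece, whereas you subtract $\Pr_0\vb$ \emph{before} applying $\Pi$ (using $\Pi\Pr_0\vb=\Pr_0\vb$) and invoke boundedness of $\Pi$ directly; since $\Pi\vb-\Pr_0\vb=\Pi(\vb-\Pr_0\vb)$, the two are literally the same quantity. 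Your explicit discussion of the scaling needed for the $h$-dependence is something the paper simply takes for granted.
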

\begin{proof}
  For any constant $c$, $\Grad \Pi \vb = \Grad \LRp{ \Pi \vb - c }$, so
  \algns{
    \nor{\Grad \Pi \vb}_{0,\K} &= \nor{\Grad (\Pi \vb - c)}_{0,\K} \\
    &\lesssim h^{-1} \nor{ \Pi \vb - c }_{0,\K} \lesssim h^{-1} \LRp{ \nor{\Pi \vb - \vb}_{0,\K} + \nor{\vb - c}_{0,\K}} \lesssim \nor{\vb}_{1,\K},
  }
  where we have used the Poincar\'e inequality and the approximation property of $\Pi$ in the last inequality.
\end{proof}

\section{Definition of projections and their properties}
\seclab{projections}
In this section we use
$\poly{k}$, $\polyb{k}$, $\polyt{k}$ for spaces of  scalar, $d$-dimensional vector, $d \times d$
matrix-valued polynomials. By $\poly{k}^{\perp}$, $\polyb{k}^{\perp}$,
$\polyt{k}^{\perp}$ we denote the spaces of polynomials of order at most $k$
orthogonal to all polynomials of order at most $\LRp{k-1}$. $\polyb{k}^t(e)$
contains  the tangential component of all polynomials in $\polyb{k}(e)$.
We desire to have error equations conform to the original
equations to facilitate the error analysis.
To begin, 
we define a collective interpolation operator
$\bs{\Pi} \LRp{\Lb, \ub, \p, \Hb, \bb, \r, \ubh, \bbh^t, \rh}$
implicitly through the interpolation errors $\veps_{\ub}^I = \ub
- \Pi \ub$, $\veps_{\bb}^I = \bb - \Pi \bb$, etc, where $\Pi \ub$,
$\Pi \bb$, etc, are components of the collective interpolator $\bs{\Pi}$ on $\ub, \bb$, etc. Specifically:
\begin{itemize}
  \item $L^2$ projections on $\e \in \Gh$ or on $\K \in \Omegah$ are defined as:
    \begin{subequations}
      \eqnlab{projections-tan}
      \begin{align}
        \label{rhat-proj}  \LRa{\erh^I, \gamma }_e &= 0, \quad \gamma \in \poly{k}\LRp{e} , \\
        \label{uhat-proj}  \LRa{\euh^I, \mub }_e &= 0, \quad \mub \in {\polyb{k}\LRp{e}} , \\
        \label{bhat-proj}  \LRa{\ebht^I, \lambdab^t}_e &= 0, \quad \lambdab^t \in {\polyb{k}^t\LRp{e}}, \\
        \label{J-proj}  \LRp{\ehi, \Jb }_\K &= 0, \quad \Jb \in \LRs{\poly{k}\LRp{\K}}^{\tilde{d}}, \\
        \label{p-proj}  \LRp{\ep^I, \q }_\K &= 0, \quad \q \in \poly{k}\LRp{\K} .
      \intertext{\item On each $\K \in \Omegah$ and $e \in \Gh$, $e \subset \pK$, $\Pi \bb$ and  $\Pi r$ are defined as}
        \eqnlab{br-proj1}
        (\eb^I, \cb)_\K &= 0, \quad  \cb \in {\polyb{k-1}(\K)}, \\
        \eqnlab{br-proj2}
        (\er^I, \s)_\K &= 0, \quad s \in \poly{k-1}(\K), \\
        \eqnlab{br-proj3}
        \LRa{ {\eb^I \cdot \n + \alpha_3 \veps_r^I} , \gamma }_e &= 0, \quad \gamma \in \poly{k}(e) .
      \intertext{\item  On each $\K \in \Omegah$ and $e \in \Gh$, $e \subset \pK$,  $\Pi \Lb$ and  $\Pi \ub$ are defined as }
        \eqnlab{Lu-projection1}
        - \LRp{\eL^I, \Gb }_\K + \LRp{\eu^I\otimes \wb, \Gb}_\K &= 0, \quad \Gb \in {\polyt{k-1}\LRp{\K}}, \\
        \eqnlab{Lu-projection2}
        \LRp{\eu^I,\vb}_\K & = 0, \quad \vb \in {\polyb{k-1}\LRp{\K}},\\
        \eqnlab{Lu-projection3}
        \LRa{-\eL^I\n + \LRp{m+\alpha_1}\eu^I,\mub}_\ed \\
        = -\langle \ep^I\n + \half \kappa\db \times (\n \times & (\ebt^I+\ebht^I) ), \mub \rangle_e, \quad \mub \in {\polyb{k}\LRp{e}}. \notag
      \end{align}
    \end{subequations}
\end{itemize}
The well-definedness and optimality of the $\L^2$-projections are clear. The coupled
projector $\Pi\LRp{\bb, \r}:= \LRp{\Pi \bb, \Pi r}$ has been studied
in \cite{CockburnGopalakrishnanSayas10}, and in particular we have
\begin{subequations}
  \eqnlab{ebI-erI}
  \algn{
    \label{eq:ebI-estm} \| \eb^I \|_{0,\K} &\lesssim h^{k +1} \nor{\bb }_{k+1, \K} + \alpha_3 h^{k +1} \nor{\r }_{k +1, \K} , \\
    \label{eq:erI-estm} \| \er^I \|_{0,\K} &\lesssim \alpha_3^{-1} h^{k +1} \nor{ \Div \bb }_{k, \K} + h^{k +1} \nor{\r}_{k +1, \K},
  }
\end{subequations}
where, again, for simplicity we choose the same solution order $k$ for
all the unknowns. Here, we assume that $\bb$ and $\r$ are sufficiently
smooth, that is, $\bb \in \LRs{H^{k+1}\LRp{\Omega}}^d$ and $\r \in H^{k+1}\LRp{\Omega}$.

To understand the approximation capability of the coupled projector
$\Pi\LRp{\Lb,\ub}:=\LRp{\Pi\Lb, \Pi\ub}$
we need to recall a result in \cite[Lemma~A.1]{CockburnGopalakrishnanSayas10}.
\begin{lemma} \lemlab{trace-equiv}
  Suppose that $w \in \poly{k}^{\perp}(K)$. Then, for any $\e \subset \pK$, the map $w \mapsto w|_e \in \poly{k}(e)$ is an isomorphism and 
  $\| w \|_{0,\K}^2 \sim h_K \| w \|_{0,e}^2$ holds with a constant independent of $h_\K$.
\end{lemma}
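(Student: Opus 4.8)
The plan is to reduce the statement to a single fixed reference simplex by affine equivalence, prove it there by a dimension count combined with an injectivity argument, and then transport the conclusion back to $\K$, controlling all scaling factors through shape-regularity.

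First I would fix a reference $d$-simplex $\hat\K$ and let $F(\hat x) = B\hat x + b$ be the affine bijection mapping $\hat\K$ onto $\K$ and a chosen face $\hat e$ of $\hat\K$ onto the given face $e \subset \pK$ (possible since $\K$ is a simplex, and the $d+1$ faces of $\hat\K$ account for ``any $e \subset \pK$''). With $\hat w := w \circ F$, pull-back preserves polynomial degree, and since $|\det B|$ is constant we have $(w,q)_\K = |\det B|\,(\hat w, \hat q)_{\hat\K}$ whenever $q = \hat q \circ F^{-1}$; hence $w \in \poly{k}^{\perp}(\K)$ if and only if $\hat w \in \poly{k}^{\perp}(\hat\K)$. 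Likewise $\|w\|_{0,\K}^2 = |\det B|\,\|\hat w\|_{0,\hat\K}^2$ and $\|w\|_{0,e}^2 = J_s\,\|\hat w\|_{0,\hat e}^2$ with $J_s$ the constant $(d-1)$-dimensional surface Jacobian of $F|_{\hat e}$. Shape-regularity gives $|\det B| = |\K|/|\hat\K| \sim h_\K^{d}$ and $J_s = |e|/|\hat e| \sim h_\K^{d-1}$, so $|\det B|/J_s \sim h_\K$; it therefore suffices to prove, on the fixed element $\hat\K$, that $\hat w \mapsto \hat w|_{\hat e}$ is an isomorphism onto $\poly{k}(\hat e)$ and that $\|\hat w\|_{0,\hat\K} \sim \|\hat w\|_{0,\hat e}$.

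On $\hat\K$, the restriction map $R : \poly{k}^{\perp}(\hat\K) \to \poly{k}(\hat e)$ is linear between spaces of equal dimension, since $\dim \poly{k}^{\perp}(\hat\K) = \dim \poly{k}(\hat\K) - \dim \poly{k-1}(\hat\K) = \binom{k+d}{d} - \binom{k+d-1}{d} = \binom{k+d-1}{d-1} = \dim \poly{k}(\hat e)$ (as $\hat e$ is a $(d-1)$-simplex). So it is enough to show $R$ is injective. If $\hat w \in \poly{k}^{\perp}(\hat\K)$ and $\hat w|_{\hat e} = 0$, write $\lambda$ for the barycentric coordinate vanishing on the hyperplane containing $\hat e$ (so $\lambda \ge 0$ on $\hat\K$ and $\lambda > 0$ in its interior); since a polynomial vanishing on the full $(d-1)$-face $\hat e$ is divisible by $\lambda$, we have $\hat w = \lambda\, q$ with $q \in \poly{k-1}(\hat\K)$. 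Testing the orthogonality $\hat w \perp \poly{k-1}(\hat\K)$ against $q$ gives $0 = (\hat w, q)_{\hat\K} = \int_{\hat\K} \lambda\, q^2$, and nonnegativity of the integrand forces $q = 0$, hence $\hat w = 0$. Thus $R$ is an isomorphism.

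Finally, since $R$ is injective, $\hat w \mapsto \|\hat w\|_{0,\hat e}$ is a norm on the finite-dimensional space $\poly{k}^{\perp}(\hat\K)$, hence equivalent to $\hat w \mapsto \|\hat w\|_{0,\hat\K}$, which yields $\|\hat w\|_{0,\hat\K}^2 \sim \|\hat w\|_{0,\hat e}^2$ with constants depending only on $k$, $d$, and $\hat\K$. Transporting both statements back through $F$ shows that restriction to $e$ is an isomorphism onto $\poly{k}(e)$ and that $\|w\|_{0,\K}^2 \sim h_\K \|w\|_{0,e}^2$, with constants independent of $h_\K$. I expect the only non-routine step to be the injectivity argument, where the sign-definiteness of the vanishing barycentric coordinate must be played against the orthogonality defining $\poly{k}^{\perp}$; the remainder is standard affine-scaling bookkeeping, for which shape-regularity is exactly what makes $|\det B|/J_s$ behave like $h_\K$.
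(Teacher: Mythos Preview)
Your proof is correct. The paper does not prove this lemma at all; it merely recalls it from \cite[Lemma~A.1]{CockburnGopalakrishnanSayas10} and uses it as a black box. Your argument---affine pull-back to a reference simplex, the dimension count $\dim \poly{k}^{\perp}(\hat\K) = \binom{k+d-1}{d-1} = \dim \poly{k}(\hat e)$, injectivity via the factorization $\hat w = \lambda q$ and the sign-definite test $(\lambda q, q)_{\hat\K} = 0$, and finally norm equivalence on a finite-dimensional space---is exactly the standard proof one finds in that reference, so nothing is missing or different.
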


\begin{lemma}[Estimation for $\eu^I$]
  Suppose $\ub \in \LRs{{H}^{k+1}\LRp{\Omega}}^d$, $\Lb \in \LRs{{H}^{k+1}\LRp{\Omega}}^{d\times d}$, $\r \in H^{k+1}\LRp{\Omega}$, $\bb \in \LRs{{H}^{k+1}\LRp{\Omega}}^d$, and $\p \in H^{k+1}\LRp{\Omega}$. The projection $\Pi\ub$ is well-defined and optimal, i.e.,
  \algns{
    \| \eu^I \|_{0} &\lesssim C(\alpha_1, \wb) [ ({ \alpha_1 + \nor{\wb}_\Linfty + h \nor{\wb}_\Woneinfty }) h^{k+1} \| \ub \|_{k+1} \\
    &\quad + h^{k+1} \| \Div \Lb - \Grad p \|_{k} + \kappa h^{k+1} \| \db \|_\Linfty \LRp{\| \bb \|_{k+1} + \alpha_3 \| r \|_{k +1} } ]
  }
  where $C(\alpha_1, \wb) = 1/(\alpha_1 - \half \nor{\wb}_\Linfty )$.
\end{lemma}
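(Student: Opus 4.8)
The plan is to adapt the analysis of the HDG Stokes projection of \cite{CockburnGopalakrishnanSayas10,CockburnNguyenPeraire10,CockburnSayas-DivConfStokes}, with the pressure folded into a stress variable, accounting for two extra ingredients: the non-polynomial convection $\wb$ in \eqref{eq:Lu-projection1}, and the skeleton datum coupling $\Pi\ub$ to the (already analyzed) magnetic projection in \eqref{eq:Lu-projection3}. For well-definedness, note that on each $\K$ the equations \eqref{eq:Lu-projection1}--\eqref{eq:Lu-projection3} form a square system for $(\Pi\Lb,\Pi\ub)\in[\poly{k}(\K)]^{d\times d}\times[\poly{k}(\K)]^d$: the $(d^2+d)\binom{k+d}{d}$ unknowns match the $(d^2+d)\binom{k-1+d}{d}+d(d+1)\binom{k+d-1}{d-1}$ equations, using $\binom{k+d}{d}-\binom{k-1+d}{d}=\binom{k+d-1}{d-1}$. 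Hence it suffices to show that the homogeneous problem (so $\ep^I,\ebt^I,\ebht^I$ vanish) has only the trivial solution $(Z,z)$. I will test \eqref{eq:Lu-projection1} with $\Gb=\Grad z$, use \eqref{eq:Lu-projection2} with $\vb=\Div Z$ to cancel $(\Div Z,z)_\K$, integrate the convective term by parts with $\Div\wb=0$, and test \eqref{eq:Lu-projection3} with the face traces $z|_e$; subtracting the two boundary identities gives $\LRa{(\tfrac m2+\alpha_1)z,z}_{\pK}=0$, so $z=0$ on $\pK$ because $\alpha_1>\tfrac12\nor{\wb}_\Linfty$ (this is where $C(\alpha_1,\wb)=(\alpha_1-\tfrac12\nor{\wb}_\Linfty)^{-1}$ enters). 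Then \eqref{eq:Lu-projection2} forces $z\in[\poly{k}^{\perp}(\K)]^d$, and Lemma~\lemref{trace-equiv} gives $z\equiv\bs0$; back in \eqref{eq:Lu-projection3} this yields $Z\n|_e=\bs0$ on all faces, \eqref{eq:Lu-projection1} gives $Z\in[\poly{k}^{\perp}(\K)]^{d\times d}$ row-wise, and since any $d$ of the $d{+}1$ outward normals are linearly independent, Lemma~\lemref{trace-equiv} applied component-wise gives $Z\equiv\bs0$.

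For the error estimate I use that \eqref{p-proj} makes $\Pi p=\Pr_k p$ the $L^2$ projection, so $(\ep^I\Ib,\Gb)_\K=(\ep^I,\tr\Gb)_\K=0$ for $\Gb\in[\poly{k-1}(\K)]^{d\times d}$; thus, with the stress $\Sigma:=\Lb-p\,\Ib$ and $\Pi\Sigma:=\Pi\Lb-\Pi p\,\Ib$, equations \eqref{eq:Lu-projection1}--\eqref{eq:Lu-projection3} state exactly that $(\Sigma-\Pi\Sigma,\eu^I)$ solves an HDG Stokes projection with stabilization $m+\alpha_1$, with data $(\eu^I\otimes\wb,\Gb)_\K$ in the interior equation and the magnetic term $-\tfrac12\kappa\,\db\times(\n\times(\ebt^I+\ebht^I))$ on the skeleton. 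I then take $\widetilde\ub:=\Pr_k\ub$ and, for $\Sigma$, a row-wise $\mathrm{BDM}_k$ projection $\widetilde\Sigma$, which has the two key properties that $(\Sigma-\widetilde\Sigma)\n\perp[\poly{k}(e)]^d$ on every face and $\Div\widetilde\Sigma=\Pr_{k-1}\Div\Sigma$ (commuting diagram, applied row by row). Setting $\theta_\ub:=\widetilde\ub-\Pi\ub$, $\theta_\Sigma:=\widetilde\Sigma-\Pi\Sigma$, $\rho_\ub:=\ub-\widetilde\ub$, $\rho_\Sigma:=\Sigma-\widetilde\Sigma$, one has $\theta_\ub\perp[\poly{k-1}(\K)]^d$ by \eqref{eq:Lu-projection2}, hence $(\Div\theta_\Sigma,\theta_\ub)_\K=0$; repeating the energy manipulation of the first step on $(\theta_\Sigma,\theta_\ub)$ and using the normal-trace orthogonality of $\widetilde\Sigma$ to drop $\LRa{\rho_\Sigma\n,\theta_\ub}_{\pK}$ gives
\[
  \LRa{(\tfrac m2+\alpha_1)\,\theta_\ub,\theta_\ub}_{\pK}
  = (\rho_\ub\otimes\wb,\Grad\theta_\ub)_\K - (\rho_\Sigma,\Grad\theta_\ub)_\K
    + \LRa{-\tfrac12\kappa\,\db\times(\n\times(\ebt^I+\ebht^I)) - (m+\alpha_1)\rho_\ub,\ \theta_\ub}_{\pK}.
\]

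The right-hand side is estimated term by term, and $\nor{\theta_\ub}_{0,\K}\lesssim h_\K^{1/2}\nor{\theta_\ub}_{0,\pK}$ is then recovered from $\theta_\ub\in[\poly{k}^{\perp}(\K)]^d$ and Lemma~\lemref{trace-equiv}: (i) integrating $(\rho_\Sigma,\Grad\theta_\ub)_\K=-(\Div\rho_\Sigma,\theta_\ub)_\K$ by parts, then using $\Div\rho_\Sigma=\Div\Sigma-\Pr_{k-1}\Div\Sigma$ together with $\theta_\ub\perp\poly{k-1}$, produces the $h^{k+1}\nor{\Div\Lb-\Grad p}_k$ contribution; (ii) in $(\rho_\ub\otimes\wb,\Grad\theta_\ub)_\K$ the part $\Pr_0\wb$ drops by $\rho_\ub\perp\poly{k-1}$, and the remainder yields $h\,\nor{\wb}_\Woneinfty$ through $\nor{\wb-\Pr_0\wb}_\Linfty\lesssim h\nor{\wb}_\Woneinfty$ --- the sole source of that factor; (iii) the magnetic datum is bounded by $\kappa\nor{\db}_\Linfty(\nor{\ebt^I}_{0,\pK}+\nor{\ebht^I}_{0,\pK})$, which by \eqref{eq:ebI-estm} (for the coupled $(\bb,\r)$ projection) and the skeleton $L^2$-projection error gives $\kappa h^{k+1}\nor{\db}_\Linfty(\nor{\bb}_{k+1}+\alpha_3\nor{\r}_{k+1})$; (iv) the $(m+\alpha_1)\rho_\ub$ term gives $(\nor{\wb}_\Linfty+\alpha_1)h^{k+1}\nor{\ub}_{k+1}$ --- all with Lemma~\ref{lemma:cont-inv-trace}, Lemma~\ref{lemma:inverse-inequality}, and the approximation properties of $\Pr_k$ and $\mathrm{BDM}_k$. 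Dividing by $C(\alpha_1,\wb)^{-1}$ and using $\nor{\eu^I}_0\le\nor{\rho_\ub}_0+\nor{\theta_\ub}_0$ with $\nor{\rho_\ub}_0\lesssim h^{k+1}\nor{\ub}_{k+1}$ (absorbed since $C(\alpha_1,\wb)\alpha_1\ge1$) yields the asserted bound. I expect the main obstacle to be item~(i): obtaining the sharp $\nor{\Div\Lb-\Grad p}_k$ rather than the cruder $\nor{\Lb}_{k+1}+\nor{p}_{k+1}$ hinges on the $\mathrm{BDM}_k$ reference projection and its commuting property, and one must simultaneously ensure the non-polynomial convective term $(\eu^I\otimes\wb,\cdot)$ does not spoil either the coercivity constant $\alpha_1-\tfrac12\nor{\wb}_\Linfty>0$ or the orthogonality underpinning the $\Pr_0\wb$ reduction.
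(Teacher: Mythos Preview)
Your argument is correct. The well-definedness step is sound (the paper does not spell this out, citing \cite{CesmeliogluCockburnNguyenEtAl13}), and each of your four term estimates goes through as written; in particular your treatment of item~(i) via the commuting property of the row-wise $\mathrm{BDM}_k$ projection and the orthogonality $\theta_\ub\perp[\poly{k-1}(\K)]^d$ really does produce the sharp $\|\Div\Lb-\Grad p\|_{k}$ factor, and item~(iii) matches the paper's handling of the magnetic boundary datum via \eqref{eq:ebI-estm} and the trace inequality.

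The route, however, differs from the paper's. You package $\Lb$ and $p$ into the combined stress $\Sigma=\Lb-p\Ib$ and introduce an auxiliary $\mathrm{BDM}_k$ projection $\widetilde\Sigma$ as a reference, relying on its normal-trace orthogonality and commuting-diagram property. The paper bypasses this entirely: it tests \eqnref{Lu-projection3} directly with $\vb=\delu\in[\poly{k}^{\perp}(\K)]^d$ and integrates by parts, whereupon the unwanted terms $(\Div\Pi\Lb,\vb)_\K$ and $(\Grad\Pi p,\vb)_\K$ vanish automatically because $\Div\Pi\Lb$ and $\Grad\Pi p$ lie in $\polyb{k-1}(\K)$, reducing the volume residual to $(\Div\Lb-\Grad p,\vb)_\K+(\eu^I\otimes\wb,\Grad\vb)_\K$ with no intermediate projection needed. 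In short, the paper's proof is leaner --- it gets the sharp divergence term from the built-in orthogonality of $\polyb{k}^{\perp}$ rather than from a $\mathrm{BDM}$ commuting diagram --- while your approach is more structural, recasting the problem as a Stokes-type HDG projection for the stress variable. Both land on the same identity $\LRa{(\alpha_1+\tfrac m2)\delu,\delu}_{\pK}=F_{\ub}+F_{\Lb}+F_{\gb}$ and the same final bound.
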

\begin{proof}
  We extend the proof of a result in \cite{CesmeliogluCockburnNguyenEtAl13}. To
  begin, we define 
  $\gb := -\half\kappa\db \times \LRp{\n \times \LRp{\ebt^I+\ebht^I}}$, 
  take $\mub = \vb|_\pK$ for some $\vb \in \polyb{k}^{\perp}(K)$ which will be determined later,
  and rewrite \eqnref{Lu-projection3} as
  \begin{align}
    \LRa{\LRp{\alpha_1+m}\eu^I,\vb}_\pK &=  \LRa{\eL^I \n - \ep^I \n + \gb, \vb }_\pK  \notag \\
    \label{rewrite-Lu-projection3}
    &\hspace{-3em}= \LRp{\Div \eL^I, \vb}_K + \LRp{\eL^I, \Grad \vb}_K - \LRp{\Grad \ep^I, \vb}_K - \LRp{\ep^I, \Div \vb}_K + \LRa{ \gb, \vb}_\pK \\
    &\hspace{-3em}= \LRp{ \Div \Lb - \Grad p, \vb}_K + \LRp{ \eu^I \otimes \wb, \Grad \vb}_K + \LRa{ \gb, \vb }_\pK \notag
  \end{align}
  where we have used the integration by parts in the second equality,
  definitions of the projections $\Pi\Lb$, $\Pi\ub$, and $\Pi\p$, the
  orthogonality between $\Div\LRp{\Pi\Lb}, \Grad \LRp{\Pi\p}$ and $\vb
  \in \polyb{k}^{\perp}(K)$, and the orthogonality $\veps_p^I \perp
  \Div \vb$ in the last equality.
  Now, let $\Pr_k \ub$ be the $L^2$ projection of $\ub$ and define $\delu^I := \ub - \Pr_k \ub$.
  By the triangle inequality, it suffices to estimate the approximation capability of $\delu := \Pr_k \ub - \Pi \ub$.
  From the above formula, we have
  \mltlns{
    \LRa{\LRp{\alpha_1+m}\delu, \vb}_\pK - \LRp{ \delu \otimes \wb, \Grad \vb}_K \\
    = \underbrace{ - \LRa{ \LRp{\alpha_1+m} \delu^I , \vb }_\pK + \LRp{ \delu^I \otimes \wb, \Grad \vb}_K }_{=: F_{\ub} (\vb)} 
    + \underbrace{\LRp{ \Div \Lb - \Grad p, \vb}_K}_{=: F_{\Lb} (\vb)} + \underbrace{\LRa{ \gb, \vb }_\pK}_{=:F_{\gb} (\vb)}.
  }
  Since $\delu \in \polyb{k}^{\perp}(K)$, we can take $\vb = \delu$ to obtain
  \algns{
    \LRa{\LRp{\alpha_1+\frac{m}{2}}\delu, \delu}_\pK
    = F_{\ub}(\delu) + F_{\Lb}(\delu) + F_{\gb}(\delu)
  }
  where we have used $\Div \wb = 0$ in the integration by parts
  \algns{
    - \LRp{ \delu \otimes \wb, \Grad \delu}_K 
    = - \half\LRp{\wb, \Grad\LRp{\delu\cdot\delu}}_K \qquad \\
    = \half\LRp{(\Div\wb) \delu, \delu}_K - \half\LRa{\wb\cdot\n\delu,\delu}_\pK
    = - \LRa{\frac{m}{2}\delu,\delu}_\pK.
  }
  By Lemma~\lemref{trace-equiv} and the fact that $\alpha_1 > \half \nor{\wb}_\Linfty$, we have
  \algn{
    \eqnlab{delu-estm-1}
    \nor{\delu}_{0,\K}^2
    &\lesssim h_\K \nor{\delu}_{0,\pK}^2
    \lesssim \frac{h_{\K}}{\alpha_1 - \half \nor{\wb}_{L^\infty(\K)}}
    \LRa{\LRp{\alpha_1 + \frac m2}\delu , \delu}_\pK \\
    &= \frac{h_{\K}}{\alpha_1 - \half \nor{\wb}_{L^\infty(\K)}}
    \LRp{F_{\ub}(\delu) + F_{\Lb}(\delu) + F_{\gb}(\delu)}. \notag
  }

  We now estimate $| F_{\ub} (\delu) |$.
  Defining $\delw = \wb - \Pr_0 \wb$, note that $(\delu^I \otimes \wb, \Grad \delu)_K = (\delu^I \otimes \delw, \Grad \delu)_K$ holds due to the definition of $\delu^I$. Thus, we have
  \algns{
    |F_{\ub}(\delu)| 
    &= \snor{ \LRa{ (m + \alpha_1) \delu^I, \delu}_\pK  + \LRp{ \delu^I \otimes \delw, \Grad \delu}_K} \\
    &\lesssim (\| \wb \|_{L^\infty(\pK)} + \alpha_1) h_K^{-1} \LRp{ \| \delu^I \|_{0,\K} + h_K \| \Grad \delu^I \|_{0,\K} } \| \delu \|_{0,\K} \\
    &\quad + \| \delu^I \|_{0,\K} \| \wb \|_{W^{1,\infty}(K)} \| \delu \|_{0,\K}
  }
  where we have used $\| \delw \|_{L^\infty(K)} \lesssim h_K \| \wb \|_{W^{1,\infty}(K)}$, 
  the inverse inequality,
  and the continuous and discrete trace inequalies 
  (\eqref{continuous_trace_ineq} and \eqref{discrete_trace_ineq}, respectively) in the last step. 
  Taking the approximation capability of $\Pr_k \ub$ into account, we get
  \algn{ 
    \label{eq:Fu-estm}
    |F_{\ub}(\delu)| \lesssim \LRp{ \alpha_1 + \| \wb \|_{L^\infty(\pK)} + h_K \| \wb \|_{W^{1,\infty}(K)} } h_K^{k} \| \ub \|_{k+1, \K} \nor{\delu}_{0,\K}.
  }

  The estimate of $| F_{\Lb} (\delu) |$ is straightforward since $\delu \in \polyb{k}^{\perp}(K)$:
  \algn{  
    \label{eq:FL-estm}
    | F_{\Lb} (\delu)| &\leq \| \Div \Lb - \Grad p - \Pr_{k-1} (\Div \Lb - \Grad p) \|_{0,\K} \| \delu \|_{0,\K} \\
    &\lesssim h_\K^{k} \| \Div \Lb - \Grad p \|_{k,\K} \| \delu \|_{0,\K}. \notag
  }

  For the estimate of $| F_{\gb} (\delu) |$, note that
  \algn{
    \label{bound_of_grad_ebi}
    \nor{\Grad\eebi}_{0,K}
    &\leq \nor{\Grad\LRp{\bb-\Pr_k\bb}}_{0,K} + \nor{\Grad\LRp{\Pr_k\bb-\Pi\bb}}_{0,K} \\
    &\lesssim \nor{\bb-\Pr_k\bb}_{1,K} + h_K^{-1} \nor{\Pr_k\bb-\Pi\bb}_{0,K} \notag\\
    &\lesssim \nor{\bb-\Pr_k\bb}_{1,K} + h_K^{-1} \nor{\Pr_k\bb-\bb}_{0,K} + h_K^{-1} \nor{\bb-\Pi\bb}_{0,K} \notag\\
    &\lesssim h_K^k \nor{\bb}_{k+1,K} + h_K^{-1} \nor{\eebi}_{0,K}. \notag
  }
  Using the definition of $\Pi^*\bbht$ as the $L^2$-projection,
  the continuous and discrete trace inequalies 
  (\eqref{continuous_trace_ineq} and \eqref{discrete_trace_ineq}, respectively),
  the above estimate of $\nor{\Grad\eebi}_{0,K}$ \eqref{bound_of_grad_ebi},
  and finally the estimate of $\nor{\eebi}_{0,K}$ in \eqnref{ebI-estm},
  we have
  \begin{align}
    \eqnlab{Fg-estm}
    | F_{\gb}(\delu) |  
    &\lesssim \kappa\nor{\db}_{L^\infty(\pK)}\LRp{\nor{\ebt^I}_{0,\pK} 
    + \nor{\ebht^I}_{0,\pK} } \nor{\delu}_{0,\pK} \\
    &\lesssim \kappa\nor{\db}_{L^\infty(\pK)}\nor{\ebt^I}_{0,\pK} \nor{\delu}_{0,\pK} \notag \\
    &\lesssim \kappa\nor{\db}_{L^\infty(\pK)}\LRp{h_\K^{-1} \| \eb^I \|_{0,\K} 
    + \| \Grad\eb^I \|_{0,\K} } \nor{\delu}_{0,\K} \notag \\
    &\lesssim \kappa\nor{\db}_{L^\infty(\pK)}\LRp{h_\K^{-1} \| \eb^I \|_{0,\K} 
    + h_K^k \nor{\bb}_{k+1,\K} } \nor{\delu}_{0,\K} \notag \\
    &\lesssim \kappa\nor{\db}_{L^\infty(\pK)} h_\K^{k}\LRp{\nor{\bb}_{k+1, \K} 
    + \alpha_3 \nor{r}_{k+1, \K}} \nor{\delu}_{0,\K}. \notag
  \end{align}
  Using \eqref{eq:Fu-estm}, \eqref{eq:FL-estm}, and \eqref{eq:Fg-estm} in \eqref{eq:delu-estm-1}, 
  and using the triangle inequality $\| \eu^I \|_{0,\K} \lesssim \| \ub - \Pr_k \ub \|_{0,\K} + \| \delu \|_{0,\K}$ ends the proof.
\end{proof}

To estimate $\| \eL^I \|_{0,\K}$, we need some auxiliary results.
We first recall a result with a sketch of its proof.
\begin{lemma} \label{thm:Pi-PF-estm}
  Let $e_K$ be a fixed face of the simplex $K$. For $\bs{R} \in [L^2(K)]^d$ and $g \in L^2(\pK)$, we define $\Pi(\bs{R}, g) \in \polyb{k}(K)$ as 
  \algns{ 
    \LRp{\Pi(\bs{R}, g) , \bs{\tau}}_K &= \LRp{\bs{R}, \bs{\tau}}_K, & & \forall \bs{\tau} \in \polyb{k-1}(K), \\
    \LRa{ \Pi(\bs{R}, g)  \cdot \n, \mu }_e &= \LRa{ g, \mu}_e, & & \forall \mu \in \poly{k}(e) \quad \text{for } e \not = e_K .
  }
  Then, $\| \Pi(\bs{R}, g) \|_K \lesssim \| \bs{R} \|_K + h_K^{1/2} \| g \|_{\pK}$. 
\end{lemma}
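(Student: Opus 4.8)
This is \cite[Lemma~A.1]{CockburnGopalakrishnanSayas10}, and the plan is to reproduce its short argument: reduce to a reference simplex by scaling, prove the reference operator is well-defined (hence bounded, by equivalence of norms on finite-dimensional spaces), and then track the powers of $h_K$. Fix the reference simplex $\hat K$ with a distinguished face $\hat e_K$ mapped to $e_K$ by the affine map $F_K(\hat x) = B_K \hat x + b_K$; shape-regularity gives $\|B_K\| \lesssim h_K$, $\|B_K^{-1}\| \lesssim h_K^{-1}$, and $|\det B_K| \sim h_K^d$. I would pull $\Pi(\bs R, g)$ back to $\hat K$ with the contravariant Piola transform $\Pi(\bs R,g) = (\det B_K)^{-1} B_K\, \widehat\Pi \circ F_K^{-1}$, which maps $\polyb{k}(K)$ onto $\polyb{k}(\hat K)$, leaves each normal-flux integral $\int_e (\cdot)\cdot\n\,\mu\,ds$ invariant, and converts the interior $L^2(K)$ pairing into an $L^2(\hat K)$ pairing after an invertible change of the (arbitrary) test functions. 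Consequently $\widehat\Pi$ solves the defining system \emph{on $\hat K$} with transformed data, i.e.\ $\widehat\Pi = \Pi(\widehat{\bs R}, \widehat g)$ for the fixed reference-element operator.

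For well-posedness on $\hat K$ I would first check that the system is square: $\dim \polyb{k}(\hat K) = d\binom{k+d}{d}$, the interior moments against $\polyb{k-1}(\hat K)$ impose $d\binom{k+d-1}{d}$ conditions, the normal moments on the $d$ faces other than $\hat e_K$ impose $d\binom{k+d-1}{d-1}$ conditions, and $\binom{k+d-1}{d} + \binom{k+d-1}{d-1} = \binom{k+d}{d}$ by Pascal's rule. For injectivity, set $\bs R = 0$, $g = 0$: every component of a solution $\bs w$ then lies in $\poly{k}^{\perp}(\hat K)$, while $\bs w\cdot\n = 0$ on all faces except $\hat e_K$. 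Using the orthogonal splitting $\poly{k}(\partial \hat K) = \{\bs v\cdot\n|_{\partial\hat K} : \bs v \in \polyb{k}^{\perp}(\hat K)\} \oplus \{q|_{\partial\hat K} : q \in \poly{k}^{\perp}(\hat K)\}$ together with the face-restriction isomorphism of Lemma~\lemref{trace-equiv}, one argues (exactly as in \cite{CockburnGopalakrishnanSayas10,CockburnSayas-DivConfStokes}) that $\bs w\cdot\n$ must in fact vanish on all of $\partial\hat K$, and then that $\bs w = 0$. Hence $(\bs R, g)\mapsto \Pi(\bs R,g)$ is a well-defined linear map between finite-dimensional spaces, so $\|\Pi(\widehat{\bs R},\widehat g)\|_{\hat K} \lesssim \|\widehat{\bs R}\|_{\hat K} + \|\widehat g\|_{\partial\hat K}$ with a constant depending only on $k$, $d$, $\hat K$.

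It remains to undo the scaling. The Piola pullback, together with $ds = J_e\,d\hat s$ where $J_e \sim h_K^{d-1}$ and the fact that $\det B_K\, B_K^{-1}$ has all singular values of order $h_K^{d-1}$, gives $\|\Pi(\bs R,g)\|_{0,K} \lesssim h_K^{1-d/2}\|\widehat\Pi\|_{0,\hat K}$, $\|\widehat{\bs R}\|_{0,\hat K} \lesssim h_K^{d/2-1}\|\bs R\|_{0,K}$, and $\|\widehat g\|_{0,\partial\hat K} \lesssim h_K^{(d-1)/2}\|g\|_{0,\partial K}$. Substituting into the reference estimate, the exponent of $h_K$ in the $\bs R$-term is $(1-d/2)+(d/2-1)=0$ and in the $g$-term is $(1-d/2)+(d-1)/2=1/2$, which yields precisely $\|\Pi(\bs R,g)\|_K \lesssim \|\bs R\|_K + h_K^{1/2}\|g\|_{\partial K}$.

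I expect the only genuinely delicate step to be the reference-element well-posedness: the dimension count must match exactly, and the injectivity is not soft — it rests on the structural fact (Lemma~\lemref{trace-equiv}) that a degree-$k$ polynomial orthogonal to all lower-degree polynomials on $\hat K$ is determined by its restriction to any single face, and on the orthogonal trace decomposition on $\partial\hat K$. The Piola bookkeeping in the first and last paragraphs, though routine, must also be carried out carefully enough to confirm that the pulled-back problem really is the \emph{same} operator $\Pi$, so that one reference-element constant suffices for all $K \in \Omegah$.
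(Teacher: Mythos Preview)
Your proposal is correct but takes a different route from the paper. You map everything to the reference simplex via the Piola transform, establish well-posedness and boundedness there by finite-dimensional norm equivalence, and then track the powers of $h_K$ back. The paper instead works almost entirely on the physical element $K$: it cites \cite{cockburn2008superconvergent} for well-posedness, splits $\Pi(\bs{R}, g) = \Pi(\bs{R}, 0) + \Pi(\bs{0}, g)$ by linearity, and bounds each piece separately. The first piece is handled by a one-line scaling argument (essentially your method restricted to that subspace). For the second piece $\bs{\sigma}_2 := \Pi(\bs{0}, g)$, the paper observes that the $d$ outward normals $\{\n_e : e \neq e_K\}$ span $\R^d$, so every Cartesian component of $\bs{\sigma}_2$ is a fixed linear combination of the scalar polynomials $\bs{\sigma}_2 \cdot \n_e$; each of these lies in $\poly{k}^{\perp}(K)$ (the interior moments vanish) and equals $\Pr_e g$ on its own face $e$, so Lemma~\lemref{trace-equiv} gives $\|\bs{\sigma}_2 \cdot \n_e\|_{0,K} \lesssim h_K^{1/2}\|g\|_{0,e}$ directly. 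Your approach is more systematic and avoids this normal-spanning trick at the cost of the Piola bookkeeping; the paper's argument is shorter and makes the source of the $h_K^{1/2}$ factor explicit. One small remark: your injectivity sketch invokes the orthogonal trace decomposition on $\partial\hat K$, which is more machinery than needed---the same normal-spanning observation the paper uses for the estimate also gives injectivity in one line.
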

\begin{proof}
  We refer to \cite{cockburn2008superconvergent} for the existence and uniqueness of $\Pi(\bs{R}, g)$. 
  Let $\bs{\sigma}_1 = \Pi(\bs{R}, 0)$ and $\bs{\sigma}_2 = \Pi(\bs{0}, g)$. By the standard scaling argument, 
  \algns{ 
    \| \bs{\sigma}_1 \|_{0,K} \lesssim \| \Pr_{k-1} \bs{\sigma}_1 \|_{0,K} \leq \| \bs{R} \|_{0,K} .
  }
  To estimate $\bs{\sigma}_2$, note that there exists $a_e \in \R$, $e \not = e_K$ such that $(1 \; 0 \; 0)^T = \sum_{e, e \not = e_K} a_e \n_e$, and the first component of $\bs{\sigma}_2$, say $\bs{\sigma}_2^1$, is 
  \algns{ 
    \bs{\sigma}_2^1 = \sum_{e, e \not = e_K} a_e (\bs{\sigma}_2 \cdot \n_e) .
  }
  Since $\bs{\sigma}_2 \cdot \n_e \perp \poly{k-1}(K)$ by the definition of $\bs{\sigma}_2$, 
  $\| \bs{\sigma}_2 \cdot \n_e \|_{0,K} \lesssim h_K^{1/2} \| \bs{\sigma}_2 \cdot \n_e \|_{0,e} \lesssim h_K^{1/2} \| g \|_{0,e}$ 
  by Lemma~\lemref{trace-equiv}.
  The estimate $\| \bs{\sigma}_2 \|_{0,K} \lesssim h_K^{1/2} \| g \|_{0,\pK}$ follows easily 
  by using this inequality to each component of $\bs{\sigma}_2$. 
\end{proof}

We now recall other known facts without proofs (cf. Lemma~4.8 in \cite{CockburnGopalakrishnanNguyenEtAl11}). 
\begin{lemma}
  \label{lemma:basis}
  For a face $e$ of $K$, let $\mc{B}_e$ be an orthogonal basis of the vectors orthogonal to $\n_e$, 
  and let $\mc{B} = \LRc{ \Ir_d } \cup \LRc{ \tb \otimes \n_e, \tb \in \mc{B}_e}$. 
  This $\mc{B}$ is a basis of the space of $d \times d$ matrices. 
\end{lemma}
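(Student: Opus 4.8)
The plan is to prove that $\mc{B}$ is a basis of the space $\R^{d\times d}$ of $d\times d$ matrices by matching cardinality with dimension and then verifying linear independence. Since $\dim \R^{d\times d}=d^2$ while a single face $e$ contributes only the $d-1$ matrices $\tb\otimes\n_e$ with $\tb\in\mc{B}_e$, the collection $\mc{B}$ can yield a basis of the full matrix space only if it ranges over all $d+1$ faces of the simplex $K$; I would therefore read $\mc{B}=\LRc{\Ir_d}\cup\LRc{\tb\otimes\n_e:\ e \text{ a face of }K,\ \tb\in\mc{B}_e}$. With this reading the cardinality of $\mc{B}$ is $1+(d+1)(d-1)=d^2=\dim\R^{d\times d}$, so it suffices to show the $d^2$ matrices in $\mc{B}$ are linearly independent; they then automatically span $\R^{d\times d}$.

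To establish independence I would begin from a vanishing combination $c\,\Ir_d+\sum_{e}\sum_{\tb\in\mc{B}_e}c_{e,\tb}\,\tb\otimes\n_e=\bs{0}$ and take the trace. Because $\tr(\Ir_d)=d$ and $\tr(\tb\otimes\n_e)=\tb\cdot\n_e=0$ (as $\tb\perp\n_e$ by the defining property of $\mc{B}_e$), the trace of the relation gives $cd=0$, hence $c=0$. Setting $\tb_e:=\sum_{\tb\in\mc{B}_e}c_{e,\tb}\,\tb\in\n_e^{\perp}$, the problem reduces to the injectivity claim: if $\sum_{e}\tb_e\otimes\n_e=\bs{0}$ with each $\tb_e\perp\n_e$, then $\tb_e=\bs{0}$ for every face $e$. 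Since each $\mc{B}_e$ is a basis of $\n_e^{\perp}$, this in turn forces $c_{e,\tb}=0$ for all $e$ and $\tb$, completing the argument.

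The main obstacle is this injectivity, which I expect to settle using the geometry of simplex normals. Label the faces $e_0,\dots,e_d$ with unit outward normals $\n_0,\dots,\n_d$; the closedness identity $\sum_{i=0}^{d}|e_i|\,\n_i=\bs{0}$ (area-weighted outward normals of a bounded polytope sum to zero) is the unique linear relation among the $\n_i$, and since all its weights are positive, any $d$ of the normals are linearly independent and $\n_0=\sum_{j=1}^{d}a_j\n_j$ with $a_j=-|e_j|/|e_0|\neq 0$. Let $\LRc{\mb{m}_1,\dots,\mb{m}_d}$ be the dual basis determined by $\n_i\cdot\mb{m}_k=\delta_{ik}$ for $i,k\in\LRc{1,\dots,d}$, and apply $M:=\sum_{i=0}^{d}\tb_{e_i}\otimes\n_i=\bs{0}$ to $\mb{m}_k$: using $(\tb_{e_i}\otimes\n_i)\mb{m}_k=(\n_i\cdot\mb{m}_k)\,\tb_{e_i}$ and $\n_0\cdot\mb{m}_k=a_k$, one gets $M\mb{m}_k=a_k\tb_{e_0}+\tb_{e_k}=\bs{0}$, i.e. $\tb_{e_k}=-a_k\tb_{e_0}$. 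The orthogonality $\tb_{e_k}\perp\n_k$ then yields $a_k(\tb_{e_0}\cdot\n_k)=0$, so $\tb_{e_0}\cdot\n_k=0$ for all $k$ because $a_k\neq 0$; as $\n_1,\dots,\n_d$ span $\R^d$ this forces $\tb_{e_0}=\bs{0}$ and hence $\tb_{e_k}=\bs{0}$ for every $k$. The only delicate point is the nonvanishing of the coefficients $a_j$, which I would justify precisely through the positive-weight relation above; this is the geometric fact underlying the basis construction of \cite{CockburnGopalakrishnanNguyenEtAl11}, and with it the dimension count shows $\mc{B}$ is a basis of $\R^{d\times d}$.
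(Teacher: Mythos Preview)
Your proof is correct. The paper itself does not prove this lemma at all: it is stated as a ``recall[ed] known fact'' from \cite[Lemma~4.8]{CockburnGopalakrishnanNguyenEtAl11}, so there is no in-paper argument to compare against. Your reading of the statement---that $\mc{B}$ ranges over all $d+1$ faces of $K$---is the only one consistent with the claimed conclusion (and with how the paper subsequently uses the lemma, e.g.\ in the decomposition \eqref{eq:eLI-basis-decomp} where the sum is explicitly over all faces $e$). The trace argument to eliminate the $\Ir_d$ coefficient, followed by the dual-basis computation exploiting that any $d$ of the simplex normals are independent with the remaining one expressed through strictly nonzero coefficients, is clean and complete.
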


\begin{lemma}[Estimation for $\eL^I$]
  Assume $\ub \in \LRs{H^{k+1}\LRp{\Omega}}^d$, $\Lb \in \LRs{{H}^{k+1}\LRp{\Omega}}^{d\times d}$, $\r \in H^{k+1}\LRp{\Omega}, \bb \in
  \LRs{{H}^{k+1}\LRp{\Omega}}^d$, and $\p \in H^{k+1}\LRp{\Omega}$. Furthermore, suppose the trace of the
  tensor $\Lb$ vanishes, i.e., $\tr\Lb = 0$. There holds:
  \begin{align*}
    \nor{\eL^I}_0 
    &\lesssim 
    h^{k+1}\nor{p}_{k+1}
    + h^{k+1} \nor{\Lb}_{k+1} 
    + \kappa \nor{\db}_{L^\infty} \LRp{h^{k+1}\nor{\bb}_{k+1} + \alpha_3 h^{k+1} \nor{\r}_{k+1}} \\
    & \quad + \LRp{\alpha_1 + \nor{\wb}_{L^\infty} + h \nor{\wb}_{W^{1,\infty}}} \nor{\eeui}_{0} 
    + \LRp{\alpha_1 + \nor{\wb}_{L^\infty}} h^{k+1}\nor{\ub}_{k+1}. \notag \\
  \end{align*}
\end{lemma}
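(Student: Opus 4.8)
The plan is to follow the pattern of the companion estimate for $\nor{\eeui}_0$: split the error by an $L^2$ projection, control the low--order part directly from the volume equations defining the projection, and reduce the high--order part to the face residuals, which are already governed by the other interpolation errors through \eqnref{Lu-projection3}. Concretely, on each $\K\in\Omegah$ I would write $\eli=(\Lb-\Pr_k\Lb)+\delta_{\Lb}$ with $\delta_{\Lb}:=\Pr_k\Lb-\Pi\Lb\in\polyt{k}(\K)$; the first summand obeys the optimal bound $h_\K^{k+1}\nor{\Lb}_{k+1,\K}$, so only $\delta_{\Lb}$ needs work. Since $\Lb-\Pr_k\Lb\perp\polyt{k}(\K)$, one has $(\delta_{\Lb},\Gb)_\K=(\eli,\Gb)_\K$ for all $\Gb\in\polyt{k}(\K)$ and, on each face $e\subset\partial\K$, $\langle\delta_{\Lb}\n,\mub\rangle_e=\langle\eli\n,\mub\rangle_e-\langle(\Lb-\Pr_k\Lb)\n,\mub\rangle_e$, where the last term is bounded in $L^2(e)$ by $h_\K^{k+1/2}\nor{\Lb}_{k+1,\K}$ via the trace inequality (Lemma~\ref{lemma:cont-inv-trace}). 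I would then further decompose $\delta_{\Lb}=\Pr_{k-1}\delta_{\Lb}+\theta_{\Lb}$ with $\theta_{\Lb}\in\polyt{k}^{\perp}(\K)$ and treat the two pieces separately.

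For the low--order piece, testing \eqnref{Lu-projection1} with $\Gb=\Pr_{k-1}\delta_{\Lb}$ gives $\nor{\Pr_{k-1}\delta_{\Lb}}_{0,\K}^2=(\eeui\otimes\wb,\Pr_{k-1}\delta_{\Lb})_\K$; since $\eeui\perp\polyb{k-1}(\K)$ by \eqnref{Lu-projection2}, the constant part $\Pr_0\wb$ of $\wb$ drops out, so this equals $(\eeui\otimes(\wb-\Pr_0\wb),\Pr_{k-1}\delta_{\Lb})_\K$, whence $\nor{\Pr_{k-1}\delta_{\Lb}}_{0,\K}\lesssim h_\K\nor{\wb}_{W^{1,\infty}(\K)}\nor{\eeui}_{0,\K}$. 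For the high--order piece I would use the componentwise version of Lemma~\lemref{trace-equiv} to write $\nor{\theta_{\Lb}}_{0,\K}^2\sim h_\K\nor{\theta_{\Lb}}_{0,e_\K}^2$ for a distinguished face $e_\K$, and then reconstruct $\theta_{\Lb}$ on $e_\K$ from its normal trace: expanding $\theta_{\Lb}$ in the matrix basis of Lemma~\ref{lemma:basis} attached to $e_\K$ and invoking $\tr\Lb=0$ to kill the identity component, the map $\theta_{\Lb}\mapsto\theta_{\Lb}\n_{e_\K}$ becomes an isomorphism, so $\nor{\theta_{\Lb}}_{0,e_\K}\sim\nor{\theta_{\Lb}\n_{e_\K}}_{0,e_\K}$ with $\theta_{\Lb}\n_{e_\K}\in\polyb{k}(e_\K)$. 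Writing $\theta_{\Lb}\n_{e_\K}=\delta_{\Lb}\n_{e_\K}-(\Pr_{k-1}\delta_{\Lb})\n_{e_\K}$, substituting the identity for $\delta_{\Lb}\n$, and then applying \eqnref{Lu-projection3} with the admissible test function $\mub=\theta_{\Lb}\n_{e_\K}$, I can rewrite $\langle\eli\n,\theta_{\Lb}\n\rangle_{e_\K}$ in terms of the face residuals $(m+\alpha_1)\eeui$, $\epi\n$ and $\half\kappa\db\times(\n\times(\eebti+\ebthi))$; bounding these by Cauchy--Schwarz, using $\nor{\ebthi}_{0,e_\K}\lesssim\nor{\eebti}_{0,e_\K}$ (best approximation) and a discrete trace inequality for the $(\Pr_{k-1}\delta_{\Lb})\n$ term, I get $\nor{\theta_{\Lb}}_{0,e_\K}\lesssim (\alpha_1+\nor{\wb}_{L^\infty})\nor{\eeui}_{0,e_\K}+\nor{\epi}_{0,e_\K}+\kappa\nor{\db}_{L^\infty}\nor{\eebti}_{0,e_\K}+h_\K^{k+1/2}\nor{\Lb}_{k+1,\K}+h_\K^{-1/2}\nor{\Pr_{k-1}\delta_{\Lb}}_{0,\K}$.

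Finally, multiplying by $h_\K^{1/2}$ I would convert the face norms into volume norms by the trace inequality together with the inverse inequality and the standard bounds $\nor{\Grad\eeui}_{0,\K}\lesssim h_\K^k\nor{\ub}_{k+1,\K}+h_\K^{-1}\nor{\eeui}_{0,\K}$, the analogous estimate \eqref{bound_of_grad_ebi} for $\nor{\Grad\eebi}_{0,\K}$, the optimality of $\epi$, and the coupled bound \eqnref{ebI-estm} for $\nor{\eebi}_{0,\K}$: this turns $h_\K^{1/2}\nor{\eeui}_{0,e_\K}$ into $h_\K^{k+1}\nor{\ub}_{k+1,\K}+\nor{\eeui}_{0,\K}$, $h_\K^{1/2}\nor{\epi}_{0,e_\K}$ into $h_\K^{k+1}\nor{\p}_{k+1,\K}$, $h_\K^{1/2}\nor{\eebti}_{0,e_\K}$ into $h_\K^{k+1}(\nor{\bb}_{k+1,\K}+\alpha_3\nor{\r}_{k+1,\K})$, and $h_\K^{1/2}\cdot h_\K^{-1/2}\nor{\Pr_{k-1}\delta_{\Lb}}_{0,\K}$ back into $h_\K\nor{\wb}_{W^{1,\infty}(\K)}\nor{\eeui}_{0,\K}$. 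Collecting the contributions of $\Lb-\Pr_k\Lb$, $\Pr_{k-1}\delta_{\Lb}$ and $\theta_{\Lb}$, summing over $\K\in\Omegah$, and absorbing shape--regularity constants yields precisely the stated bound. The step I expect to be the main obstacle is the treatment of $\theta_{\Lb}$: making rigorous the reconstruction of a maximal--degree piecewise--polynomial tensor error from its normal trace on a single face — this is exactly where the hypothesis $\tr\Lb=0$ is indispensable, and where Lemma~\ref{lemma:basis} and the scaling equivalence of Lemma~\lemref{trace-equiv} must be combined carefully, with bookkeeping of the powers of $h$ picked up when passing between $L^2(e_\K)$ and $L^2(\K)$.
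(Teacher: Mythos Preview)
Your reconstruction step for $\theta_{\Lb}$ contains a genuine gap. The claim that ``the map $\theta_{\Lb}\mapsto\theta_{\Lb}\n_{e_\K}$ becomes an isomorphism'' once the identity component is removed cannot hold: $\theta_{\Lb}|_{e_\K}$ is a $d\times d$ matrix-valued polynomial, while $\theta_{\Lb}\n_{e_\K}$ is only $\R^d$-valued, so even on the trace-free subspace (dimension $d^2-1$) the normal trace cannot be injective for $d\ge 2$. The face condition \eqnref{Lu-projection3} controls only $\eL^I\n_e$ on each face, not the full matrix restriction, and a single face does not carry enough information to recover $\theta_{\Lb}$. Moreover, $\tr\Lb=0$ does \emph{not} kill the identity component of $\theta_{\Lb}$: since $\Pi\Lb$ is defined implicitly by the coupled system \eqnref{Lu-projection1}--\eqnref{Lu-projection3}, there is no reason for $\tr(\Pi\Lb)=0$, so $\tr\eL^I=-\tr(\Pi\Lb)$ is generically nonzero and must be estimated separately.

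The paper's argument resolves both issues by working with the matrix basis of Lemma~\ref{lemma:basis} over \emph{all} faces: one writes $\eL^I=\sum_e\sum_{\tb\in\mc{B}_e}(\eL^I\n_e\cdot\tb)W_{e,\tb}+\frac{\tr\eL^I}{d}\Ir_d$ and estimates each scalar coefficient $\eL^I\n_e\cdot\tb$ independently, using auxiliary row-wise Raviart--Thomas--type projections $\Pi_1,\Pi_2$ (Lemma~\ref{thm:Pi-PF-estm}) together with the face equation \eqnref{Lu-projection3} on the face $e$ in question. The trace part $\tr\eL^I$ is a genuine polynomial precisely because $\tr\Lb=0$, and is then handled by a scalar $\Pi_{e_K}$-type interpolation using \eqnref{Lu-projection1} (with $\Gb=q\Ir$) and \eqnref{Lu-projection3} (with $\mub=w\n_{e_K}$). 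Your low-order estimate $\nor{\Pr_{k-1}\delta_{\Lb}}_{0,\K}\lesssim h_\K\nor{\wb}_{W^{1,\infty}}\nor{\eeui}_{0,\K}$ is correct and reappears in the paper, but the high-order piece needs this face-by-face scalar decomposition rather than a single-face matrix reconstruction.
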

\begin{proof}
  We proceed in a manner similar to \cite[Theorem 2.3]{CesmeliogluCockburnNguyenEtAl13} with adaptations
  corresponding to our more complicated projectors
  $\bs{\Pi}\LRp{\Lb,\ub}$.

  The dual basis of $\mc{B}$ (see Lemma~\ref{lemma:basis}) can be written as 
  \algns{ 
    \mc{B}^* = \LRc{ \frac 1d \Ir_d } \cup \LRc{ W_{e,\tb}\;:\; e \subset \pK, \tb \in \mc{B}_e },
  }
  where $W_{e,\tb}:(\tb \otimes \n_e) = 1$ for the $e$ and $\tb$ corresponding to the subscripts of $W$, 
  and 0 otherwise.
  Any $d \times d$ matrix, $A$, can be written as 
  \algns{ 
    A = \sum_e \sum_{\tb \in \mc{B}_e} (A : (\tb \otimes \n_e))W_{e,\tb} + \frac{ \tr A}{d} \Ir_d 
    = \sum_e \sum_{\tb \in \mc{B}_e} (A \n_e \cdot \tb ) W_{e,\tb} + \frac{ \tr A}{d} \Ir_d ,
  }
  so 
  \algn{ 
    \label{eq:eLI-basis-decomp}
    \eL^I = \sum_e \sum_{\tb \in \mc{B}_e} (\eL^I \n_e \cdot \tb ) W_{e,\tb} + \frac{ \tr \eL^I}{d} \Ir_d .
  }
  Since $W_{e,\tb}$ is an element of $\mc{B}^*$ independent of mesh size, 
  this identity reduces the estimate of $\| \eL^I \|_{0,K}$ to the estimates of 
  $\| \eL^I \n_e \cdot \tb \|_{0,K}$ with $\tb \in \mc{B}_e$ and $\| \tr \eL^I \|_{0,K}$. 

  We first estimate $\| \eL^I \n \cdot \tb \|_{0,K}$ with $\n = \n_e$ for some $e$. Let $e_K$ be a fixed face of $K$ 
  and define $\Pi_1 \Lb, \Pi_2 \Lb \in \polyt{1}(K)$ as 
  \begin{subequations}
    \eqnlab{Pi1}
    \algn{
      \eqnlab{Pi1-volume}
      \LRp{\Pi_1 \Lb, \Gb}_\K &= \LRp{\Lb , \Gb}_\K , 
      & & \forall \Gb \in \polyt{k-1}(K) , \\
      \eqnlab{Pi1-edges}
      \LRa{\Pi_1 \Lb \n, \mub}_e  &= \LRa{\Lb \n ,\mub}_e,  
      & & \forall \mub \in \polyb{k}(e), e \not = e_K .
    }
  \end{subequations}
  and 
  \begin{subequations}
    \eqnlab{Pi2}
    \algn{
      \eqnlab{Pi2-volume}
      \LRp{\Pi_2 \Lb, \Gb}_\K &= \LRp{\Lb , \Gb}_\K - \LRp{\eu^I \otimes \delw, \Gb}_\K , 
      & & \forall \Gb \in \polyt{k-1}(K) , \\
      \eqnlab{Pi2-edges}
      \LRa{\Pi_2 \Lb \n, \mub}_e  &= \LRa{\Lb \n ,\mub}_e,  
      & & \forall \mub \in \polyb{k}(e), e \not = e_K .
    }
  \end{subequations}
  The existence and uniqueness of $\Pi_1 \Lb$ and $\Pi_2 \Lb$ follow from Lemma~\ref{thm:Pi-PF-estm}.
  By the triangle inequality, 
  \mltln{ 
    \label{eq:eLInt-split}
    \| \eL^I \n \cdot \tb \|_{0,K} \\
    \leq \| \Lb \n \cdot \tb - \Pi_1 \Lb \n \cdot \tb \|_{0,K} 
    + \| (\Pi_1 - \Pi_2) \Lb \n \cdot \tb \|_{0,K} 
    + \| (\Pi_2 - \Pi) \Lb \n \cdot \tb \|_{0,K} .
  }
  Again by the triangle inequality, we bound the first term in \eqref{eq:eLInt-split} as
  \algn{ \label{eq:eLn-estm1}
  \nor{ \Lb \n \cdot \tb - \Pi_1 \Lb \n \cdot \tb}_{0,K}
  &\leq \nor{ \Lb - \Pi_1 \Lb }_{0,K} \\
  &\leq \nor{ \Lb - \Pi^{RTN} \Lb }_{0,K} + \nor{ \Pi^{RTN} \Lb - \Pi_1 \Lb }_{0,K} \notag
}
where $\Pi^{RTN}$ is the row-wise canonical Raviart-Thomas-N\'ed\'elec (RTN) interpolation operator 
into the row-wise $(k+1)$-th order RTN element, which contains $\polyt{k}(\K)$ 
for all $\K \in \Omegah$. 
From \cite[Proposition~2.1 (vi)]{cockburn2008superconvergent},
we have that $\nor{ \Pi^{RTN} \Lb - \Pi_1 \Lb }_{0,K} \lesssim h_K^{k+1}\nor{\Pr_k \Div \Lb}_k$,
and from a well known property of the canonical RTN interpolation operator we have that 
$\nor{ \Lb - \Pi^{RTN} \Lb }_{0,K} \lesssim h_K^{k+1}\nor{\Lb}_{k+1}$.
Therefore, for the first term of \eqref{eq:eLInt-split} we have
\algns{ 
  \nor{\Lb - \Pi_1 \Lb}_{0,K} 
  \lesssim h_K^{k+1} \nor{\Lb}_{k+1,K}. 
}
Note that $\Pi^{RTN} \Lb$ is not necessarily in $\polyt{k}(\Omegah)$ but the above argument does not require $\Pi^{RTN} \Lb \in \polyt{k}(\Omegah)$. 

For the estimate of the second term in \eqref{eq:eLInt-split}, note that the definitions of $\Pi_1$ and $\Pi_2$ give
\begin{subequations}
  \algn{
    \LRp{\Pi_1 \Lb - \Pi_2 \Lb, \Gb}_\K &= \LRp{\eu^I \otimes \delw, \Gb}_\K , 
    & & \forall \Gb \in \polyt{k-1}(K) , \\
    \LRa{\Pi_1 \Lb \n - \Pi_2 \Lb \n, \mub}_e  &= 0,  
    & & \forall \mub \in \polyb{k}(e), e \not = e_K .
  }
\end{subequations}
By Lemma~\ref{thm:Pi-PF-estm}, we can estimate the second term in \eqref{eq:eLInt-split} as 
\algn{  \label{eq:eLn-estm2}
\| (\Pi_1 - \Pi_2) \Lb \n \cdot \tb \|_{0,K} 
&\leq \| \Pi_1 \Lb - \Pi_2 \Lb \|_{0,K} 
\lesssim \| \eu^I \otimes \delw \|_{0,K} \\
&\lesssim h_K \| \eu^I \|_{0,K} \| \wb \|_{W^{1,\infty}(K)} . \notag
  }
  For the estimate of the third term in \eqref{eq:eLInt-split}, recalling \eqnref{Lu-projection1}, \eqnref{Lu-projection2}, and \eqnref{Pi2-volume},
  we derive $\LRp{\Pi_2 \Lb - \Pi \Lb, \Gb}_\K = 0$ for all $\Gb \in \polyt{k-1}(K)$.
  Selecting $\Gb = \LRp{\tb \otimes \n} q$ with $q \in \poly{k-1}(K)$, we have that 
  $\LRp{\Pi_2 - \Pi} \Lb \n \cdot \tb \in \poly{k}^{\perp}(K)$, and by Lemma~\lemref{trace-equiv},
  \algns{
    \nor{\LRp{\Pi_2 - \Pi} \Lb \n \cdot \tb}_{0,\K} 
    \lesssim h_K^{\half} \nor{\LRp{\Pi_2 - \Pi} \Lb \n \cdot \tb}_{0,e}
  }
  for any $e$ of $\pK$.
  From \eqnref{Lu-projection3} and \eqnref{Pi2-edges}, we have, for $e \ne e_K$
  \algns{ 
    \LRa{\LRp{\Pi_2 - \Pi)}\Lb \n, \mub}_e 
    = \LRa{(m+\alpha_1)\eeui + \epi \n - \gb, \mub}_e
    \quad \forall \mub \in \polyb{k}(e),
  }
  with $\gb := -\half\kappa\db \times \LRp{\n \times \LRp{\ebt^I+\ebht^I}}$.
  Choosing $\mub = \LRs{\LRp{\Pi_2 - \Pi}\Lb \n \cdot \tb} \tb$ and applying the Cauchy-Schwarz inequality to the above expression, we have
  \algns{
    \nor{\LRp{\Pi_2 - \Pi}\Lb \n \cdot \tb}_{0,e} 
    &\lesssim \nor{(m+\alpha_1)\eeui - \gb}_{0,e} \\
    &\lesssim \LRp{\alpha_1 + \nor{\wb}_{L^\infty(K)}} \nor{\eeui}_{0,e}
    + \kappa \nor{\db}_{L^\infty(\K)} \nor{\eebi}_{0,e} \\
    &\lesssim \LRp{\alpha_1 + \nor{\wb}_{L^\infty(K)}} \LRp{h_K^{-\half}\nor{\eeui}_{0,\K}+h_K^{\half}\nor{\Grad\eeui}_{0,\K}} \\
    &\quad + \kappa \nor{\db}_{L^\infty(\K)} \LRp{h_K^{-\half}\nor{\eebi}_{0,\K}+h_K^{\half}\nor{\Grad\eebi}_{0,\K}} \\
    &\lesssim \LRp{\alpha_1 + \nor{\wb}_{L^\infty(K)}} \LRp{h_K^{-\half}\nor{\eeui}_{0,\K}+h_K^{k+\half}\nor{\ub}_{k+1,\K}} \\
    &\quad + \kappa \nor{\db}_{L^\infty(\K)} \LRp{h_K^{-\half}\nor{\eebi}_{0,\K}+h_K^{k+\half}\nor{\bb}_{k+1,\K}},
  }
  where we have used the fact that $\nor{\ebht^I}_{0,e} \leq \nor{\ebt^I}_{0,e}$,
  the continuous trace inequality \eqref{continuous_trace_ineq},
  the bound on $\nor{\Grad\eebi}_{0,K}$ given by \eqref{bound_of_grad_ebi},
  and a similar bound for $\nor{\Grad\eeui}_{0,K}$ given by
  \algn{
    \label{bound_of_grad_eui}
    \nor{\Grad \eeui}_{0,K} \lesssim h_K^k \nor{\ub}_{k+1,K} + h_K^{-1} \nor{\eeui}_{0,K}.
  }
  Combining the previous expressions, we have
  \algn{
    \notag \nor{\LRp{\Pi_2 - \Pi} \Lb \n \cdot \tb}_{0,\K} 
    \notag &\lesssim \LRp{\alpha_1 + \nor{\wb}_{L^\infty(K)}} \LRp{\nor{\eeui}_{0,\K} + h_K^{k+1}\nor{\ub}_{k+1,K}} \\
    \notag &\quad + \kappa \nor{\db}_{L^\infty(\K)} \LRp{\nor{\eebi}_{0,\K} + h_K^{k+1}\nor{\bb}_{k+1,K}} \\
    \label{eq:eLn-estm3} &\lesssim \LRp{\alpha_1 + \nor{\wb}_{L^\infty(K)}} \LRp{\nor{\eeui}_{0,\K} + h_K^{k+1}\nor{\ub}_{k+1,K}} \\
    \notag &\quad + \kappa \nor{\db}_{L^\infty(\K)} \LRp{h_K^{k+1}\nor{\bb}_{k+1,\K} + \alpha_3 h_K^{k+1} \nor{\r}_{k+1,K}}, 
  }
  where we have used \eqref{eq:ebI-estm} in the final step, but for simplicity in writing have not
  expanded $\nor{\eeui}_{0,K}$.
  Thus, from the three estimates \eqref{eq:eLn-estm1}, \eqref{eq:eLn-estm2}, \eqref{eq:eLn-estm3} with \eqref{eq:eLInt-split}, we have 
  \algn{  
    \label{eq:eLInt-estm}
    \nor{\eL^I \n \cdot \tb}_{0,K} 
    &\lesssim 
    h_K^{k+1} \nor{\Lb}_{k+1,K} 
    + \LRp{\alpha_1 + \nor{\wb}_{L^\infty(K)}} h_K^{k+1}\nor{\ub}_{k+1,K} \\
    &\quad + \LRp{\alpha_1 + \nor{\wb}_{L^\infty(\K)} + h_K \nor{\wb}_{W^{1,\infty}(K)}} \nor{\eeui}_{0,K} \notag \\
    &\quad + \kappa \nor{\db}_{L^\infty(\K)} \LRp{h_K^{k+1}\nor{\bb}_{k+1,\K} + \alpha_3 h_K^{k+1} \nor{\r}_{k+1,K}}. \notag 
  }

  To complete the estimate of $\| \eL^I \|_{0,K}$, we need to estimate $\| \tr \eL^I \|_{0,K}$. 
  First, by taking $\Gb = q\Ir$ in \eqnref{Lu-projection1} with $q \in \poly{k-1}(K)$, we get
  \algn{ 
    \label{eq:treL-id-1}
    \LRp{ \tr \eL^I, q}_K = \LRp{ \tr(\eu^I \otimes \delw), q }_K =  \LRp{ \Pr_{k-1} \tr(\eu^I \otimes \delw), q }_K , \qquad \q \in \poly{k-1}(\K) 
  }
  where $\Pr_{k-1}$ is the orthogonal $L^2$ projection into $\poly{k-1}(\K)$. 
  For a fixed $e_K \subset \pK$, taking $\mub = w \n_{e_K}$ with $w \in \poly{k}^{\perp}(K)$ in \eqnref{Lu-projection3} and using \eqref{eq:eLI-basis-decomp}, we also get
  \algn{ 
    \label{eq:treL-id-2}
    \LRa{ \tr \eL^I, w }_{e_K} = \LRa{ \zeta, w }_{e_K} = \LRa{ \Pr_{e_K} \zeta, w }_{e_K}, 
  }
  where $\zeta$ is a scalar function on $\K$ defined by 
  \algns{ 
    \zeta := - \LRp{ \LRp{ \sum_e \sum_{\tb \in \mc{B}_e} ( \eL^I \n_e \cdot \tb ) W_{e,\tb} } \n_{e_K} } \cdot \n_{e_K} 
    + \veps_p^I|_{e_K} + (\alpha_1+m)\eu^I \cdot \n_{e_K} - \gb \cdot \n_{e_K} 
  }
  and $\Pr_{e_K}$ is the orthogonal $L^2$ projection into $\poly{k}(e_K)$. 
  We define $\Pi_{e_K} (f,g) \in \poly{k}(K)$ for $f \in L^2(K)$ and $g \in L^2(e_K)$ as 
  \algns{ 
    \LRp{ \Pi_{e_K} (f, g), q }_K &= \LRp{ f, q }_K, & & \forall q \in \poly{k-1}(K), \\
    \LRa{ \Pi_{e_K} (f, g), \mu}_e &= \LRa{ g, \mu }_e, & & \forall \mu \in \poly{k}(e), e = e_K.
  }
  We refer to \cite[Lemma~3.1]{cockburn2008superconvergent} for well-posedness of this interpolation and optimal approximation property. 
  By an argument similar to Lemma~\ref{thm:Pi-PF-estm}, we have 
  \algns{
    \nor{\Pi_{e_K} (f, g)}_{0,\K} \lesssim \nor{f}_{0,\K} + h_{\K}^{\half} \nor{g}_{0, e_K}. 
  }
  For simplicity, we will use $\Pi_{e_K} f$ if $g = f|_{e_K}$. 

  Note that $\tr \eL^I$ is an element-wise polynomial because $\tr \Lb = 0$, so $\tr \eL^I = \Pi_{e_K} \tr \eL^I$.
  From this, 
  the identities \eqref{eq:treL-id-1} and \eqref{eq:treL-id-2}, and the above inequalities from scaling argument,
  we have 
  \algns{ 
    \nor{\tr \eL^I}_{0,K} 
    &= \nor{\Pi_{e_K} \tr \eL^I}_{0,K} \lesssim \nor{\Pr_{k-1} \tr ( \eu^I \otimes \delw )}_{0,K} + h_K^{\half} \nor{\Pr_{e_K} \zeta}_{0,e_K} .
  }
  The optimality of the orthogonal $L^2$ projection and the inverse trace inequality give
  \algns{
    h_K^{\half} \nor{\Pr_{e_K} \zeta}_{0,e_K} \le h_K^{\half} \nor{\Pr_k \zeta}_{0,e_K} \lesssim \nor{\Pr_k \zeta}_{0,\K} \le \nor{\zeta}_{0,\K}, 
  }
  and using this and the previous estimate, we can write
  \algns{
    \nor{\tr \eL^I}_{0,K}  
    &\lesssim \nor{\eu^I \otimes \delw}_{0,K} + \nor{\zeta}_{0,K} \\
    &\lesssim h_K \nor{\wb}_{W^{1,\infty}(K)} \nor{\eeui}_{0,K} 
    + h_K^{k+1}\nor{p}_{k+1,K} \\
    & \quad + \sum_e \sum_{\tb \in \mc{B}_e}\nor{(\eL^I \n_e) \cdot \tb}_{0,K} 
    + \nor{(\alpha_1+m)\eu^I - \gb }_{0,K} \\ 
    &\lesssim 
    h_K^{k+1}\nor{p}_{k+1,K}
    + h_K^{k+1} \nor{\Lb}_{k+1,K} 
    + \LRp{\alpha_1 + \nor{\wb}_{L^\infty(K)}} h_K^{k+1}\nor{\ub}_{k+1,K} \\
    & \quad + \kappa \nor{\db}_{L^\infty(\K)} \LRp{h_K^{k+1}\nor{\bb}_{k+1,\K} + \alpha_3 h_K^{k+1} \nor{\r}_{k+1,K}} \\
    & \quad + \LRp{\alpha_1 + \nor{\wb}_{L^\infty(\K)} + h_K \nor{\wb}_{W^{1,\infty}(K)}} \nor{\eeui}_{0,K} 
  }
  and here we used previous results on $\nor{\eL^I \n}_{0,\K}$ and $\nor{(\alpha_1+m)\eu^I - \gb}_{0,\K}$ in the final inequality. 
  We completed the estimate of $\| \eL^I \|_{0,K}$. 
\end{proof}

We now define the adjoint projection $\Pi^* (\Lb^*, \ub^*, \p^*, \Hb^*,
\bb^*, \r^*, \ubh^*, (\bbh^*)^t, \rh^*)$. As in the splitting of
errors with $\Pi$, we define \algns{ \veps_{\sigma^*}^I = \sigma^* -
\Pi^* \sigma^* } for an adjoint unknown $\sigma^*$. We first define $\Pi^*
\Hb^*$, $\Pi^* \p^*$, $\Pi^* \ubh^*$, $\Pi^* (\bbh^*)^t$, $\Pi^*
\rh^*$ as $L^2$ projections into relevant polynomials spaces, and
define $\Pi^* \bb^*$, $\Pi^* \r^*$ to satisfy
\begin{subequations}
  \eqnlab{abr-proj}
  \algn{
    \label{eq:abr-proj1} (\eebis, \cb)_\K &= 0, & & \forall \cb \in \polyb{k-1}(K) , \\
    \label{eq:abr-proj2} (\eeris, s)_\K &= 0, & & \forall s \in \poly{k-1}(K), \\
    \label{eq:abr-proj3} \LRa{ - \eebis \cdot \n + \alpha_3 \eeris , \gamma }_e &= 0, & & \forall \gamma \in \poly{k}(e) .
  }
\end{subequations}
We then choose $\Pi^* \Lb^*$, $\Pi^* \ub^*$ to satisfy
\begin{subequations}
  \eqnlab{adjoint-projection}
  \algn{
    \label{eq:aLu-proj1} (\elis, \Gb)_\K + (\eeuis \otimes \wb, \Gb)_\K &= 0, & & \forall \Gb \in \polyt{k-1}(K) , \\
    \label{eq:aLu-proj2} (\eeuis, \vb)_\K &= 0, & & \forall \vb \in \polyb{k-1}(K), \\
    \label{eq:aLu-proj3} \LRa{-\elis \n + \alpha_1 \eeuis  , \mub }_e &= \LRa{ \gb, \mub}_e , & & \forall \mub \in \polyb{k}(e)
  }
\end{subequations}
where
\algns{
  \gb = \epis \n - \half \kappa \db \times \LRp{\n \times \LRp{ - (\eebis )^t + \ebthis }} .
}

Assuming that $(\Lb^*, \ub^*, \p^*, \Hb^*,  \bb^*, \r^*, \ubh^*, (\bbh^*)^t, \rh^*)$ are sufficiently regular, 
we can show that the interpolation $\bs{\Pi}^*$ is well-defined and provides optimal approximations.
Due to the similarity between $(\Pi\bb,\Pi\r)$ and $(\Pi^*\bb^*,\Pi^*\r^*)$,
we can conclude
\begin{subequations}
  \eqnlab{ebIs-erIs}
  \algn{
    \label{eq:ebIs-estm} 
    \nor{\eebis}_{0,\K} &\lesssim h^{k +1} \nor{\bb^* }_{k+1, \K} + \alpha_3 h^{k +1} \nor{\r^* }_{k +1, \K} , \\
    \label{eq:erIs-estm} 
    \nor{\eeris}_{0,\K} &\lesssim \alpha_3^{-1} h^{k +1} \nor{ \Div \bb^* }_{k, \K} + h^{k +1} \nor{\r^*}_{k +1, \K}.
  }
\end{subequations}
It can also be shown that
\algns{ 
  \| \eeuis \|_{0} &\lesssim (\alpha_1 - \half \nor{\wb}_\Linfty)^{-1} 
  \left[ ({ \alpha_1 + h \nor{\wb}_\Woneinfty }) h^{k+1} \| \ub^* \|_{k+1} \right. \\
    &\quad \left. + h^{k+1} \| \Div \Lb^* + \Grad \p^* \|_{k} 
    + \kappa h^{k+1} \| \db \|_\Linfty \LRp{\| \bb^* \|_{k+1} 
  + \alpha_3 \| \r^* \|_{k +1} } \right] , \\
  \nor{\elis}_0 
  &\lesssim 
  h^{k+1}\nor{p^*}_{k+1} 
  + h^{k+1}\nor{\Lb^*}_{k+1}
  + \kappa h^{k+1}\nor{\db}_\Linfty\LRp{\nor{\bb^*}_{k+1} + \alpha_3 \nor{r^*}_{k +1}} \\
  &\quad + \LRp{\alpha_1 + h\nor{\wb}_\Woneinfty} \nor{\eeuis}_{0}
  + \alpha_1 h^{k+1} \nor{\ub^*}_{k+1},
}
assuming $\tr \Lb^* = 0$. 
The proofs are analogous to those for the $\bs{\Pi}$ projections, with the only differences resulting
from the absence of $m$ from \eqref{eq:aLu-proj3}.
As a consequence, from the elliptic regularity assumption \eqnref{adjoint-regularity}, we have
\algn{  \label{eq:elliptic-estimate}
\max\left\{
  \nor{\elis}_{0},\; 
  \nor{\eeuis}_{0},\; 
  \nor{\epis}_{0},\; 
  \nor{\eebis}_{0},\; 
  \nor{\eeris}_{0}
  \vphantom{\nor{\epis}_{0}} 
\right\}
&\lesssim h \nor{ \bs{\sigma}, \bs{\theta}}_{0} 
}
and the implicit constant depends on $\db$, $\wb$, $\alpha_1$, $\alpha_2$, $\alpha_3$ but not $h$.

\section{Well-posedness and regularity of the adjoint
problem} \seclab{regularity_of_adjoint} In this section, we discuss
the well-posedness of the adjoint equation \eqnref{adjoint-mhd} and
conditions under which  the regularity result \eqnref{adjoint-regularity} holds.
For the well-posedness
of \eqnref{adjoint-mhd} we can adapt the approach
in \cite{HoustonSchoetzauWei09}. In particular, since
\eqref{eq:amhd2lin1n} and \eqref{eq:amhd5lin1n}
satisfy the following  antisymmetry:
\algns{
  ((\wb \cdot \nabla) \ub^*, \ub^*) = 0, \qquad - (\kappa \db \times
  (\nabla \times \bb^*), \ub^*) + ( \kappa \nabla \times
  (\ub^* \times \db), \bb^*) = 0 , 
}
we can invoke a similar argument as
in \cite{HoustonSchoetzauWei09}
to conclude
\algn{ 
  \label{eq:adjoint-regularity0} 
  \nor{\ub^*}_{1}
  + \nor{\p^*}_{0} + \nor{\r^*}_{1} + \nor{\bb^*}_{0}
  + \nor{\curl \bb^*}_{0} \lesssim \nor{\thetab, \sigmab}_{0}.  
}
because $\Hb^* = \nabla \times \bb^*$.  We next assume that solutions
of the Stokes and time-harmonic Maxwell equations (with $0$ frequency)
satisfy higher order regularities, i.e., when $\wb = \db = 0$
in \eqnref{adjoint-mhd}, the solution $\ub^*$, $\p^*$, $\bb^*$ satisfy
$\nor{\ub^*}_{2} + \nor{\p^*}_{1} \lesssim \nor{\bs{\sigma}}_0$ and
$\nor{\bb^*}_{2} \lesssim \nor{\bs{\theta}}_0$.  Sufficient conditions
for these assumptions are known but the details are beyond the scope
of this paper, so we refer the readers
to \cite{Dauge-89,HiptmairMoiolaPerugia-11,Monk-FEM-Maxwell}.
We now assume that $(\ub^*, \Lb^*, \p^*, \bb^*, \Hb^*, \r^*)$ is the solution
of \eqnref{adjoint-mhd}.  From \eqnref{adjoint-mhd} and the
regularity assumptions of the Stokes and Maxwell equations, we have
\algns{
  \nor{\ub^*}_{2} + \nor{\p^*}_{1} &\lesssim \nor{(\wb \cdot \nabla) \ub^* + \kappa \db \times (\nabla \times \bb^*) + \bs{\theta} }_0 \\
  &\lesssim \nor{\wb}_{L^\infty} \nor{\ub^*}_1 + \nor{\db}_{L^\infty} \nor{\curl \bb^*}_0 + \nor{\bs{\theta}}_0 \\
  &\lesssim (\nor{\wb, \db}_{L^\infty} +1) \nor{\bs{\theta}, \bs{\sigma}}_0, \\
  \nor{\bb^*}_2 &\lesssim \nor{-\kappa \nabla \times (\ub^* \times \db) + \bs{\sigma}} \\
  &\lesssim \nor{\ub^*}_1 \nor{\db}_{L^\infty} + \nor{\ub^*}_0 \nor{\db}_{W^{1,\infty}} + \nor{\bs{\sigma}}_0 \\
  &\lesssim (\nor{\db}_{W^{1,\infty}} + 1) \nor{\bs{\theta}, \bs{\sigma}}_0 .
}
The regularity of $\nor{\r^*}_1$ is  already given in \eqref{eq:adjoint-regularity0} and the regularity of $\nor{\Lb^*}_0$ and $\nor{\Hb^*}_0$ is obvious from the identities $\Lb^* = \nabla \ub^*$ and $\Hb^* = \nabla \times \bb^*$.

\section{Divergence-free post-processing of \texorpdfstring{$\ubH$}{u} and \texorpdfstring{$\bbH$}{b}}
\seclab{appendix:postproc}

In this appendix we adapt a postprocessing procedure
in \cite{CockburnGopalakrishnanNguyenEtAl11} to enforce the pointwise
solenoidal constraint on $\ub$ and $\bb$. For simplicity the
exposition is done only for $d = 3$. For completeness, error estimates of
the post-processed solutions will also be derived.  To begin, let
$\lambda_0$, $\lambda_1$, $\lambda_2$, $\lambda_3$ be the barycentric
coordinates of a tetrahedron $\K$, and $B_{\K}$ be a symmetric
matrix-valued bubble function defined by
\algns{
  B_{\K} = \sum_{i=0}^3 \lambda_i \lambda_{i+1} \lambda_{i+2} \LRp{\Grad \lambda_{i+3} \otimes \Grad \lambda_{i+3} },
}
with the index $i$ counted modulo 4. Let $\mc{N}_k$ be the space of $\R^3$-valued polynomials
\algns{
  \mc{N}_k(\K) = \polyb{k-1}(\K) \oplus \mc{N}_k'(\K),
}
where $\mc{N}_k'(\K)$ is the space of homogeneous $\R^3$-valued polynomials $\vb$ of degree $k$ such that $\vb \cdot \bs{x} = 0$. We define $\mc{S}_k(\K)$ by
\algns{
  \mc{S}_k (\K) = \LRc{ \vb \in \mc{N}_k(\K) \,:\, \LRp{\vb, \Grad \phi}_{\K} = 0 \text{ for all } \phi \in \poly{k}(\K)} .
}
We recall an alternative characterization of the interpolation of the Brezzi--Douglas--Marini (BDM) element \cite[Proposition~A.1]{CockburnGopalakrishnanNguyenEtAl11}:
\begin{subequations}
  \eqnlab{BDM-projection}
  \algn{
    \label{BDM-projection1} \LRa{ ( \Pi^{BDM} \ub - \ub) \cdot \n, \mu }_e &= 0, & & \mu \in \poly{k}(e) , \\
    \label{BDM-projection2} \LRp{ \Pi^{BDM} \ub - \ub, \Grad w }_{\K} &= 0, & & w \in \poly{k-1}(\K), \\
    \label{BDM-projection3} \LRp{ \Curl \LRp{ \Pi^{BDM} \ub - \ub}, (\Curl \vb) B_{\K} }_{\K} &= 0 , & & \vb \in \mc{S}_{k-1}(\K) .
  }
\end{subequations}

Our post-processed solution $\bar{\ub}_h|_{\K} \in \mc{P}_k(\K)$ is defined as
\begin{subequations}
  \eqnlab{u-pp} 
  \algn{
    \label{u-pp1} \LRa{ (\bar{\ub}_h - \ubhH) \cdot \n, \mu }_e &= 0, & & \mu \in \poly{k}(e) , \\
    \label{u-pp2} \LRp{ \bar{\ub}_h - \ubH, \Grad w }_{\K} &= 0, & & w \in \poly{k-1}(\K), \\
    \label{u-pp3} \LRp{ \Curl \bar{\ub}_h - \LbH^{skw}, (\Curl \vb) B_{\K} }_{\K} &= 0 , & & \vb \in \mc{S}_{k-1}(\K) ,
  }
\end{subequations}
where $\LbH^{skw} = \LRp{ \Lb_{h,32} - \Lb_{h,23}, \Lb_{h,13}-\Lb_{h,31}, \Lb_{h,21}-\Lb_{h,12}}$.
In two dimensions, i.e. $d = 2$, $\n \times \bs{a}$, $\Curl \ub$, and $B_{\K}$ are replaced by $n_1 a_2 - n_2 a_1$, $\pd_1 u_2 - \pd_2 \u_1$, and the standard bubble function on $\K$, respectively.

The post-processed solution $\bar{\ub}_h$ is in the BDM space, so its divergence is well-defined. Further, it is divergence-free because, for any $q \in \poly{k-1}(\Omegah)$,
\algns{
  \LRp{ \Div \bar{\ub}_h, q}  &= \LRa{ \bar{\ub}_h \cdot \n, q } - \LRp{ \bar{\ub}_h, \Grad q} =  \LRa{ \ubhH \cdot \n, q } - \LRp{ {\ubH}, \Grad q} = 0
}
where the last equality is due to \eqref{eq:local_3_1}.
For error analysis, it is enough to estimate $\delu := \bar{\ub}_h - \Pi^{BDM} \ub$.
Using  $\Curl \ub = \Lb^{skw}$ together with \eqnref{BDM-projection} and \eqnref{u-pp} gives
\begin{subequations}
  \eqnlab{delta-u}
  \algn{
    \label{delta-u-1} \LRa{ \delu \cdot \n, \mu }_{e} &= \LRa{\euh \cdot \n }_{e}, & & \mu \in \poly{k}(e) , \\
    \label{delta-u-2} \LRp{ \delu , \Grad w }_{\K} &= \LRp{\ub - \ubH, \Grad w}_{\K}, & & w \in \poly{k-1}(\K), \\
    \label{delta-u-3} \LRp{ \Curl \delu , (\Curl \vb) B_{\K} }_{\K} &= \LRp{ (\Lb - \LbH)^{skw} , (\Curl \vb) B_{\K} }_{\K} , & & \vb \in \mc{S}_{k-1}(\K) .
  }
\end{subequations}
Using a  scaling argument yields
\algns{
  \nor{\delu}_{0,\K} \lesssim \LRp{ h_{\K}^{1/2} \nor{\euh}_{0,e}
  + \nor{\ub - \ubH}_{0,\K} + h_\K \nor{\Lb - \LbH}_{0,\K}} .
}
On the other hand, by the triangle inequality we have
\algns{
  \nor{\ub - \bar{\ub}_h}_0 \lesssim \nor{\ub - \Pi^{BDM} \ub}_{0}
  + { h^{1/2} \nor{\euh}_{\pd \Omegah } + \nor{\ub - \ubH}_{0} + h \nor{\Lb - \LbH}_{0}}.
}
Thus, the convergence rates of $\nor{\bar{\ub}_h - \ub}_0$ and $\nor{\ubH - \ub}_0$ are the same.

To post-process $\bbH$, we define $\bar{\bb}_h|_{\K} \in \mc{P}_k(\K)$ as
\begin{subequations}
  \eqnlab{b-pp}
  \algn{
    \label{b-pp1} \LRa{ \LRp{\bar{\bb}_h - \bbH - \alpha_2 \LRp{ \rH - \rhH} } \cdot \n, \mu }_e
    &= 0, & & \mu \in \poly{k}(e) , \\
    \label{b-pp2} \LRp{ \bar{\bb}_h - \bbH, \Grad w }_{\K} &= 0, & & w \in \poly{k-1}(\K), \\
    \label{b-pp3} \LRp{ \Curl \bar{\bb}_h - \HbH, (\Curl \vb) B_{\K} }_{\K} &= 0 , & & \vb \in \mc{S}_{k-1}(\K) ,
  }
\end{subequations}
Then the divergence of $\bar{\bb}_h$ is well-defined and $\Div \bar{\bb}_h = 0$ by \eqref{eq:local_6_1}.
By a completely analogous argument as above and the fact $\Hb = \Curl \bb$, we can conclude
\algns{
  \nor{\bb - \bar{\bb}_h}_0 \lesssim \nor{\bb - \Pi^{BDM} \bb}_0
  + h \nor{\eb, \er}_1 + h^{1/2} \nor{\erh}_{\pd \Omegah} + h \nor{\Hb - \HbH}_0,
}
hence the convergence rates of $\nor{\bb - \bar{\bb}_h}_0$ and $\nor{\bb - \bbH}_0$ are the same.

\bibliographystyle{siamplain}
\bibliography{references,MHD_Shadid,luis,bochev,FEM}

\end{document}